\newtheorem{exercise}{{\color{blue}Exercise}}
\newtheorem*{sol}{{\color{red}Solution}}
\renewcommand{\AA}{\mathbb{A}}
\newcommand{\BB}{\mathbb{B}}
\newcommand{\CC}{\mathbb{C}}
\newcommand{\GG}{\mathbb{G}}
\newcommand{\RR}{\mathbb{R}}
\newcommand{\OO}{\mathbb{O}}
\newcommand{\HH}{\mathbb{H}}
\newcommand{\PP}{\mathbb{P}}
\newcommand{\QQ}{\mathbb{Q}}
\newcommand{\ZZ}{\mathbb{Z}}
\newcommand{\KK}{\mathbb{K}}
\newcommand{\SSS}{\mathbb{S}}
\newcommand{\cO}{\mathcal{O}}
\newcommand{\cH}{\mathcal{H}}
\newcommand{\cL}{\mathcal{L}}
\newcommand{\cQ}{\mathcal{Q}}
\newcommand{\cN}{\mathcal{N}}
\newcommand{\cS}{\mathcal{S}}
\newcommand{\cU}{\mathcal{U}}
\newcommand{\cV}{\mathcal{V}}
\newcommand{\ff}{\mathfrak f}
\newcommand{\fs}{\mathfrak s}
\newcommand{\fe}{\mathfrak e}
\newcommand{\fn}{\mathfrak n}
\newcommand{\fg}{\mathfrak g}
\newcommand{\fh}{\mathfrak h}
\newcommand{\fso}{\mathfrak{so}}
\newcommand{\fsp}{\mathfrak{sp}}
\newcommand{\fsl}{\mathfrak{sl}}
\newcommand{\fgl}{\mathfrak{gl}}
\renewcommand{\ff}{\mathfrak{f}}
\newcommand{\ii}{\mathrm{i}} %imaginary unit
\newcommand{\raar}{\rightarrow}
\newcommand{\lra}{\longrightarrow}
\newcommand{\extp}{\textstyle\bigwedge} %exterior power
\newcommand{\intprod}{\mathbin{\lrcorner}} %interior product  
\DeclareMathOperator{\Stab}{Stab}
\DeclareMathOperator{\Tot}{Tot}
\DeclareMathOperator{\Aut}{Aut}
\DeclareMathOperator{\Tri}{Tri}
\DeclareMathOperator{\End}{End}
\DeclareMathOperator{\rank}{rank}
\DeclareMathOperator{\rk}{rk}
\DeclareMathOperator{\Fl}{Fl}
\DeclareMathOperator{\Spec}{Spec}
\DeclareMathOperator{\Der}{Der}
\DeclareMathOperator{\tr}{tr}
\DeclareMathOperator{\Sing}{Sing}
\DeclareMathOperator{\GL}{GL}
\DeclareMathOperator{\SL}{SL}
\DeclareMathOperator{\SU}{SU}
\DeclareMathOperator{\U}{U}
\DeclareMathOperator{\SO}{SO}
\DeclareMathOperator{\rmO}{O}
\DeclareMathOperator{\Sp}{Sp}
\DeclareMathOperator{\Pf}{Pf}
\DeclareMathOperator{\Spin}{Spin}
\DeclareMathOperator{\id}{id}
\DeclareMathOperator{\Hil}{F}
\DeclareMathOperator{\ann}{Ann}
\newcommand{\sm}{{\rm{sm}}}
\renewcommand{\Re}{\operatorname{Re}}
\renewcommand{\Im}{\operatorname{Im}}
\newcommand{\cPP}{\check{\PP}}
\DeclareMathOperator{\ad}{ad}
\newcommand{\con}{\mathcal{N}}
\DeclareMathOperator{\codim}{codim}
\DeclareMathOperator{\im}{im}
\newcommand{\spann}[1]{\mathrm{span}\left\{#1\right\}} %linear hull
\newcommand{\der}{ \mathfrak{der}}
\newcommand{\ftri}{\mathfrak{tri}}
\DeclareMathOperator{\Gr}{Gr} %complex Grassmannians
\DeclareMathOperator{\OGr}{OGr} %orth. Grassmannians
\DeclareMathOperator{\CGr}{CGr} %Cayley Grassmannians
\newcommand{\GtwoGr}{G_2\mathrm{Gr}} %G2-Grassmannians
\DeclareMathOperator{\DGr}{DGr} %doubled Grassmannians
\DeclareMathOperator{\LieGr}{LieGr} %Lie Grassmannians
\DeclareMathOperator{\IGr}{IGr} %isotropic Grassmannians?Symplectic! How many of those fuckers are there? V: E6,E7,E8,G2,F4 Grassmannians, orthogonal, symplectic, ordinary, + those for G2 ;)
\DeclareMathOperator{\HG}{HGr} %horospherical Grassmannian
\newcommand{\computation}{{\color{blue}\Large{\usym{2659}}}\quad}
\newcommand{\training}{{\color{blue}\Large{\WritingHand}}\quad }
\newcommand{\necessary}{{\color{blue}\Large{\noway\PointingHand}}\quad }
\newcommand{\curiosity}{{\color{blue}\Large{\usym{2658}}}\quad }% this is a horse chesspiece
\newcommand{\gaia}[1]{{\color{teal}\textbf{gaia:{#1}}}}
\title{BRIDGES Lectures: \\ \, \\ $G_2$ in action, \\ 
and a mathematical theory of exceptions}
\author{Laurent Manivel \\ Toulouse Mathematics Institute \\ CNRS/Toulouse University \\ \texttt{manivel@math.cnrs.fr}}
\date{May 2025}
\begin{document}

%\includepdf{PDFs/BRIDGES_Lecture_Notes_Cover}
%\clearpage
%\renewcommand{\thepage}{\roman{page}}
%\setcounter{page}{1}

\maketitle

\begin{abstract}
The \emph{BRIDGES meeting in gauge theory, extremal structures, and stability} was held in June 2024 at l’Institut d'\'Etudes Scientifiques de Carg\`ese in Corsica, organized by Daniele Faenzi, Eveline Legendre, Eric Loubeau, and Henrique S\'a Earp. The first week was a summer school consisting of four independent but related lecture series by Oscar Garc\'ia-Prada, Spiro Karigiannis, Laurent Manivel, and Ruxandra Moraru. The present document consists of notes for the lecture series by Laurent Manivel on exceptional complex Lie groups, especially $G_2$, and some algebraic varieties acted on by these groups. 

The first lecture is a rather elementary introduction to $G_2$ as an algebraic group, with its various incarnations in terms of trivectors or the Cayley octonions. The second lecture focuses on the geometry of several varieties acted on by $G_2$, transitively or quasi-transitively. The third lecture explores the Tits-Freudenthal magic square of Lie algebras, leading to a uniform construction of the exceptional simple Lie algebras, starting from Jordan algebras and triality. The fourth and last lecture discusses the more geometric aspects of the magic square, and their connections with a few famous problems in complex geometry.    

Some assistance in the preparation of these notes by the author was provided by several participants of the summer school. Details can be found at the end of the introduction.
\end{abstract}

\dominitoc
\tableofcontents

\clearpage
\renewcommand{\thepage}{\arabic{page}}
\setcounter{page}{1}

\chapter{Introduction}

In the introduction to the collection of short writings that Philippe Sollers published in 1985 under the title {\it Théorie des exceptions} (Theory of exceptions), he recalls that one of the original meanings of the 
greek word \textgreek{θεωρία} is {\it "une ambassade, une procession, une fête - un défilé, ou plutôt une danse d'exceptions"} ("an embassy, a procession, a celebration - a parade, or rather a dance of exceptions"). Let it be clear, once and for all, that there is no theory of exceptions, in the abstract sense of the word. What we hope to offer the reader is rather a colorful parade of exceptional mathematical objects, all deriving from the exceptional complex simple Lie algebras and groups. 

The discovery of the exceptional Lie algebras was a completely unexpected 
consequence of Killing's progress on the classification of simple Lie algebras. 
The official birthdate of $\fg_2$ is 23 May 1887, when Killing, in a letter to 
Engel, wrote the root system and  multiplication table \cite{hawkins}. The full classification was announced in October of the same year, and published in Mathematische Annalen as soon as 1888. Killing's paper actually mentioned two simple Lie algebras of dimension $52$, hence two avatars of $\ff_4$, and only after Elie Cartan's thesis in 1894
did the Killing-Cartan classification reached its final form. 

Of course this was only the beginning of a long heroic story, in which the 
role of the exceptional Lie algebras has been somehow ambiguous. On the one hand, 
they seem to disturb the beautiful harmony of the classical Lie algebras, that 
Killing expected to be the only simple ones. On the other hand, they add a little spice to this overly wise story, that has spread in different directions and gave birth to all sorts of exciting exceptional phenomena. 

The goal of these lectures is to present  some aspects of this story concerning complex projective geometry. Our main characters will be projective varieties, that are acted on by some exceptional Lie groups : we will thus see $G_2$ in action, as well as the other Lie groups of type $F_4, E_6$, $E_7$ and $E_8$. One of the main occurrences of algebraic groups is indeed as symmetry groups of varieties, and there are at least two reasons to focus on those varieties whose automorphism group is exceptional.
The first one is the particular, intriguing beauty of these objects, that are worth contemplating for themselves -- another meaning of the word {\it theory}, more from the Renaissance period, is "the science that deals with contemplation". Baudelaire wrote {\it Le beau est toujours bizarre. Tâchez de concevoir un beau banal !"} ("Beauty is always bizarre. Try to design a {\it banal beauty}!"). The second one is that exceptional objects often appear in classifications, in which they are likely to 
demonstrate some extremal, unexpected  behavior. 

But we will also take the opportunity to mention, at least in passing, several general notions and problems that exceptional objects may nicely illustrate. To name a few: symmetric spaces, affine homogeneous spaces and their compactifications, prehomogeneous spaces, 
birational maps and homolo\"idal polynomials, varieties of minimal rational 
tangents, dual varieties, Lagrangian and Legendrian varieties, contact structures, graded Lie algebras, Tits geometries, etc. Our hope is that the reader of these notes, after enjoying the 
{\it dance of exceptions}, will feel like delving deeper into the feast.

\bigskip
This text is divided into four chapters, one for each lecture, and three appendices to make the text more self-contained. The first two lectures cover general theory and should be mostly accessible to graduate students and young researchers, both from algebraic and differential geometry (starting from Section \ref{sec_CG} the arguments become more algebraic in flavour). The last two lectures are more technical and will appeal more to algebraic geometers. Basics from representation theory of semisimple Lie algebras, which are briefly recalled throughout the text, may be very useful to understand at least some of the content of the lectures. Each lecture ends with an exercise section, followed by solutions or hints;  but the reader is strongly recommended to try to solve the exercises by himself. They are advertized according to their difficulty:
\begin{exercise}
\computation Computation (easy exercise, verification)
\end{exercise}
\begin{exercise}
\training Training exercise (proper exercise)
\end{exercise}
\begin{exercise}
\necessary Necessary exercise (exercise whose solution is necessary to understand the notes)
\end{exercise}
\begin{exercise}
\curiosity Curiosity (not directly useful for reading the notes)
\end{exercise}

\noindent
The plan is the following. We start the first chapter by studying three-forms in seven variables and their connections with Cayley's algebra of octonions. It is a result of Engel that the exceptional complex Lie group $G_2$ can be realized as the stabilizer of a generic three-form. Later, E. Cartan proved that $G_2$ can also be realized as the automorphism group of the octonions. In this lecture, we classify three-forms reproving a classical theorem of Schouten on the finiteness of $\GL_7$-orbits of three-forms, thus characterizing the generic forms. Then we construct composition algebras, in particular Cayley's octonions. Finally, we describe $G_2$ and its representations. Contributed to this chapter: A. Muniz (classification of three-forms), P. Schwahn (composition algebras, representations of G2, and actions of compact Lie groups).

\noindent
The second chapter  is were we really see $G_2$ in action, since it 
focuses on several interesting algebraic varieties acted upon by $G_2$. We highlight the roles played by the octonions and the representations of $G_2$ in their constructions. First we discuss the two five-dimensional $G_2$-homogeneous varieties: the quadric $\QQ^5 = G_2/P_1$ and the $G_2$-Grassmannian $G_2\Gr(2,7) = G_2/P_2$. Then we construct the Cayley Grassmannian $\CGr\subset \OGr(3,7)$, which parametri\-zes quaternion subalgebras of the octonions;  its 
"doubled version" $\DGr\subset \OGr(6,14)$, which compactifies the affine algebraic group $G_2$;  the Lie Grassmannian $\LieGr(2,7)$, that parametrizes Lie subalgebras of the imaginary octonions; the horospherical $G_2$-variety $\HG\subset \PP(\Im(\OO^{\CC})\oplus \mathfrak{g_2}^{\CC})$, an unexpected deformation of the orthogonal Grassmannian $\OGr(2,7)$. We give these varieties several interpretations in terms of octonions. Contributed to this chapter: G. Comaschi (homogeneous $G_2$-varieties, the Cayley Grassmannian), V. Benedetti (double Cayley, Lie and horospherical Grassmannian).

\noindent
The third chapter presents two constructions of exceptional Lie groups via Jordan algebras and Triality. Our main Jordan algebras are spaces of 
$3\times 3$ Hermitian matrices with coefficients in a normed algebra $\AA$, and we show how to construct from these "projective planes" over $\AA$. In particular 
the octonions yield the sixteen-dimensional Cayley plane, in which "projective 
lines" are eight-dimensional quadrics. Starting from a pair of normed algebra, we explain how to define a Lie algebra and construct in particular  the exceptional series of Lie algebras in a unified way. The result of this construction is the  celebrated magic square of Freudenthal and Tits. Contributed to the exercises: V. Benedetti, G. Comaschi, A. Muniz, P. Schwahn.

\noindent
The last chapter explores the varieties that appear in the 
geometric version of the magic square, mainly due to Freudenthal.  We will explain how varieties from a same row share very similar geometric properties, and  how varieties from a same column are related to each other. It turns out that all these varieties, although not all exceptional, exhibit some special behaviour that we discuss and  connect to a series of  open problems of general interest in complex projective geometry. Contributed to the exercises: V. Benedetti, G. Comaschi, A. Muniz, P. Schwahn.

\noindent
{\bf Appendices.}
Appendix \ref{app:proj-duality} covers basic facts of projective duality, drawn mostly from the books \cite{GKZ} and \cite{Tevelev-PD}. Appendix \ref{sec_gradings} covers graded Lie algebras, following \cite{Vinberg_graded}; this part is not strictly necessary but helps understanding many of the constructions in a uniform way. Appendix \ref{sec_tits} covers Tits' shadows, i.e. how to cover homogeneous spaces with homogeneous families of subvarieties; this is part of the theory of Tits geometries, which is very helpful when trying to understand the geometry of homogeneous spaces. Contributed to the Appendices: A. Muniz (projective duality), V. Benedetti (gradings of Lie algebras, Tits' shadows).

\noindent
{\bf Acknowledgements.}
L. Manivel gratefully acknowledges the essential contributions of his team's members V. Benedetti, G. Comaschi, A. Muniz, P. Schwahn. Thanks are also due to E. Legendre, D. Faenzi, E. Loubeau, H. S\'a Earp  for the invitation to the BRIDGES Summer School, for the friendly atmosphere in Cargese, and for the opportunity to prepare and expand these notes. 

The BRIDGES meeting was supported by the \emph{BRIDGES collaboration: Brazil-France interactions in gauge theory, extremal structures and stability}, grants $\#$2021/04065-6, São Paulo Research Foundation (FAPESP) and ANR-21-CE40-0017, Agence Nationale de la Recherche (ANR).

\chapter{Three-forms and the octonions}

{In 1887, W. Killing was attempting to show that the only simple complex Lie algebras were $\fsl(n,\CC)$, $\fso(n,\CC)$ and $\fsp(n,\CC)$ when he discovered a $14$-dimensional simple algebra of rank two, which we now call $\fg_2$: the first exceptional Lie algebra. Later, F. Engel realized a Lie group of type $G_2$ as the stabilizer of a generic exterior three-form in seven variables. Another realization came from E. Cartan, who showed that the automorphism group of Cayley's octonions is of type $G_2$. This first lecture aims to explore the classification of three-forms, the structure of composition algebras and the octonions, and show how a group of type $G_2$ can be realized. For more on the historical background, see \cite{agricolag2, hawkins} and references therein. 
}

%We first define nondegenerate skew-symmetric three-forms in seven variables. Then we show that the exceptional group $G_2$ can be described equivalently as the automorphism group of Cayley's octonions, or as the stabilizer of a nondegenerate three-form. 

To set up notation: $V_n$ denotes a complex vector space of complex dimension $\dim_{\CC}V_n = n$; its dual is $V_n^*$; a skew-symmetric (or exterior) $k$-form on $V_n$ is an element of $\extp^k V_n^*$; for $k=n$, we denote by $\det(V_n^*) = \extp^n V_n^* \simeq \CC$ the top exterior power. Furthermore, if we fix a basis $\{e_1, \dots, e_n\}$ for $V_n$, then $\{e^1, \dots, e^n\}$ is the dual basis for $V_n^*$;  the vectors $e_{i_1\cdots i_k} = e_{i_1} \wedge \dots \wedge e_{i_k}$ with $i_1<\cdots <i_k$ form a basis for $\extp^kV_n$, and similarly for $e^{i_1\cdots i_k}$. For any $k$-form $\omega\in\extp^kV_n^\ast$, the interior product with a vector $v\in V_n$ is the $(k-1)$-form $v\intprod\omega=\omega(v,\ldots\,)$. The \emph{support} of a $k$-form $\omega \in \extp^kV_n^*$ is the smallest subspace $W\subset V_n^*$ such that $\omega \in \extp^kW$, we will denote it by $S_\omega$; the dimension of $S_\omega$ is the \emph{rank} of $\omega$. When $k=2$, $\extp^2V_n^*$ is isomorphic to the space of skew-symmetric matrices and this notion of rank coincides with the usual rank for matrices. 

A desirable property of this rank is that it defines a good stratification of $\extp^k V_n^*$. Let $\Sigma^k_{n,r}$ denote the set of $k$-forms of rank $r$ on $V_n$. If non-empty, it is a smooth algebraic variety of (complex) dimension $\binom{r}{k} + r(n-r)$ whose closure is the union of forms of rank $r$ or lower 
\cite[Chap. I]{Mar-sings}.  Moreover, the natural action of $\GL_n = \GL_n(\CC)$ on $\extp^kV_n^*$ preserves the rank, hence $\Sigma^k_{n,r}$ decomposes into orbits. Each $\GL_r$-orbit in $\Sigma^k_{r,r}$ corresponds to a $\GL_n$-orbit in $\Sigma^k_{n,r}$ of the same codimension. Therefore, before describing three-forms in seven variables, we start with lower dimensions.

\section{Three-forms in few variables}\label{sec:fewvars}
Three-forms in three variables are volume forms spanning the one-dimensional space $\det(V_3^*)$. In four variables, we have an isomorphism given by contraction:
$$\extp^3V_4^* \simeq V_4 \otimes \det(V_4^*) \simeq V_4, \qquad  v\otimes e^{1234} \mapsto v\intprod e^{1234}.$$ Note that $v\intprod e^{1234} \in \extp^3(\ann(v))$, where $\ann(v) = \{\, f\in V_4^* \mid f(v) = 0 \,\}$; hence, every non-zero element of $\extp^3V_4^*$ is decomposable, i.e., a product of one-forms; thus of rank three. 

In five variables, three-forms have either rank three or five; from the above discussion, we see that rank four is not possible. Rank-three forms are decomposable, so let $\omega \in\extp^3V_5^*$ be of rank five. Consider the skew-symmetric bilinear pairing 
\[
V_5^* \otimes V_5^* \longrightarrow \det(V_5^*); \quad (\alpha, \beta) \longmapsto \alpha \wedge \beta \wedge \omega. 
\]
Since the dimension of $V_5$ is odd, this pairing must degenerate, i.e., there exists $\alpha \in V_5^*$ such that $\alpha \wedge \omega = 0$; consequently, $\omega = \alpha \wedge \theta$, with $\theta$ a two-form of rank four. Acting with $\GL_5$, we can assume that $\alpha = e^1$ and $\theta \in \extp^2\spann{e^2, \dots, e^5}$. From the classification of two-forms (or skew-symmetric matrices), we conclude that every rank-five three-form is in the orbit of
\[
\omega = e^1\wedge(e^{23} + e^{45}) = e^{123} +e^{145}.
\]
In six variables, three-forms of maximal rank are described by the following result.

\begin{prop}\label{prop:orbits-66}
The set $\Sigma_{6,6}^3$ decomposes into two $\GL_6$-orbits, represented by
    $$\omega_1 = e^{123}+e^{456} \quad  and \quad 
    \omega_2 = e^{124}+e^{135}+e^{236}.$$
Moreover, $\GL_6\cdot \omega_1$ is an open subset of $\extp^3V_6^*$, and the closure of $\GL_6\cdot \omega_2$ is an open subset in a irreducible quartic hypersurface. 
\end{prop}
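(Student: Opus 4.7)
The plan is to separately verify that $\omega_1$ and $\omega_2$ have full rank $6$ and lie in distinct $\GL_6$-orbits of the claimed dimensions, and then show that any other full-rank form is $\GL_6$-equivalent to one of them. First I would compute the stabilizer Lie algebras $\fh_i = \{X\in\fgl_6 \mid X\cdot\omega_i=0\}$. For $\omega_1$, writing $X$ in block form with respect to the decomposition $V_6 = \langle e_1,e_2,e_3\rangle \oplus \langle e_4,e_5,e_6\rangle$, a direct expansion of $X\cdot\omega_1$ in the standard basis of $\extp^3V_6^*$ forces the off-diagonal blocks to vanish (each such entry contributes a single otherwise-unmatched monomial) and both diagonal blocks to be traceless; hence $\fh_1\simeq\fsl_3\oplus\fsl_3$ is $16$-dimensional, and the orbit has dimension $36-16=20 = \dim\extp^3V_6^*$, so it is open. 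An analogous but more involved computation for $\omega_2$ yields $\dim\fh_2 = 17$, hence orbit dimension $19$.

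To distinguish the two orbits and produce the quartic hypersurface, the key tool is the $\GL_6$-equivariant \emph{Hitchin map}
$$A_\omega : V_6 \longrightarrow V_6 \otimes \det V_6^*, \qquad v \longmapsto (v\intprod\omega)\wedge\omega,$$
using the canonical identification $\extp^5V_6^*\simeq V_6\otimes\det V_6^*$. A direct basis computation shows that $A_{\omega_1}$ is diagonal, sending $e_i\mapsto \varepsilon_i\,e_i\otimes e^{123456}$ with signs $(\varepsilon_1,\ldots,\varepsilon_6)=(+,+,+,-,-,-)$, hence invertible, while $A_{\omega_2}$ vanishes identically on $\langle e_4,e_5,e_6\rangle$. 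The determinant $P(\omega):=\det A_\omega$ is then a nonzero $\GL_6$-semi-invariant of degree $12$ with $P(\omega_1)\ne 0$ and $P(\omega_2)=0$, separating the orbits and confining $\overline{\GL_6\cdot\omega_2}$ to $\{P=0\}$. Classical invariant theory (or a direct representation-theoretic check) gives the Hitchin identity $A_\omega^2=\lambda(\omega)\,\id_{V_6}$ for a well-defined quartic $\lambda(\omega)\in\det V_6^{*\otimes 2}$, which is irreducible and generates the ring of $\SL_6$-semi-invariants; so $P=c\,\lambda^3$ and $\{\lambda=0\}$ is an irreducible quartic hypersurface of dimension $19$ containing the codimension-$1$ orbit closure $\overline{\GL_6\cdot\omega_2}$, which therefore coincides with it by dimension and irreducibility.

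For exhaustiveness, I first establish that every $\omega\in\extp^3V_6^*$ admits a nonzero $v\in V_6$ with $v\intprod\omega\in\extp^2V_6^*$ decomposable. The set of such $\omega$ is closed, as the image of a Pl\"ucker-cut incidence variety under the proper projection $\PP(V_6)\times\extp^3V_6^*\to\extp^3V_6^*$, and it contains the dense orbit of $\omega_1$ (where $e_1\intprod\omega_1=e^{23}$ is decomposable), hence equals $\extp^3V_6^*$. Given a full-rank $\omega$, pick such a $v$, write $v\intprod\omega=\alpha\wedge\beta$, complete $(v^\flat,\alpha,\beta)$ to a basis of $V_6^*$, and obtain $\omega = e^{123}+\eta$ with $\eta\in\extp^3\langle e^2,\ldots,e^6\rangle$. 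By the $5$-variable classification from Section~\ref{sec:fewvars}, $\eta$ has rank $3$ or $5$. The rank-$3$ case reduces to $\omega\sim\omega_1$ after a change of basis supported in $\langle e^4,e^5,e^6\rangle$ together with a Heisenberg-type shift in $\langle e^2,e^3\rangle$ (possible precisely because the full-rank hypothesis forces the projection of the support of $\eta$ to $\langle e^4,e^5,e^6\rangle$ to be $3$-dimensional). The rank-$5$ case reduces to $\omega\sim\omega_2$. The main obstacle is this latter reduction: one must show that any rank-$5$ three-form $\eta$ in five variables can be normalized to $e^{135}+e^{236}$ by a basis change inside the residual subgroup of $\GL_6$ preserving the normalization $e^{123}$, a direct but somewhat delicate linear-algebra argument balancing the freedom in $\langle e^2,e^3\rangle$ (which interacts with $e^{123}$) against that in $\langle e^4,e^5,e^6\rangle$ (which does not).
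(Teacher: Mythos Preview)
Your overall strategy overlaps substantially with the paper's: both compute the stabilizer of $\omega_1$ to get the open orbit, and both use the map you call $A_\omega$ (the paper's $\Psi_\omega$) together with the quartic $\lambda(\omega)=\frac{1}{6}\tr(A_\omega^2)$ to produce the invariant hypersurface. The paper proves irreducibility of $V(\lambda)$ via Lemma~\ref{lem:comp-irred} rather than by invoking the Hitchin identity and classical invariant theory; and for exhaustiveness it uses the dual variety of $\Gr(3,V_6)$, showing directly that any rank-$6$ form tangent to the Grassmannian is $\GL_6$-equivalent to $\omega_2$. Your alternative route to exhaustiveness (reduce every $\omega$ to $e^{123}+\eta$ with $\eta$ in five variables, then split on $\rank\eta$) is a natural idea, but as stated it does not work.

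The concrete flaw is the claim ``rank-$5$ $\eta$ reduces to $\omega\sim\omega_2$''. The rank of $\eta$ depends on the choice of the vector $v$ with decomposable contraction, and a form in the \emph{open} orbit can perfectly well yield a rank-$5$ $\eta$. Take
\[
\omega=e^{123}+e^4\wedge(e^{25}+e^{36}+b\,e^{56}),\qquad b\in\CC.
\]
Here $\eta=e^4\wedge(e^{25}+e^{36}+b\,e^{56})$ has rank $5$ for every $b$ (since $e^{25}+e^{36}+b\,e^{56}$ has rank $4$). Yet a short computation gives $A_\omega(e_1)=b\,e_1$ and more generally $A_\omega^2=b^2\,\id$, so $\lambda(\omega)=b^2$. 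For $b\neq 0$ this form lies in the open $\omega_1$-orbit, not the $\omega_2$-orbit. Thus your proposed dichotomy (rank $3\Rightarrow\omega_1$, rank $5\Rightarrow\omega_2$) is false, and the ``delicate linear-algebra argument'' you allude to cannot succeed as formulated: within the rank-$5$ case one must still separate the two orbits, which is exactly the exhaustiveness statement you are trying to prove. The paper's dual-variety argument sidesteps this by giving an intrinsic characterization of the complement of the open orbit (namely $X^\vee$) and then normalizing \emph{those} forms directly. A smaller point: your use of $v^\flat$ is undefined, since there is no chosen identification $V_6\simeq V_6^\ast$; you should simply choose $e^1$ with $e^1(v)=1$ and complete to a basis.
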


We will give a partly geometric proof. We refer the reader to \cite{R-rg6} for a more elementary algebraic proof. Let us begin with a useful lemma.

\begin{lemma} \label{lem:comp-irred}
Let $V$ be a complex vector space with an action of $\GL_n$ such that there is a Zariski-open orbit. Then any hypersurface in the complement of this orbit is irreducible.
\end{lemma}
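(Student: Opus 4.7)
The plan is to show that $V\setminus U$ contains at most one irreducible hypersurface, by translating $\GL_n$-invariant hypersurfaces into semi-invariant polynomials on $V$ and then exploiting two facts: every algebraic character of $\GL_n$ is a power of $\det$, and the existence of the dense orbit $U$ forces every $\GL_n$-invariant rational function on $V$ to be constant.

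First, let $H\subset V\setminus U$ be an irreducible hypersurface with vanishing ideal $(f)$, $f\in\CC[V]$ irreducible. Connectedness of $\GL_n$ implies that each irreducible component of $V\setminus U$ is $\GL_n$-stable, so $H$ is, and $g\cdot f=\chi_H(g)\,f$ for an algebraic character $\chi_H:\GL_n\to\CC^*$, which must have the form $\det^{a_H}$ for some $a_H\in\ZZ$. Next, the field $\CC(V)^{\GL_n}$ is reduced to $\CC$: any invariant rational function restricts to an invariant function on $U\simeq\GL_n/\Stab(v_0)$, which by transitivity of the $\GL_n$-action is constant. In particular no nonconstant polynomial is truly $\GL_n$-invariant, so $a_H\neq 0$.

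Now take two irreducible hypersurfaces $H_1,H_2\subset V\setminus U$ defined by semi-invariants $f_1,f_2$ of weights $a_1,a_2\in\ZZ\setminus\{0\}$. The rational function $f_1^{a_2}/f_2^{a_1}$ transforms by the character $\det^{a_1a_2-a_2a_1}=1$, hence is $\GL_n$-invariant and therefore equal to a nonzero constant $c$. When $a_1,a_2$ share a sign, this reads as a polynomial identity $f_1^{a_2}=cf_2^{a_1}$, and unique factorization in $\CC[V]$ forces $f_1$ and $f_2$ to be proportional, so $H_1=H_2$. When they have opposite signs, rearranging gives a polynomial $f_1^{|a_2|}f_2^{|a_1|}$ of positive degree equal to a nonzero constant, which is absurd. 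The only real pressure point is the middle paragraph, where one collapses the character group of $\GL_n$ to $\ZZ$ and uses irreducibility of $f$ together with triviality of the invariants to get $a_H\neq 0$; once that framework is in place, the weight cancellation in the last step is essentially automatic. The argument is a baby instance of the general principle in the theory of prehomogeneous vector spaces that fundamental relative invariants are in bijection with the codimension-one components of the complement of the open orbit.
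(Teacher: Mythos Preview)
Your proof is correct and follows essentially the same route as the paper's: pass from $\GL_n$-stable irreducible hypersurfaces to irreducible semi-invariants with character $\det^{a}$, then use density of the open orbit to force a monomial relation between any two such semi-invariants. Your version is in fact a bit more careful than the paper's sketch, since you explicitly argue that $a_H\neq 0$ and handle the case of opposite signs separately, whereas the paper tacitly treats the exponents as positive.
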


\begin{proof}
Let $\cO$ be the open orbit and $D \subset V\setminus \cO$ be a hypersurface in the complement. If $D = D_1\cup D_2$ with $D_1\neq D_2$ then $D_1 = \{f_1 = 0\}$ and $D_2 = \{f_2=0\}$ for two polynomials $f_1,f_2$ which must be \emph{semi-invariant} (i.e., invariant modulo some character) under the action of $\GL_n$:
\[
f_i(g\cdot x) = \det(g)^{t_i}f_i(x)
\]
for some $t_i\in \ZZ$, $i=1,2$. Let $x_0 \in \cO$ be a point in the open orbit, and consider the polynomial function $F(x) = f_1(x_0)^{t_2}f_2(x)^{t_1} -f_2(x_0)^{t_1}f_1(x)^{t_2} $. Then, $F$ is semi-invariant and $F(x_0) = 0$. Since the orbit of $x_0$ is an open subset, $F$ is identically zero, hence $f_1^{t_1}$ and $f_2^{t_2}$ are proportional, which contradicts $D_1\neq D_2$. 
\end{proof}

\begin{proof}
First, we observe that the stabilizer of $\omega_1 = e^{123} + e^{456}$ under the action of $\GL_6$ must preserve the pair $\{P_1,P_2\}$ of subspaces defined by $P_1=\spann{e^1,e^2,e^3}$ and $P_2=\spann{e^4,e^5,e^6}\subset V_6^*$. Indeed, the set of vectors $v$ whose contraction with $\omega$ is a two-form of rank at most two is the union of $P_1^\perp$ with $P_2^\perp$. 
We readily deduce that this stabilizer must be isomorphic to $(\SL_3\times \SL_3)\rtimes \ZZ_2$,  of dimension $16$. Then $\dim (\GL_6\cdot \omega_1) = 36-16 = 20 = \dim \extp^3V_6^*$, hence
the orbit of $\omega_1$ must be open. 

We proceed to describe  the complement.
Consider $X \subset \extp^3V_6$ the set of decomposable three-vectors. It is an affine variety, the affine cone over the Grassmannian $\Gr(3, V_6)$ in its Pl\"ucker embedding. The dual variety $X^\vee \subset \extp^3V_6^*$ is the Zariski closure of the set of forms $\omega \in \extp^3V_6^*$ such that $\ker \omega $ contains the embedded tangent space $T_wX$ for some $w\in X\setminus \{0\}$. See Appendix~\ref{app:proj-duality} for basics on Projective Duality. %Since $X$ is the closure of $\GL_6$-orbit, so is $X^\vee$, see Theorem~\ref{thm:Py}. Moreover, it is disjoint from the open orbit.

Let $W\subset V_6$ be the subspace associated with $w\in X$. The tangent space to $T_wX$ is spanned by three-vectors of the form $v_1\wedge v_2 \wedge v_3$ where $v_1, v_2 \in W$, as can be checked by computing first-order deformations. Therefore, $\omega \in X^\vee$ if and only if there exists $W\in V_6$ of dimension three such that 
\[
\omega \in \extp^2\ann(W) \wedge V_6^*=\extp^3\ann(W)\oplus\extp^2\ann(W) \otimes W',
\]
for any complement $W'$ of $W$. Consider the component in $\extp^2\ann(W) \otimes W'\simeq  \CC^3 \otimes \CC^3$.
Let $M_\omega$ be the associated $3\times 3$ matrix. Up to the action of $\GL_3\times\GL_3$ it is only determined by its rank.  But $\rk M_\omega = 3$, since otherwise $\omega$ would have rank at most $5$.  So we can choose basis  $\{e^1,e^2,e^3\}$ of $\ann(W)$ and $\{e^4, e^5, e^6\}$ of $W'$ for which  $M_\omega$ is the identity, and then
\[
\omega = ze^{123}+e^{12}\wedge e^4 + e^{13}\wedge e^5 + e^{23}\wedge e^6 = e^{12}\wedge (ze^3+e^4)+e^{135}+e^{236} = \omega_2.
\]

Now, given $\omega \in \extp^3V_6^*$, consider the equivariant map $\Psi_{\omega} \colon V_6 \to V_6 \otimes \det(V_6^*)$ given by $\Psi_{\omega}(v) = u \otimes e^{123456}$, where $u$ is the unique vector such that $(v\intprod \omega)\wedge\omega = u\intprod e^{123456}$. Then define
\[
\lambda\colon \extp^3V_6^* \lra \det(V_6^*)^{\otimes 2}; \quad \omega \longmapsto  \frac{1}{6}\tr(\Psi_{\omega} \circ \Psi_{\omega}).
\]
Note that $\Psi_\omega$ is quadratic in the coefficients of $\omega$, hence $V(\lambda)$, the zero locus of $\lambda$, is a quartic hypersurface. A simple computation shows that $\lambda(\omega_1) = 1$ and $\lambda(\omega_2) = 0$; hence $V(\lambda)$ is contained in the complement of $\GL_6\cdot \omega_1$, and contains the closure of  $\GL_6\cdot \omega_2$. By Lemma~\ref{lem:comp-irred}, $V(\lambda)$ is irreducible. Since forms of lower rank are contained in $\Sigma^3_{6,5}$, which has dimension $15 < 19 = \dim V(\lambda)$, this hypersurface must be the closure of $\GL_6\cdot \omega_2$.
\end{proof}

\begin{remark}\label{rem:oadp}
As we noticed, the decomposition of $\omega_1$ in Proposition \ref{prop:orbits-66} is essentially unique. Geometrically, this can be interpreted as the property, for a general point in $\PP(\wedge^3V_6^*)$, to be on a unique secant line joining two points of the Grassmannian; this is called the {\it one apparent double point property}. Note that $\wedge^3V_6$ is self-dual (choose a volume form and use wedge product); this allows to identify the dual variety of the Grassmannian with its tangent variety (the union of its tangent spaces). The decomposition of $\omega_2$ in Proposition \ref{prop:orbits-66} is also  essentially unique (by the same argument as for $\omega_1$), and this can be rephrased by saying that a generic point on the tangent variety belongs to a unique tangent line. Note finally that the Grassmannian is cut-out by the  quadratic equations $\Psi_{\omega}=0$, parametrized by $SL(V_6)$. In particular the tangent quartic is singular along the Grassmannian, as expected in general. 
\end{remark}

\begin{remark}\label{rem:v6-reals}
The proof of Proposition \ref{prop:orbits-66} uses the complex numbers, or at least that the base field is quadratically closed.
%, to rule out the case $d_1(\omega)=5$. 
Over $\RR$ there is  one more orbit with representative $e^{124}+e^{135}+e^{456}-e^{236}$, cf. \cite{R-rg6}.
\end{remark}

%An immediate consequence of Proposition \ref{prop:orbits-66} and Remark \ref{rem:v6-reals} is the following.

%\begin{cor}\label{cor:2scindable}
%Let $\omega\in \extp^3V_6^*$ be a three-form of rank six (over $\RR$ or $\CC$). Then there exists a decomposition $V_6^* = W_1\oplus W_2$ such that $\dim W_1 =2$ and $\omega \in W_1\otimes \extp^2W_2$. 
%\end{cor}

\section{Nondegenerate three-forms in seven variables}
\label{sec:nondeg7}

For a three-form $\omega$ on a seven-dimensional vector space $V_7$, define the quadratic form
\begin{equation}\label{eq:quadV7}
 q_\omega(v)=(v\intprod\omega)\wedge(v\intprod\omega)\wedge \omega\in\det(V_7^*).
\end{equation}
The three-form $\omega$ is called \emph{nondegenerate} if $q$ has maximal rank. We will prove that the set of nondegenerate three-forms is the unique open orbit for the action of $\GL_7$ on $\extp^3V_7^*$, which justifies calling such forms \emph{generic}. This will be a consequence of the following theorem due to Schouten, see \cite{schouten} or \cite{noui-revoy} for a modern exposition. 

\begin{theorem}[Schouten] \label{thm:schouten}
Let $\omega \in \Sigma^3_{7,7}$ be a three-form of maximal rank  
 on a seven-dimensional complex vector space. Up to the action of $\GL_7$, $\omega$ is one of the following:
\begin{enumerate}
    \item $\omega_1= e^{125}+e^{136}+e^{147}$;
    \item $\omega_2 =e^{125}+e^{136}+e^{147} + e^{234}$;
    \item $\omega_3 = e^{125}+e^{236}+e^{347}$;
    \item $\omega_4 = e^{125}+e^{147}+e^{346}+e^{327}$;
    \item $\omega_5 = e^{125}+e^{136}+e^{147} + e^{234} + e^{567}$.
\end{enumerate}
\end{theorem}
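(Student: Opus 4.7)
The plan is to reduce the classification to the six-variable case (Proposition~\ref{prop:orbits-66} and Section~\ref{sec:fewvars}) by restricting $\omega$ to a hyperplane. I fix a nonzero $\alpha\in V_7^*$, let $H=\ker\alpha$, and pick $e_7\in V_7$ with $\alpha(e_7)=1$; this yields a decomposition $\omega = \omega_H + \alpha\wedge\theta$, where $\omega_H\in\extp^3 H^*$ is the restriction of $\omega$ to $H$ and $\theta\in\extp^2 H^*$ is the two-form induced by $e_7\intprod\omega$. The $\GL_7$-orbit of $\omega$ is then governed by the orbit of the pair $(\omega_H,\theta)$ under the residual symmetry group: the stabilizer $\Stab_{\GL(H)}(\omega_H)$ acts on $\theta$; the rescaling $(\alpha,e_7)\mapsto(\lambda\alpha,\lambda^{-1}e_7)$ rescales $\theta$; and the translation $e_7\mapsto e_7+h$ for $h\in H$ leaves $\omega_H$ fixed but shifts $\theta$ by $h\intprod\omega_H$.

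By Proposition~\ref{prop:orbits-66} and the preceding discussion, $\omega_H$ lies in one of five $\GL(H)$-orbits: (a) zero; (b) rank $3$ (decomposable); (c) rank $5$; (d) rank $6$, generic type (orbit of $e^{123}+e^{456}$); (e) rank $6$, special type (orbit of $e^{124}+e^{135}+e^{236}$). I would choose $\alpha$ so that $\omega_H$ attains the ``largest'' orbit available for $\omega$, and proceed case by case. The easiest is (a): $\omega=\alpha\wedge\theta$ has rank $1+\rank\theta$, forcing $\rank\theta=6$ and landing $\omega$ in the orbit of $\omega_1$. Case (d) is the richest: setting $A=\langle e^1,e^2,e^3\rangle$ and $B=\langle e^4,e^5,e^6\rangle$ so that $H^*=A\oplus B$ and $\omega_H=e^{123}+e^{456}$, the space $\extp^2 H^*$ decomposes as $\extp^2 A\oplus(A\otimes B)\oplus\extp^2 B$; the translations $h\intprod\omega_H$ exhaust the $\extp^2 A$ and $\extp^2 B$ summands, so modulo them $\theta$ is represented by its mixed component $\theta_m\in A\otimes B$, acted on by the stabilizer $(\SL_3\times\SL_3)\rtimes\ZZ_2$ via its natural action on $A$ and $B$. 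The sub-case $\rank\theta_m=0$ is excluded (it would force $\rank\omega=6$), while the sub-cases $\rank\theta_m=1,2,3$ produce three distinct orbits, which I would identify with $\omega_2$, $\omega_4$, and $\omega_5$ respectively. The cases (b), (c), (e) are treated by analogous but more constrained analyses; they account for $\omega_3$ and produce no new orbits.

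Finally, I would verify pairwise inequivalence of the five forms using discrete invariants. The quadratic form $q_\omega$ defined in equation~\eqref{eq:quadV7} has maximal rank precisely for $\omega_5$, consistent with the notion of nondegeneracy introduced at the beginning of this section; finer invariants such as stabilizer dimensions separate the remaining four. A dimension check is instructive: the stabilizer of $\omega_5$ has dimension $14$ (it is a central extension of $G_2$), so its orbit is $35$-dimensional and open in $\extp^3 V_7^*$. The main obstacle, as is often the case with such orbit classifications, is the combinatorial bookkeeping of the sub-case analysis: confirming exhaustiveness of the case split, correctly computing the residual group orbits in each branch, and matching each resulting normal form with one of $\omega_1,\dots,\omega_5$ listed in the statement.
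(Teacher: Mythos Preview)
Your approach is valid but genuinely different from the paper's. You slice $\omega$ by a hyperplane and invoke the six-variable classification (Proposition~\ref{prop:orbits-66}) directly, then normalize the transverse two-form $\theta$ using the residual stabilizer and translations; this is the inductive, ``elementary algebraic'' route the paper alludes to when citing \cite{noui-revoy}. The paper instead argues geometrically: it first isolates the open orbit as the nondegenerate locus, then identifies the complementary hypersurface $D$ with the projective dual of $\Gr(3,V_7)$, so that every \emph{degenerate} rank-$7$ form satisfies $\omega\in\extp^2\ann(W)\wedge V_7^*$ for some $3$-plane $W\subset V_7$; the classification then reduces to Lemma~\ref{lem:auxV7} on three-dimensional subspaces of $\extp^2V_4^*$, yielding $\omega_1,\dots,\omega_4$ in four strokes. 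Your method has the advantage of being self-contained and not requiring the duality input or the existence of an open orbit as an external fact, but it pays for this with a heavier case split: cases~(b), (c), (e), which you dismiss in a sentence, each require their own stabilizer-and-translation analysis, and the ``choose $\alpha$ maximizing the orbit of $\omega_H$'' device, while sound, demands care to avoid both omissions and double-counting. Two small points: your rank-$\theta_m$ sub-cases in~(d) do give three distinct orbits, but matching them explicitly to the listed $\omega_2,\omega_4,\omega_5$ takes a nontrivial change of basis you should not leave implicit; and the stabilizer of $\omega_5$ in $\GL_7$ is $G_2$ itself (up to the finite center $\mu_3$), not a nontrivial central extension.
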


% \begin{remark}\label{rem:openorbit}
% Over the real numbers, the classification is more involved, see \cite{Wes7,noui-revoy}. In particular, there are two open orbits: the orbit of $\omega_5$ and the orbit of 
% \[
% \omega=e^{123}+e^{365}+e^{541}+e^{264}+e^{176}+e^{572}+e^{374},
% \]
% that will be constructed below from the octonionic multiplication, see \eqref{eq:omegainbasis}. One may check that $\omega$ is obtained from $\omega_5$ by the following change of basis: 
% \[
% \begin{matrix}
%        e^1 \mapsto \lambda \,e^1, & e^2 \mapsto e^2 -i\,e^3, &  e^3 \mapsto \lambda^2(e^2+i\,e^3), & e^4 \mapsto \lambda (e^5 -i\,e^5), \\
%    e^5 \mapsto -\lambda (e^5+i\,e^4), & e^6 \mapsto \lambda^2(e^6 + i\, e^7), & e^7 \mapsto e^6 - i\,e^7; &
% \end{matrix}
% \]
% where $i^2 = -1$ and $\lambda^3 = -i/2$.
% \end{remark}

We will give here a geometric proof. For an elementary algebraic proof, see \cite{noui-revoy}. 

The finiteness of the number of orbits follows from general theory of graded Lie algebras, see Example~\ref{example:e7e8}. Let us only assume, at this point, that there exists an open orbit; we claim that is must coincide with the set of nondegenerate forms. Indeed, the set of degenerate forms is an invariant hypersurface $D$, and we know by Lemma \ref{lem:comp-irred} that there is a unique such hypersurface: the complement of the open orbit!   
%On the other hand, the rank of $q_\omega$ is semicontinuous on $\omega$; it must be maximal on the open orbit. 
One can check that the ranks of the quadratic forms $q_{\omega_i}$ are respectively $1,1,2,4,$ and $7$ (\necessary Exercise \ref{ex:rank-forms}). Thus, the open orbit is that of $\omega_5$.

To prove Schouten's theorem, we follow the same line of arguments as in the proof of Proposition~\ref{prop:orbits-66}. We need the following lemma.

\begin{lemma}\label{lem:auxV7}
Let $H\subset \extp^2V_4^*$ be a three-dimensional subspace such that the supports of its elements span $V_4^*$. Then $H$ has a basis of one of the following forms, 
for some basis $\{e^1, e^2, e^3, e^4\}$ of $V_4^*$:
\[
\begin{cases}
\gamma_1 = e^{12} \\
\gamma_2 = e^{13}\\
\gamma_3 = e^{14}\\
\end{cases}\!\!\!\!\!, \quad
\begin{cases}
\gamma_1 = e^{12} \\
\gamma_2 = e^{13}\\
\gamma_3 = e^{14}+ e^{23}\\
\end{cases}\!\!\!\!\!, \quad
\begin{cases}
\gamma_1 = e^{12}\\
\gamma_2 = e^{23}\\
\gamma_3 = e^{34}\\
\end{cases}\!\!\!\!\!, \quad \text{or} \quad
\begin{cases}
\gamma_1 = e^{12}\\
\gamma_2 = e^{34}\\
\gamma_3 = (e^1+e^3)\wedge(e^2+e^4).\\
\end{cases}\!\!\!\!\!
\]
\end{lemma}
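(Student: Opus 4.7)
The plan is to classify $H$ by the rank of the quadratic form $q_H \colon H \to \det(V_4^*) \cong \CC$, $\omega \mapsto \omega\wedge\omega$. Its zero locus in $\PP(H) \cong \PP^2$ is the conic $C := \PP(H) \cap Q$, where $Q \subset \PP(\extp^2 V_4^*) \cong \PP^5$ is the Klein quadric of decomposable $2$-forms. The four cases of the lemma will correspond, in reverse order, to $\rk q_H \in \{3,2,1,0\}$. I will exploit the standard description of the geometry of $Q$: its lines correspond to flags $W_1 \subset W_3$ in $V_4^*$ with $\dim W_1 = 1,\ \dim W_3 = 3$ (parametrising the $2$-planes $W_1 \subset W \subset W_3$), and its $2$-planes are either $\alpha$-planes $\langle e\rangle\wedge V_4^*$ or $\beta$-planes $\extp^2 W_3$.

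Assume first that $\rk q_H = 3$, so $C$ is a smooth conic. The key observation is that any two distinct rank-two forms $\alpha_1,\alpha_2 \in H$ must have disjoint $2$-dimensional supports: a shared vector $v$ would give $\alpha_i = v \wedge v_i$, and then the whole line $\langle \alpha_1,\alpha_2\rangle$ would lie in $Q$, contradicting smoothness. Picking three non-collinear points of $C$ yields three rank-two forms $\gamma_i$ with pairwise complementary supports $S_i \subset V_4^*$; the third $S_3$ must be the graph of an isomorphism $S_1 \to S_2$. Since $\GL(V_4^*)$ acts transitively on triples of pairwise complementary $2$-planes (after fixing $S_1,S_2$, the stabiliser is $\GL(S_1)\times\GL(S_2)$, which is transitive on such graphs), a suitable basis reduces $H$ to the fourth normal form.

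Now assume $\rk q_H = 2$, so $C = L_1 \cup L_2$ is a pair of distinct lines. Each $L_i \subset Q$ has a flag $(W_1^{(i)},W_3^{(i)})$. If $W_1^{(1)} = W_1^{(2)}$ then $L_1\cup L_2$ already spans an $\alpha$-plane, forcing $\PP(H)$ into $Q$ and contradicting $\rk q_H = 2$; similarly $W_3^{(1)} = W_3^{(2)}$ forces a $\beta$-plane. Hence both flags differ in both entries, and the meeting point of $L_1,L_2$ is the common $2$-plane $W = W_1^{(1)} + W_1^{(2)} = W_3^{(1)} \cap W_3^{(2)}$. Normalising $W_1^{(1)} = \langle e^2\rangle$, $W_1^{(2)} = \langle e^3\rangle$, $W_3^{(1)} = \langle e^1,e^2,e^3\rangle$, $W_3^{(2)} = \langle e^2,e^3,e^4\rangle$ identifies $H$ with $\langle e^{12},e^{23},e^{34}\rangle$, the third normal form. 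If instead $\rk q_H = 1$, then $q_H = \ell^2$ and $H_0 := \ker\ell$ is a $2$-dimensional subspace of decomposable forms, i.e.\ a line in $Q$ with flag $(W_1,W_3)$; normalising to $W_1=\langle e^1\rangle,\ W_3=\langle e^1,e^2,e^3\rangle$ gives $H_0 = \langle e^{12},e^{13}\rangle$, and a short computation expanding any $\gamma_3 \in H \setminus H_0$ modulo $H_0$ and imposing $\rk q_H = 1$ forces $\gamma_3 \equiv \alpha e^{14} + \beta e^{23} \pmod{H_0}$ with $\alpha\beta \neq 0$; rescaling $e^3,e^4$ gives the second normal form.

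Finally, if $\rk q_H = 0$ then $\PP(H) \subset Q$ is a $2$-plane: a $\beta$-plane $\extp^2 W_3$ has all supports in the $3$-dimensional $W_3$ and is excluded by the spanning hypothesis on supports, while an $\alpha$-plane $\langle e\rangle \wedge V_4^*$ has supports $\langle e,v\rangle$ sweeping out all of $V_4^*$, and normalising gives the first form. The main obstacle I foresee is the bookkeeping in the middle cases $\rk q_H = 1,2$: one has to check carefully that the various ``coincidence'' sub-cases (two lines in $Q$ sharing $W_1$ or $W_3$, or a rank-$4$ element chosen with the wrong leading terms) genuinely collapse into either an $\alpha$/$\beta$-plane already handled, or can be reabsorbed by a change of basis that preserves the previously fixed normalisations; the spanning condition on supports is used only in the $r=0$ case, to discard the $\beta$-plane.
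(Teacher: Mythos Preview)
Your approach is exactly the one the paper indicates: case analysis on the rank of $q_H(\eta)=\eta\wedge\eta$, with the four normal forms corresponding to ranks $0,1,2,3$ (the paper simply cites \cite[\S3.2]{R-rg6} for the details you have supplied). Your use of the Klein quadric geometry to handle the rank-$2$ and rank-$0$ cases is clean, and your observation that the spanning hypothesis is only needed to exclude the $\beta$-plane in rank $0$ is correct, since any $H$ with $\rk q_H\ge 1$ cannot sit inside a $3$-dimensional $\extp^2W_3$.
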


\begin{proof}
Consider the quadratic form $q(\eta) = \eta\wedge \eta \in \extp^4V_4^* \simeq \CC$. The proof is a case-by-case analysis in terms of the rank of the restriction $q|_H$, see \cite[\S3.2]{R-rg6}. 
\end{proof}

\begin{proof}[Proof of Theorem \ref{thm:schouten}]
From the discussion below, we just need to determine the orbits of degenerate forms.
As in the proof of Proposition \ref{prop:orbits-66}, consider $X$ the affine cone over 
$\Gr(3,V_7)$, and its dual $X^\vee\subset \extp^3V_7^*$. This is a hypersurface (see Example \ref{ex:deg dual G(3,7)}), $GL(V_7)$-invariant, so it has to coincide with $D$.

So let $\omega \in D=X^\vee$; there exists $W\in \Gr(3,V_7)$ such that 
\[
\omega \in \extp^2\ann(W)\wedge V_7^*=\extp^3\ann(W) \oplus \extp^2\ann(W)\otimes W'
\]
if $W'$ is any complement. 
(Notice that, given any $u\in W$, $(u\intprod \omega)\wedge \omega \in \extp^4\ann(W)\otimes W'$ hence $q_{\omega}(u,v) = 0$ for every $v\in V_7$, confirming that $\omega$ is degenerate.)
%Then $X^\vee$ is contained in the hypersurface $Y$ given by $\omega \mapsto \det q_{\omega}$. Moreover,  Due to Lemma \ref{lem:comp-irred}, $X^{\vee} = Y$ and every degenerate form is contained in $X^{\vee}$.

Consider the decomposition $\omega = \eta_0 + \eta_1$ where $\eta_0 \in \extp^3\ann(W)$ and $\eta_1 \in \extp^2\ann(W)\otimes W'$. Notice that $\eta_0$ is decomposable for $\dim \ann(W) = 4$. On the other hand, we determine $\eta_1$ via the action of $\SL_4\times \SL_3 \subset \SL_7$ preserving $\ann(W)$ and $W'$. Fixing a basis $\{e^5, e^6, e^7\}$ for $W'$ we write $\eta_1 = \gamma_1\wedge e^5 + \gamma_2\wedge e^6 + \gamma_3\wedge e^7$. Consider the subspace $H = \spann{\gamma_1,\gamma_2,\gamma_3} \subset \extp^2\ann(W)$. Note that $\dim H = 3$, otherwise, $\omega$ would have rank at most $6$. For a similar reason, the support of $H$ spans $\ann(W)$; then, we may assume that $\{\gamma_1,\gamma_2,\gamma_3\}$ is as in Lemma \ref{lem:auxV7} for $\{e^1, e^2,e^3,e^4\}$ a basis of $\ann(W)$.

For the first case of Lemma \ref{lem:auxV7}, if $\eta_0 =0$ we get 
\[
\omega = e^{12}\wedge e^5 + e^{13}\wedge e^6 + e^{14}\wedge e^7 = e^1\wedge (e^{25}+e^{36}+e^{47}) = \omega_1.
\]
If $\eta_0\neq 0$ then, up to changing $W'$, we get $\eta_0 = e^{234}$, hence 
\[
\omega = e^1\wedge (e^{25}+e^{36}+e^{47}) + e^{234} = \omega_2.
\]
In the second case, up to changing $W'$ we can assume $\eta_0 = 0$. Then
\[
\omega = e^{125} + e^{136} + e^{147} + e^{237} = e^{1}\wedge( e^{25} + e^{36} + e^{47}) + e^{237},
\]
which is in the orbit of $\omega_2$. In the third case, we can assume again that $\eta_0 =0$ and then 
\[
\omega = e^{125}+e^{236}+e^{347} = \omega_3.
\]
In the fourth case, once more we can assume $\eta_0=0$. Then 
%\begin{align*}
   $$ \omega  = e^{125} + e^{346} + e^{127} + e^{147} -e^{237}+e^{347}
    %\\
    = e^1\wedge(e^2\wedge(e^5+e^7) + e^{47}) + e^3\wedge(e^{27}+e^4\wedge (e^6+e^7)),$$
%\end{align*}
so that it belongs to the orbit of $\omega_4$.
\end{proof}

By semi-continuity of the rank of $q_\omega$, the irreducible hypersurface $D$ must 
be the closure of the orbit of $\omega_4$. An equation is 
%As we discussed before, a three-form $\omega$ on $V_7$ is generic if the associated quadratic form $q_\omega$ has maximal rank, which is equivalent to being in the $\GL_7$-orbit of $\omega_5$ from Schouten's Theorem. This orbit is an open subset of $\extp^3V_7^*$, whose complement is the (reduced) hypersurface $D$ given by 
$\det (q_\omega) = 0$, a polynomial of degree $21$ in $\omega$. So it would seem that the hypersurface $D$ is of this degree, but we will see it has in fact degree $7$ (so that $\det(q_\omega)$ is a cube). 

There are several ways to construct an equation of $D$: since it is the unique invariant hypersurface, we just need to construct a semi-invariant. By the First Fundamental Theorem of Invariant Theory, such a semi-invariant can necessarily be constructed in terms of permutations in tensor products, see \cite{procesi}. There are many ways to do that. We could consider the map 
\[
\extp^2V_7^*\stackrel{\wedge\omega}{\lra}\extp^5V_7^*\simeq \extp^2V_7.
\]
The resulting determinant is also a polynomial of degree $21$. The caveat is that this polynomial is zero! 
%For another example, consider the map
%\[
%\extp^3V_9^*\stackrel{\wedge\omega}{\lra}\extp^6V_9^*\simeq \extp^3V_9;
%\]
%defined by a generic $\omega\in\extp^3V_9^*$: it is not invertible; the corank is four! 

%One powerful tool to investigate actions with finitely many orbits is \emph{projective duality}; we cover the essentials in Appendix~\ref{app:proj-duality}. In Example~\ref{ex:deg dual G(3,7)} we show that the dual of $\Gr(3,7)$ is a smooth hypersurface of degree seven. Its affine cone is the closure of a $\GL_7$-orbit, see Theorem~\ref{thm:Py}. Thus, it must coincide with our hypersurface $D$.

To find a degree seven semi-invariant defining $D$, consider the following equivariant morphism:
\begin{align*}
   \Phi\colon S^2(\extp^3V^*_7) \otimes V_7^\ast &\longrightarrow \extp^3V_7^* \otimes \extp^4V_7^* \simeq \End(\extp^3V_7)\otimes \det(V_7^*),\\
    (\alpha \cdot \beta, \gamma) &\longmapsto \frac{1}{2}(\alpha \otimes \beta \wedge \gamma + \beta\otimes \alpha \wedge \gamma).
\end{align*}
The natural action of $\GL_7$ defines an equivariant morphism $\End(V_7) \to \End(\extp^3V_7)$. Transposing and composing with $\Phi$, we get a morphism $S^2(\extp^3V^*_7) \otimes V_7^\ast \to \End(V_7)\otimes \det(V_7^*)$, hence,
\[
S^2(\extp^3V^*_7)  \longrightarrow \End(V_7)\otimes V_7\otimes \det(V_7^*).
\]
Taking the symmetric square of this morphism, and  natural inclusions and projections, we get 
\[
S^4(\extp^3V^*_7) \hookrightarrow S^2(S^2(\extp^3V^*_7))  \to S^2(\End(V_7)\otimes V_7\otimes \det(V_7^*)) \twoheadrightarrow S^2\End(V_7)\otimes S^2V_7\otimes \det(V_7^*)^{\otimes 2};
\]
composing in the first factor with the trace form $S^2\End(V_7)\to\CC$, we deduce a map
\begin{equation}\label{eq:mapv71}
    S^4(\extp^3V^*_7)  \longrightarrow S^2\End(V_7)\otimes S^2V_7\otimes \det(V_7^*)^{\otimes 2} \longrightarrow S^2V_7\otimes \det(V_7^*)^{\otimes 2}
\end{equation}
%In order to get the above map, notice that we precompose with the inclusion $S^4(\bullet)\to S^2S^2(\bullet)$, and postcompose with the canonical projection $S^2(A \otimes B\otimes C) \to S^2(A) \otimes S^2(B) \otimes S^2(C)$). 
%mention inclusion S^4(...)->S^2S^2(...)
%mention CANONICAL projection S^2(A \otimes B) -> S^2(A) \otimes S^2(B)
On the other hand, the map $\omega\mapsto q_\omega$ from \eqref{eq:quadV7} defines, via polarization, an equivariant morphism 
$$S^3(\extp^3V_7^*) \to S^2V_7^*\otimes\det(V_7^*).$$ 
Contracting it with \eqref{eq:mapv71} and symmetrizing yields an equivariant map $S^7(\extp^3V^*_7) \to \det(V_7^*)^{\otimes 2}$; this is our desired degree-seven semi-invariant. 

\noindent\textit{Claim}. This semi-invariant is nonzero. Consequently, it gives an equation of 
$D$, and it cannot be a power. 
%$\omega\mapsto\det(q_\omega)$ is a multiple of its cube! 
This degree-seven semi-invariant was first constructed in \cite{Kim_inv}. Moreover, it can also be constructed through a procedure called Matrix Factorization (see \cite{AbMan}).

The full classification of complex three-forms in seven variables can be summarized in the following diagram. The numbers are the dimensions of the orbits and the arrows show how their closures are contained in each other \cite{abo-ott}:
\[
\begin{tikzcd}
             &              &                          & 28 \arrow[r] \arrow[rdd] & 21 \arrow[rd] &              &              &   \\
35 \arrow[r] & 34 \arrow[r] & 31 \arrow[ru] \arrow[rd] &                          &               & 20 \arrow[r] & 13 \arrow[r] & 0 \\
             &              &                          & 26 \arrow[r]             & 25 \arrow[ru] &              &              &
\end{tikzcd}
\]
The orbits of rank-seven forms are those of dimensions $21, 28,31,34$, and $35$.

%Relation with a grading of $\fe_7$. 

\begin{remark}\label{rem:v7-real-case}
Again the situation is more complicated over the real numbers. For example, there are two open orbits, represented by the canonical three-forms for the real octonions and their split version (see Remark~ \ref{rem:generic}). We refer to \cite[\S5]{noui-revoy} for a classification over an arbitrary field.
\end{remark}

\section{Composition algebras and the octonions}

Many interesting objects of study in geometry, both classical and exceptional, can be traced back to simple algebraic structures permeating our everyday life: the so-called composition algebras.

\begin{definition}
A \emph{composition algebra} (or \emph{Hurwitz algebra}) is a unital, not necessarily associative or commutative algebra $(\AA,\cdot\,)$ over a field $\KK$, together with a nondegenerate symmetric bilinear form (the \emph{inner product})
\[\langle\,\cdot\,,\cdot\,\rangle \colon \AA\times\AA\to\KK\]
such that its associated quadratic form $q\colon \AA\to\KK$, $q(x)=\langle x,x\rangle$ (the \emph{norm}), is \emph{multiplicative}, i.e.
\[q(u\cdot v)=q(u)q(v) \quad \forall u,v\in\AA.\]
\end{definition}

The \emph{real part} of a composition algebra is $\Re\AA:=\KK \, 1_\AA$, and the \emph{imaginary part} is its orthogonal complement $\Im\AA:=(\Re\AA)^\perp$. We always have $q(1_\AA)=1$, thus $\AA=\Re\AA\oplus\Im\AA$. Any composition algebra comes with an involution $x\mapsto\bar x$ called \emph{conjugation}, defined by reflection along $\Re\AA$:
\[\bar{x}=2\langle x,1_\AA\rangle 1_\AA-x,\qquad x\in\AA.\]
Many important properties and calculational rules for the conjugation and bilinear form on composition algebras are contained in \cite{springerveldkamp}.\footnote{Note that this book uses the notation $\langle\,\cdot\,,\cdot\,\rangle$ for \emph{twice} the inner product, that is, $q(x)=\frac{1}{2}\langle x,x\rangle$.} For example, conjugation is an antiautomorphism:
\begin{equation}
    \overline{(xy)}=\bar{y}\bar{x}\qquad\forall x,y\in\AA,\label{eq:conjantiauto}
\end{equation}
and it recovers the norm:
\begin{equation}
    x\bar{x}=\bar{x}x=q(x)1_\AA\qquad\forall x\in\AA.\label{eq:norm}
\end{equation}

Let us for now focus on algebras over the field $\KK=\RR$. A real composition algebra $\AA$ whose quadratic form $q$ is positive-definite  is called a \emph{Euclidean Hurwitz algebra}. Such an $\AA$ is automatically a \emph{division algebra} -- that is, every nonzero element $u\in\AA$ has a two-sided multiplicative inverse $u^{-1}$ \cite[Prop.~1.3.4]{springerveldkamp}. As a consequence, there are no zero divisors:
\[x\cdot y=0\quad\Longrightarrow\quad x=0\text{ or }y=0,\qquad x,y\in\AA.\]
We hence refer to $\AA$ also as a \emph{normed division algebra}.

On the other hand, if $q$ is indefinite (or, equivalently, if there exist zero divisors), the algebra $\AA$ is called \emph{split}. A vector subspace on which $q$ vanishes identically is called \emph{isotropic}.

\begin{definition}
Let $\AA$ be a composition algebra over $\RR$. Its \emph{complexification} $\AA^\CC$ is the algebra over $\CC$ given by $\AA\otimes_{\RR}\CC$, with complex-bilinearly extended product.
\end{definition}

The complex algebra $\AA^\CC$ is again a composition algebra. Complexification is a useful tool: it may simplify the representation theory of groups and algebras, the classification of bilinear forms (signature disappears), and as we have seen, also three-forms in low dimensions (there is only one open orbit of $\GL_7(\CC)$ on $\extp^3\CC^7$, instead of two  in the real case).

Let us review two  normed division algebras whose existence has far-reaching consequences.

\subsection*{Quaternions} The quaternions $\HH$ are a four-dimensional normed division algebra discovered by Hamilton in 1843. As a vector space over $\RR$, it is spanned by the basis vectors $1,I,J,K$, which satisfy the relations
\[I^2=J^2=K^2=IJK=-1.\]
Any quaternion $x\in\HH$ can be written as a linear combination
\[x=x_0+x_1I+x_2J+x_3K,\qquad x_0,x_1,x_2,x_3\in\RR.\]
The \emph{real part} is $\Re(x)=x_0$.

The quaternions include the algebra of complex numbers $\CC=\mathrm{span}\{1,I\}$ as a subalgebra. Complex conjugation extends to $\HH$ as
\[\bar x=x_0-x_1I-x_2J-x_3K,\]
and the inner product is given by
\[\langle x,y\rangle=\frac{1}{2}(x\bar y+y\bar x)=\Re(x\bar y).\]
This gives rise to the norm
\[q(x)=|x|^2=x\bar x=\bar x x.\]
The norm is multiplicative and thus gives $\HH$ the structure of a normed division algebra. It is associative, but not commutative.

The quaternions can be built from the complex numbers via a construction known as the \emph{Cayley-Dickson doubling process} (\necessary Exercise \ref{ex_CD_H}). This process can be iterated also over the quaternions, and this begs the question: what is the result of doubling the quaternions?

\subsection*{Octonions}

Discovered by Graves in 1843, and independently by Cayley in 1845, 
the octonions are a real algebra $\OO$ defined by doubling the quaternions via the Cayley--Dickson rule \eqref{eq:cayleydickson}. It is thus an eight-dimensional algebra over $\RR$, spanned by the basis vectors $e_0=1,e_1,\ldots,e_7$. Their multiplication is encoded in the \emph{Fano plane} (the projective plane over the field with two elements), which has seven points (labeled by the basis vectors $e_1,\ldots,e_7$) and seven lines:
\small 
\begin{center}
\setlength{\unitlength}{4mm}
\begin{picture}(30,11)(-15,-.6)
\put(-5.5,0){ \resizebox{!}{1.5in}{\includegraphics{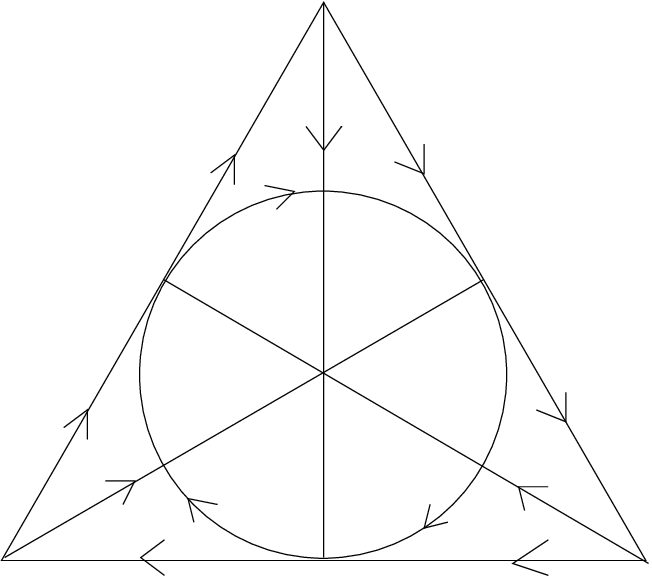}}}
\put(-3.8,4.7){$e_2$}\put(3.35,4.7){$e_6$}
\put(-.3,-.8){$e_4$}\put(-6.1,-.8){$e_1$}
\put(5.5,-.8){$e_5$}\put(-.2,9.9){$e_3$}\put(.9,3.2){$e_7$}
\end{picture} 
\end{center}
\normalsize

The elements $e_1,\ldots,e_7$ are imaginary units: we have
\[e_1^2=\cdots=e_7^2=-1.\]
 Each line $(e_i\to e_j\to e_k)$ in the Fano plane, cyclically oriented as above, gives the rule
\[e_i\cdot e_j=e_k=-e_j\cdot e_i.\]
The conjugation of $\OO$ is defined by
\[\bar{1}=1,\qquad\overline{e_i}=-e_i\quad\text{for }i=1,\ldots,7.\]
Again, $\langle x,y\rangle=\frac{1}{2}(x\bar y+y\bar x)=\Re(x\bar y)$ defines an inner product (for which $e_0:=1,e_1,\ldots,e_7$ form an orthonormal basis) and a norm, making $\OO$ into a normed division algebra. It is neither commutative nor associative! However it is both alternative (Exercise \ref{assocsubalg}) and satisfies the so-called Moufang identities (Exercise \ref{ex_moufang}). For more information, see e.g. \cite{baez}.

\subsection*{Hurwitz's Theorem}

\begin{theorem}[Hurwitz 1898]
\label{hurwitz}
There exist only four real normed division algebras: $\RR$, $\CC$, $\HH$, and $\OO$.
\end{theorem}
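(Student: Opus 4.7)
The plan is to induct on the dimension of $\AA$, iteratively enlarging a composition subalgebra of $\AA$ by the Cayley--Dickson doubling process, and to show that this chain is forced to stop at $\OO$.

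First I would establish a doubling lemma: for any proper composition subalgebra $\BB\subsetneq\AA$, one can choose an element $\ell\in\BB^\perp$ with $q(\ell)=1$, and then the subspace $\BB\oplus\BB\ell\subseteq\AA$ is again a composition subalgebra, whose multiplication is given exactly by the Cayley--Dickson rule
\[(a+b\ell)(c+d\ell) = (ac-\bar{d}b) + (da+b\bar{c})\ell,\]
with norm $q(a+b\ell)=q(a)+q(b)$ and conjugation $\overline{a+b\ell}=\bar a-b\ell$. The verification rests on a handful of polarized consequences of multiplicativity, in particular $\langle xy,xz\rangle=q(x)\langle y,z\rangle$ and $\bar{x}(xy)=q(x)\,y$, together with the fact that elements of $\Im\AA$ are skew under conjugation; the upshot is that any $\ell$ orthogonal to $\BB$ anticommutes with $\BB$ in a structured way that forces the Cayley--Dickson formula.

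Starting from $\BB_0=\Re\AA\cong\RR$, I would then iterate: while $\BB_i\subsetneq\AA$, pick an orthogonal unit $\ell_{i+1}\in\BB_i^\perp$ and set $\BB_{i+1}:=\BB_i\oplus\BB_i\ell_{i+1}$. Matching the Cayley--Dickson outputs dimension by dimension yields $\BB_1\cong\CC$, $\BB_2\cong\HH$, and $\BB_3\cong\OO$, since each doubling reproduces precisely the known multiplication tables of the next algebra in the chain.

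The main obstacle is to show that the chain terminates at $\BB_3$. Suppose for contradiction that $\OO\cong\BB_3\subsetneq\AA$; then the doubling lemma would produce a fourth composition subalgebra $\BB_4=\OO\oplus\OO\ell_4$ inside $\AA$. However, expanding $q\bigl((a+b\ell_4)(c+d\ell_4)\bigr)$ via the Cayley--Dickson product and comparing with $(q(a)+q(b))(q(c)+q(d))$ leaves residual cross-terms that vanish for all $a,b,c,d\in\OO$ if and only if the associator $(a,b,c):=(ab)c-a(bc)$ vanishes identically on $\OO$. But $\OO$ is only alternative and not associative, so choosing any triple of basis octonions $e_i,e_j,e_k$ with nonzero associator contradicts the composition algebra structure of $\AA$. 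Hence $\dim_\RR\AA\leq 8$ and $\AA$ is isomorphic to one of $\RR$, $\CC$, $\HH$, $\OO$.
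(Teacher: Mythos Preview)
The paper does not actually give a proof of this theorem; it is stated as a classical result attributed to Hurwitz, with the surrounding material on composition algebras referenced to \cite{springerveldkamp}. Your argument is precisely the standard proof one finds there (essentially \cite[\S1.5]{springerveldkamp}): iterate the Cayley--Dickson doubling inside $\AA$ starting from $\RR$, and observe that the norm on the double $\BB\oplus\BB\ell$ is multiplicative if and only if $\BB$ is associative, so the chain halts at $\OO$.

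Your outline is correct. The one step worth spelling out more carefully is the reduction of the cross-terms to associativity: after expanding $q\bigl((ac-\bar d b)+(da+b\bar c)\ell\bigr)$ and cancelling the diagonal terms, the identity $\langle xy,z\rangle=\langle y,\bar x z\rangle=\langle x,z\bar y\rangle$ converts the condition $\langle ac,\bar d b\rangle=\langle da,b\bar c\rangle$ into $\langle d(ac),b\rangle=\langle (da)c,b\rangle$ for all $a,b,c,d$, which is exactly associativity of $\BB$. Since $\OO$ is alternative but not associative (any non-collinear triple $e_i,e_j,e_k$ in the Fano plane gives a nonzero associator), the existence of $\BB_4\subset\AA$ is impossible, and the proof closes.
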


As a consequence of this theorem and Exercise~\ref{assocsubalg}, any two elements in $\OO$ are contained in a subalgebra isomorphic to $\HH$. Hurwitz's Theorem, as well as the concept of quaternions and octonions, has been generalized to other fields \cite{jacobson58}:

\begin{theorem}[Hurwitz; Jacobson 1958]
\label{hurwitzgeneral}
Let $\AA$ be a composition algebra over a field $\KK$ of characteristic $\neq2$. Then one of the following holds:
\begin{enumerate}
    \item $\dim_\KK\AA=1$ and $\AA\simeq\KK$,
    \item $\dim_\KK\AA=2$ and either $\AA\simeq\KK\oplus\KK$ or $\AA$ is a quadratic field extension of $\KK$,
    \item $\dim_\KK\AA=4$ (and $\AA$ is called a \emph{quaternion algebra} over $\KK$),
    \item $\dim_\KK\AA=8$ (and $\AA$ is called an \emph{octonion algebra} over $\KK$).
\end{enumerate}
Moreover, two composition algebras are isomorphic if and only if their underlying quadratic vector spaces $(\AA,q)$ are isomorphic.
\end{theorem}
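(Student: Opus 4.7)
The plan is to prove Theorem~\ref{hurwitzgeneral} by iterating the Cayley--Dickson doubling process already used to build $\HH$ from $\CC$ and $\OO$ from $\HH$, now applied inside an arbitrary composition algebra $\AA$ over $\KK$. Starting from the one-dimensional subalgebra $\KK\cdot 1_\AA$, I enlarge by successive doublings; the proof consists of showing that each enlargement preserves the composition-algebra property only under an associativity hypothesis on the previous stage, forcing the process to terminate at dimension at most $8$. Before starting, I would record the standard identities from \cite{springerveldkamp}: the quadratic relation $x^2 - 2\langle x, 1_\AA\rangle x + q(x)1_\AA = 0$, the antiautomorphism property \eqref{eq:conjantiauto}, and the polarised multiplicativity $\langle ax, ay\rangle = q(a)\langle x, y\rangle$ together with $\langle ax, by\rangle + \langle bx, ay\rangle = 2\langle a, b\rangle \langle x, y\rangle$.

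Suppose $\BB \subsetneq \AA$ is a composition subalgebra containing $1_\AA$. Since $\mathrm{char}\,\KK \neq 2$ and $q$ is nondegenerate, one has $\AA = \BB \oplus \BB^\perp$ and $q|_{\BB^\perp}$ is itself nondegenerate, so one can pick $\ell \in \BB^\perp$ with $\alpha := -q(\ell) \in \KK^*$. Because $\ell \perp 1_\AA$ one finds $\bar\ell = -\ell$, and from \eqref{eq:conjantiauto}--\eqref{eq:norm} a short calculation shows that $\BB\ell \subseteq \BB^\perp$, that $\BB' := \BB + \BB\ell$ is closed under multiplication in $\AA$, and that the product in $\BB'$ is governed by a Cayley--Dickson formula
\[(a + b\ell)(c + d\ell) = (ac + \alpha\, \bar d\, b) + (d\,a + b\,\bar c)\,\ell, \qquad a,b,c,d\in\BB,\]
with norm $q(a + b\ell) = q(a) - \alpha\, q(b)$.

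The central computation is to expand $q((a+b\ell)(c+d\ell))$ using \eqref{eq:norm} and compare it with $q(a+b\ell)\,q(c+d\ell)$. All cross terms cancel exactly when the associator of $\BB$ vanishes identically; thus the doubled norm is multiplicative if and only if $\BB$ is associative. Parallel expansions show that $\BB'$ is associative iff $\BB$ is commutative, and that $\BB'$ is commutative iff $\BB = \KK\cdot 1_\AA$. Iterating from $\BB_0 = \KK\cdot 1_\AA$, I therefore obtain a chain of composition subalgebras
\[\BB_0 \subset \BB_1 \subset \BB_2 \subset \BB_3\]
of dimensions $1,2,4,8$, in which $\BB_1$ is commutative and associative, $\BB_2$ is associative but not commutative, and $\BB_3$ is alternative but not associative. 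Since $\BB_3$ is non-associative, if $\BB_3 \ne \AA$ one could still pick $\ell\in\BB_3^\perp$ with $q(\ell)\ne 0$ and form $\BB_3+\BB_3\ell$ inside $\AA$; the Cayley--Dickson formula would then contradict multiplicativity of $q$ on $\AA$. Hence $\dim_\KK\AA \in \{1,2,4,8\}$, and cases~(1)--(4) of the statement follow; in dimension~$2$, $\AA = \KK\oplus\KK\ell$ with $\ell^2 = \alpha \in \KK^*$, which is $\KK\oplus\KK$ if $\alpha$ is a square and the quadratic field extension $\KK(\sqrt\alpha)$ otherwise. The isomorphism clause follows by induction on the doubling tower: at each step the isomorphism class of $\BB'$ depends only on that of $\BB$ and on the square class of $\alpha=-q(\ell)$, and both pieces of data are encoded in the quadratic form $(\AA,q)$.

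The main obstacle is the multiplicativity lemma together with the three-layer propagation of algebraic properties (commutative, associative, alternative) under doubling. In practice it is a careful bookkeeping of associators and commutators inside the expansion of $q((a+b\ell)(c+d\ell))$; the computations are elementary but one must track signs and orderings with care, especially at the octonion stage, where the associator of $\BB_3$ only vanishes after full skew-symmetrisation. Once the multiplicativity lemma is in hand, the termination at dimension $8$ and the identification of the four cases are immediate.
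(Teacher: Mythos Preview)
The paper does not supply its own proof of this theorem; it is stated as a classical result with a reference to Jacobson \cite{jacobson58}, just as the preceding Theorem~\ref{hurwitz} is only stated. So there is nothing in the paper to compare your argument against. Your proposal is the standard internal Cayley--Dickson argument, essentially the one in \cite[\S1.5--1.7]{springerveldkamp}, and the outline is correct.

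Two points to tighten when you write it out. First, your termination step uses that $\BB_3$ is genuinely non-associative; for this you need the \emph{only if} direction of your propagation lemmas (e.g.\ $\BB'$ commutative forces conjugation to be trivial on $\BB$, hence $\BB=\KK$), so make sure those reverse implications are actually checked and not just asserted. Second, your sketch of the isomorphism clause hides a Witt-cancellation step: given two composition algebras with isometric norm forms, you must argue that at each stage the element $\ell$ can be chosen so that the parameters $\alpha=-q(\ell)$ agree on both sides. Concretely, after matching $\BB_i^{(1)}\cong\BB_i^{(2)}$, Witt cancellation gives $q_1|_{(\BB_i^{(1)})^\perp}\cong q_2|_{(\BB_i^{(2)})^\perp}$, so both orthogonal complements represent the same nonzero values and a common $\alpha$ can be chosen. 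This is routine but should be made explicit.
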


Examples of non-Euclidean quaternion algebras can be found in Exercise \ref{ex_noneucl}.

\subsection*{Cross products}

Normed division algebras are related to so-called vector cross products.

\begin{definition}
Let $(V,\langle\cdot\,,\cdot\,\rangle)$ be an Euclidean vector space. A \emph{vector cross product} is a bilinear map
\[\times\colon V\times V\to V\]
which is totally skew-symmetric, i.e.,
\[\langle u\times v,w\rangle=-\langle v\times u,w\rangle=\langle v\times w,u\rangle\qquad\forall u,v,w\in V,\]
and satisfies the \emph{magnitude} condition
\begin{equation}
    |u\times v|^2=|u\wedge v|^2:=|u|^2|v|^2-\langle u,v\rangle^2\qquad\forall u,v\in V.
    \label{eq:magnitude}
\end{equation}
\end{definition}

Any vector cross product gives rise to a three-form $\omega\in\extp^3V^\ast$ via
\[\omega(u,v,w)=\langle u\times v,w\rangle, \qquad u,v,w\in V.\]
Equation \eqref{eq:magnitude} is equivalent to the condition that for every unit vector $u\in V$, the skew-symmetric endomorphism $u\times \colon V\to V$ restricted to $u^\perp\subset V$ is orthogonal (and thus a complex structure). In particular, $\dim V$ has to be odd.

In fact, from any vector space with vector cross product one can construct a normed division algebra (\necessary Exercise \ref{ex:divalgfromveccross}). Conversely:

\begin{theorem}
Let $\AA$ be a real normed division algebra with imaginary part $V:=\Im\AA$. Then
\[u\times v:=\Im(uv),\qquad u,v\in V,\]
defines a vector cross product on $V$.
\end{theorem}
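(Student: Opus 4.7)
Write $u\times v = \Im(uv) = \tfrac12(uv - vu)$, which is immediate since for imaginary $u,v$ we have $\bar u = -u$, $\bar v = -v$ and thus $\overline{uv} = \bar v\bar u = vu$. The skew-symmetry $u\times v = -v\times u$ is then visible from the formula, and bilinearity is clear. So the real content is (i) full skew-symmetry of the associated three-form, and (ii) the magnitude identity.

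\textbf{Step 1: reduce $\langle u\times v,w\rangle$ to a triple real part.} For $u,v,w\in V$, decompose $uv = \Re(uv) + \Im(uv)$; since $\Re(uv)\in\Re\AA$ is orthogonal to the imaginary element $w$, one has $\langle u\times v,w\rangle = \langle uv,w\rangle = \Re(uv\,\bar w) = -\Re((uv)w)$. The claim $\langle u\times v,w\rangle = \langle v\times w,u\rangle$ then reduces to the cyclic invariance
\begin{equation*}
    \Re((uv)w) = \Re((vw)u).
\end{equation*}

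\textbf{Step 2: cyclicity of $\Re$ on triple products.} This is the one place where non-associativity of $\OO$ bites, so it is the main obstacle. I will combine two standard facts for composition algebras: first, the ``trace'' is symmetric, $\Re(xy)=\Re(yx)$, which follows from $\Re(xy) = \tfrac12(xy + \overline{xy}) = \tfrac12(xy + \bar y\bar x)$; second, $\Re$ vanishes on associators, $\Re((xy)z) = \Re(x(yz))$. The latter is immediate if $\AA$ is associative ($\RR,\CC,\HH$) and for $\OO$ follows from alternativity via the identity $\langle xy,z\rangle = \langle y,\bar xz\rangle$ (equivalent to multiplicativity of $q$): indeed, $\Re((xy)z) = \langle xy,\bar z\rangle = \langle y,\bar x\bar z\rangle = \langle y, \overline{zx}\rangle = \Re(y(zx))$, which by the symmetry $\Re(ab)=\Re(ba)$ equals $\Re((zx)y)$; iterating yields $\Re((xy)z) = \Re((yz)x) = \Re((zx)y)$ and also $\Re(x(yz))$.

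\textbf{Step 3: magnitude.} Since $uv$ decomposes orthogonally as $\Re(uv) + \Im(uv)$,
\begin{equation*}
    |u\times v|^2 = |\Im(uv)|^2 = |uv|^2 - \Re(uv)^2.
\end{equation*}
Multiplicativity of the norm gives $|uv|^2 = q(uv) = q(u)q(v) = |u|^2|v|^2$, and $\Re(uv) = \langle u,\bar v\rangle = -\langle u,v\rangle$ for imaginary $v$, so $\Re(uv)^2 = \langle u,v\rangle^2$ and \eqref{eq:magnitude} follows.

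\textbf{Where I expect trouble.} The only subtle point is Step~2: in $\OO$, $(uv)w \ne u(vw)$ in general, so one must be careful to invoke only \emph{alternativity} (via Artin's theorem, or the inner-product identity $\langle xy,z\rangle = \langle y,\bar xz\rangle$) rather than associativity. Once cyclicity of $\Re$ on three-fold products is established, everything else reduces to the defining properties of a composition algebra already set up in the text.
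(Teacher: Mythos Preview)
Your proof is correct and, for the magnitude identity, actually cleaner than the paper's. The paper proceeds differently on both points.

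For total skew-symmetry, the paper does not prove the general cyclicity $\Re((xy)z)=\Re((yz)x)$. Instead it checks directly that $\langle x\times y,y\rangle=-\Re((xy)y)=-\Re(xy^2)=|y|^2\Re(x)=0$ using alternativity and the fact that the square of an imaginary element is real, and then polarizes. Your route through the adjunction $\langle xy,z\rangle=\langle y,\bar xz\rangle$ is equally valid and has the advantage of establishing a reusable identity; the paper's route is slightly more direct but less general.

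For the magnitude, the paper writes $|xy|^2=\Re((xy)(\overline{xy}))=\Re((xy)(yx))$ and then invokes the Moufang identities to rearrange this as $\Re((xy^2)x)$ before concluding. You bypass Moufang entirely by using the defining property of a composition algebra, $q(uv)=q(u)q(v)$, together with the orthogonal splitting $uv=\Re(uv)+\Im(uv)$. This is genuinely simpler: Moufang is overkill here, and your argument makes clear that only multiplicativity of the norm is needed for \eqref{eq:magnitude}.
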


\begin{proof}
As a consequence of Theorem~\ref{hurwitz}, $\AA$ is isomorphic to a subalgebra of the octonions and thus alternative. The definition of octonionic multiplication implies that
\[x\times y=\Im(xy)=-\Im(yx)=-y\times x\]
for all $x,y\in\Im\AA$. Additionally, we have
\[\langle x\times y,z\rangle=\Re((x\times y)\bar z)=-\Re((xy)z)\]
for $x,y,z\in\Im\AA$, since $\bar z=-z$. By alternativity, and since the square of any imaginary octonion is real, we deduce that
\[\langle x\times y,y\rangle=-\Re((xy)y)=-\Re(xy^2)=|y|^2\Re(x)=0,\]
and the total skew-symmetry of $\times$ follows by polarization. To check the magnitude condition, we use the fact that $\overline{xy}=\bar y\bar x$ together with the Moufang identities and compute
\begin{align*}
    |x\times y|^2&=|\Im(xy)|^2=|xy|^2-|\Re(xy)|^2=\Re((xy)(\overline{xy}))-\langle x,y\rangle^2=\Re((xy)(yx))-\langle x,y\rangle^2\\
    &=\Re((xy^2)x)-\langle x,y\rangle^2=-|y|^2\Re(x^2)-\langle x,y\rangle^2=|x|^2|y|^2-\langle x,y\rangle^2. \qedhere
\end{align*}
\end{proof}

Together with Theorem~\ref{hurwitz}, these results imply that nontrivial cross products exist only in three or seven dimensions, and are related to the quaternions or octonions, respectively \cite{browngray}.

\subsection*{The associative three-form}

We return to the study of the octonion algebra $\OO$, whose imaginary part is  $\Im\OO=\spann{e_1,\ldots,e_7}\simeq\RR^7$. As in Exercise~\ref{ex:divalgfromveccross}, octonionic multiplication gives rise to a vector cross product on $\RR^7$, and thus to a canonical three-form.

\begin{definition}\label{def:3-form}
The \emph{associative} three-form $\omega\in\extp^3(\RR^7)^\ast$ is defined by
\begin{equation}\label{eq:inv-form}
    \omega(x,y,z)=\langle\Im(xy),z\rangle=-\Re((xy)z),\qquad x,y,z\in\OO.
\end{equation}
If $(e^1,\ldots,e^7)$ is the dual basis of $(e_1,\ldots,e_7)$, we may also write
\[\omega=\sum_{(e_i\to e_j\to e_k)}e^i\wedge e^j\wedge e^k,\]
where the sum ranges over all oriented lines of the Fano plane. Explicitly,
\begin{equation}
\omega=e^{123}+e^{365}+e^{541}+e^{264}+e^{176}+e^{572}+e^{374}.\label{eq:omegainbasis}
\end{equation}
\end{definition}

Taking $(e_1,\ldots,e_7)$ as positively oriented, the information of $\omega$ is equivalently included in its Hodge dual $\star\omega\in\extp^4(\RR^7)^\ast$, given by
\[\star\omega=e^{4567}-e^{1247}-e^{2367}-e^{1357}-e^{2345}+e^{1346}-e^{1256}.\]
This is also called the \emph{coassociative four-form}. For more information, see \cite{harveylawson}, \cite{harvey}.

\begin{remark} \label{rem:generic}
Later, we will have to complexify the octonions and consider the complex-trilinear extension of $\omega$ to $\extp^3(\CC^7)^\ast$. By abuse of notation, we will still call it $\omega$. In the complex setting, one may check that $\omega$ is obtained from $\omega_5$ of Theorem~\ref{thm:schouten} by the following change of basis: 
\[
\begingroup % keep the change local
\setlength\arraycolsep{10pt}
\begin{matrix*}[l]
       e^1 \mapsto \lambda \,e^1, & e^2 \mapsto \lambda (e^5 -\ii\,e^2), & e^3 \mapsto \lambda^2(e^3 + \ii\, e^6),  & e^4 \mapsto e^4 -\ii\,e^7, \\
   e^5 \mapsto -\lambda (e^5+\ii\,e^2), &  e^6 \mapsto e^3 - \ii\,e^6, & e^7 \mapsto \lambda^2(e^4+\ii\,e^7), &
\end{matrix*}
\endgroup
\]
where $\ii^2 = -1$ and $\lambda^3 = -\ii/2$. In particular, the three-form $\omega\in\extp^3(\CC^7)^\ast$ is \emph{generic} in the sense that its $\GL_7(\CC)$-orbit is open (see Section~\ref{sec:nondeg7}).
\end{remark}

\subsection*{Composition algebras with zero divisors}

It is possible to modify the Cayley--Dickson construction to obtain split composition algebras. Starting with a composition algebra $\AA$, let the product on $\AA\oplus\AA$ be defined by
\begin{equation}
(a,b)\cdot(c,d)=(ac+\overline{d}b,b\overline{c}+da)
\label{eq:modcayleydickson}
\end{equation}
for all $a,b,c,d\in\AA$ (notice the difference in sign to \eqref{eq:cayleydickson}). If $\AA=\RR$ or $\AA$ is associative and split, then $\AA\oplus\AA$ will again be a split composition algebra. Starting from $\RR$, we successively obtain the algebras of \emph{split-complex numbers} $\CC'$, \emph{split quaternions} $\HH'$, and \emph{split octonions} $\OO'$. These all have zero divisors, and their inner product is not positive definite.

As a consequence of Theorem~\ref{hurwitzgeneral}, both $\OO$ and $\OO'$ have isomorphic complexification $\OO^\CC$ (since over $\CC$ nondegenerate quadratic forms are equivalent). These are called \emph{complex octonions} or \emph{bioctonions}. In split or complex composition algebras, zero divisors are of importance. Therefore, we state two results characterizing zero divisors.

\begin{lemma}
\label{zerodivnorm}
Zero divisors in a composition algebra are precisely the elements of norm zero.
\end{lemma}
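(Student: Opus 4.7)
The plan is to prove both implications directly from the composition algebra axioms, namely the identity $x\bar{x}=\bar{x}x=q(x)\,1_\AA$ from \eqref{eq:norm} and the multiplicativity of $q$ (after polarization), together with the nondegeneracy of the inner product. Throughout, the conjugation being an involution means that $\bar{u}=0 \iff u=0$.

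First I would handle the easy direction: if $u\neq 0$ with $q(u)=0$, then $u$ is a zero divisor. This is immediate from \eqref{eq:norm}: $u\bar{u}=q(u)\,1_\AA=0$, and since conjugation is an involution, $\bar{u}\neq 0$. Hence $u$ is both a left and a right zero divisor.

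The converse is the main (but still short) step. Suppose $u\neq 0$ and $uv=0$ for some $v\neq 0$; I want to conclude $q(u)=0$. The trick is to polarize the multiplicative identity $q(uy)=q(u)q(y)$ in the variable $y$, which yields
\[
\langle uy,\, uz\rangle = q(u)\,\langle y,z\rangle \qquad \forall\, y,z\in\AA.
\]
Setting $y=v$, so that $uv=0$, gives $q(u)\langle v,z\rangle=0$ for every $z\in\AA$. If $q(u)\neq 0$, then $\langle v,z\rangle=0$ for all $z$, contradicting nondegeneracy of the inner product since $v\neq 0$. Hence $q(u)=0$, as required. The symmetric case $vu=0$ is handled identically using the polarized identity $\langle yu,zu\rangle=q(u)\langle y,z\rangle$.

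I do not expect any genuine obstacle here: neither alternativity nor the Moufang identities are needed, only the defining multiplicativity of $q$ (to obtain the two polarized identities) and the assumption that $\langle\cdot,\cdot\rangle$ is nondegenerate. The only minor care point is ensuring that the zero-divisor condition is treated symmetrically for left and right multiplication, which the two polarized forms handle in parallel.
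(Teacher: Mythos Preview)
Your proof is correct. The forward direction is identical to the paper's. For the converse, the paper instead invokes the identities $(xy)\bar{y}=q(y)x$ and $\bar{x}(xy)=q(x)y$ from \cite[Lem.~1.3.3]{springerveldkamp}: from $xy=0$ one immediately reads off $q(y)x=0$ and $q(x)y=0$, hence $q(x)=q(y)=0$. Your argument polarizes the multiplicativity identity directly to obtain $\langle uy,uz\rangle=q(u)\langle y,z\rangle$ and then uses nondegeneracy; this is essentially how those Springer--Veldkamp identities are proved in the first place, so your route is a bit more self-contained while the paper's is a bit quicker given the reference. Both are the same idea at heart.
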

\begin{proof}
Let $\AA$ be a composition algebra, and $x\in\AA$ such that $q(x)=0$. Then, by \eqref{eq:norm}, $x\bar{x}=0$, hence $x$ is a zero divisor. Conversely, let $x,y\in\AA$ be nonzero with $xy=0$. It follows from the properties of composition algebras \cite[Lem.~1.3.3]{springerveldkamp} that
\[
    0=(xy)\bar{y}=q(y)x, \quad \mathrm{and} \quad 0=\bar{x}(xy)=q(x)y,
\]
and thus $q(x)=q(y)=0$.
\end{proof}

This immediately implies that if $x\in\AA$ is a zero divisor, then $x(\bar{x}z)=q(x)z=0$ for any $z\in\AA$. There is also a converse:

\begin{theorem}\label{kerofleftmul}
Let $\AA$ be a composition algebra, and $x,y\in\AA$ such that $xy=0$. Then there exists $z\in\AA$ such that $y=\bar{x}z$, and $\bar{x}\AA$ is a maximal isotropic subspace for $q$.
\end{theorem}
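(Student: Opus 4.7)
The plan is to identify $\bar x\AA$ with $\ker L_x$, where $L_x\colon\AA\to\AA$ denotes left multiplication by $x$, by exhibiting both as the same maximal isotropic subspace of $\AA$ for the norm form; once this is done, $y\in\ker L_x=\bar x\AA$ immediately produces the desired $z$. I may assume $x\neq 0$ (otherwise the assertion about maximal isotropy is vacuous), and by Lemma~\ref{zerodivnorm} this forces $q(x)=0$.

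First I would establish the adjointness identity
\[\langle xa,b\rangle=\langle a,\bar x b\rangle\qquad\text{for all }a,b\in\AA,\]
obtained by polarizing the multiplicativity relation $q(uv)=q(u)q(v)$ and using \eqref{eq:norm}; this is one of the standard calculational rules of \cite{springerveldkamp}. Passing to orthogonal complements yields at once $\ker L_x=(\bar x\AA)^\perp$.

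Next I would show that $\bar x\AA$ and $\ker L_{\bar x}$ are both isotropic for $q$. For $\bar x\AA$, multiplicativity gives $q(\bar x u)=q(\bar x)q(u)=q(x)q(u)=0$ for every $u\in\AA$, and polarizing in $u,v$ shows that the bilinear form vanishes on $\bar x\AA$. For $\ker L_{\bar x}$ I would invoke alternativity of $\AA$ (Exercise~\ref{assocsubalg}): since the associator is alternating and $\bar z\in\spann{1,z}$, one gets $[\bar x,z,\bar z]=0$, so for any $z\in\ker L_{\bar x}$,
\[q(z)\,\bar x=\bar x(z\bar z)=(\bar x z)\bar z=0,\]
forcing $q(z)=0$. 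Polarization then gives that the whole of $\ker L_{\bar x}$ is isotropic. Setting $n=\dim\AA$, this yields $\dim\bar x\AA\le n/2$ and $\dim\ker L_{\bar x}\le n/2$.

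To conclude, I combine these bounds with the rank–nullity identity $\dim\bar x\AA+\dim\ker L_{\bar x}=n$: both dimensions must equal exactly $n/2$, so $\bar x\AA$ is a \emph{maximal} isotropic subspace and coincides with its own orthogonal. By the first step, $\ker L_x=(\bar x\AA)^\perp=\bar x\AA$, and the hypothesis $xy=0$ then reads $y\in\bar x\AA$, producing $z$ with $y=\bar x z$. The main obstacle in this plan is the isotropy of $\ker L_{\bar x}$: multiplicativity of $q$ alone is insufficient, and one genuinely needs the alternative law — a nontrivial but standard consequence of the composition algebra axioms, derivable from the Moufang identities of Exercise~\ref{ex_moufang}. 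Once this is in hand, everything else is a clean linear-algebra count.
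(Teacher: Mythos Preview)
Your argument is correct and structurally the same as the paper's: establish $\ker L_x=(\bar x\AA)^\perp$, show the relevant subspaces are isotropic, run a dimension count to force both to be half-dimensional, and conclude equality. The only point worth noting is that your perceived ``main obstacle'' --- isotropy of the kernel --- is handled in the paper by a one-line appeal to Lemma~\ref{zerodivnorm} (every nonzero element of $\ker L_x$ is a zero divisor, hence null) rather than an explicit alternativity computation; the paper then obtains the dimension identity from $\dim V+\dim V^\perp=\dim\AA$ instead of rank--nullity, and closes with the direct inclusion $\bar x\AA\subseteq\ker L_x$ (from $x(\bar xz)=q(x)z=0$) in place of your self-orthogonality step.
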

\begin{proof}
For $x\in\AA$, let $L_x\colon \AA\to\AA$ denote left-multiplication, $ L_x(y):=xy$. Suppose that $x$ is a zero divisor. We show that $\ker L_x$ and $\im L_x=x\AA$ are isotropic subspaces.

First, any element $y\in\ker L_x$ satisfies $xy=0$ and is thus a zero divisor. So $q(y)=0$ by Lemma~\ref{zerodivnorm}. Thus $\ker L_x$ is isotropic.

Second, let $a\in\AA$. Applying the calculational rules in \cite[Lem.~1.3.2, 1.3.3]{springerveldkamp}, we find
\[q(xa)1_\AA=\langle xa,xa\rangle=\langle a,\bar{x}(xa)\rangle=\langle a,q(x)a\rangle=0.\]
Thus $x\AA$ is isotropic.

Since $\ker L_x=(\bar x\AA)^\perp$ and $q$ is nondegenerate, the dimension formula
\[\dim\bar x\AA+\dim\ker L_x=\dim\AA\]
holds. Recalling that isotropic subspaces are of dimension at most $\frac{1}{2}\dim\AA$, it follows that both $\bar x\AA$ and $\ker L_x$ have this dimension and are thus maximal isotropic. Furthermore, since it has already been remarked above that $\bar x\AA\subseteq\ker L_x$, it follows that $\bar x\AA=\ker L_x$.
\end{proof}

\begin{cor}
\label{kerofleftmulim}
Let $x\in\Im\AA$ be a zero divisor, and $K_x=\ker(L_x|_{\Im\AA})$ the kernel of left-multiplication by $x$ on $\Im\AA$. Then $K_x=x\AA\cap\Im\AA$ is an isotropic subspace of dimension $\frac{\dim\AA}{2}-1$ containing $x$.
\end{cor}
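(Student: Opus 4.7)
The plan is to deduce the corollary directly from Theorem \ref{kerofleftmul} by exploiting the fact that the conjugation acts as $-\id$ on the imaginary part. First I would observe that, since $x\in\Im\AA$, we have $\bar x=-x$, so Theorem \ref{kerofleftmul} gives $\ker L_x=\bar x\AA=x\AA$. Intersecting with $\Im\AA$ yields immediately the identification $K_x=\ker(L_x|_{\Im\AA})=x\AA\cap\Im\AA$, and isotropy of $K_x$ is inherited from isotropy of $\ker L_x$.

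Next I would verify that $x\in K_x$. Since $x$ is a zero divisor, Lemma \ref{zerodivnorm} gives $q(x)=0$. Combined with $\bar x=-x$ and the identity $x\bar x=q(x)1_\AA$ from \eqref{eq:norm}, this forces $x^2=0$, so $x\in\ker L_x$; but $x\in\Im\AA$ by hypothesis, hence $x\in K_x$.

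The main issue, and really the only nontrivial step, is the dimension count: Theorem \ref{kerofleftmul} tells us $\dim x\AA=\dim\AA/2$, and I must show that the intersection with $\Im\AA$ drops the dimension by exactly one. First, $x\AA$ is isotropic while $\Re\AA=\KK\cdot1_\AA$ is not (as $q(1_\AA)=1$), so $x\AA\cap\Re\AA=0$. Consequently, the projection $x\AA\to\Re\AA$ along $\Im\AA$ has kernel $K_x$ and is injective on any complement, so $\dim K_x$ is either $\dim\AA/2$ or $\dim\AA/2-1$. To exclude the first possibility I would suppose $x\AA\subset\Im\AA$ and derive a contradiction: this assumption says $\langle xa,1_\AA\rangle=0$ for every $a\in\AA$, and using the standard adjunction identity $\langle xa,b\rangle=\langle a,\bar x b\rangle$ from \cite{springerveldkamp} with $b=1_\AA$ gives $\langle a,\bar x\rangle=0$ for all $a$. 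Nondegeneracy of $\langle\,\cdot\,,\cdot\,\rangle$ then forces $\bar x=0$, hence $x=0$, contradicting that $x$ is a zero divisor (in particular nonzero). Therefore $\dim K_x=\dim\AA/2-1$, completing the proof.
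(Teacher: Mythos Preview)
Your proof is correct and follows precisely the approach the paper intends: the corollary is stated without proof, immediately after Theorem~\ref{kerofleftmul}, and your argument supplies exactly the routine deductions one expects (using $\bar x=-x$ to identify $\ker L_x=x\AA$, then intersecting with $\Im\AA$ and counting dimensions via the adjunction $\langle xa,1_\AA\rangle=\langle a,\bar x\rangle$). There is nothing to add.
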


We may also ask about subspaces of a composition algebra on which the multiplication vanishes identically. In the complex octonions, it turns out that the maximal such subspaces are two-dimensional (see Lemma \ref{lem:dim-null-planes}).

\begin{definition}
\label{def:null-plane}
A two-dimensional subspace $N\subset\OO^\CC$ is called a \emph{null-plane} if $xy=0$ for all $x,y\in N$.
\end{definition}

\begin{lemma}
\label{lem:null-isotropic}
Null-planes are isotropic and purely imaginary.
\end{lemma}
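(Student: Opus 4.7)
The plan is to derive both conclusions simultaneously by applying the conjugation identity $\bar x = 2\langle x, 1_\AA\rangle 1_\AA - x$ together with the norm formula (\ref{eq:norm}), using only the trivial observation that every element of a null-plane squares to zero.

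First I would take any $x \in N$ and note that the null-plane condition with $y = x$ gives $x^2 = 0$. Substituting the conjugation formula into $x\bar x = q(x) 1_\AA$ then yields
\[ q(x)\, 1_\AA \;=\; x\bar x \;=\; 2\langle x, 1_\AA\rangle\, x - x^2 \;=\; 2\langle x, 1_\AA\rangle\, x. \]
The next step is to argue that $1_\AA$ and any nonzero $x \in N$ are linearly independent: if $x = c\,1_\AA$ for some scalar $c$, then $x^2 = c^2\,1_\AA = 0$ would force $c = 0$, contradicting $x \neq 0$. Given this linear independence, the above equation forces both coefficients to vanish, giving $q(x) = 0$ and $\langle x, 1_\AA\rangle = 0$ for every nonzero (hence every) $x \in N$. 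This immediately yields $N \subset \Im \OO^\CC$ and pointwise vanishing of the quadratic form on $N$.

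Finally, to upgrade the pointwise statement $q|_N = 0$ to isotropy in the sense that the bilinear form $\langle\,\cdot\,,\cdot\,\rangle$ vanishes on $N \times N$, I would polarize: for $x,y \in N$ the sum $x+y$ again lies in $N$, so $q(x+y)=0$, and expanding gives $\langle x, y\rangle = 0$. There is no serious obstacle in this argument; the only slightly delicate step is the linear-independence observation, which is entirely forced by the condition $x^2 = 0$ together with the fact that $1_\AA$ is not a zero divisor.
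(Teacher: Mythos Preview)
Your proof is correct and essentially the same as the paper's: both start from $x^2=0$ and use the conjugation formula to relate $q(x)$ and $\Re(x)$ to $x$. The only cosmetic difference is that the paper first invokes Lemma~\ref{zerodivnorm} to get $q(x)=0$ and then computes $0=x^2+\bar{x}x=2\Re(x)x$, whereas you read off both $q(x)=0$ and $\Re(x)=0$ simultaneously from the single identity $q(x)\,1_\AA = x\bar{x} = 2\langle x,1_\AA\rangle\, x$ and linear independence; your explicit polarization step is also a nice touch that the paper leaves implicit.
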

\begin{proof}
Let $N$ be a null-plane. Then for every $x\in N$, we have $x^2=0$, hence  $q(x)=0$ by  Lemma~\ref{zerodivnorm}. Thus $q$ vanishes when restricted to $N$. Moreover
\begin{align*}
0&=x^2+\bar{x}x=2\Re(x)x,
\end{align*}
so necessarily $\Re(x)=0$.
\end{proof}

We will see in the second lecture that the set of null-planes is homogeneous under the action of the complex Lie group $G_2$, see Corollary \ref{G2G27_hom}.

%If S^\perp null-plane:
%S is six-dimensional, show that it contains a quaternion subalgebra
%S is kernel of rank two derivation? arxiv:math/0402157

\section{Automorphisms} 

Let us finally turn to our desired object of study: the exceptional Lie group $G_2$, which arises as the automorphism group of the octonions. For any composition algebra $\AA$ over $\KK=\RR,\CC$, its automorphism group is the Lie group defined by
\[\Aut(\AA)=\left\{U\in\GL(\AA)\,|\,U(xy)=U(x)U(y)\quad\forall x,y\in\AA\right\}.\]
The group $\GL(\AA)$ is to be understood over the same field $\KK$.

By \cite[Cor.~1.2.4]{springerveldkamp}, the norm of $\AA$ is determined by its algebra structure. Thus, $\Aut(\AA)$ is a closed subgroup of the \emph{orthogonal group} $\rmO(\AA,q)$. %Conversely, two composition algebras with the same norm are already isomorphic \cite[Thm.~1.7.1]{springerveldkamp}.
Since automorphisms preserve $1_\AA$ and the norm, they preserve the splitting into real and imaginary parts and are determined by their action on the subspace $\Im\AA$. Thus, we may view $\Aut(\AA)$ as a subgroup of $\rmO(\Im\AA,q)$.

The Lie algebra of $\Aut(\AA)$ is the set of \emph{derivations} of $\AA$, that is,
\[\Der(\AA)=\left\{U\in\fgl(\AA)\,|\,U(xy)=U(x)y+xU(y)\right\}.\]

From now on, we work over  $\KK=\CC$, and take Lie groups and Lie algebras to be \emph{complex} by default.

\begin{theorem}
For the associative complex composition algebras, we have
\begin{align*}
    \Aut(\RR^\CC)&=1,&\Aut(\CC^\CC)&=\ZZ_2,&\Aut(\HH^\CC)&=\SO_3,\\
    \Der(\RR^\CC)&=0,&\Der(\CC^\CC)&=0,&\Der(\HH^\CC)&=\fso_3.
\end{align*}
\end{theorem}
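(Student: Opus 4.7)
The plan is to treat the three algebras separately, in order of increasing complexity, using that each is specified by a small number of generators in its imaginary part: once we know where an automorphism sends those generators (subject to preserving the defining relations), the whole map is determined. Throughout I will use that $\Aut(\AA)$ fixes $1_\AA$ and preserves the norm, hence acts on $\Im\AA$ through the orthogonal group $\rmO(\Im\AA,q)$; the corresponding Lie algebra $\Der(\AA)$ acts on $\Im\AA$ through $\fso(\Im\AA,q)$.

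For $\AA=\RR^\CC$ there is nothing to do: $\Im\AA=0$, so automorphisms and derivations are both trivial. For $\AA=\CC^\CC$, $\Im\AA$ is spanned over $\CC$ by an element $i$ with $i^2=-1$. Any automorphism $\phi$ satisfies $\phi(i)^2=-1$, and since $\phi(i)\in\Im\AA=\CC i$, we get $\phi(i)=\pm i$, giving $\Aut(\CC^\CC)=\ZZ_2$. For a derivation $\delta$ one writes $\delta(i)=\lambda i$ and uses the Leibniz rule on $i^2=-1$ to force $\lambda=0$.

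The main work is $\HH^\CC$, where I would like to argue as follows. First, an automorphism $\phi$ restricts to an element of $\rmO(\Im\HH^\CC,q)\simeq\rmO_3(\CC)$. Using $xy=-\langle x,y\rangle 1+x\times y$ on $\Im\HH^\CC$ with the octonionic cross product of the previous subsection (specialised to quaternions), $\phi$ also preserves $\times$. Since a cross product on a three-dimensional quadratic space distinguishes one of the two orientations, $\phi$ must in fact lie in $\SO_3(\CC)$. For the reverse inclusion, the cleanest route is to identify $\HH^\CC\simeq M_2(\CC)$ (a split quaternion algebra, as in Exercise~\ref{ex_noneucl}) and invoke Skolem--Noether, which says every automorphism of $M_2(\CC)$ is inner, hence $\Aut(\HH^\CC)=\PGL_2(\CC)\simeq\SO_3(\CC)$. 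A more direct alternative would be to verify that every unit-norm element $u\in\HH^\CC$ defines an inner automorphism $x\mapsto uxu^{-1}$ and that the resulting map $\SL_2(\CC)\to\SO_3(\CC)$ is the familiar double cover, whose surjectivity supplies the missing elements of $\SO_3$.

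Finally, the statement about $\Der(\HH^\CC)$ follows by differentiating: it is the Lie algebra of the complex algebraic group $\Aut(\HH^\CC)=\SO_3(\CC)$, so $\Der(\HH^\CC)=\fso_3$. One can also see this hands-on by noting that the inner derivations $x\mapsto[u,x]$ for $u\in\Im\HH^\CC$ already fill up a three-dimensional subspace of $\fso(\Im\HH^\CC,q)\simeq\fso_3$, and dimension count closes the gap. The main obstacle I anticipate is the passage from "orthogonal and preserving cross product" to "special orthogonal": making the orientation argument rigorous over $\CC$ is cleanest via the matrix-algebra identification plus Skolem--Noether, and this is the step where I would be most careful to be sure I am using the complex structure correctly rather than transporting a real-oriented-vector-space intuition.
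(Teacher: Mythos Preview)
Your proof is correct. For $\RR^\CC$ and $\CC^\CC$ you argue essentially as the paper does. For $\HH^\CC$ the paper takes a more direct route: it observes that $\Aut(\HH^\CC)$ is \emph{exactly} the stabilizer in $\rmO_3(\CC)$ of the three-form $\omega(x,y,z)=\langle\Im(xy),z\rangle$ on $\Im\HH^\CC$ (both inclusions at once, since the multiplication on $\HH^\CC$ is recovered from the inner product together with the cross product on the imaginary part, as in Exercise~\ref{ex:divalgfromveccross}). Because $\dim\extp^3(\CC^3)^*=1$, this $\omega$ is a nonzero multiple of the volume form, so its stabilizer in $\rmO_3$ is $\rmO_3\cap\SL_3=\SO_3$, and the argument is finished in one stroke. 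This also dissolves your worry about making the orientation argument rigorous over $\CC$: ``preserving the cross product'' is literally ``preserving a fixed generator of $\extp^3(\CC^3)^*$'', i.e.\ the determinant-one condition. Your Skolem--Noether route is valid and has the merit of exhibiting $\Aut(\HH^\CC)\simeq\PGL_2(\CC)$ explicitly, but it brings in more machinery (and the identification $\HH^\CC\simeq M_2(\CC)$) than the paper needs.
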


\begin{proof}
Since $\Aut(\AA)\subset\rmO(\Im\AA,q)$, we have
\[\Aut(\RR^\CC)\subset\rmO_0=1,\qquad\Aut(\CC^\CC)\subset\rmO_1=\ZZ_2,\qquad\Aut(\HH^\CC)\subset\rmO_3.\]
The first case is trivial. For the second case, we note that the conjugation $z\mapsto\bar z$ is a nontrivial automorphism by \eqref{eq:conjantiauto} and  commutativity, thus $\Aut(\CC^\CC)=\ZZ_2$. For the third case, we observe that $\Aut(\HH^\CC)$ is precisely the stabilizer in $\rmO_3(\CC)$ of the nontrivial three-form $\omega\in\extp^3(\Im\HH^\CC)^\ast$ given by
\[    
\omega(x,y,z)=\langle\Im(xy),z\rangle.
\]
Since $\extp^3(\CC^3)^\ast\simeq\CC$, $\omega$ is a multiple of the volume form, and its stabilizer is
\[\Aut(\HH^\CC)=\rmO_3\cap\SL_3=\SO_3. \qedhere \] 
\end{proof}

\begin{definition}
We define the following Lie groups as automorphism groups of the complex, real, and split octonions, respectively:
\[G_2=\Aut(\OO^\CC),\qquad G_2^{\mathrm{c}}=\Aut(\OO),\qquad G_2'=\Aut(\OO').\]
Moreover we denote the respective Lie algebras by $\fg_2$, $\fg_2^{\mathrm{c}}$, $\fg_2'$, respectively.
\end{definition}

\begin{definition}
A \emph{basic triple} is a triple $(x,y,z)$ of imaginary unit octonions such that $\langle x,y\rangle=0$, and $z$ is orthogonal to the (quaternion) subalgebra generated by $\{x,y\}$.
\end{definition}

In fact $G_2^{\mathrm{c}}$ acts simply transitively on  basic triples; moreover $G_2^{\mathrm{c}}$ is connected  of dimension $14$ (\necessary Exercise \ref{ex_dim14}). The analogous statement holds for $G_2$ and the complex octonions $\OO^\CC$.

\begin{cor}\label{g2trans}
The group $G_2^{\mathrm{c}}$ acts transitively on the unit sphere $S^6\subset\Im\OO$.
\end{cor}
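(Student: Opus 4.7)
The strategy is to deduce the corollary directly from the already-stated fact that $G_2^{\mathrm{c}}$ acts simply transitively on the set of basic triples (Exercise~\ref{ex_dim14}). The key lemma I need is that every unit imaginary octonion $x\in S^6$ is the first entry of some basic triple.

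First I would extend $x$ to a basic triple as follows. The orthogonal complement of $\RR x$ in $\Im\OO\simeq\RR^7$ is six-dimensional, so I can pick a unit vector $y\in\Im\OO$ with $\langle x,y\rangle=0$. By alternativity (Exercise~\ref{assocsubalg}) the subalgebra $\AA:=\langle 1,x,y\rangle$ generated by $\{x,y\}$ is associative. A short computation using $x^2=y^2=-1$ and $xy=-yx$ (which follows from $\langle x,y\rangle=0$ together with $\Re(x)=\Re(y)=0$) shows that $\{1,x,y,xy\}$ is orthonormal in $\OO$, and hence spans a four-dimensional composition subalgebra; by Theorem~\ref{hurwitzgeneral} this is a quaternion subalgebra $\AA\simeq\HH$. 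The orthogonal complement of $\AA$ inside $\Im\OO$ has dimension $7-3=4$, so I can pick a unit $z\in\AA^\perp\cap\Im\OO$. By construction $(x,y,z)$ is a basic triple.

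Then, given any two points $x_1,x_2\in S^6$, I would extend them to basic triples $(x_1,y_1,z_1)$ and $(x_2,y_2,z_2)$ by the previous step. By the transitivity of the $G_2^{\mathrm{c}}$-action on basic triples, there exists $\varphi\in G_2^{\mathrm{c}}$ mapping one triple to the other; in particular $\varphi(x_1)=x_2$, which proves that $G_2^{\mathrm{c}}$ acts transitively on $S^6$.

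The only mild obstacle is the verification that two orthogonal imaginary units generate a quaternion subalgebra; this reduces to the alternative identity $x(xy)=x^2 y$ and the observation that $xy\in\Im\OO$ whenever $x,y\in\Im\OO$ are orthogonal, both of which are routine consequences of the material already developed in this section.
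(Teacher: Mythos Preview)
Your proposal is correct and follows exactly the paper's intended argument: the corollary is stated immediately after the simple transitivity on basic triples (Exercise~\ref{ex_dim14}), whose solution shows precisely that any unit imaginary octonion can be taken as the first entry of a basic triple. The extra justification you give for why $\langle 1,x,y,xy\rangle$ is a quaternion subalgebra is more detailed than the paper's treatment but entirely in the same spirit.
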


In a sense, octonions are often responsible for the existence of exceptional objects in mathematics. Their derivations give the smallest exceptional Lie algebra in the Cartan--Killing classification:

\begin{theorem}%[Bannow/Jacobson, 1939]
\label{g2simple}
$\fg_2$ is an exceptional simple Lie algebra.
\end{theorem}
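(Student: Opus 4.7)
The plan is to exploit the faithful action of $\fg_2 = \Der(\OO^\CC)$ on $V := \Im\OO^\CC \simeq \CC^7$ (faithful because derivations kill $1_{\OO^\CC}$) and combine irreducibility, semisimplicity, and a dimension check.

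First I would show that $V$ is $\fg_2$-irreducible. By Corollary \ref{g2trans}, $G_2^{\mathrm{c}}$ acts transitively on $S^6 \subset \Im\OO$, hence irreducibly on $\RR^7$. To transfer irreducibility to the complexification, for any $\fg_2$-submodule $U \subset \CC^7$ the subspaces $U \cap \overline U$ and $U + \overline U$ are $\fg_2^{\mathrm{c}}$-stable and closed under complex conjugation, so they are complexifications of real $G_2^{\mathrm{c}}$-submodules of $\RR^7$ and thus $0$ or $\CC^7$; since $7$ is odd, the alternative $U \cap \overline U = 0$, $U + \overline U = \CC^7$ would force $2 \dim_{\CC} U = 7$, so we must have $U = 0$ or $U = \CC^7$.

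Next I would establish semisimplicity by passing to the compact form. The (connected) compact group $G_2^{\mathrm{c}} \subset \rmO(\Im\OO)$ has reductive Lie algebra, so $\fg_2^{\mathrm{c}} = [\fg_2^{\mathrm{c}}, \fg_2^{\mathrm{c}}] \oplus \fz(\fg_2^{\mathrm{c}})$, and it suffices to show the center vanishes. Any element of $\fz(\fg_2^{\mathrm{c}})$ commutes with all of $G_2^{\mathrm{c}}$, hence lies in the commutant of the irreducible real representation on $\RR^7$; by the real Schur lemma this commutant is one of $\RR$, $\CC$, or $\HH$. Intersecting with $\fso_7$ kills the scalars, and the oddness of $7$ rules out any compatible complex or quaternionic structure, so $\fz(\fg_2^{\mathrm{c}}) = 0$. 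Hence $\fg_2^{\mathrm{c}}$ is semisimple, and so is its complexification $\fg_2$.

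Simplicity then follows by decomposing $\fg_2 = \bigoplus_i \fg_i$ into simple ideals: the irreducible representation $V$ factors as an outer tensor product $V_1 \otimes \cdots \otimes V_k$ with $V_i$ an irreducible $\fg_i$-module, and since $7$ is prime exactly one factor has dimension $7$ while the rest are trivial; faithfulness kills the trivial factors and forces $k = 1$. Finally, $\fg_2$ is exceptional by inspection of the Cartan--Killing list: the dimensions $(n+1)^2 - 1$, $\binom{n}{2}$, and $n(2n+1)$ of $\fsl_{n+1}$, $\fso_n$, and $\fsp_{2n}$ all skip over $14$. The main obstacle is semisimplicity, where the decisive ingredient ruling out an abelian ideal in $\fg_2^{\mathrm{c}}$ is precisely the oddness of $\dim\Im\OO = 7$, which precludes an invariant complex or quaternionic structure on the defining representation.
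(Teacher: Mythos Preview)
The paper does not actually prove Theorem~\ref{g2simple}: it is stated without proof, the surrounding text merely attributing the result to Cartan (1914) and pointing to the survey references. Your argument therefore supplies a self-contained proof where the paper gives none, and it is correct. The three steps---irreducibility of $V_7$ over $\CC$ via transitivity on $S^6$ and the odd-dimension complexification trick, vanishing of the center of $\fg_2^{\mathrm{c}}$ via the real Schur lemma (again using that $7$ is odd to exclude a $\CC$- or $\HH$-structure), and simplicity from the tensor-product factorization of an irreducible module over a semisimple Lie algebra together with the primality of $7$---are all sound. The final dimension check correctly excludes the classical types.

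One cosmetic remark: the parenthetical ``faithful because derivations kill $1_{\OO^\CC}$'' is doing double duty. What you use is both $D(1)=0$ and that $D$ preserves $\Im\OO^\CC$, so that a derivation vanishing on $\Im\OO^\CC$ vanishes identically; both facts are immediate from $D(1\cdot 1)=2D(1)$ and the orthogonal splitting, so this is not a gap. The paper later writes down the root system and the decomposition of Proposition~\ref{g2fromsl3}, from which simplicity could alternatively be extracted, but it never packages this into a proof of the theorem; your route via the $7$-dimensional representation and repeated use of the arithmetic of $7$ (odd, prime) is cleaner and entirely independent of the root-space description.
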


The same statement holds true over $\RR$ for the octonions $\OO$ (Cartan 1914) and the split octonions $\OO'$, provided that $G_2$ is respectively replaced by its \emph{compact real form} $G_2^{\mathrm{c}}\subset\SO_7(\RR)$ or \emph{split real form} $G_2^\ast\subset\SO(3,4)$ \cite{agricolag2,draperg2}. The other exceptional Lie algebras will be addressed in Lecture 3. For now, let us focus on some representation-theoretic properties of $G_2$.

\section{Representations and invariants of \texorpdfstring{$G_2$}{G2}}
\label{sec:g2rep}

\begin{theorem}
\label{omegageneric}
$G_2^{\mathrm{c}}$ (resp.~$G_2$) is the stabilizer of the associative three-form $\omega\in\Lambda^3(\RR^7)^\ast$ (resp.~its complexification), which is generic in the sense of Section~\ref{sec:nondeg7}. More precisely, the associated quadratic form $q$ defined in \eqref{eq:quadV7} induces the octonionic norm in the sense that
\begin{equation}
    q_\omega(v)=6|v|^2\otimes e^{1234567}\qquad\forall v\in\RR^7.\label{eq:normfromomega}
\end{equation}
\end{theorem}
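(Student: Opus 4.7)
The plan is to prove three things in sequence: the containment $G_2^{\mathrm{c}} \subset \Stab(\omega)$, the explicit formula \eqref{eq:normfromomega}, and finally equality via a dimension count combined with a reconstruction argument. The containment is immediate from the definition $\omega(x,y,z) = -\Re((xy)z)$: the three-form is built entirely from the octonionic product and the inner product, both of which are preserved by any element of $\Aut(\OO)$ (since the norm is determined by the algebra structure); the same formula, applied over $\CC$, gives $G_2 \subset \Stab(\omega)$ on the complexified side.

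For \eqref{eq:normfromomega} I would use a representation-theoretic shortcut rather than a brute computation. The map $v \mapsto q_\omega(v)$ is a $G_2^{\mathrm{c}}$-equivariant quadratic map $\Im\OO \to \det((\RR^7)^*)$, and $G_2^{\mathrm{c}} \subset \SO_7$ acts trivially on the target line. Since $G_2^{\mathrm{c}}$ is transitive on $S^6$ by Corollary \ref{g2trans}, the action on $\Im\OO$ is irreducible and every invariant symmetric bilinear form on it is a scalar multiple of $|\cdot|^2$. Hence $q_\omega(v) = c\,|v|^2 \otimes e^{1234567}$ for a universal constant $c$, which I pin down by one evaluation: from \eqref{eq:omegainbasis} one computes $e_1 \intprod \omega = e^{23} + e^{54} + e^{76}$, and wedging its square with $\omega$ produces $6\,e^{1234567}$, so $c = 6$.

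Genericity is then immediate: \eqref{eq:normfromomega} shows that $q_\omega$ has maximal rank $7$, so $\omega$ is nondegenerate in the sense of Section \ref{sec:nondeg7} and lies in the unique open $\GL_7$-orbit, of dimension $35$. Thus $\Stab_{\GL_7}(\omega)$ has dimension $49 - 35 = 14 = \dim G_2^{\mathrm{c}}$ by Exercise \ref{ex_dim14}, and the connected subgroup $G_2^{\mathrm{c}}$ is the identity component of the stabilizer. To upgrade containment to equality, I reconstruct the algebra from $\omega$: for $g \in \Stab(\omega)$, equivariance of $q_\omega$ together with \eqref{eq:normfromomega} gives $|gv|^2 = \det(g)^{-1}|v|^2$, and over $\RR$ positivity forces $\det g = 1$, so $g \in \SO_7$. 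Using $\omega(x,y,z) = \langle\Im(xy), z\rangle$ and $\Re(xy) = -\langle x, y\rangle$ for $x, y \in \Im\OO$, I then check that $g$ extends by $g(1) := 1$ to an algebra automorphism of $\OO$, hence $g \in G_2^{\mathrm{c}}$.

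The main obstacle I anticipate is the complex case: the relation $g^t g = \det(g)^{-1} I$ only forces $\det(g)^9 = 1$, and the central $\mu_3 \subset \GL_7(\CC)$ acts trivially on $\omega$. The cleanest fix is to phrase the stabilizer statement inside $\SL_7(\CC)$, where this ambiguity disappears and the same reconstruction argument produces an element of $G_2 = \Aut(\OO^{\CC})$.
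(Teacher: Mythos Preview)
Your proof follows essentially the same line as the paper's: both establish $G_2^{\mathrm{c}} \subseteq \Stab(\omega)$ from the definition of $\omega$, use transitivity on $S^6$ to reduce the computation of $q_\omega$ to the single vector $e_1$, and then argue that any $g \in \Stab(\omega)$ preserves the inner product and hence the octonionic multiplication. The only differences are cosmetic --- the paper routes the reconstruction step through the cross product and Exercise~\ref{ex:divalgfromveccross} rather than rebuilding the product directly from $\omega$ and $\langle\cdot,\cdot\rangle$ --- and your remark about the $\mu_3$ ambiguity over $\CC$ is a genuine subtlety that the paper's proof (which treats only $\GL_7(\RR)$ explicitly) leaves to the reader.
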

\begin{proof}%\cite{harvey}, Problem 14 in Part 6.
%Let $v\in\RR^7=\Im\OO$ be any unit vector. Then $v\intprod\omega$ is a two-form on $v^\perp\subset\Im\OO$ which encodes left-multiplication with $v$, i.e.
%\[(v\intprod\omega)(x,y)=\langle\Im(vx),y\rangle=\langle\ell_vx,y\rangle\]
%for $x,y\in v^\perp$. Using the calculational rules of composition algebras, one shows that the map $\ell_v: v^\perp\to v^\perp$ is a complex structure and it is compatible with the inner product. Thus $v\intprod\omega$ is a Kähler form on $v^\perp$, and it follows that $\frac{(v\intprod\omega)^3}{3!}$ is the volume element on $v^\perp$. Thus, after homogenizing, we obtain
%\[(v\intprod\omega)^3=6|v|^2v\intprod e^{1234567}.\]
%for arbitrary $v\in\RR^7$. Polarizing this, we find
%\[(v\intprod\omega)^2\wedge(u\intprod\omega)=2(|v|^2u+2\langle u,v\rangle v)\intprod e^{1234567}\]
%for any $u,v\in\RR^7$. Finally, we have
%\begin{align*}
%    u\intprod((v\intprod\omega)^2\wedge\omega)&=2\omega(v,u,\cdot)\wedge(v\intprod\omega)\wedge\omega+(v\intprod\omega)^2\wedge(u\intprod\omega)\\
%    &=...
%\end{align*}
%and thus $q(v)=6|v|^2e^{1234567}$. Hence we have shown that the quadratic form $q$ is nondegenerate, which means that $\omega$ is generic.
As noted above, $G_2$ preserves the octonionic multiplication, the norm, and the imaginary part. Thus, it also stabilizes the associative three-form \[\omega(x,y,z)=\langle\Im(xy),z\rangle.\]
This shows that $G_2\subseteq\Stab(\omega)$.

By Corollary~\ref{g2trans}, $G_2^{\mathrm{c}}$ acts transitively on the unit sphere while preserving $\omega$. Thus, we may take without restriction $v=e_1$ and with the help of \eqref{eq:omegainbasis} compute
\begin{align*}
    v\intprod\omega&=e^{23}+e^{54}+e^{76},\\
    (v\intprod\omega)^2&=2e^{2354}+2e^{2376}+2e^{5476},\\
    (v\intprod\omega)^2\wedge\omega&=6e^{1234567}.
\end{align*}
After homogenizing, we obtain \eqref{eq:normfromomega}. This means that $q_\omega$ is nondegenerate, that is, $\omega$ is a generic three-form.

In particular, \eqref{eq:normfromomega} implies that every element in $\GL_7(\RR)$ preserving $\omega$ also preserves the volume form $e^{1234567}$ and the inner product on $\RR^7$. Thus, it will also preserve the vector cross product on $\RR^7$, and by Exercise~\ref{ex:divalgfromveccross} (and Hurwitz's Theorem) also the multiplication on $\OO=\RR\oplus\RR^7$. This shows that $\Stab(\omega)\subseteq G_2$.
\end{proof}

 Let $V_7=\Im\OO^\CC$ be the \emph{standard representation} of $G_2$. As $G_2$ preserves a nondegenerate bilinear form on $V_7$, this representation is self-dual, i.e.,~$V_7\simeq V_7^\ast$. The same thus goes for exterior or symmetric powers of $V_7$. In the sequel, we will freely identify these representations with their dual.
For example, $S^2V_7$ identifies with the space of quadratic forms on $V_7$, so it contains an invariant line generated by $q_\omega$. An invariant complement is the kernel $S^2_0V_7$ of the  contraction with $q_\omega$. 

\begin{prop}\label{prop:wedge3}
The representation $\extp^3V_7$ of $G_2$ decomposes into invariant subspaces as
\[\extp^3V_7\simeq\CC\oplus V_7\oplus S^2_0V_7.\]
\end{prop}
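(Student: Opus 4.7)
The plan is to construct three $G_2$-equivariant embeddings — one from each of $\CC$, $V_7$, and $S^2_0V_7$ — into $\extp^3V_7$, and then to conclude via Schur's lemma. Since $\CC$, $V_7$, and $S^2_0V_7$ are pairwise non-isomorphic irreducible $G_2$-representations, their images in $\extp^3V_7$ must intersect pairwise trivially, so their sum is direct; matching the dimensions $1+7+27=35=\binom{7}{3}$ then forces this sum to be all of $\extp^3V_7$. Throughout, I use the $G_2$-invariant isomorphism $V_7\simeq V_7^*$ induced by $q_\omega$ (and the resulting $\extp^kV_7\simeq\extp^kV_7^*$), as well as the $G_2$-invariant Hodge star $\star$.

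The trivial summand is immediate from Theorem~\ref{omegageneric}: the invariant three-form $\omega\in\extp^3V_7^*$ spans a one-dimensional subrepresentation. For the $V_7$ summand, let $\psi=\star\omega\in\extp^4V_7^*$ be the (invariant) coassociative four-form and consider
\[\iota\colon V_7\longrightarrow\extp^3V_7^*,\qquad v\longmapsto v\intprod\psi.\]
This is $G_2$-equivariant since $\psi$ is invariant; its kernel is a subrepresentation of the irreducible $G_2$-module $V_7$, hence is either $0$ or all of $V_7$. The latter is excluded because $\psi\neq 0$, so $\iota$ is injective.

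For the $S^2_0V_7$ summand, viewing $V_7$ inside $V_7^*$ via $q_\omega$, I define the $G_2$-equivariant map
\[\sigma\colon S^2V_7\longrightarrow\extp^3V_7^*,\qquad v\cdot w\longmapsto v\wedge(w\intprod\omega)+w\wedge(v\intprod\omega).\]
Using the identity $\sum_ie^i\wedge(e_i\intprod\omega)=3\omega$, one finds $\sigma(q_\omega)=6\omega$, so the invariant line $\CC\cdot q_\omega\subset S^2V_7$ is sent into $\CC\cdot\omega$ (already accounted for). The restriction $\sigma|_{S^2_0V_7}$ is then an equivariant map out of an irreducible module; by Schur it is either zero or injective, and a direct evaluation on $e_i\cdot e_j$ with $i\neq j$ orthogonal produces a nonzero three-form, confirming injectivity.

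The main subtle point along the way is the irreducibility of $V_7$ and $S^2_0V_7$ as $G_2$-modules: for $V_7$ this follows from the transitivity of $G_2^{\mathrm{c}}$ on $S^6$ (Corollary~\ref{g2trans}), while for $S^2_0V_7$ one either appeals to the general fact that the Cartan power $V(2\omega_1)$ of a fundamental representation is irreducible, or verifies it by hand using the full octonionic decomposition $V_7\otimes V_7=\CC\oplus V_7\oplus\fg_2\oplus S^2_0V_7$.
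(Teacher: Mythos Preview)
Your proof is correct but takes a genuinely different route from the paper's. The paper argues via a single equivariant surjection: the infinitesimal action $\End(V_7)\to\extp^3V_7$, $A\mapsto A_\ast\omega$, whose kernel is exactly $\fg_2$ because $\omega$ is generic (Theorem~\ref{omegageneric}); combined with $\End(V_7)\simeq\CC\oplus S^2_0V_7\oplus\extp^2V_7$ and $\extp^2V_7\simeq V_7\oplus\fg_2$, this yields the claim at once. The key advantage of that organization is that it never needs $S^2_0V_7$ to be irreducible---only $\fg_2$ (Theorem~\ref{g2simple}) and $V_7$. Your approach, building three explicit embeddings and invoking Schur, does require irreducibility of $S^2_0V_7$ both for injectivity of $\sigma|_{S^2_0V_7}$ and for disjointness of images; your first justification (Cartan power) really hides a Weyl dimension computation $\dim V_{2\omega_1}=27$, and your second (the tensor decomposition $V_7\otimes V_7=\CC\oplus V_7\oplus\fg_2\oplus S^2_0V_7$) does not by itself give irreducibility of the last summand. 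The paper in fact defers irreducibility of $S^2_0V_7$ to a remark \emph{after} the proof, precisely because the proposition is about invariant subspaces. Your maps $v\mapsto v\intprod(\star\omega)$ and $\sigma(v\cdot w)=v\wedge(w\intprod\omega)+w\wedge(v\intprod\omega)$ are pleasantly explicit; note that $\sigma$ is (up to sign) the restriction of $A\mapsto A_\ast\omega$ to symmetric endomorphisms, so you could bypass the irreducibility issue by observing that the kernel $\fg_2$ of the full map lies entirely in $\extp^2V_7$, hence $\sigma$ is injective on all of $S^2V_7$---at which point you have essentially recovered the paper's argument.
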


\begin{proof}
First note that $\End(V_7)\simeq \CC\oplus S^2_0V_7\oplus\extp^2V_7$ under $\rmO_7$ and thus under $G_2$. Further, the Lie algebra $\fg_2\subset\fso_7\simeq\extp^2V_7$ is invariant, and irreducible by Theorem~\ref{g2simple}. The equivariant map
\[V_7\lra\extp^2V_7,\qquad v\longmapsto v\intprod\omega\]
is injective since $\omega$ is nondegenerate. For dimensional reasons, it follows that
\[\extp^2V_7\simeq V_7\oplus\fg_2.\]
The infinitesimal action on $\omega$ now gives a map
\[\End(V_7)\lra\extp^3V_7,\qquad A\longmapsto A_\ast\omega.\]
As a consequence of Theorem~\ref{omegageneric}, the kernel of this map is precisely $\fg_2$, and counting dimensions we see that it must be surjective. Using the decomposition of $\End(V_7)$, we arrive at the desired statement.
\end{proof}

\begin{remark}
The representations occurring in this decomposition are irreducible. This is clear for $V_7=\Im\OO^\CC$, since $G_2^{\mathrm{c}}$ acts transitively on $S^6\subset\Im\OO$ by Corollary~\ref{g2trans}. One may employ the theory of highest weights to see that there is a $27$-dimensional irreducible representation of $G_2$, necessarily contained in $S^2V_7$; hence it must coincide with the $27$-dimensional representation $S^2_0V_7$.
\end{remark}

%\begin{exercise}
%$\mathrm{PSL}(3,\FF_2)$ vs $\mathrm{PSL}(2,\FF_7)$. 
%\end{exercise}
%Not really feasible, and barely related to G_2. 

The Cartan--Killing classification of simple Lie algebras is mainly based on root systems, which are finite combinatorial objects encoding the Lie algebra structure. The root system of type $G_2$ is made of twelve roots, described as vectors in a real plane. Up to symmetries one can choose
so-called simple roots, a short one $\alpha_1$ and a long one $\alpha_2$, with an angle of $5\pi/6$ between them; the full root system is depicted in Figure~\ref{fig:g2roots}. in terms of $\alpha, \beta, \gamma$, such that $\alpha+\beta+\gamma=0$,  the six short roots are $\pm\alpha,\pm\beta,\pm\gamma$, and the six long roots are $\pm(\alpha-\beta),\pm(\beta-\gamma),\pm(\gamma-\alpha)$.

%may also steal from https://tex.stackexchange.com/questions/233016/root-system-g-2
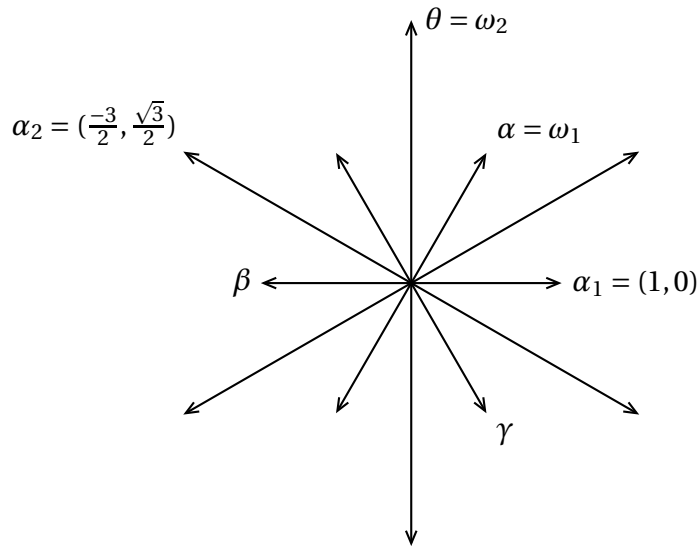
\begin{figure}[H]
\centering
\begin{tikzpicture}[
    % arrow heads for all lines (with narrower arrow head width)
    -{Straight Barb[bend,
       width=\the\dimexpr10\pgflinewidth\relax,
       length=\the\dimexpr12\pgflinewidth\relax]},
  ]
    % straight arrows
    \foreach \ii in {0, 1, ..., 5} {
      \draw[thick] (0, 0) -- (\ii*60:2);
      \draw[thick] (0, 0) -- (30 + \ii*60:3.5);
    }
    % arc arrow
    % annotations
    \node{};
    \node[right, inner sep=.5em] at (90:3.5) {$\theta=\omega_2$};
    \node[above right] at (2*30:2) {$\alpha=\omega_1$};
    \node[right] at (2, 0) {$\alpha_1= (1,0)$};
    \node[above left, inner sep=.2em] at (5*30:3.5) {$\alpha_2=(\frac{-3}{2},\frac{\sqrt{3}}{2})$};
    \node[left] at (6*30:2) {$\beta$};
    \node[below right] at (10*30:2) {$\gamma$};
  \end{tikzpicture}
  \caption{The root system of type $G_2$}
  \label{fig:g2roots}
\end{figure}

Dual to the simple roots are the \emph{simple coroots} or \emph{fundamental weights}  $$\omega_1=2\alpha_1+\alpha_2=\alpha, \qquad \omega_2=3\alpha_1+2\alpha_2=\alpha-\gamma.$$ To every representation of a semisimple Lie algebra one can associate a set of weights, and any finite-dimensional irreducible representations is characterized (up to equivalence) by its \emph{highest weight}. The possible highest weights are precisely the \emph{dominant integral weights}, that is, nonnegative integer linear combinations of the fundamental weights. In the following, for every such dominant integral weight $\lambda$ we shall denote the irreducible representation of highest weight $\lambda$ by $V_\lambda$. In this notation, we have
\begin{align*}
    V_{\omega_1}&=V_7,&
    V_{\omega_2}&=\fg_2,&
    V_{2\omega_1}&=S^2_0V_7.
\end{align*}
For a thorough introduction to the theory of roots and weights, we refer to \cite{bourbaki,FH}.

\smallskip
The Lie algebra $\fg_2$ may be directly constructed from the root system as
\[\fg_2=\fh\oplus\bigoplus_{\lambda\text{ root}}\fg_\lambda,\]
where $\fh\simeq\CC^2$ is the Cartan subalgebra, and the $\fg_\lambda$ are the (one-dimensional) root spaces. A detailed discussion is available in \cite[\S22]{FH}. For now, let us mention two remarkable shortcuts via subalgebras of $\fg_2$.

\begin{prop}[\cite{FH}, \S22]\label{g2fromsl3}
The Lie algebra $\fg_2$ may be constructed as
\begin{align*}
\fg_2&=\fsl_3\oplus V_3\oplus V_3^\ast,&\fsl_3&=\fh\oplus\bigoplus_{\lambda\text{ long root}}\fg_\lambda,\\
V_3&=\fg_\alpha\oplus\fg_\beta\oplus\fg_\gamma,&V_3^\ast&=\fg_{-\alpha}\oplus\fg_{-\beta}\oplus\fg_{-\gamma}.
\end{align*}
\end{prop}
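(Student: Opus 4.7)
The plan is to read the decomposition directly off the $G_2$ root system displayed in Figure \ref{fig:g2roots}. Recall that for any semisimple Lie algebra with root-space decomposition $\fg = \fh \oplus \bigoplus_\lambda \fg_\lambda$, a subset $\Phi' \subseteq \Phi$ of roots that is closed under negation and under addition (when the sum is again in $\Phi$) generates a reductive Lie subalgebra spanned by $\fh$ and the corresponding root spaces. I will exhibit such a $\Phi'$ consisting of the six long roots, and then analyze how the short root spaces organize under its adjoint action.

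First I would verify that the six long roots $\Phi' = \{\pm(\alpha-\beta), \pm(\beta-\gamma), \pm(\gamma-\alpha)\}$ form a sub-root-system of type $A_2$. They are of equal length, the angles between consecutive ones are multiples of $\pi/3$, and they sum pairwise to $0$ around the origin; so they trace out the regular hexagon characteristic of $A_2$. Consequently the subspace $\fh \oplus \bigoplus_{\mu \in \Phi'} \fg_\mu$ is a subalgebra of $\fg_2$ of dimension $2+6=8$, with the same root data as $\fsl_3(\CC)$, hence isomorphic to it via the usual Chevalley--Serre identification.

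Next, I need to show that $V_3 := \fg_\alpha \oplus \fg_\beta \oplus \fg_\gamma$ and $V_3^* := \fg_{-\alpha} \oplus \fg_{-\beta} \oplus \fg_{-\gamma}$ are $\fsl_3$-stable and to identify them as the standard representation and its dual. Invariance under the bracket reduces to the combinatorial statement: for every long root $\mu$ and every short positive root $\lambda \in \{\alpha,\beta,\gamma\}$, the sum $\mu+\lambda$ is either not a root or again lies in $\{\alpha,\beta,\gamma\}$ (and symmetrically on the negative side). This is a small finite check using $\alpha+\beta+\gamma=0$: for instance $\alpha+(\gamma-\alpha)=\gamma$ is short, while $\alpha+(\alpha-\beta)=2\alpha-\beta$ has squared length $7$ (normalizing short roots to $1$), which is not a root length in $G_2$, and similarly for the remaining cases. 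Once invariance is established, the $\fh$-weights of $V_3$ are precisely $\alpha,\beta,\gamma$; they sum to zero and are permuted transitively by the Weyl group $S_3$ of the $A_2$ subsystem, so $V_3$ is irreducible of dimension $3$ and must coincide with the standard representation of $\fsl_3$. The dual statement for $V_3^*$ follows by reversing signs. A dimension count $\dim(\fsl_3 \oplus V_3 \oplus V_3^*) = 8+3+3 = 14 = \dim\fg_2$ then forces the inclusion of the right-hand side into $\fg_2$ to be an equality, concluding the proof.

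The only real obstacle is the case-by-case check that adding a long root to a short root never lands on a long root, but it is light because only the two root-lengths of $G_2$ need to be tracked, and any such addition would correspond to an "anomalous" combination incompatible with the $G_2$ Weyl group action visible in Figure \ref{fig:g2roots}.
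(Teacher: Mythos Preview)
The paper does not actually prove this proposition: it is stated with a bare citation to Fulton--Harris \S22, and the constructive side (defining the brackets $V_3\times V_3\to V_3^\ast$, $V_3\times V_3^\ast\to\fsl_3$, etc., and checking Jacobi) is deferred to a later exercise. Your argument is correct and proceeds in the opposite direction: rather than \emph{building} $\fg_2$ from $\fsl_3$ and its two $3$-dimensional modules, you start from the root-space decomposition of $\fg_2$ (already available at this point in the notes) and carve out the pieces. This buys you the Lie bracket and the Jacobi identity for free, so the verification reduces to the short combinatorial check on root sums that you carry out.

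One small point to tighten: the implication ``the Weyl group $S_3$ permutes the weights transitively, so $V_3$ is irreducible'' is not a general principle. What actually makes $V_3$ irreducible is that for $\mu$ a long root and $\lambda\in\{\alpha,\beta,\gamma\}$ with $\mu+\lambda\in\{\alpha,\beta,\gamma\}$, the bracket $[\fg_\mu,\fg_\lambda]=\fg_{\mu+\lambda}$ is \emph{nonzero} (a standard fact for root systems); hence any nonzero $\fsl_3$-submodule containing one weight space contains all three. Equivalently, you can observe that $\fg_\alpha$ is a highest-weight vector for the $\fsl_3$-action (your root check shows it is killed by all positive long root vectors), so the irreducible it generates already has dimension $\geq 3$. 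With that adjustment the argument is complete.
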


\begin{remark}\label{rmk:V_7-roots}
%Regarding the construction in Proposition~\ref{g2fromsl3}, 
Accordingly, the seven-dimensional representation of $\fg_2$ is obtained as
\[V_7=\CC\oplus V_3\oplus V_3^\ast.\]
For a suitable basis of root vectors,  $\CC=\spann{e_0}$, $V_3=\spann{ x_\alpha,x_\beta,x_\gamma}$, and $V_3^\ast=\spann{x_{-\alpha},x_{-\beta},x_{-\gamma}}$, and there is an invariant three-form given by
\begin{equation}\label{eq:form-root}
    \omega= x^0\wedge x^{\alpha}\wedge x^{-\alpha}+ x^0\wedge x^{\beta}\wedge x^{-\beta}+ x^0\wedge x^{\gamma}\wedge x^{-\gamma} + x^{\alpha}\wedge x^{\beta}\wedge x^{\gamma} + x^{-\alpha}\wedge x^{-\beta}\wedge x^{-\gamma},
\end{equation}
where $x^0, x^{\alpha} \ldots x^{-\gamma}$ denotes the dual basis.
Note that this is precisely the form $\omega_5$ from Theorem~\ref{thm:schouten} by setting $e^1 = x^0$, $e^2 = x^\alpha$, $e^3 = x^{-\alpha}$, $e^4 = x^{\beta}$, $e^5 = x^{-\beta}$, $e^6 = x^{\gamma}$, and $e^7 = x^{-\gamma}$.

The corresponding quadratic form $q$ is 
\begin{equation}\label{eq:quadric-root} 
 q= 2(x^0)^2 -x^{\alpha}x^{-\alpha}-x^{\beta}x^{-\beta}-x^{\gamma}x^{-\gamma},
 \end{equation}
%\[\omega=x^0\wedge(x^\alpha\wedge x^{-\alpha}+x^\beta\wedge x^{-\beta}+x^\gamma\wedge x^{-\gamma})+x^\alpha\wedge x^\beta\wedge x^\gamma+x^{-\alpha}\wedge x^{-\beta}\wedge x^{-\gamma}.\]
%where $x^0, x^{\alpha} \ldots x^{-\gamma}$ denotes the dual basis. 
\end{remark}
%
% In a suitable basis of root vectors, the quadratic form $q$ is written as
%\begin{equation}\label{eq:quadric-root} 
% q= 2(x^0)^2 -x^{\alpha}x^{-\alpha}-x^{\beta}x^{-\beta}-x^{\gamma}x^{-\gamma},
% \end{equation}
%whilst the $G_2$-invariant 3-form $\omega$ is given by

\section{Exercises}

 \begin{exercise}\label{ex:rank-forms} \necessary
    The ranks of the quadratic forms $q_{\omega_i}$ associated to the three-forms $\omega_1$, $\omega_2$, $\omega_3$, $\omega_4$, $\omega_5$ appearing in Theorem \ref{thm:schouten} are respectively $1,1,2,4,$ and $7$.
\end{exercise}

\begin{exercise}
\label{ex_CD_H} \necessary
Since $K=IJ$, one can write $x= (x_0+x_1I)+(x_2+x_3I)J$ and see $\HH=\CC\oplus \CC$. Show that under this isomorphism, quaternion multiplication is given in terms of complex multiplication and conjugation by the rule
\begin{equation}
(a,b)\cdot(c,d)=(ac-\overline{d}b,b\overline{c}+da)
\label{eq:cayleydickson}
\end{equation}
for $a,b,c,d\in\CC$. This way to obtain $\HH$ from $\CC$ is called the \emph{Cayley--Dickson doubling process}. 
\end{exercise}

\begin{exercise}
\label{assocsubalg}
\computation Prove that $\OO$ has the \emph{alternativity} property: for any $x,y\in\OO$,
\[x(xy)=(x^2)y,\qquad(yx)x=y(x^2),\qquad (xy)x=x(yx).\]
Conclude that any two elements of $\OO$ are contained in an associative subalgebra.
\end{exercise}

\begin{exercise}
\label{ex_moufang} \computation Prove the Moufang identities: for all $x,y,z\in\OO$,
\[z(x(zy))=((zx)z)y,\qquad x(z(yz))=((xz)y)z,\qquad(zx)(yz)=(z(xy))z=z((xy)z).\]
\end{exercise}

\begin{exercise}
\curiosity Recall the definition of conjugation in a composition algebra:
\[u\mapsto\bar u:=2\langle u,1_\AA\rangle-u,\qquad u\in\AA.\]
Show that associativity of a normed division algebra $\AA$ is required for its Cayley--Dickson double to be normed. In particular, the \emph{sedenions}, obtained from doubling $\OO$, are not a normed division algebra.
\end{exercise}

\begin{exercise}
\label{ex_noneucl} \training
Show that the complexified quaternions $\HH^\CC$ are isomorphic to the matrix algebra $M_2(\CC)$ (\emph{Hint:} Search for Pauli matrices). In the same vein, show that $M_2(\RR)$ is a four-dimensional real split composition algebra. It is called the algebra of  \emph{split-quaternions}.
\end{exercise}

\begin{exercise}
\label{ex:divalgfromveccross} \necessary
 Show that if $(V,\langle\cdot\,,\cdot\,\rangle)$ carries a vector cross product $\times$, then $\AA:=\RR\oplus V$ together with the product
 \[(s,u)\cdot(t,v):=(st-\langle u,v\rangle,sv+tu+u\times v),\qquad s,t\in\RR,\ u,v\in V,\]
 is a normed division algebra.
\end{exercise}

\begin{exercise}\label{sextonions}[Sextonions, see \cite{LM-sext}]
\label{ex:sextonions} \curiosity
Let $N\subset\OO^\CC$ be a two-dimensional subspace. Prove that $S:=N^\perp$ is a six-dimensional subalgebra if and only if $N$ is a null-plane. Prove that such $S$ do exist, and that $S\simeq M_2(\CC)\oplus \CC^2$ with multiplication given by
\[(A,v)\cdot(B,w)=(AB,\tr(A)w-Aw+Bv),\qquad A,B\in M_2(\CC),\ v,w\in\CC^2.\]
(One can use the fact that $G_2$ acts transitively on the set of null planes.)
Since all null-planes are conjugate, we get a unique six-dimensional algebra of $\OO^\CC$, up to conjugation. We denote by $\SSS$ and call it the \emph{sextonion algebra}.
\end{exercise}

\begin{exercise}
\label{ex_dim14}
\necessary Show that $G_2^{\mathrm{c}}$ acts simply transitively on the set of basic triples. Conclude that $G_2^{\mathrm{c}}$ is connected and of dimension $14$.
\end{exercise}

\begin{exercise}
\training In Proposition \ref{g2fromsl3}, the action of $\fsl_3$ on itself, $V_3$, and $V_3^\ast$ is canonical. Construct the missing parts of the bracket
\[V_3\times V_3\lra V_3^\ast,\qquad V_3^\ast\times V_3^\ast\lra V_3,\qquad V_3\times V_3^\ast\lra\fsl_3\]
and verify the Jacobi identity.
\end{exercise}

\begin{exercise}
\training Define the action of $\fg_2$ on the $V_7$ of Remark \ref{rmk:V_7-roots}, and show that $\omega$ is invariant and generic.
\end{exercise}

\begin{exercise}
\curiosity Construct directly $\fg_2$ from $\fsl_2\times\fsl_2=\fsl(A)\times\fsl(B)$, as well as the seven-dimensional representation as follows:
\[ \fg_2= \fsl(A)\times\fsl(B)\oplus (A\otimes S^3B), \qquad V_7=B\oplus (A\otimes S^2B).\]
\end{exercise}

%\section{}
\medskip
\begin{sol}[Ex. \ref{ex:sextonions}]
    Let us prove the statement that: if $S$ is a six-dimensional subalgebra, then $N:=S^\perp$ is a null-plane. Notice that if $s\in S$ also $\overline{s}\in S$. Moreover $N\subset V_7$. Let $s,s'\in S$ and $n\in N$. Applying the calculational rules in \cite[Lem.~1.3.2, 1.3.3]{springerveldkamp} we get $0=\langle n,ss'\rangle=\langle \overline{s}n,s'\rangle $, and therefore $S\cdot N\subset N$. Since $S$ is six-dimensional, its orthogonal is two-dimensional, so the right multiplication by any element $n\in N$ cannot be invertible. This shows that $N$ is isotropic for the symmetric form. Let us look at the image $R_n(S)$ of the right multiplication operator $R_n$ by $n\in N$. The right multiplication version of Theorem \ref{kerofleftmul} tells us that $R_n(\OO^\CC)$ is four-dimensional, and since $S$ has codimension two, $R_n(S)$ has at most codimension two in $R_n(\OO^\CC)$; thus $R_n(S)=N$. So for any $n'\in N$, we have $n'=sn$ for a certain $s\in S$. But then $n'n=(sn)n=-q(n)s=0$. This shows that $N$ is a null-plane.

    Consider now $N$ a null-plane. By Corollary \ref{G2G27_hom} we know that $G_2$ acts transitively on the set of null-planes. So we can suppose that $N\subset\OO^\CC$ is the plane spanned by $e_4+\ii e_5$ and $e_6+\ii e_7$. Then 
    %$N$ is a null-plane, i.e.,~$x\cdot y=0$ for all $x,y\in N$, and we have
\[S:=N^\perp=\HH^\CC\oplus N=\mathrm{span}\{1,e_1,e_2,e_3,e_4+\ii e_5,e_6+\ii e_7\}.\]
One can check directly that $S$ is a subalgebra. Moreover, it contains a copy of $\HH^\CC\simeq M_2(\CC)$; the motivated reader can check the multiplication rules in $S$. We remark that even though the sextonions $\SSS$ are a subalgebra of $\OO^\CC$, the restriction of the quadratic form $q$ to $\SSS$ is degenerate (since $N\subset N^\perp$). Hence, $S$ is not itself a composition algebra, and its existence does not contradict Theorem~\ref{hurwitzgeneral}.
\end{sol}

\begin{sol}[Ex. \ref{ex_dim14}] Choose $e_1=$ any vector of unit norm, then $e_2=$ any vector of unit norm and orthogonal to $e_1$. Then let $e_3=e_1e_2$, it has unit norm and is orthogonal to $e_1, e_2$. Finally choose $e_4$ of unit norm, orthogonal to $e_1,e_2,e_3$, and let $e_5=e_4e_1$, $e_6=e_4e_2$, $e_7=e_4e_3$: you get the correct multiplication table.

The vector $e_1\in S^6\subset\Im\OO$ can be chosen arbitrarily, while $e_2\in S^5\subset e_1^\perp$ and $e_4\in S^3\subset\{e_1,e_2,e_1e_2\}^\perp$. Thus $G_2^{\mathrm{c}}$ is diffeomorphic to a bundle over $S^6$ whose fiber is itself a bundle over $S^5$ with fiber $S^3$. In particular it is connected and of dimension $6+5+3=14$.
\end{sol}

\chapter{A few \texorpdfstring{$G_2$}{G2}-varieties}

In this lecture, we primarily focus on the complex group $G_2$. Our main objective is to study various interesting projective varieties acted on by $G_2$, with particular emphasis on highlighting how octonionic geometry and the representation theory of $G_2$ are deeply intertwined with their geometry. We will start by looking at the two homogeneous $G_2$-varieties, the 5-dimensional quadric $\QQ\subset \PP^6$ and the $G_2$-Grassmannian $\GtwoGr(2,7)\subset \PP(\mathfrak{g}_2^{\CC})$.
We will then move on to the study of specific families of linear spaces that are acted on by  $G_2$: the Cayley Grassmannian $\CGr\subset \OGr(3,7)$, its "doubled version" $\DGr\subset \OGr(6,14)$, the Lie Grassmannian $\LieGr(2,7)\subset \OGr(2,7)$ and the horospherical $G_2$-variety $HG\subset \PP(\Im(\OO^{\CC})\oplus \mathfrak{g_2}^{\CC})$. All have special properties that we will discuss for they own sake, but also place in a more general context. 

\section{Homogeneous varieties associated to \texorpdfstring{$G_2$}{G2}}
%\section{The quadric, the \texorpdfstring{$G_2$}{G2}-Grassmannian, and the orthogonal Grassmannian}

\subsection*{The six-sphere}

Over $\RR$, the group $G_2^{\mathrm{c}}$ acts transitively on imaginary octonions of norm one. Such octonions  are parametrized by the 6-dimensional sphere $\SSS^6\subset \Im \OO\simeq \RR^7$, therefore $\SSS^6$ is a homogeneous space of $G_2^{\mathrm{c}}$ (Corollary~\ref{g2trans}). As it turns out:
\[ \SSS^6\simeq G_2^{\mathrm{c}}/\SU_3.\]
To prove this isomorphism, it is sufficient to compute the isotropy subgroup of an arbitrary point $x\in \SSS^6$. Since $G_2^{\mathrm{c}}\simeq \Aut(\OO)$, each $\phi\in G_2^{\mathrm{c}}$ preserves both the real and imaginary parts of the octonionic product. If we then take an element $\phi\in \Stab_{G_2^{\mathrm{c}}}(x)$, $\phi$ must also fix the entire linear space $V_{x}:=x^{\perp}\cap \Im(\OO)$. This also tells us that $\phi$ is uniquely determined by its restriction to $V_x$ which, as $G_2^{\mathrm{c}}\subset \SO_7(\RR)$, must lie in $\SO_6(\RR)$. %\textcolor{red}{perhaps is better to explain once and for all that the orthogonal is wrt the restirction of the quadratic form.} 
In other words, the restriction to $V_x$ defines an injective morphism $\rho_x \colon\Stab_{G_2^{\mathrm{c}}}(x)\to \SO_6(\RR)$. Since $x^2=-1$, the linear space $V_{x}\simeq \RR^6$ admits a complex structure defined by left-multiplication by $x$. By definition, given $\phi\in \Stab_{G_2^{\mathrm{c}}}(x)$ and $v\in V_x$, $\phi(x\cdot v)=x\cdot \phi(v)$, hence the image of $\rho_x$ preserves the complex structure, that is, lies in $\SL_3(\CC)$.
Since, moreover, $\phi$ preserves both (nondegenerate) bilinear forms $\langle\,\cdot\,,\cdot\,\rangle |_{V_x}$ and $(x\intprod \omega)|_{V_x}$, it must also preserve the Hermitian form $h:= \langle\,\cdot\,,\cdot\,\rangle|_{V_{x}} + \ii(x\intprod \omega)|_{V_{x}}$. Hence, the image of $\rho_x$ is contained in $\SU_3=\SO_6(\RR)\cap\SL_3(\CC)$. Counting dimensions, this allows us to conclude that $\Stab_{G_2^{\mathrm{c}}}(x)\simeq\SU_3$.
%paul: added dimension count argument to have both inclusions.

Since $G_2^{\mathrm{c}}$ acts linearly on $\Im(\OO)$, we can extend its action to $\PP(\Im(\OO))\simeq \PP_{\RR}^6$, the projective space of lines in $\Im(\OO)$ through the origin. This action is still transitive and, as a result: $\PP_{\RR}^6\simeq G_2^{\mathrm{c}}/\Stab_{G_2^{\mathrm{c}}}(x)$ for an arbitrary point $x\in \PP_{\RR}^6$. Let us consider the projection $\pi\colon \SSS^6\to \PP_{\RR}^6$ (mapping a point $x\in \SSS^6$ to the line spanned by $x$ and the origin) and take an arbitrary point $y\in \PP_{\RR}^6$. The preimage $\pi^{-1}(y)$ consists of two antipodal points $\pm x$ in $\SSS^6$; therefore, $\Stab_{G_2^{\mathrm{c}}}(y)=\{\pm\phi\mid \phi\in \Stab_{G_2^{\mathrm{c}}}(x)\}$. Accordingly $\Stab_{G_2^{\mathrm{c}}}(y)=\ZZ_2\ltimes \SU_3$ and \mbox{$\PP_{\RR}^6\simeq G_2^{\mathrm{c}}/(\ZZ_2\ltimes \SU_3)$}.

\subsection*{The quadric and the \texorpdfstring{$G_2$}{G2}-Grassmannian}

Let us discuss what this result becomes over the
complex octonions $\OO^{\CC}=\OO\otimes_{\RR} \CC$. The complexification of the quadratic form $q$ on $\OO$ to a quadratic form on $\OO^\CC$, as well as its restriction to $V_7=\Im\OO^\CC$, shall still be denoted by $q$. Moreover,  all linear spaces $\PP^{n}$ will implicitly be complex projective spaces.
 
As already mentioned, the first noticeable difference when we work over $\CC$, rather than $\RR$, is that there exist non-zero imaginary octonions whose ``norm" is zero. The quadratic form $q$ indeed defines a quadric hypersurface $\QQ^5$ in $\PP(V_7)=\PP(\Im(\OO^\CC))$, though without real points. In fact, there are entire linear spaces of octonions all having zero norm, as we already saw in Theorem~\ref{kerofleftmul}. For what concerns the action of $G_2$ on $\PP(\OO^\CC)$, the complement of the quadric is a $G_2$-orbit, and one can prove that $\PP(\Im(\OO^{\CC}))\setminus\QQ^5\simeq G_2/(\ZZ_2\ltimes\SL_3)$ (\necessary Exercise \ref{ex:SL3}). This implies the existence of an embedding $\SL_3 \hookrightarrow G_2$. We already met such an embedding in the last section (cf. Proposition \ref{g2fromsl3}), where we saw that the subalgebra $\mathfrak{g}_0=\mathfrak{h}\oplus_{\lambda\in \Delta_{l}}\mathfrak{g}_{\lambda}$, generated by the set $\Delta_l$ of long roots, is isomorphic to $\fsl_3$.

Let us recall some further features of $\fg_2$ that will be needed later. We have two fundamental representations of $\fg_2$: the adjoint representation and the seven-dimensional standard representation. The highest weight of the adjoint representation is the highest root \mbox{$\omega_2=3\alpha_1+2\alpha_2$}. The fundamental seven-dimensional representation is generated by the short roots; its highest weight is $\omega_1=2\alpha_1+\alpha_2$, that we also denote by $\alpha$; the weights are $0$ and the short roots (see Remark \ref{rmk:V_7-roots}).

As previously stated, when we pass to the complex octonions, there appear two-dimensional  linear subspaces of $\Im(\OO^{\CC})$ on which all octonionic products are zero. Such linear spaces are called \emph {null-planes} (Definition~\ref{def:null-plane}).
They are parametrized by a closed subvariety $\GtwoGr(2,7)\subset \Gr(2,7)$, on which $G_2$ acts transitively.
As it turns out, it is possible to obtain $\GtwoGr(2,7)$ as the zero locus of a global section of a homogeneous vector bundle on the Grassmannian $\Gr(2,7)$. 
Recall that on $\Gr(2,7)$ there is a \emph{tautological vector bundle}
$\cU$ of rank two (this is simply the vector bundle whose fiber over a point $[V]$ is the space $V$ itself), which fits into an ``Euler type" short exact sequence:
\[ 0\lra \cU\lra \cO_{\Gr}\otimes V_7\lra \cQ\lra 0.\]
Here $\cO_{\Gr}$ is the trivial bundle on $\Gr(2,7)$ and $\cQ$ is the tautological quotient bundle. We also let $\cO_{\Gr}(1)=\det(\cQ)$,  $\cO_{\Gr}(-1)=\cO_{\Gr}(1)^*$, and for any vector bundle $\cV$ and $n\in \ZZ$, we denote $\cV(n):=\cV\otimes \cO_{\Gr}(1)^{\otimes n}$.
%\cO(1) = the ample generator of the Picard group, i.e., the group of equivalence classes of line bundles under \otimes

The invariant three-form $\omega$ defines a global section of $\cQ^*(1)$ as follows. For any pair of linearly independent vectors $x,y$ in $V_7$ spanning $U :=\spann{x, y}$, the linear form $\omega(x,y,\cdot\,)$
clearly annihilates $U$, hence defines an element in $(V_7/U)^*$. Thus $\omega$ defines a section of $(\extp^2\cU)^\ast\otimes\cQ^\ast= \cQ^\ast(1)$.
%It is not difficult to prove the following.

\begin{prop}\label{prop:zero-locus} $\GtwoGr(2,7)\subset \Gr(2,7)$ is the zero-locus of $\omega$ regarded as a section of the rank-five 
vector bundle $\cQ^*(1)$.
\end{prop}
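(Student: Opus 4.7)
The plan is to translate the vanishing of the section $s_\omega$ into a condition on octonionic multiplication, and then use the alternative law together with the structure of the norm to force the resulting subspace to be a null-plane.

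First, unpacking the construction of $s_\omega$: since $\cO_{\Gr}(1)=\det\cQ=(\extp^2\cU)^\ast$ on $\Gr(2,7)$, the section takes its value at $[U]$ in $(\extp^2 U)^\ast\otimes(V_7/U)^\ast$, namely the map $u_1\wedge u_2\mapsto\omega(u_1,u_2,\,\cdot\,)$ (well-defined on $V_7/U$ by alternation of $\omega$). Hence $s_\omega([U])=0$ iff $\omega(u_1,u_2,v)=0$ for all $u_1,u_2\in U$ and all $v\in V_7$. Plugging in the formula $\omega(x,y,z)=\langle\Im(xy),z\rangle$ from Definition~\ref{def:3-form} and using that $\langle\,\cdot\,,\cdot\,\rangle$ is nondegenerate on $V_7$, this amounts exactly to
\[ \Im(u_1 u_2)=0\quad\text{for all } u_1,u_2\in U, \]
i.e.\ $u_1 u_2\in\CC\cdot 1_{\OO^\CC}$ for all $u_1,u_2\in U$. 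The ``only if'' direction of the proposition is then immediate: for a null-plane one has $u_1 u_2 = 0$, so certainly $\Im(u_1u_2)=0$.

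For the converse, I would argue that the above condition forces $U$ to be isotropic, which then yields $u_1u_2=0$. Using $\bar{x}=-x$ for $x\in\Im\OO^\CC$, one checks that $\overline{xy}=yx$ and $\Re(xy)=-\langle x,y\rangle$, so the scalar $u_1u_2\in\CC$ equals $-\langle u_1,u_2\rangle$ and in particular $u_1u_2=u_2u_1$. Assume for contradiction that $q(x)\neq0$ for some $x\in U$, and pick $y\in U$ linearly independent from $x$. By alternativity of $\OO^\CC$ (Exercise~\ref{assocsubalg} extended $\CC$-bilinearly),
\[ x(xy)=x^2 y=-q(x)\,y,\qquad\text{while}\qquad x(xy)=x\cdot c=cx\text{ with }c:=xy\in\CC, \]
so $y=-(c/q(x))\,x\in\CC x$, contradicting independence. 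Therefore $q|_U\equiv 0$; polarizing, $\langle u_1,u_2\rangle=0$ for all $u_1,u_2\in U$, and hence $u_1u_2=-\langle u_1,u_2\rangle=0$. Thus $U$ is a null-plane, as required.

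The main obstacle is precisely the dimension/alternativity step of the previous paragraph: a priori ``$\Im(u_1u_2)=0$ on $U$'' is a weaker condition than ``$u_1u_2=0$ on $U$'', and one needs both the alternative law (to exploit $x^2=-q(x)$ and $x(xy)=x^2y$) and the fact that $\dim U=2$ to exclude the possibility of a nonzero scalar product inside $U$. Once this is settled, comparing dimensions (both $\GtwoGr(2,7)$ and the expected zero-locus of a section of the rank-$5$ bundle $\cQ^\ast(1)$ on the $10$-dimensional $\Gr(2,7)$ have dimension $5$) confirms that the equality is also scheme-theoretically the expected one.
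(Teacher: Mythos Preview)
Your argument is correct and follows essentially the same route as the paper: both reduce to showing that $xy\in\CC\cdot 1$ for linearly independent imaginary $x,y$ forces $xy=0$, and both do this by multiplying by $x$ (or equivalently $\bar{x}$) and invoking $x^2=-q(x)$ to force $y\in\CC x$ when $q(x)\neq 0$. Your version is in fact slightly more careful, since the paper's line ``$y=\frac{t}{|x|^2}\bar{x}$'' tacitly assumes $q(x)\neq 0$, whereas you handle the isotropic case explicitly; the closing remark on scheme-theoretic equality is extra (the paper defers smoothness and dimension to the subsequent corollary).
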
 

\begin{proof} For a pair of linearly independent vectors $x, y$ in $\Im(\OO^{\CC})$, the linear form $\omega(x,y, \cdot\,)$ on  $\Im(\OO^{\CC})$ is identically zero if and only if the octonionic product $xy$ belongs to $\CC\, 1=\Re(\OO^\CC)$, see \eqref{eq:inv-form}. 
But if $xy=t1$ with $t\in \CC^*$, then  since $x \in \Im(\OO^{\CC})$ we deduce that $y=\frac{t}{|x|^2} \overline{x}= \frac{-t}{|x|^2} x$,
contradicting the linear independence of $x$ and $y$.\end{proof}

\begin{cor}
$\GtwoGr(2,7)$ is smooth of dimension five, homogeneous under the action of $G_2$. 
\label{G2G27_hom}
\end{cor}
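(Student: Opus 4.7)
The plan is to prove $G_2$-homogeneity directly, from which smoothness and the dimension count follow automatically as standard consequences of transitive actions of algebraic groups.

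Since $G_2$ stabilizes $\omega$ (Theorem~\ref{omegageneric}), it preserves the section of $\cQ^\ast(1)$ defining $\GtwoGr(2,7)$ by Proposition~\ref{prop:zero-locus}, and hence acts on this variety. A convenient explicit null-plane is $N_0 = \spann{x_\alpha, x_{-\beta}}$ in the root basis of Remark~\ref{rmk:V_7-roots}: the identity $(x_\alpha \wedge x_{-\beta}) \intprod \omega = 0$ is immediate from \eqref{eq:form-root}, since no summand of $\omega$ contains both $x^\alpha$ and $x^{-\beta}$, so $N_0 \in \GtwoGr(2,7)$.

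To compute the orbit dimension, analyze $\Stab_{G_2}(N_0)$ via the action of $\fg_2$ on $V_7$. An element $X \in \fg_2$ lies in the stabilizer iff both $X\cdot x_\alpha$ and $X\cdot x_{-\beta}$ lie in $N_0$. Running through the twelve root spaces of $\fg_2$ (using Proposition~\ref{g2fromsl3} and the weight structure $V_7 = \CC e_0 \oplus V_3 \oplus V_3^\ast$), one finds exactly seven root spaces satisfy both conditions, together with the Cartan, giving dimension~$9$. Moreover, the resulting subalgebra contains $\fg_\gamma \oplus \fh \oplus \fg_{-\gamma}$, with reductive Levi $\fsl_2 \oplus \CC$ and a $5$-dimensional nilradical; it is a parabolic subalgebra conjugate to the standard maximal parabolic $P_2$ associated to the long simple root $\alpha_2$. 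Consequently $G_2 \cdot N_0 \simeq G_2/P_2$ is a smooth closed subvariety of $\GtwoGr(2,7)$ of dimension $14 - 9 = 5$.

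It remains to show this orbit exhausts $\GtwoGr(2,7)$, which is where the real work lies. Since $\GtwoGr(2,7)$ is cut out in the $10$-dimensional $\Gr(2,7)$ by a section of a rank-$5$ bundle, every irreducible component has dimension $\geq 5$, so the orbit $G_2/P_2$ is already an irreducible component, and the task reduces to ruling out additional components. The direct approach is to reduce any null-plane $N$ to $N_0$ using the octonionic structure: pick an isotropic $x \in N$ (which exists by Lemma~\ref{lem:null-isotropic}); by Corollary~\ref{kerofleftmulim}, $N$ is contained in the $3$-dimensional isotropic subspace $K_x$, and specifying $N$ amounts to specifying a line in $K_x/\CC x \simeq \CC^2$. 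The main obstacle is then twofold: (i) establishing transitivity of $G_2$ on isotropic lines in $V_7$ (a parallel stabilizer computation identifies the other maximal parabolic $P_1$, yielding $G_2/P_1 \simeq \QQ^5$), and (ii) verifying that the Levi of $P_1 = \Stab_{G_2}(\CC x)$ acts transitively on the $\PP^1$ of null-planes through $x$ via a standard $\SL_2$-action on $\PP(K_x/\CC x)$. Once this two-step transitivity is verified, $\GtwoGr(2,7)$ becomes a homogeneous space of the connected algebraic group $G_2$, hence smooth, of dimension~$5$.
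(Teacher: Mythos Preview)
Your proof is correct and follows a genuinely different strategy from the paper's. The paper argues in the opposite order: it first uses the genericity of $\omega$ to conclude (via a Bertini-type argument) that the zero locus is smooth of the expected codimension five, then invokes Borel's fixed-point theorem (Lemma~\ref{lem_stable_contains_hom}) to produce a closed orbit $G_2/P\subset\GtwoGr(2,7)$, and finally appeals to the classification of parabolic subgroups of $G_2$ (only dimensions $8$ or $9$ occur) to force $G_2/P$ to have dimension five and hence exhaust $\GtwoGr(2,7)$. You instead prove transitivity directly and deduce smoothness and the dimension as corollaries: an explicit stabilizer computation at $N_0$ identifies a conjugate of $P_2$, and then the two-step octonionic argument (transitivity on $\QQ^5$, then on the $\PP^1$ of null-planes through a fixed isotropic vector via Lemma~\ref{nullplanes}) establishes homogeneity. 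The paper's route is shorter and more structural, avoiding any root-space bookkeeping; your route is more hands-on and anticipates geometric content---the identification $\GtwoGr(2,7)=G_2/P_2$, the transitivity on $\QQ^5$, and the $\PP^1$-fibration of null-planes through a point---that the paper develops only afterward, around the incidence diagram~\eqref{eq:diag-A}. One small caveat: your step~(i) requires an independent proof that $G_2$ is transitive on $\QQ^5$, which the paper deduces only after the present corollary; your parallel stabilizer argument (identifying $P_1$ as the stabilizer of the highest-weight line and using that $G_2/P_1$ is projective, hence closed, in the irreducible $\QQ^5$) does supply this, but it should be stated as an explicit lemma rather than left implicit.
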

%https://arxiv.org/pdf/2211.12866
\begin{proof}
The zero locus $\GtwoGr(2,7)$ of $\omega\in H^0(G(2,7),\cQ^{*}(1))$ is stable under the action of  $\Stab_{\SL_7}(\omega)$, which is $G_2$. By generic smoothness, 
%a Bertini-type theorem \cite[Teorema 2.8]{Ott}, 
the genericity of $\omega$ implies that its zero locus is smooth of the expected codimension $5=\rank(\cQ^{*}(1))$.
%by the Bertini Theorem (the zero locus of a general section of a globally generated rank $r$ vector bundle on a smooth variety $X$ is smooth of codimension $r$, see \cite[Theorem II.8.18]{Ha} for a version of the Bertini theorem for line bundles whose proof can be adapted). \paul{Is this an application of the implicit function theorem? Essentially yes, the differential side of the story is the implicit function theorem, or Sard's lemma. How does it work?} \alan{mention Bertini} {\color{blue}I propose the following explication}:  
By Lemma \ref{lem_stable_contains_hom}, there exists a homogeneous space $G_2/P$, with $P$ parabolic, that is contained in $\GtwoGr(2,7)$. Now, parabolic subgroups of $G_2$ are classified, and up to conjugation their dimension can only be equal to $9$ or $8$. This shows that $G_2/P$ in fact must coincide with $\GtwoGr(2,7)$, which is thus homogeneous.
\end{proof}

\begin{definition}
    Given a semisimple Lie group $G$, a \emph{parabolic subgroup} of $G$ is a closed subgroup that contains a  Borel subgroup of $G$ (a maximal, closed connected solvable subgroup). A \emph{homogeneous space} will be in these lectures a quotient $G/P$ of a semisimple group by a proper parabolic subgroup.
\end{definition}

\begin{remark}
By \cite[Theorem 7.5]{Otta}, parabolic subgroups of a semisimple  group $G$ are those subgroups $P$ whose quotient $G/P$ is projective. Then homogeneous spaces are rational projective varieties.
    In the literature, our \emph{homogeneous spaces} are often referred to as \emph{generalized flag varieties} (or \emph{generalized Grassmannians} when the parabolic subgroup is maximal),  while a homogeneous space is simply a variety with a transitive group action.
\end{remark}

\begin{fact}
    Since parabolic subgroups are classified, the same is true for homogeneous spaces. Moreover, homogeneous vector bundles (vector bundles with a compatible action of $G$) on homogeneous spaces $G/P$ are in $1:1$ correspondence with representations of $P$. In particular, irreducible homogeneous vector bundles are classified by their highest weight. See \cite{Otta} for details. 
\end{fact}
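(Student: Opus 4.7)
The plan is to verify the three assertions in the fact in turn; all are standard consequences of the structure theory of reductive groups, but it is worth recalling how they fit together. For the classification of parabolic subgroups, hence of homogeneous spaces $G/P$, I would fix a Borel subgroup $B \supset T$ and use that parabolic subgroups containing $B$ are in bijection with subsets $I$ of the simple roots; since every parabolic is conjugate to such a standard parabolic $P_I$, this classifies homogeneous spaces $G/P$ up to isomorphism, and in particular justifies the enumeration of parabolics of $G_2$ used in the proof of Corollary \ref{G2G27_hom}.

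For the correspondence between $G$-equivariant vector bundles on $G/P$ and representations of $P$, I would set up the equivalence of categories directly. Given an equivariant bundle $\cE \to G/P$, the fiber $\cE_{eP}$ over the base point carries a natural action of the stabilizer $P$, producing a $P$-representation. Conversely, a $P$-representation $V$ yields the associated bundle $\cE_V := G \times^P V = (G \times V)/P$, where $P$ acts diagonally (by $p \cdot (g, v) = (gp^{-1}, p \cdot v)$) and $G$ acts from the left on the first factor. These two constructions are mutually inverse, as one verifies by writing out local trivializations on the opposite big cell.

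For the classification of irreducible homogeneous bundles by highest weight, the key step is to reduce irreducible representations of $P$ to irreducible representations of its Levi factor $L$. Writing $P = L \ltimes U_P$ for the Levi decomposition, I would argue that $U_P$ acts trivially on any irreducible $P$-representation $V$: the fixed subspace $V^{U_P}$ is nonzero by the Lie--Kolchin theorem (applied to the connected unipotent group $U_P$), and is $P$-invariant because $U_P$ is normal in $P$, so irreducibility forces $V = V^{U_P}$. Hence irreducible $P$-representations coincide with irreducible representations of the reductive group $L$, which the highest-weight theorem classifies by dominant integral weights with respect to the root system of $L$.

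The main point requiring care, and the place where I expect to spend the most effort, is the precise formulation of "dominant integral weight" in the last step: for $P = P_I$ with Levi of type $I$, the relevant weights are integral weights pairing nonnegatively with the coroots of simple roots in $I$, which is genuinely weaker than dominance for $G$. Once this is stated carefully, the Borel--Weil description of homogeneous line bundles on $G/P$ emerges as the rank-one specialization, and the full correspondence between irreducible homogeneous vector bundles and such highest weights is established.
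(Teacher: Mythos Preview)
Your outline is correct and follows the standard route: parabolics classified by subsets of simple roots, the associated-bundle construction $V \mapsto G \times^P V$ giving the equivalence with $P$-representations, and the reduction of irreducible $P$-modules to irreducible $L$-modules via triviality of the $U_P$-action. The paper, however, does not prove this fact at all: it is stated as background with a reference to Ottaviani \cite{Otta}, so there is nothing to compare against beyond noting that your sketch is exactly the argument one finds in that reference or in any standard treatment.

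One small remark on your last paragraph: you are right that the relevant dominance condition is with respect to the Levi, not all of $G$, but be careful with the word ``integral''. The character lattice of $T$ is fixed by $G$, so integrality is the same; what changes is only which simple coroots must pair nonnegatively with the weight (namely those in $I$). This is a matter of phrasing rather than a gap.
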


\begin{lemma}
\label{lem_stable_contains_hom}
Let $G$ be a semisimple group and $V$ be a $G$-representation with no trivial factor. Then every $G$-stable projective variety $X\subset \PP(V)$ contains a homogeneous space.
\end{lemma}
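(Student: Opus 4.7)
The plan is to use the Borel fixed point theorem applied to the action of a Borel subgroup on $X$, and then to upgrade a $B$-fixed point to a full $G$-orbit which is a homogeneous space.

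Concretely, first I would pick a Borel subgroup $B \subset G$. Since $B$ is connected and solvable, and $X \subset \PP(V)$ is a (nonempty) projective variety on which $B$ acts, Borel's fixed point theorem yields a point $[v] \in X$ with $B \cdot [v] = [v]$. Let $P := \Stab_G([v])$. Then $P$ is a closed subgroup of $G$ containing $B$, hence a parabolic subgroup of $G$. Consequently the orbit $G \cdot [v] \subset X$ is in bijection with $G/P$; since $P$ is parabolic, $G/P$ is a projective (in particular complete) variety, so its image under the $G$-equivariant orbit map $G/P \to X$ is closed in $X$. Thus $X$ contains the closed subvariety $G \cdot [v] \simeq G/P$.

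The one remaining issue—and the place where the hypothesis on $V$ enters—is to show that $P$ is a \emph{proper} parabolic, i.e.\ $P \neq G$, so that $G/P$ is genuinely a homogeneous space in the sense of the notes (not a single point). Suppose for contradiction that $P = G$. Then $G$ stabilizes the line $\CC v \subset V$, and hence acts on $\CC v$ through a character $\chi \colon G \to \CC^{\ast}$. Since $G$ is semisimple, every algebraic character of $G$ is trivial, so $\chi = 1$ and $G$ fixes $v$ itself. But then $\CC v \subset V$ is a trivial $G$-subrepresentation, contradicting the assumption that $V$ has no trivial factor.

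The key step I expect to be the delicate one is not the fixed-point argument itself, which is essentially textbook, but rather the verification that the $G$-orbit of a $B$-fixed point in $\PP(V)$ really is closed (so that it sits inside $X$ as a subvariety, not just as a locally closed image). This follows, as above, from the completeness of $G/P$, but it is the bridge between ``$X$ has a $B$-fixed point'' and ``$X$ contains a $G$-homogeneous projective subvariety,'' which is the actual content of the lemma.
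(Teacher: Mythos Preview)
Your proof is correct and follows essentially the same approach as the paper: apply Borel's fixed point theorem to get a $B$-fixed point $[v]$, note that its stabilizer $P\supset B$ is parabolic so $G\cdot[v]\simeq G/P$ is projective and hence closed in $X$, and rule out $P=G$ using the absence of trivial factors. If anything, you are slightly more explicit than the paper about why the orbit is closed and why a semisimple group has no nontrivial characters.
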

\begin{proof}
    Let $B$ be a Borel subgroup of $G$. By Borel's fixed point Theorem, there exists a point $x\in X$ which is fixed by $B$. Then the stabilizer $P$ of $x$ in $G$ is a closed subgroup containing $B$, that is, a parabolic subgroup. Moreover, the point $x$ cannot be fixed by the whole 
    $G$, since otherwise it would define a one-dimensional representation of $G$ in $V$, i.e., a trivial factor. Hence, the $G$-orbit $G\cdot x$ is closed and projective, and of positive dimension, thus a homogeneous space.
\end{proof}

\begin{lemma}
    Let $V$ be a non-trivial irreducible representation of a semisimple group $G$. Then $\PP(V)$ contains a unique homogeneous space.
\end{lemma}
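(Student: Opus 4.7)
The plan is to combine the previous lemma (which gives existence) with an analysis of Borel-fixed points in $\PP(V)$ (which gives uniqueness).

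For existence, I would apply the previous lemma to $X = \PP(V)$ itself: since $V$ is a non-trivial irreducible representation, it has no trivial $G$-factor, so $\PP(V)$ contains at least one homogeneous space $G/P$.

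For uniqueness, I would first observe that a homogeneous space $G/P$ embedded in $\PP(V)$ is automatically a closed $G$-orbit: it is $G$-stable by definition, and closed because $G/P$ is projective (hence its image in $\PP(V)$ is closed). So it suffices to show that $\PP(V)$ admits a unique closed $G$-orbit. Fix a Borel subgroup $B \subset G$. By Borel's fixed point theorem, every closed $G$-orbit $\cO \subset \PP(V)$ contains a $B$-fixed point. Therefore I only need to verify that $\PP(V)$ has a unique $B$-fixed point.

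A $B$-fixed point in $\PP(V)$ corresponds to a $B$-stable line $\CC v \subset V$. Such a line must be a weight line for the maximal torus $T \subset B$, and the action of the unipotent radical of $B$ must preserve it; this forces $v$ to be annihilated by all positive root vectors, i.e., $v$ is a highest weight vector. Since $V$ is irreducible, its highest weight space is one-dimensional (standard fact from the theory of highest weights \cite{FH}), producing a unique $B$-fixed line $[v_\lambda] \in \PP(V)$. Consequently, every closed $G$-orbit contains the single point $[v_\lambda]$, and since distinct orbits are disjoint there is exactly one such orbit, namely $G \cdot [v_\lambda]$.

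The only mild subtlety is to justify that a homogeneous space $G/P \hookrightarrow \PP(V)$ really is a $G$-orbit in the set-theoretic sense (i.e., that the equivariant map is injective on points, or equivalently that one identifies the image with $G/P'$ where $P'$ is the stabilizer of the image of the base point). This is automatic once one notes that the stabilizer in $G$ of any point of $\PP(V)$ is closed and contains a Borel, hence is parabolic, so the orbit map from $G$ through that point factors through a generalized flag variety.
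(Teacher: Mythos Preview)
Your proof is correct and follows essentially the same approach as the paper: existence from the previous lemma, uniqueness by reducing to the uniqueness of the $B$-fixed point in $\PP(V)$, which is the highest weight line since $V$ is irreducible. The paper is more terse and explicitly leaves the details (the identification of $B$-fixed points with highest weight lines) to the reader, whereas you spell them out, including the harmless check that an embedded $G/P$ really is a closed $G$-orbit.
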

\begin{proof}
    From the proof of the previous lemma, the result follows if we can show that $\PP(V)$ contains a unique $B$-fixed point. However, this is equivalent to the fact that, since $V$ is irreducible, it has a unique highest weight vector up to scalar (we let the reader fill up the details with the help of any representation theory textbook, for instance \cite{FH}).
\end{proof}

%For ease of notation, hereafter, the complex vector space $\Im(\OO^{\CC})$ will simply be denoted by $V_7$.
We keep focusing our attention on the projective space $\PP(\extp^2 V_7)$; in particular, we will establish what the other $G_2$ homogeneous subvarieties of this space are. Recall that the vector space $\extp^2 V_7$ decomposes as the direct sum of two irreducible $G_2$-representations (see \cite[\S22]{FH} or the proof of Proposition \ref{prop:wedge3}):
\[\extp^2 V_7= V_7\oplus \fg_2\]
From this decomposition we deduce that there are two closed $G_2$-orbits in $\PP(\extp^2 V_7)$, one in $\PP(V_7)$ and the other in $\PP(\fg_2)$. From the general theory of rational homogeneous varieties, since $V_7=V_{\omega_1}$ and $\fg_2=V_{\omega_2}$  these two orbits must be isomorphic with  $G_2/P_1$ and $G_2/P_2$, where $P_1, P_2$ are the two maximal parabolic subgroups associated with the simple roots $\alpha_1$ and $\alpha_2$ (see \cite{Otta} for a complete treatment of homogeneous varieties). Moreover we know that their dimensions are both equal to $5$. This immediately implies  that $G_2/P_1=\QQ^5$ and   $G_2/P_2=\GtwoGr(2,7)$.

%For what follows, we adopt the following notation:
%$P_i$ is the parabolic subgroup generated by the Lie algebra $\mathfrak{p}_i=\mathfrak{b}\oplus\fg_{-\alpha_j}$, $i,j\in \{1,2\}$, $i\ne j$, where $\mathfrak{b}=\fh\oplus\fg_{\alpha_1}\oplus\fg_{\alpha_2}$ is a Borel subalgebra and the $\fg_{\lambda}=\spann{x_\lambda}$ are the root spaces. Each parabolic subgroup (of any semisimple group $G$) is uniquely defined by the choice of a subset of simple roots, see \cite{Otta}. When $G=G_2$ and the subset is composed of a single simple root $\alpha_i$ we obtain exactly the $P_i$'s above for $i=1,2$. The quotients $G/P_i$ are then subvarieties of $\PP(V_{\omega_i})$, where $\omega_i$ denotes the $i$-th fundamental weight of $G$; this follows from the description of very ample homogeneous line bundles, which correspond to equivariant embeddings $G/P_i\subset \PP(V)$, where $V$ is a $G$-representation, given in \cite{Otta}.

%Let us describe the varieties $G_2/P_i$ in greater detail. %Recall the fundamental weights $\omega_1=\alpha, \omega_2=\theta$ from Section \ref{sec:g2rep}, and that $V_{\omega_1}=V_7$ and $V_{\omega_2}=\fg_2$.
%We claim that $G_2/P_1$ identifies with the quadric hypersurface $\QQ^5\subset \PP(V_7)$ 
%defined by $q$. In other words, $G_2$ acts transitively on $\QQ^5$. Indeed, consider a non-zero isotropic vector in $V_7$; its real and imaginary parts are two real vectors, orthogonal of the same norm; up to scalar we can suppose this norm is equal to one; then we know that $G_2^c$ acts transitively on such pairs; hence also a fortiori the bigger group $G_2$. 

Since $V_7$ embeds in $\extp^2 V_7$ via $v\mapsto v\intprod\omega$. 
%, where as usual we identify $V_7$ with $V_7^*$ via the quadratic form $q$, whose bilinear form we denote by $\langle\,\cdot\,,\cdot\,\rangle$. 
%The quadric $\QQ^5$ is the 
%closed $G_2$-orbit in $\PP(V_7)$ is the 
%projectivization of $G_2\cdot x_{\alpha}$,
%(again, we denote by $\alpha:=\omega_1$ as well), the orbit of the highest weight vector; this latter consists of the locus:
and any $[v]\in\QQ^5$ is in the $G_2$-orbit of $[x_\alpha]$, 
a direct computation shows that 
\[ [v]\in \QQ^5\quad \implies  \quad \,\,\,\,\, \rank(v\intprod\omega)=4. \]
%or the orbit of the highest weight space in the projective repr , the 2 repr are long and short roots
%(the rank of $v\intprod\omega$ being the rank of the corresponding skew-symmetric map $V_7\to V_7^*$). One can check directly that this happens if and only if $v$ satisfies $\langle v,v\rangle=q(v)=0$ (from the $G_2$-invariance of $\omega$ and $q$ it suffices to check this on an arbitrary point) %see ex ) 
%so that $G_2/P_1$ identifies with the quadric hypersurface $\QQ^5\subset \PP(V_7)$ defined by $q$.
In particular the image of  $\QQ^5$ does not meet the Grassmannian $\Gr(2,7)$. 
%in $\PP(\extp^2V_7)$ are therefore disjoint -- points of $G_2/P_1$ correspond to two-forms of rank $4$, while points in $\Gr(2,7)$ correspond to decomposable, i.e.,~rank-$2$, two-forms). So the $G_2$-Grassmannian $\GtwoGr(2,7)$ must be the other closed $G_2$-orbit $G_2/P_2$. Observe the following: by construction, $G_2/P_2\subset \PP(\mathfrak{g}_2)$ is the projectivization of the orbit of $x_{\theta}\in\fg_2$; since $\theta$ is the highest root, its orbit is the minimal nilpotent orbit in $\mathfrak{g}_2$ and $\GtwoGr(2,7)\simeq G_2/P_2$ identifies with the adjoint variety of the group $G_2$ (for further details, see Lecture 4).

Since $B=P_1\cap P_2$ is a Borel subgroup of $G_2$, there is an incidence diagram: 
\begin{equation}\label{eq:diag-A}
\xymatrix{ & G_2/B\ar[dr]\ar[dl] & \\
\PP(V_7)\supset\QQ^5=G_2/P_1 & & G_2/P_2=\GtwoGr(2,7)\subset \PP(\fg_2). 
}
\end{equation} 
where the fibers of both projections are isomorphic to $\PP^1$.
From a geometric perspective, the diagram can be interpreted as follows.
Each null-plane $\spann{u, v}\subset V_7$ is fixed by a subgroup of $G_2$ isomorphic to $P_2$, and the stabilizer (in $P_2$) of an arbitrary point in this null-plane is isomorphic to a Borel subgroup $B\subset P_2\subset G_2$. In other words, we can identify $P_2/B$ with the projective line $\PP(\spann{ u, v})$, the full flag manifold $G_2/B$ with the incidence variety
\[ G_2/B\simeq \left\{(u,l)\in \QQ^5\times \GtwoGr(2,7)\mid u\subset l\right\},\]
and the fibration $G_2/B\to G_2/P_2$ with the tautological $\PP^1$-fibration over $\GtwoGr(2,7)$. 
The fiber over a point $u\in \QQ^5$ is the family of null-planes through $u$; since the latter must be isomorphic to $P_1/B\simeq \PP^1$, we deduce that each point of the quadric belongs to a family of null-planes parametrized by $\PP^1$. 

\begin{remark}
Since any point $u\in\QQ^5$ defines a line in $G_2/P_2\subset \PP(\fg_2)$, we get an embedding 
$$\QQ_5\hookrightarrow \Gr(2,\fg_2).$$
Pulling-back the tautological bundle on the Grassmannian, we deduce a special $G_2$-homogeneous vector bundle on $\QQ^5$, called the \emph{Cayley bundle} \cite{ott-cayley}. 
\end{remark}

Diagram \eqref{eq:diag-A} is  a subdiagram of the usual incidence diagram for lines and planes,
 \[\xymatrix{ & \Fl(1,2,7)\ar[dr]^{\pi_2}\ar[dl]_{\pi_1} & \\
\PP(V_7)=\PP^6 & & \Gr(2,7)\subset\PP(\extp^2 V_7)\simeq\PP(\fso_7)}\]
where $\Fl(1,2,7)$ denotes the variety of $(1,2)$-flags in $V_7$. The fibers of $\pi_1$ are isomorphic to $\PP^5$ and the fibers of $\pi_2$ are isomorphic to $\PP^1$.

\begin{remark}
    The homogeneous varieties $G_2/P_1$, $G_2/P_2$ and $G_2/B$ discussed above may also be expressed as homogeneous spaces (in the group-theoretic sense) of the compact group $G_2^{\mathrm{c}}$. For any complex semisimple Lie group with compact real form $G^{\mathrm{c}}$, the generalized flag manifolds (also called \emph{Kähler C-spaces}) are described equivalently as
    \begin{enumerate}
    \item quotients $G/P$ by a parabolic subgroup,
    \item quotients $G^{\mathrm{c}}/K$ by the centralizer $K=G^{\mathrm{c}}\cap P$ of a torus,
    \end{enumerate}
    see for example \cite{Arva,Chrys}. Under this correspondence, the full flag manifold is given by $G_2/B\simeq G_2^{\mathrm{c}}/T^2$, with $T^2\subset G_2^{\mathrm{c}}$ a maximal torus. The flag manifolds from viewpoint (2) are explicitly known -- see \cite[\S8.H]{Besse}
    %\marginnote{reference}
    for the classical groups. For the flag manifolds of $G_2$, we have
    \[\QQ^5=G_2/P_1=G_2^{\mathrm{c}}/\U_2^{-}\]
    with $\U_2^{-}$ embedded by $\U_2\subset\SO_4(\RR)\subset G_2^{\mathrm{c}}$, while
    \[\GtwoGr(2,7)=G_2^{\ad}=G_2/P_2=G_2^{\mathrm{c}}/\U_2^{+}\]
    with $\U_2^{+}$ embedded by $\U_2\subset\SU_3\subset G_2^{\mathrm{c}}$, cf.~\cite{Chrys}. Moreover the quadric, defined only in terms of $q$, is naturally a homogeneous space of $\SO_7$, indeed also a generalized flag manifold; it can be written as
    \[\QQ^5\simeq\SO_7(\RR)/(\SO_2(\RR)\times\SO_5(\RR)),\]
    which is also its presentation as a compact symmetric space \cite{helgason}.
\end{remark}

\subsection*{The orthogonal Grassmannian}

\begin{definition}
The \emph{orthogonal Grassmannians} of $V_7$ are defined as
$$    \OGr(k,V_7)=\OGr(k,7):=\left\{\,q\text{-isotropic} k\text{-planes in }V_7\,\right\}=\left\{\,[P]\in \Gr(k,7)\mid q|_P\equiv 0\,\right\}.$$
\end{definition}

They are non-empty only for $k\le3$, and  $\OGr(1,7)\simeq\QQ^5$. The action of $\SO_7$ preserving the quadratic form $q$ makes any $\OGr(k,7)$ a homogeneous space.
Also notice that $\OGr(2,7)$ is isomorphic to the $\SO_7$-adjoint variety (see Definition~\ref{def_adjoint}), as it is a closed orbit inside $\PP(\fso_7)$:
%\paul{Proof?}{\color{blue}Indeed, by definition, the $G$-adjoint variety is the unique closed orbit in $\PP(\fg)$, where $\fg$ is the Lie algebra of $G$; clearly $\OGr(2,7)\subset \Gr(2,7)\subset \PP(\wedge^2 V_7)=\PP(\fso_7)$ and it is $\SO_7$-homogeneous (linear algebra: any isotropic space can be sent to any other isotropic space with the orthogonal group).} \paul{Ok, I changed the definition accordingly.}
\[\OGr(2,7)\simeq \SO_7^{\ad}\subset \PP(\extp^2 V_7)\simeq\PP( \mathfrak{so}_7).\]
%A similar consideration applies to the higher orthogonal Grassmannians, which are respectively closed orbits $\OGr(k,7)\subset\PP(\extp^kV_7)$.

\begin{remark}
As we will see, $\OGr(k,7)$ is not in general a homogeneous space of $G_2$. However, it is a generalized flag manifold of $\SO_7$, and we can write $\OGr(k,7)=\SO_7/P_k$, $1\leq k\leq 3$, where $P_k$ is the parabolic corresponding to the $k$-th simple root of $\SO_7$.
We can also describe it  as a homogeneous space of the compact group $\SO_7(\RR)$. The Dynkin type of the isotropy group is determined by deleting the $k$-th root from the Dynkin diagram of $\SO_7$; this leads (cf.~\cite[Ex.~8.115]{Besse}) to
\[\OGr(k,7)\simeq\SO_7(\RR)/(\U(k)\times\SO_{7-2k}(\RR)),\qquad 1\leq k\leq 3.\]

\end{remark}
%source? especially for OGr(3,7) as \extp^3V_7 is not fundamental
%https://mathoverflow.net/questions/360683/confusion-over-spin-representation-and-coordinate-ring-of-orthogonal-grassmannia

\begin{definition}
For any $[x]\in\QQ^5$, let $K_x:=\ker(L_x|_{\Im\OO^\CC})$be  the kernel of the left-multiplica\-tion by $x$ on $V_7=\Im\OO^\CC$.
%Let $[x]\in\QQ^5$, i.e., $x\in\Im\OO^\CC$ with $q(x)=0$. Then b
By Lemma~\ref{zerodivnorm}, $x$ is a zero divisor, and by Corollary~\ref{kerofleftmulim},   $K_x$ is a three-dimensional isotropic subspace containing $x$, namely
\[K_x=x\OO^\CC\cap \Im(\OO^{\CC}).\]
\end{definition}
In particular, $\PP(K_x)\subset\QQ^5$. We can also write $K_x$ in terms of the associative three-form (Exercise \ref{ex_Kxker}):
\[K_x=\ker(x\intprod\omega).\]

\begin{lemma}
\label{nullplanes}
The two-planes $N\subset K_x$  through $x$ are precisely the null planes containing $x$.
\end{lemma}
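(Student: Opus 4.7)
The plan is to verify both inclusions directly using the characterization of $K_x$ as $x\OO^\CC \cap \Im(\OO^\CC)$, and exploiting that $K_x$ is a three-dimensional isotropic subspace of $\Im\OO^\CC$ (Corollary \ref{kerofleftmulim}).

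For the easy direction, if $N$ is a null plane containing $x$, then for any $y\in N$ the defining condition gives $xy=0$, so $y\in K_x$; hence $N\subset K_x$. This uses only the definition of a null plane.

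For the converse, I would take a two-plane $N\subset K_x$ through $x$ and pick $y\in N$ linearly independent from $x$, so that $N=\spann{x,y}$. By bilinearity of the octonionic product, to prove that $N$ is a null plane it suffices to check that the four products $xx$, $xy$, $yx$, $yy$ all vanish. Now $x,y\in K_x\subset\Im\OO^\CC$ are imaginary, and since $K_x$ is isotropic we have $q(x)=q(y)=\langle x,y\rangle=0$. For any imaginary element $u$, the identity $u\bar u=q(u)1_\AA$ from \eqref{eq:norm} together with $\bar u=-u$ gives $u^2=-q(u)1_\AA$, so $x^2=y^2=0$. Next, $xy=0$ holds by definition of $K_x$. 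Finally, for imaginary $x,y$ one has $\overline{xy}=\bar y\bar x=yx$, so
\[
xy+yx = xy+\overline{xy} = 2\Re(xy) = -2\langle x,y\rangle\,1_\AA = 0,
\]
using that $\langle x,y\rangle=\Re(x\bar y)=-\Re(xy)$. Hence $yx=-xy=0$ as well, confirming that $N$ is a null plane.

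The whole argument is mostly formal bookkeeping with the involution and inner product; the one small subtlety to be careful about is that one must invoke the isotropy of $K_x$ not merely to get $x^2=y^2=0$, but also to conclude $\langle x,y\rangle=0$, which is what makes $yx$ vanish in addition to $xy$. Everything else is a direct consequence of the definitions and the identities \eqref{eq:conjantiauto} and \eqref{eq:norm}.
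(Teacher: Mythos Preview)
Your proof is correct and follows essentially the same approach as the paper's: verify $x^2=y^2=xy=yx=0$ using imaginarity and isotropy of $K_x$, then conclude by bilinearity. One minor simplification worth noting: once you have $xy=0$, the identity $yx=\overline{xy}=\bar 0=0$ gives the last product for free, so the orthogonality $\langle x,y\rangle=0$ is not actually needed for that step---your closing remark overstates the subtlety there.
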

\begin{proof}
If $N$ is a null-plane through $x$, then  $N\subset\Im\OO^\CC$ by Lemma~\ref{lem:null-isotropic}, so $N\subset K_x$ by definition.

Conversely, let $N=\spann{x,y}$ for some $y\in K_x$. Then, by definition, $xy=0$. On the other hand $y^2=-q(y)=0$ since $y$ is imaginary and $K_x$ is isotropic, and the same goes for $x$. By bilinearity, all octonionic products of elements of $N$ vanish, so $N$ is a null-plane.
\end{proof}

Projectivizing, these null-planes give lines in the quadric which we will also call \emph{$G_2$-lines}. It is crucial to note that not all lines in $\QQ^5$ are $G_2$-lines (\necessary Exercise \ref{ex_OG}); in fact, the union of all lines in $\QQ^5$ is $\OGr(2,7)$.

Information about the family of lines in $\QQ^5$ can also be deduced from the Dynkin diagram of $G_2$:
$$ \resizebox{0.1\hsize}{!}{
\dynkin[labels={ \alpha_2,\alpha_1 }]G2} $$
Following \cite[Section 4]{LM03} (see also Remark \ref{rmk_lines_short_roots}), since $\alpha_1$ is a short root, the family of lines in $\QQ^5\simeq G_2/P_1$ is irreducible with two $G_2$-orbits, the closed one being $G_2/P_2\simeq \GtwoGr(2,7)$ whilst the open one must be the complement  $\OGr(2,7)\setminus \GtwoGr(2,7)$. We will see later on  how to identify $\GtwoGr(2,V_7)$ with a zero locus of a section of a vector bundle on $\OGr(2,7)$ (Remark \ref{rem_G2G_in_OG}).

From Exercise~\ref{ex_OG} we will learn that the image of the embedding
\[
\QQ^5\lra \OGr(3,7), \qquad [x]\longmapsto [K_x]
\]
is a family of planes in $\QQ^5$ which is isomorphic to $\QQ^5$ itself. From the definition of this family of isotropic planes $\QQ^5\subset \OGr(3,7)$, we also deduce the following.

\begin{lemma}\label{lem:dim-null-planes}
The maximal dimension of a linear subspace of $\OO^{\CC}$ on which all octonionic products vanish is two; any such maximal linear subspace is a null-plane.
\end{lemma}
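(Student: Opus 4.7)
The plan is to establish the dimension bound $\dim W \leq 2$ for any subspace $W \subset \OO^\CC$ on which all octonionic products vanish; the second assertion will then follow at once, since by Definition \ref{def:null-plane} a $2$-dimensional such subspace \emph{is} a null-plane, and null-planes exist (e.g.\ via the construction underlying Exercise \ref{ex:sextonions}, so the bound is attained).

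First I would verify that any such $W$ sits inside $V_7 = \Im \OO^\CC$ and is $q$-isotropic, essentially repeating the argument of Lemma \ref{lem:null-isotropic}. For every $x \in W$ one has $x \cdot x = 0$, so Lemma \ref{zerodivnorm} forces $q(x)=0$ and hence $\bar x x = q(x)\, 1_{\OO^\CC} = 0$. Using the identity $x^2 + \bar x x = 2\Re(x)\, x$ (immediate from $\bar x = 2\langle x, 1\rangle\, 1 - x$), this gives $\Re(x)\, x = 0$, so $\Re(x)=0$ for every nonzero $x \in W$. Thus $W \subset V_7$ and $q|_W \equiv 0$.

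The core step is then the following. For any nonzero $x \in W$, the condition $xy=0$ for all $y \in W$ is precisely $W \subset K_x$, and by Corollary \ref{kerofleftmulim} we have $\dim K_x = 3$. So suppose for contradiction that $\dim W = 3$: then $W = K_x$ for every nonzero $x \in W$. Choosing any two linearly independent $x, y \in W$, we would get $K_x = K_y$ with $[x] \neq [y]$ in $\PP(V_7)$, contradicting the injectivity of the embedding
\[
\QQ^5 \hookrightarrow \OGr(3,7), \qquad [x] \longmapsto [K_x],
\]
which is the content of Exercise \ref{ex_OG}. Hence $\dim W \leq 2$, and the statement of the lemma follows.

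The only nontrivial external input is the injectivity of $[x]\mapsto [K_x]$, so this is where the "hard" content is located — but it is already recorded as an exercise the lemma is allowed to rely on. Everything else reduces to a short chain of elementary identities in a composition algebra and a single dimension count, so the main obstacle is really just keeping the bookkeeping clean: translating "all products vanish" into "$W \subset V_7$, $W$ isotropic, and $W \subset K_x$ for each $x \in W$".
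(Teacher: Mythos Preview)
Your proof is correct and follows essentially the same route as the paper's: reduce to $W\subset V_7$, $W$ isotropic, observe $W\subset K_x$ for every nonzero $x\in W$ (so $\dim W\le 3$), and rule out $\dim W=3$ by appealing to Exercise~\ref{ex_OG}. The only difference is in how the final contradiction is packaged: the paper argues that if $W=K_x$ then every $2$-plane in $K_x$ would be a null-plane, contradicting the fact (from Exercise~\ref{ex_OG}) that a general $2$-plane in $K_x$ is not; you instead invoke injectivity of $[x]\mapsto[K_x]$. These two statements are equivalent (non-injectivity at $[x]\neq[y]$ forces all products in $K_x=K_y$ to vanish, and conversely), so the arguments are really the same. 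One small quibble: injectivity is not literally the statement of Exercise~\ref{ex_OG} but rather the consequence recorded in the text immediately preceding the lemma, so you might cite that passage instead.
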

\begin{proof}
A linear subspace $N\subset\OO^{\CC}$ on which all octonionic products are zero must necessarily be contained in $\Im(\OO^{\CC})$ and be isotropic with respect to the quadratic form (Lemma~\ref{lem:null-isotropic}). Moreover, it must satisfy $N\subset K_x$ for any $x\in N$, thus $\dim N\leq \dim K_x=3$. But we will prove in Exercise~\ref{ex_OG} that the general point in $\Gr(2, K_x)$ is not a null-plane; as a consequence $\dim(N)\le 2$ with equality holding if and only if $N$ is a null-plane.
\end{proof}

\subsection*{Lines in the \texorpdfstring{$G_2$}{G2}-varieties}

\begin{definition}
For a projective variety $X\subset \PP^n$, we denote by 
\[
\Hil_{\PP^1}(X)=\left\{\PP^1\subset \PP^n \text{ linear} \,|\, \PP^1\subset X\right\}
\]
the \emph{Fano variety of lines} in $X$ (see also Appendix \ref{sec_tits}).
\end{definition}

Inside the Grassmannian $X=\Gr(2,7)\subset\PP(\extp^2V_7)$, lines are parametrized by $\Hil_{\PP^1}(\Gr(2,7))\simeq\Fl(1,3,7)$, the variety  of partial flags $(L_1\subset L_3)$, where $L_k\subset V_7$ is a subspace with $\dim L_k=k$ (see Example~\ref{lines_in_grassmannian}); any line in $\Gr(2,7)$ is of the form
\[ \left\{[P]\in\Gr(2,7)\,|\, L_1\subset P\subset L_3\right\}\simeq \PP(L_1\wedge L_3)\subset\PP(\Lambda^2V_7)\]
for some partial flag $(L_1,L_3)$. Let us see what happens for the lines in the orthogonal and $G_2$-Grass\-mann\-ians.

Some terminology: we call a $\PP^1$ embedded smoothly in $\PP^3$ as a subvariety of degree three a \emph{twisted cubic curve}. Moreover, by \emph{pencil} we will mean a family of objects of some type, which are (algebraically) parametrized by a $\PP^1$; e.g.~a pencil of lines, a pencil of planes, etc.

\begin{prop}
\label{prop_lines_G2}
Lines  in the $G_2$-Grassmannian $\GtwoGr(2,7) \subset \Gr(2,7)$ are pa\-ra\-me\-tri\-zed by $$\Hil_{\PP^1}(\GtwoGr(2,7))\simeq\QQ^5.$$ 
Lines in $\GtwoGr(2,7)$ passing through a fixed null-plane $[N]$ are pa\-ra\-me\-tri\-zed by a twisted cubic curve $\PP(N)=\PP^1 \hookrightarrow \PP(T_{[N]}\GtwoGr(2,7))$.
%$\PP(N)=\PP^1$ embedded inside the tangent space by a cubic Veronese. 
\end{prop}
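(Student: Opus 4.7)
The plan is to first classify lines in $\GtwoGr(2,7)$ via the ambient Grassmannian $\Gr(2,V_7)$, and then to study the local geometry around a fixed null-plane $[N]$ using the $P_2$-grading of $\fg_2$.

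First I would recall that lines in $\Gr(2,V_7)$ are parametrized by partial flags $(L_1 \subset L_3)$ with $\dim L_k = k$, via
\[
(L_1, L_3) \longmapsto \{[P] \in \Gr(2,V_7) : L_1 \subset P \subset L_3\} \simeq \PP^1.
\]
For such a line to lie in $\GtwoGr(2,7)$, every $P$ on it must be a null-plane. Fixing a nonzero $x \in L_1$, every $v \in L_3$ must satisfy $xv = 0$, hence $v \in K_x$; so $L_3 \subseteq K_x$, forcing $\dim K_x \geq 3$, which by Corollary~\ref{kerofleftmulim} is equivalent to $q(x) = 0$, i.e.\ $[x] \in \QQ^5$, and then $L_3 = K_x$. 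Conversely, Lemma~\ref{nullplanes} guarantees that every $2$-plane in $K_x$ through $x$ is a null-plane, so $\ell_x := \{[P] : \spann{x} \subset P \subset K_x\}$ genuinely lies in $\GtwoGr(2,7)$. This yields a $G_2$-equivariant bijection $\QQ^5 \to \Hil_{\PP^1}(\GtwoGr(2,7))$, $[x] \mapsto \ell_x$. Since $\QQ^5 = G_2/P_1$ is homogeneous and the $G_2$-stabilizer of $\ell_x$ coincides with that of $[x]$, this bijection is in fact an isomorphism of varieties.

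For the second part, the line $\ell_x$ contains the null-plane $[N]$ exactly when $x \in N$, so the lines through $[N]$ are parametrized by $\PP(N) \simeq \PP^1$. To identify the resulting embedding in $\PP(T_{[N]}\GtwoGr(2,7))$ as a twisted cubic, I would use the $\alpha_2$-height grading $\fg_2 = \fg_{-2} \oplus \fg_{-1} \oplus \fg_0 \oplus \fg_1 \oplus \fg_2$: as modules over the Levi $L_2 \simeq \GL_2 \subset P_2$, one has $T_{[N]}\GtwoGr(2,7) \simeq \fg_{-1} \oplus \fg_{-2}$. A direct root-space computation relative to the $\fsl_2 \subset \fg_0$ associated to the short simple root $\alpha_1$ shows that $N$ is the standard two-dimensional representation, $\fg_{-1}$ is the irreducible $S^3 N$, and $\fg_{-2}$ is a trivial line. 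The assignment $[x] \mapsto (\text{tangent direction of } \ell_x \text{ at } [N])$ is $L_2$-equivariant; by a Schur-type argument, the only non-constant $L_2$-equivariant morphism $\PP(N) \to \PP(S^3 N \oplus \CC)$ is the degree-three Veronese embedding into $\PP(S^3 N) \subset \PP(T_{[N]}\GtwoGr(2,7))$, which is by definition a twisted cubic.

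The main obstacle I foresee is confirming that this map is not identically constant (equal to $[\fg_{-2}]$), which the equivariance argument alone cannot rule out. I would address this by direct computation in the root basis of Remark~\ref{rmk:V_7-roots}, taking $N = \spann{x_\alpha, x_{-\beta}}$ and $[x] = [s\,x_\alpha + t\,x_{-\beta}]$; using $K_x = \ker(x \intprod \omega)$, one finds that the tangent direction of $\ell_x$ at $[N]$ inside $T_{[N]}\Gr(2,V_7) = N^* \otimes V_7/N$ depends cubically on $[s:t]$ with four linearly independent coefficient vectors, confirming both non-constancy and degree three.
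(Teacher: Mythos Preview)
Your argument is correct. The first half---reducing to flags $(L_1\subset L_3)$ in $\Gr(2,V_7)$ and showing $L_1=[x]$ must be isotropic and $L_3=K_x$---is exactly what the paper does.

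For the twisted cubic, your route genuinely differs. The paper works by direct computation throughout: it contracts $\omega$ with a generic $x\in N$, squares to land in $\Gr(3,V_7)$ via a degree-two Veronese, then wedges once more with $x$ to obtain an explicit degree-three parametrization in $\PP(T_{[N]}\Gr(2,V_7))$ with four independent coefficients. You instead invoke the $\alpha_2$-height grading $T_{[N]}\GtwoGr(2,7)\simeq\fg_{-1}\oplus\fg_{-2}$ together with the Levi decomposition $\fg_{-1}\cong S^3$ of the standard $2$-dimensional module, and then argue by $L_2$-equivariance that the only non-constant possibility is the rational normal cubic in $\PP(\fg_{-1})$. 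This is the viewpoint the paper alludes to in the remark immediately following the proof (``we get a twisted cubic reflects the triple bond in the Dynkin diagram''), but does not itself carry out. Your approach is more conceptual and explains \emph{why} the degree is three; the paper's is more self-contained and avoids appealing to the structure theory of parabolics. Note that the explicit check you propose at the end to rule out constancy is essentially the same computation the paper performs as its main argument, so in practice the two proofs overlap more than their outlines suggest. One small imprecision: as $L_2$-modules with the central character included, $\fg_{-1}$ is a twist of $S^3N$ rather than $S^3N$ itself, but this is irrelevant at the projective level; also, the base null-plane stabilized by the standard $P_2$ is $\spann{x_\alpha,x_{-\gamma}}$ rather than $\spann{x_\alpha,x_{-\beta}}$, though of course any choice works by homogeneity.
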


%The second statement describes the twisted cubic as an instance of a \emph{variety of minimal rational tangents} (VMRT), see \cite{Hw}.)

\begin{proof}
Let us consider a point $(L_1=[x],L_3)\in \Fl(1,3,7)$. By Lemma~\ref{nullplanes}, the line $\PP(x\wedge L_3)\simeq \PP^1$ in $\Gr(2,7)$ gives rise to a pencil of null-planes if and only if $[x]\in \QQ^5$ and $L_3=K_x$. Hence lines of $\GtwoGr(2,7)$ are parametrized by the map
\[\QQ^5\longrightarrow\Hil_{\PP^1}(\GtwoGr(2,7)),\qquad [x]\mapsto\PP(x\wedge K_x)\subset\PP(\extp^2V_7).\]
In the following, we identify the line $\PP(x\wedge K_x)$ with the partial flag $([x],K_x)$.

%inverse: projection Fl(1,3,7) to P(V_7)
%Notice that one can equivalently consider the projection $\Hil_{\PP^1}(\GtwoGr(2,7))\to \OGr(3,7)$ given by $([x],K_x)\mapsto [K_x]$; this maps $\Hil_{\PP^1}(\GtwoGr(2,7))$ isomorphically onto $\QQ^5\subset \OGr(2,7)$ (see Corollary~\ref{kerofleftmulim}). 

Let us consider now the universal line $\cL\subset \Hil_{\PP^1}(\GtwoGr(2,7))\times \GtwoGr(2,7)$; this is the subvariety whose fiber over a point $L\in \Hil_{\PP^1}(\GtwoGr(2,7))$ is the corresponding pencil of null-planes. We have a diagram
 \begin{equation}\label{eq:diag-A'}
       \xymatrix{ & \cL\ar[dr]^{p_2}\ar[dl]_{p_1} & \\
\QQ^5 & & \GtwoGr(2,7)}
\end{equation}
where $p_1$ is the composition of the projection onto the first factor with the isomorphism above, and $p_2$ is the projection onto the second factor.

Given a null-plane $[N]=\spann{u,v}\in \GtwoGr(2,7)$, its pre-image by $p_2$ %$p_2^{-1}([u\wedge v])$
is the pencil of partial flags
\[ p_2^{-1}([N])=\{ ([t_0u+t_1v], K_{t_0u+t_1v}) \,|\, [t_0:t_1]\in \PP^1\}.\]
As $p_1(p_2^{-1}([N]))=\PP(N)$, we get 
an embedding 
\[\tau_{[N]}:\quad\PP(N) \to \PP(T_{[N]}\GtwoGr(2,7))\subset\PP(\extp^2V_7)\]
which sends a one-dimensional subspace of $N$ to the corresponding tangent direction at $[N]$. 

%The image of $\tau_{[N]}$ is called the variety of minimal rational tangents (VMRT) at the point $[N]$ (we refer to \cite{Hw} for further details on the topic).

Let us show that $\tau_{[N]}$ is a degree three Veronese embedding. To ease our task, we choose to work with $N:=\spann{x_{\alpha},x_{-\beta}}$ -- the general case can be reduced to this one by the transitivity of the $G_2$ action on $\GtwoGr(2,7)$. 
To begin, consider the pencil of two-forms $\{[x\intprod \omega]\,|\, x\in N\}$. Since $x$ is a null vector, these forms all have rank 4 and define an embedding 
\[ \PP (N)\simeq \PP^1 \lra \PP(\extp^2 V_7^*), \qquad [x]\longmapsto x\intprod \omega. \]
Since $N$ is a null-plane, the image  lies in $\PP(\extp^2 \ann(N))$. For our particular choice of $N$ we get:
\begin{align*}
(t_0 x_{\alpha}+ t_1 x_{-\beta})\intprod\omega &= t_0 (x^0\wedge x^{-\alpha} + x^{\beta}\wedge x^{\gamma}) + t_1(x^0\wedge x^{\beta}+ x^{-\alpha}\wedge x^{-\gamma}),\\
\ann(N)&=\spann{x^0,x^{-\alpha},x^{\beta},x^{\gamma},x^{-\gamma}}.
\end{align*}
%pencil not general
%Pf = Pfaffian variety
Let us denote by $\Pf(4,V_7)\subset \PP(\extp^2 V_7^*)$ the locus of two-forms of rank at most $4$. For any form $\alpha$ of rank $4$, its square $\alpha\wedge\alpha$ is decomposable and represents the four-dimensional support of $\alpha$. We thus have a rational map
\[\Pf(4,V_7)\dashrightarrow \Gr(4,V_7^*)\simeq \Gr(3,V_7), \qquad  \alpha\mapsto\alpha\wedge\alpha\]
(rational since only defined on the open subset of forms of rank $4$). This is a quadratic map whose restriction to the pencil $\{[x\intprod \omega]\,|\, x\in N\}$ is a degree two Veronese embedding
\[\PP(N)\xhookrightarrow{\nu_2} \Gr(4,\ann(N))\simeq \PP(\ann(N)^\ast).\]
For our choice of $N$, we get:
 \begin{equation}\label{eq:deg2-ver}
 \nu_2([t_0x_{\alpha}+t_1x_{-\beta}])= [t_0^2x_{-\gamma}+ 2t_0t_1 x_0+t_1^2 x_{\gamma}].
 %: \PP(N)\rightarrow \PP(x_0,x_{-\alpha},x_{\beta}, x_{\gamma},x_{-\gamma}),\qquad 
 \end{equation}
The pencil of lines through $[N]$ therefore embeds into $\PP(T_{[N]}\Gr(2,7))\subset\PP(\extp^2V_7)$ %(this is the projectivized tangent space to $Gr(2,7)) $ at the point $e_1\wedge e_5$) 
via:
\begin{align*}
 \PP^1\rightarrow \PP(T_{[N]} \Gr(2,7)),\quad [t_0:t_1]\mapsto& [(t_0x_{\alpha}+ t_1 x_{-\beta})\wedge \nu_2(t_0x_{\alpha}+ t_1 x_{-\beta})]\\ =\,&[t_0^3(x_{\alpha}\wedge x_{-\gamma}) +2{t_0}^2t_1(x_{\alpha}\wedge x_0)+ 2t_0t_1^2 (x_0\wedge x_{-\beta})+ t_1^3(x_{-\beta}\wedge x_{\gamma})]
 \end{align*}
which is indeed  a twisted cubic curve.
\end{proof}

\begin{remark}
%One can have a Lie theoretic description of those twisted cubic curves. Indeed, t
To determine that the variety $\Sigma_N\subset \PP(T_{[N]}\GtwoGr(2,7))$ of lines in $\GtwoGr(2,7)$ through a null-plane $N$ is a rational curve, one could also simply look at the Dynkin diagram of $G_2$. The semisimple part of $P_2$ is simply $\SL_2$ (the Dynkin diagram obtained by erasing the node corresponding to $P_2$ is the Dynkin diagram of $\SL_2$), and as $\alpha_2$ is a long root, $\Sigma_{N}$ is isomorphic to $SL_2/B\simeq \PP^1$ for $B\subset \SL_2$ a Borel subgroup. That we get a twisted cubic reflects the triple bond in the Dynkin diagram of $G_2$ (see \cite{LM03} for further details, or Appendix \ref{sec_tits}).
\end{remark}

\begin{remark}\label{vmrt}
The image of $\tau_{[N]}$ in  $\PP(T_{[N]}\GtwoGr(2,7))$ is called the \emph{variety of minimal rational tangents} (VMRT) at the point $[N]$. In the general study of Fano manifolds, VMRT's have been a tremendously useful tool, as they turn out to give extremely strong information on the global geometry on the variety. This is specially true for homogenous spaces, which are essentially characterized by their VMRT; a beautiful consequence is that homogeneous spaces of Picard number one admit no non-trivial K\"ahler deformation. Actually, there is one single exception to this statement: the orthogonal Grassmannian $\OGr(2,7)$! We refer to \cite{hwang-mok2, hwang-contact, hwang-icm} for further details on this topic.
\end{remark}

%\begin{remark}\label{rmk:conic-bundle}
%The arguments presented in the previous proposition allow to show that
Let us come back to the universal line $\cL\subset \QQ^5\times \GtwoGr(2,V_7)$. The previous discussion shows that $\cL$ is a conic bundle over the $G_2$-Grassmannian.
Indeed, the pencil of $G_2$-lines through a given null-plane $N$ is nothing but the pencil of isotropic planes in $\QQ^5\subset\OGr(3,7)$ containing $N$; this is isomorphic to a smooth conic on $\QQ^5$, i.e., the image of the quadratic embedding $\mu_2: \PP(N)\to \Gr(3,V_7), \ x\mapsto K_x$.
\begin{comment}
whose image is a smooth conic lying in $\QQ^5\subset \OGr(3,V_7);$ this conic idenitifies thus with the family of isotropic planes in $\QQ^5\subset\OGr(3,7)$ containing $N$.

Composing this map with the isomorphism $\QQ^5\to \QQ^5\subset \PP(V_7), \ [\ker(v\intprod \omega)]\mapsto [v]$ we obtain the morphism $\nu_2$ in display in \ref{eq:deg2-ver}.
\end{comment}
%Considering the degree 2 Veronese $\nu_2$ defined as in \ref{eq:deg2-ver}, we thus see that $\forall\: v\in N, \ \mu_2([v])=\spann{\PP(N), \nu_2(v)}$. 
%The family of conics we have just described, actually admits the structure of a conic bundle $\cC\to \GtwoGr(2,7)$. To see this, %eah conic represents the family of isotropic planes containing a null-plane. 
Observe now that the planes in $\QQ^5\subset \OGr(3,7)$ containing $\PP(N)$ are contained in the linear space $\PP(N^{\perp})$; each one of them is therefore parametrized by a point in $\PP(N^{\perp}/N)\simeq \PP^2$. 
Taking therefore a general 3-dimensional subspace $U\subset N^{\perp}$, disjoint from $N$ we get $\PP(N^{\perp}/N)\simeq \PP (U)$; the set of isotropic planes containing $N$ therefore also identifies with the smooth conic $\QQ^5\cap \PP (U)$, i.e., the image of a degree-2 Veronese embedding $\PP(N)\to \PP (U)$. %hence  $\PP(N)$ embeds via a degree 2 and the points in $\PP(U)$ corresponding to the isotropic planes containing $N$ are parametrized by the smooth conic $\QQ^5\cap \PP(U)$.
Choosing $N=\spann{x_{\alpha},x_{\beta}}$ as in the previous exercise, we get that, up to projective equivalence, $\PP U=\PP(\spann{x_0,x_{\gamma}, x_{-\gamma}})$ and the conic in $\PP U$ is therefore nothing but the image of the morphism $\nu_2$ in Equation~\eqref{eq:deg2-ver}.

The families of linear spaces $N, \ N^{\perp}, \ N^{\perp}/N$, for $[N]$ varying in $\GtwoGr(2,7)$, define vector bundles $\cU, \ \cU^{\perp}, \cU^{\perp}/\cU$ over the $G_2$-Grassmannian. The bundle 
$\cU$ is simply the restriction of the rank two tautological bundle on $\Gr(2,7)$, $\cU^{\perp}$ is the pullback of the rank 5 tautological bundle on $\Gr(5,V_7)$ under the isomorphism $\Gr(2,V_7)\simeq \Gr(2,V_7^*)\simeq \Gr(5,V_7)$ induced by $q$. Finally, the composition
\[\cU\to \cO_{\Gr}\otimes V_7\to (\cO_{\Gr}\otimes V_7)/\cU^{\perp}\]
is zero on $\OGr(2,7)$ (by the very definition of $\OGr(2,7)$), therefore $\cU\hookrightarrow \cU^{\perp}$. The quotient of this inclusion defines a rank-3 vector bundle $\cQ$ (we will adopt the same notation for the restriction to $\GtwoGr(2,7)$).
We therefore have a $\PP^2$-bundle $ \PP(\cQ)\xrightarrow{\pi} \GtwoGr(2,7) $, and the quadratic form $q$ defines a global section of $\pi^* S^2\cQ^*$. The previous proof  shows that $\cL$ is the zero locus of this global section. 
%this can be done over the entire \OGr(2,7) and this should mean that the restriction to the G2 grass. is disjoint from the discriminant locus
%\gaia{the conic bundle in $\PP(Q)$ is the universal family on the Fano variety }

In fact it is easy to check that $\cL$ is homogeneous under $G_2$, and conclude that 
the incidence diagram \eqref{eq:diag-A'}  coincides with \eqref{eq:diag-A}. It contains a lot of geometric information about $\fg_2$ and $G_2$. 
\begin{enumerate}
    \item $\Im\OO^{\CC}$ and $\fg_2$ are the two fundamental representations (from which all the others can be constructed).  
    \item $\QQ^5$ and $\GtwoGr(2,7)$ are the two generalized Grassmannians: the two minimal orbits  in the (projectivized) fundamental representations. They have the same dimension and Betti numbers as $\PP^5$. In particular, the Picard group is $\ZZ H$, but with $H^5=2$ for  $\QQ^5$ and $H^5=18$ for $\GtwoGr(2,7)$. 
    \item It is possible to recover $G_2$ from $\GtwoGr(2,7)$ in the sense that $G_2=\Aut(\GtwoGr(2,7))$ (see \cite[page 118]{Tits},  or \cite[Theorem 1]{Dema}).

   %\item  $\GtwoGr(2,7)\subset \PP(\fg_2)$ is an example of {\it adjoint variety}, a closed $Aut(\fg)$-orbit inside $\PP(\fg)$ for $\fg$ a simple complex Lie algebra.
\end{enumerate}

\begin{remark}\label{Fano}
$\QQ^5$ and $\GtwoGr(2,7)$ are both Fano varieties, of index $5$ and $3$ respectively (the index of a Fano manifold $X$ is the largest integer $m$ such that the canonical divisor $K_X=-mH$ for some divisor $H$). By adjunction, a codimension two linear section of $\GtwoGr(2,7)$ is a Fano threefold of index one, and a codimension three  linear section is a surface with trivial canonical bundle, in fact a K3 surface.
(The fact that they easily provide families of Fano manifolds or K3 surfaces is yet another reason to be interested in homogeneous spaces!) 

It was proved by Mukai \cite{Mukai} that any prime Fano threefold of degree $18$ is a section of the $G_2$-Grassmannian. The same statement holds for a \emph{general} polarized K3 surface of degree $18$ (a polarized K3 surface consists of a pair $(S,L)$ where $S$ is a smooth simply connected surface with trivial canonical bundle and $L$ is an ample line bundle of degree $d=L^2$). 
   \end{remark}

\section{The Cayley Grassmannian}\label{sec_CG}

Inside the Grassmannian $\Gr(4,V_7),$
The \textit{Cayley Grassmannian} $\CGr$ is defined as the closed subvariety parameterizing 4-planes that are isotropic with respect to $\omega$:  $[V]\in \CGr$ if and only if $\omega|_{V}\equiv 0$. Recall that the index of a Fano variety was defined in Remark \ref{Fano}.
%This variety was defined and studied in \cite{LM_Cayley}. 

%From its very definition, we deduce the following:

%Recall that a smooth variety $X$ is Fano if the anticanonical bundle $K_X^* =  \det(TX)$ is ample. Every Fano variety has an invariant, called the index, defined as 
%$$i_X := \max\{ \, m \in \ZZ \mid \exists H \in {\rm Pic}(X) \text{ s.t. } K_X^* = H^{\otimes m} \,\}.$$ 

\begin{lemma}
$\CGr$ is a smooth $8$-dimensional Fano variety of index $4$. %(i.e., the canonical line bundle $K_{\CGr}$ is isomorphic to $\cO_{\CGr}(-4)).$
%Recall that the index of a Fano variety $X$ is 
\end{lemma}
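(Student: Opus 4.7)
The plan is to realize $\CGr$ as the zero locus of an explicit section of a homogeneous vector bundle and to deduce all of the stated properties by adjunction.

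The restriction of $\omega$ to the fibers of the tautological rank-$4$ sub-bundle $\cU\to\Gr(4,V_7)$ defines a global section $s_\omega$ of the rank-$4$ vector bundle $\cE := \extp^3\cU^*$, whose zero locus is precisely $\CGr$. The expected dimension is $\dim\Gr(4,V_7) - \rank\cE = 12 - 4 = 8$, and $\CGr$ is non-empty: for any quaternion subalgebra $\HH^\CC\subset\OO^\CC$, the $4$-plane $V := (\HH^\CC)^\perp\subset V_7$ lies in $\CGr$, since the Cayley--Dickson rule gives $V\cdot V\subset\HH^\CC$, so $\omega(x,y,z)=\langle\Im(xy),z\rangle$ vanishes for $x,y,z\in V$.

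To obtain both smoothness and the claimed dimension, I would show that $s_\omega$ is transverse to the zero section, i.e., that at each $[V]\in\CGr$ the differential
\[ds_\omega(\phi)(v_1,v_2,v_3) = \omega(\phi(v_1),v_2,v_3) + \omega(v_1,\phi(v_2),v_3) + \omega(v_1,v_2,\phi(v_3))\]
defines a surjective map $\Hom(V,V_7/V)\to\extp^3 V^*$ (the right-hand side is well defined modulo lifts of $\phi(v_i)$ to $V_7$ precisely because $\omega|_V = 0$). This is the main obstacle: surjectivity must be verified at \emph{every} point of $\CGr$, not merely at a generic one. The $G_2$-invariance of $s_\omega$ reduces the check to one representative per $G_2$-orbit in $\CGr$; the orbits are finite in number (indexed, for instance, by the rank of $q|_V$), with the open one $G_2/\SO_4$ parametrizing the orthogonal complements of genuine quaternion subalgebras. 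Each representative admits a concrete octonionic description on which the surjectivity of $ds_\omega$ can be checked by direct computation.

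Once smoothness is established, the adjunction formula for regular zero loci yields
\[K_{\CGr} = \bigl(K_{\Gr(4,V_7)} + c_1(\cE)\bigr)\big|_{\CGr}.\]
Writing $H = c_1(\cU^*)$ for the Pl\"ucker class, and using $K_{\Gr(4,V_7)} = -7H$ together with $c_1(\extp^3\cU^*) = \binom{3}{2}\,c_1(\cU^*) = 3H$ (valid for a rank-$4$ bundle), one gets $K_{\CGr} = -4H|_{\CGr}$; hence $\CGr$ is Fano of index divisible by $4$. To upgrade this to index exactly $4$, I would invoke a Lefschetz-type theorem: since $\cE\cong\cU\otimes\det\cU^*$ is sufficiently positive on $\Gr(4,V_7)$ and $\dim\CGr = 8$ is large compared with the codimension, the restriction map $\Pic(\Gr(4,V_7))\to\Pic(\CGr)$ is an isomorphism, so $\Pic(\CGr) = \ZZ\cdot H|_{\CGr}$ and the Fano index of $\CGr$ equals $4$.
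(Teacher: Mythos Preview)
Your overall strategy (zero locus of $s_\omega$ in $\extp^3\cU^*$, adjunction, Lefschetz for the Picard group) matches the paper's. The substantive difference is in the smoothness step, and there the paper's argument is both shorter and avoids a circularity in yours.

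The paper observes that $\extp^3\cU^*$ is globally generated with $H^0(\Gr(4,V_7),\extp^3\cU^*)\simeq\extp^3V_7^*$, and that $\omega$ lies in the open $\GL_7$-orbit of this space. Since $\GL_7$ acts on $\Gr(4,V_7)$, the zero locus of $s_\omega$ is $\GL_7$-isomorphic to that of any section in the dense orbit; hence Bertini (generic smoothness for a globally generated bundle) applies directly and gives smoothness of the expected codimension with no case analysis. This is the point you are missing: the representation-theoretic genericity of $\omega$ established in Lecture~1 \emph{is} genericity as a section.

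Your proposed orbit-by-orbit transversality check would work in principle, but as written it is circular: you assume the $G_2$-orbit structure on $\CGr$ is known and finite, whereas in the paper that classification (the rank stratification of four-dimensional subalgebras and the three-orbit description) is established only \emph{after} the present Lemma. You also defer the actual verification at the degenerate orbits to ``direct computation''; the rank-one case $\Im\AA=K_x$ is the delicate one and would need to be done explicitly.

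For the index, the paper also invokes a Lefschetz-type result (Ein's theorem) to get Picard rank one, then finishes slightly differently: knowing $K_{\CGr}=-4H$ and $\rho=1$, the index is a multiple of $4$, and Kobayashi--Ochiai rules out index $8$ (which would force $\CGr$ to be a quadric). Your version, reading off $\Pic(\CGr)=\ZZ\cdot H|_{\CGr}$ directly from Lefschetz, is equally valid and a touch more direct.
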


\begin{proof}
Denote by $\mathcal{T}$ the rank 4 tautological bundle on $\Gr(4,V_7)$. The invariant three-form 
$\omega$ defines a global section $s_{\omega}$ of $\extp^3 \mathcal{T}^*$. This is a rank 4 globally generated vector bundle and, as $s_{\omega}$ is generic, its zero locus is smooth of codimension $4$; so  $\CGr$ is smooth of dimension $8$. The generality assumption on $\omega$ also ensures that $\CGr$ has Picard rank one (see \cite{Ein}, Thm 2.2), and its index can be computed by the \emph{adjunction formula} for zero loci of sections of vector bundles: since $K_{\Gr(4,7)}\simeq \cO_{\Gr(4,7)}(-7)$ and $\det(\extp ^3 \mathcal{T})=\cO_{\Gr(4,7)}(3)$, we get $$K_{\CGr}\simeq K_{\Gr(4,7)}\otimes\det(\extp ^3 \mathcal{T})\simeq \cO_{\CGr}(-4),$$ whose dual is ample on $\CGr$.
This ensures that the index is a multiple of four; it were bigger than four,  general results would imply that it is equal to $8$ and that $\CGr$ is a quadric, which is not the case. 
\end{proof}

%The index of $\CGr$ can be computed directly from the Koszul resolution of $\CGr.$ Denoting $\cE:=\extp^3 \mathcal{T}$ this has the form:
%$$ 0 \to \extp^4 \cE\to \extp^3 \cE\to \extp^2 \cE\to \cE\to \cO\to \cO_{\CGr}\to 0$$

\begin{prop}\label{lem:cg-subalgebras}
A codimension $3$ subspace $B\subset V_7$ belongs to $\CGr$ if and only if its orthogonal $B^{\perp}$ is a subalgebra of $\OO^{\CC}.$
\end{prop}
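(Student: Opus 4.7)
My plan is to translate the geometric condition $\omega|_B\equiv 0$ into a multiplicative closure property of the octonionic product, and then use the alternativity of $\OO^\CC$ to promote closure on $B\cdot B$ to closure on the $\OO^\CC$-orthogonal complement $\HH:=\CC\cdot 1\oplus B^\perp$ of $B$. Note that $\HH$ is $4$-dimensional and contains $1$, so by Hurwitz (Theorem~\ref{hurwitzgeneral}) it is a subalgebra iff it is a quaternion subalgebra; the proposition is thus the assertion that $B\in\CGr$ iff $\HH$ is such a quaternion subalgebra.

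The starting observation is that for $b_1,b_2,b_3\in V_7$ one has $\omega(b_1,b_2,b_3)=\langle b_1b_2,b_3\rangle$: this follows from Definition~\ref{def:3-form} together with the fact that $\Re(b_1b_2)\in\CC\cdot 1$ is orthogonal to $b_3\in V_7$. Hence $\omega|_B\equiv 0$ is equivalent to $b_1b_2\perp B$ in $\OO^\CC$ for all $b_1,b_2\in B$, i.e.\ $B\cdot B\subset\HH$, or equivalently $b_1\times b_2\in B^\perp$ for all $b_1,b_2\in B$. The direction $(\Leftarrow)$ is now immediate from Cayley--Dickson: writing $\OO^\CC=\HH\oplus\HH\ell$ via Exercise~\ref{ex_CD_H} we have $B=\HH\ell$, and $(a\ell)(b\ell)=-\bar b a\in\HH$ gives $B\cdot B\subset\HH$ directly.

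For the nontrivial direction $(\Rightarrow)$, the crucial tool is the identity
\[x\times(x\times y)=\langle x,y\rangle\,x-q(x)\,y,\qquad x,y\in V_7,\]
obtained by extracting the imaginary part of the alternativity relation $x(xy)=x^2 y=-q(x)y$. A $4$-dimensional subspace of $V_7$ cannot be totally isotropic (maximal isotropic subspaces have dimension $3$), so non-null vectors in $B$ are Zariski-dense. For such $x\in B$, the operator $T_x:=x\times(\cdot)\colon V_7\to V_7$ has kernel $\CC x$, and its restriction to $B$ takes values in $B^\perp$ by hypothesis, with one-dimensional kernel $\CC x\subset B$; a dimension count yields surjectivity of $T_x|_B\colon B\to B^\perp$. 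For any $z\in B^\perp$ one then chooses $y\in B$ with $x\times y=z$ and applies the identity to obtain $x\times z=\langle x,y\rangle x-q(x)y\in B$. By bilinearity this extends to $B\times B^\perp\subset B$.

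The desired closure $B^\perp\times B^\perp\subset B^\perp$ now follows from the cyclic symmetry of $\omega$: for $c_1,c_2\in B^\perp$ and $b\in B$ one has
\[\langle c_1\times c_2,b\rangle=\omega(c_1,c_2,b)=\omega(b,c_1,c_2)=\langle b\times c_1,c_2\rangle=0,\]
since $b\times c_1\in B$ is orthogonal to $c_2\in B^\perp$. Hence $\HH$ is closed under multiplication, which is the content of the claim. The step I expect to require the most care is the bridge $B\times B\subset B^\perp\Rightarrow B\times B^\perp\subset B$: the quadratic identity above is essential because the hypothesis only becomes usable after iterating the cross product, and this is where the non-associative (alternative) structure of $\OO^\CC$ enters the argument.
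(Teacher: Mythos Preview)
Your forward direction $(\Rightarrow)$ is correct and is essentially the paper's argument in cross-product language: the paper uses $L_x^2=-q(x)\id$ for imaginary $x$ to pass from $B\cdot B\subset B^\perp$ to $B\cdot B^\perp\subset B$, while you use the equivalent identity $x\times(x\times y)=\langle x,y\rangle x-q(x)y$; the final transfer via the cyclic symmetry of $\omega$ is exactly the paper's $(x,yz)=-(xz,y)$.

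Your backward direction $(\Leftarrow)$ has a genuine gap. The Cayley--Dickson decomposition $\OO^\CC=\HH\oplus\HH\ell$ with the multiplication rule $(a\ell)(b\ell)=-\bar b a$ is only available when $\HH$ is a \emph{nondegenerate} composition subalgebra (this is what Exercise~\ref{ex_CD_H} and Theorem~\ref{hurwitzgeneral} provide). But four-dimensional subalgebras of $\OO^\CC$ need not be nondegenerate: Proposition~\ref{prop:classif-subalgebras} exhibits degenerate ones with $q|_\HH$ of rank $2$ or $1$. For those, Hurwitz does not apply, and without the Cayley--Dickson formula the statement $(a\ell)(b\ell)\in\HH$ is precisely $B\cdot B\subset\HH$, which is what you are trying to prove.

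The repair is to run your own machinery in reverse, as the paper suggests (``the converse is similar''). From $B^\perp\times B^\perp\subset B^\perp$ and the skewness of $T_c$ one gets $B^\perp\times B\subset B$; then for non-null $b\in B$ the identity $T_b^2(c)=\langle b,c\rangle b-q(b)c=-q(b)c$ (for $c\in B^\perp$) shows $B^\perp\subset T_b(B)$, and since $\ker T_b|_B=\CC b$ a dimension count gives $T_b(B)=B^\perp$. Density of non-null $b$ then yields $B\times B\subset B^\perp$.
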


%\alan{I still wonder if  we can simplify the proof using results from Lec I}

\begin{proof}
Recall that $\omega$ is given in terms of octonionic multiplication by $\omega(x,y,z)=Re((xy)z)$. 
The condition that $\omega$ vanishes identically on $B$ is therefore equivalent to $B.B\subset B^\perp.$ 

Suppose this is the case. For $x\in B$, the left multiplication operator by $x$ is thus an 
endomorphism of $\OO^\CC$ that maps $B$ to $B^\perp$. Note that since the invariant quadratic form $q$ is nondegenerate, $B$ cannot be isotropic; for $x$ general in $B$, $q(x)\ne 0$, $L_x$ is an isomorphism and proportional to its own inverse. This implies that $L_x$ also maps $B^\perp$ to $B$, and since this is true for $x$ general in $B$, it must be true for any $x\in B$. This means that $B.B^\perp\subset B$. But then, if $x\in B$ and $y,z\in B^\perp\cap Im(\OO)$, we get 
$(x,yz) = -(xz,y) =0$ since $xz\in B$. So  $B^\perp.B^\perp\subset B^\perp$, which means that $B^\perp$ is a subalgebra of $\OO^\CC$. 

The converse implication is similar and left to the reader. 
\end{proof}

So, the Cayley Grassmannian parametrizes the four-dimensional subalgebras of $\OO^{\CC}$. Over the real numbers, all subalgebras of $\OO$ are isomorphic to the quaternion algebra $\HH$. Over the complex numbers, this is no longer true, since there exist \emph{degenerate subalgebras}.
\begin{definition}
A subalgebra $\mathbb{A}\subset \OO^{\CC}$ is said to be nondegenerate if the restriction of the octonionic norm to $\mathbb{A}$ is a nondegenerate quadratic form. Otherwise, $\mathbb{A}$ is degenerate.
\end{definition}
%A complete classification of degenerate subalgebras 
%and a description of the cohomology of $\CGr$ 
%can be found  in \cite{LM_Cayley}; we report the classification here.

\begin{lemma}\label{lem:nondeg-quaternion}
Any 4-dimensional nondegenerate subalgebra $\mathbb{A}\subset \OO^{\CC}$ is isomorphic to $\mathbb{H}^{\CC}$.
\end{lemma}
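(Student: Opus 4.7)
The plan is to reduce the statement to Hurwitz's theorem (Theorem \ref{hurwitzgeneral}). The only real content is to check that $\mathbb{A}$ is a unital subalgebra, i.e.~that $1 \in \mathbb{A}$; once this is done, $\mathbb{A}$ inherits from $\OO^{\CC}$ the structure of a 4-dimensional composition algebra over $\CC$, and Hurwitz together with the uniqueness of nondegenerate quadratic forms on $\CC^4$ will finish the job.

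To verify that $1 \in \mathbb{A}$, I would use the quadratic identity satisfied by any element of a composition algebra. For $x \in \OO^{\CC}$, the relations $x + \bar{x} = 2\langle x, 1 \rangle \cdot 1$ and $x\bar{x} = q(x)\cdot 1$ (see \eqref{eq:norm}) combine to give
\[
x^2 - 2\langle x, 1\rangle\, x + q(x)\cdot 1 = 0,
\]
hence $q(x)\cdot 1 = 2\langle x, 1\rangle\, x - x^2$. Since $\mathbb{A}$ is closed under multiplication, the right-hand side lies in $\mathbb{A}$ for every $x \in \mathbb{A}$. By the nondegeneracy assumption on $q|_{\mathbb{A}}$ there exists $x \in \mathbb{A}$ with $q(x) \neq 0$, and dividing by $q(x)$ yields $1 \in \mathbb{A}$.

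With $1 \in \mathbb{A}$, the restriction of the octonionic product and the inner product make $\mathbb{A}$ into a unital algebra equipped with a nondegenerate symmetric bilinear form whose associated norm is multiplicative (multiplicativity is inherited from the ambient $\OO^{\CC}$). Thus $\mathbb{A}$ is a four-dimensional composition algebra over $\CC$. By Theorem \ref{hurwitzgeneral}, $\mathbb{A}$ is a quaternion algebra over $\CC$; the concluding clause of that theorem states that two composition algebras are isomorphic if and only if their underlying quadratic spaces are. As any two nondegenerate quadratic forms of the same dimension over $\CC$ are equivalent, it follows that $\mathbb{A} \simeq \HH^{\CC}$ (equivalently $M_2(\CC)$, cf.~Exercise \ref{ex_noneucl}).

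The only place where care is needed is the unit check: a subalgebra in the sense of being merely closed under the product need not a priori contain $1$, but the nondegeneracy hypothesis forces it via the minimal polynomial argument above. No further obstacle arises.
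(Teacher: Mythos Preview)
Your proof is correct and takes a genuinely different route from the paper's. The paper argues directly: it picks two orthogonal norm-one vectors $e_1,e_2$ in $\Im(\mathbb{A})$, sets $e_3=e_1e_2$, and checks by hand that $1,e_1,e_2,e_3$ satisfy the quaternion multiplication table and are linearly independent. Your argument instead first forces $1\in\mathbb{A}$ via the minimal polynomial $x^2-2\langle x,1\rangle x+q(x)1=0$, then invokes Hurwitz's theorem (Theorem~\ref{hurwitzgeneral}) to conclude that $\mathbb{A}$ is a quaternion algebra, and finally uses the uniqueness of nondegenerate quadratic forms over $\CC$.

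Your approach is slicker and makes the dependence on the structure theory transparent; the paper's is more hands-on but self-contained. One practical point in favor of the paper's version: the explicit basis $(e_1,e_2,e_3)$ it constructs is reused verbatim in the proof of Proposition~\ref{prop:orbits-CG} to extend to a basic triple and thereby show that $G_2$ acts transitively on the set of nondegenerate four-dimensional subalgebras. Your abstract isomorphism with $\HH^\CC$ does not immediately hand you such a basis, so if you adopt this proof you would need to supply that step separately (which is easy, but worth noting).
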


\begin{proof} Choose two orthogonal norm-one vectors $e_1, e_2$ in 
$\Im(\mathbb{A})$; then  ${e_i}^2=-e_i\overline{e_i}=-1$
for $i=1,2$. 
Let $e_3=e_1e_2=-e_2e_1;$ this an octonion of norm one and since $\OO^{\CC}$ is alternative, the subalgebra generated by $e_1$ and $e_2$ is associative, therefore $e_1 e_3=-e_2$ and $e_2 e_3=e_1$. To conclude the proof, it suffices to show that $e_3\not\in\spann{1, e_1,e_2}.$
From the choice of $e_1$ and $e_2$, $e_3$ is imaginary since 
$\langle e_3,1\rangle=\langle e_1e_2,1\rangle=\langle e_1,e_2\rangle=0$. Then suppose that $e_3=\alpha e_1+\beta e_2$ for some $\alpha, \beta\in \CC$; multiplying on the left by $e_1$ yields $-e_2=-\alpha 1+\beta e_3$, hence $\alpha=0$ by taking real parts; and similarly $\beta=0$.  
\end{proof}

Let us now pass to the study of degenerate subalgebras $\mathbb{A}$.
\begin{lemma}\label{lem:deg-null-plane}
Suppose that $\mathbb{A}$ is degenerate. Then $\Im(\mathbb{A})$ contains a null-plane $N$.
\end{lemma}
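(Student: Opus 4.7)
My plan is to locate a null-plane inside the kernel of left multiplication by a vector in the radical of $q|_{\mathbb{A}}$. Since $\mathbb{A}$ is degenerate, its radical $R=\{\,r\in\mathbb{A}\mid\langle r,\mathbb{A}\rangle=0\,\}$ is nontrivial; taking the pairing with $1\in\mathbb{A}$ shows $\langle r,1\rangle=0$ for every $r\in R$, hence $R\subset\Im(\mathbb{A})$. Fix any nonzero $x\in R$. Being imaginary and isotropic, $x$ satisfies $x^{2}=-q(x)\,1=0$, so $x$ is a zero divisor. By Corollary~\ref{kerofleftmulim} the subspace $K_{x}\subset\Im(\OO^{\CC})$ is three-dimensional, isotropic, and contains $x$; by Lemma~\ref{nullplanes}, every $2$-plane $N\subset K_{x}$ passing through $x$ is already a null-plane. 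It therefore suffices to produce such an $N$ inside $\Im(\mathbb{A})$.

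The key reduction is to show that $L_{x}$ restricts to a nilpotent endomorphism of $\Im(\mathbb{A})$. First, $L_{x}$ preserves $\mathbb{A}$ because $\mathbb{A}$ is a subalgebra. Second, for any $y\in\mathbb{A}$ the standard composition-algebra identity $\langle xy,1\rangle=\langle y,\bar{x}\rangle=-\langle y,x\rangle$ (cf.\ \cite[Ch.~I]{springerveldkamp}) vanishes, since $x\in R$; thus $L_{x}$ maps $\mathbb{A}$ into $\Im(\mathbb{A})$. Third, by the alternativity of $\OO^{\CC}$ (Exercise~\ref{assocsubalg}) and the vanishing $x^{2}=0$, we have $L_{x}^{2}=L_{x^{2}}=0$ on $\OO^{\CC}$, hence a fortiori on $\Im(\mathbb{A})$. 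In particular $\im(L_{x}|_{\Im(\mathbb{A})})\subseteq\ker(L_{x}|_{\Im(\mathbb{A})})$.

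To conclude, set $K':=\ker(L_{x}|_{\Im(\mathbb{A})})$. Since $L_{x}|_{\Im(\mathbb{A})}$ is a nilpotent operator of order $\leq2$ on a $3$-dimensional space, the inclusion $\im\subseteq\ker$ together with rank-nullity forces $\dim K'\geq 2$. Moreover $K'\subseteq K_{x}$ directly from the definitions (any $y\in K'$ lies in $\Im(\OO^{\CC})$ with $xy=0$), and $x\in K'$ because $x\cdot x=0$. Any $2$-plane $N\subseteq K'$ containing $x$ then satisfies $N\subseteq K_{x}$ and passes through $x$, so by Lemma~\ref{nullplanes} it is a null-plane; it lies in $\Im(\mathbb{A})$ by construction. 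The only subtle point is the middle step of the reduction --- the preservation of $\Im(\mathbb{A})$ under $L_{x}$ --- which crucially uses that $x$ lies in the radical and not merely in $\Im(\mathbb{A})$; this is what makes the dimension count on the three-dimensional space $\Im(\mathbb{A})$ tight enough to force $\dim K'\geq 2$.
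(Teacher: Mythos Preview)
Your proof is correct and follows essentially the same approach as the paper's: pick $x$ in the radical of $q|_{\mathbb{A}}$, observe that $L_x$ restricts to a square-zero endomorphism of the three-dimensional space $\Im(\mathbb{A})$, and conclude that its kernel there is at least two-dimensional. The only cosmetic difference is that you package the final verification (that a $2$-plane through $x$ inside $K_x$ is a null-plane) as an appeal to Lemma~\ref{nullplanes}, whereas the paper checks it by hand from $x^2=y^2=xy=yx=0$.
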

\begin{proof}
The degeneracy assumption on $\mathbb{A}$ is equivalent to the existence of a non-zero element $x\in \mathbb{A}$ such that $\mathbb{A}\subset x^\perp.$ Observe that such an $x$ must necessarily lie in $\Im(\mathbb{A})\cap \QQ^5\subset \Im(\OO^{\CC})$, as $\langle x, 1\rangle=0=\langle x,x\rangle$. Moreover the left multiplication operator $L_x$  preserves $\Im(\mathbb{A})$
since $\Re(xy)=-(x,\overline{y})=-(x,y)$. But $x$ being an imaginary octonion of norm zero, 
 $x^2=0$ and thus $L_x^2=0$ ($\OO^{\CC}$ being alternative). So $L_x$ has rank at most one as an endomorphism of $\Im(\mathbb{A})$, and there exists
%L_x(Im(\mathbb{A})\subset Ker(L_x) can not be an automorphism of $Im(\mathbb{A}).$ 
 $y\in \Im(\mathbb{A})$, independent of $x$, such that $xy=0$ (and $yx=0$ by conjugating). 
Necessarily $y$ has zero norm, being a zero divisor, and therefore   $N= \spann{x,y}$ is a null-plane. %(this is due to \ref{prop:zero-locus}).
\end{proof}

Using these preliminary results we can classify, up to the $G_2$ action, the 4-dimensional subalgebras $\mathbb{A}$ of $\OO^{\CC}$. Denote by $r$ the rank of the restriction of the octonionic norm to $\mathbb{A}.$

\begin{prop}\label{prop:classif-subalgebras}
Up to the action of $G_2$, there are only three possibilities:
\begin{enumerate}
    \item $r=4$, then $\mathbb{A}$ is a quaternion algebra;
    \item $r=2$, then there exists a unique null-plane $N$ such that $N\subset \Im(\mathbb{A})\subset N^\perp$ % ie restriction to Im(A) is double line;
    \item $r=1$, and there exists a unique isotropic line $\ell$ such that $\Im(\mathbb{A})=\ell\OO\cap \Im(\OO)$. 
\end{enumerate}
\end{prop}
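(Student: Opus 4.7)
The plan is a case-by-case analysis on the $G_2$-invariant rank $r$, showing each listed value gives a single $G_2$-orbit and that $r=3$ is impossible. Throughout, the key tool in the degenerate cases is that $\mathbb{A}\in\CGr$ (Proposition~\ref{lem:cg-subalgebras}) combined with $\langle xy,z\rangle=\omega(x,y,z)$, which forces the imaginary part of $xy$ (for $x,y\in\Im(\mathbb{A})$) to lie in the radical of $q|_{\Im(\mathbb{A})}$.

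For $r=4$, Lemma~\ref{lem:nondeg-quaternion} identifies $\mathbb{A}\simeq\HH^\CC$, and the complexification of Exercise~\ref{ex_dim14} shows $G_2$ acts transitively on basic pairs, which generate such quaternion subalgebras; this yields a single $8$-dimensional (hence open) $G_2$-orbit in $\CGr$. For $r<4$, Lemma~\ref{lem:deg-null-plane} provides a null-plane $N\subset\Im(\mathbb{A})$.

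To exclude $r=3$: the radical $\kappa=\CC u$ of $q|_{\Im(\mathbb{A})}$ is 1-dimensional, and any 2-dimensional isotropic subspace of a rank-$2$ quadratic 3-space must contain its radical, so $N=\spann{u,v}$ for some $v$. Pick $w\in\Im(\mathbb{A})$ with $\langle v,w\rangle\neq 0$ (possible since $v\notin\kappa$); the radical condition above then forces $uw=\alpha u\in\kappa$ and $vw=-\langle v,w\rangle\cdot 1+\eta$ with $\eta\in\kappa$. A direct computation gives the associators $[u,v,w]=(uv)w-u(vw)=\langle v,w\rangle u$ and $[v,u,w]=(vu)w-v(uw)=0$, using $uv=vu=0$ together with $u\kappa=v\kappa=0$. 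But alternativity of $\OO^\CC$ makes the associator alternating, so $[u,v,w]=-[v,u,w]=0$, contradicting $\langle v,w\rangle\neq 0$.

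For $r=2$, the form $q|_{\Im(\mathbb{A})}$ has rank $1$, so its 2-dimensional radical is the unique 2-dimensional isotropic subspace of $\Im(\mathbb{A})$ and must coincide with $N$, giving uniqueness of $N$ and $\Im(\mathbb{A})\subset N^\perp$. Transitivity of $G_2$ on $\GtwoGr(2,7)$, together with the fact that the stabilizer $P_2$ of $N$ acts transitively on the complement of the quadric in the 3-dimensional nondegenerate quotient $(N^\perp\cap V_7)/N$, yields a single orbit (the dimension count $5+2=7$ confirms this). For $r=1$, $\Im(\mathbb{A})$ is a totally isotropic 3-space with anti-commutative product $\mu$, nontrivial by Lemma~\ref{lem:dim-null-planes}. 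Since $L_x^2=L_{x^2}=0$ on $\Im(\mathbb{A})$ by alternativity, each $L_x|_{\Im(\mathbb{A})}$ has rank $\leq 1$; if some $x_0$ gives $L_{x_0}|_{\Im(\mathbb{A})}=0$ then $\Im(\mathbb{A})\subset K_{x_0}$ and equality follows by dimension. Otherwise every $L_x$ has rank exactly $1$, but picking $u,v\in\Im(\mathbb{A})$ with $w:=uv\neq 0$ yields $uw=u(uv)=0$, $vw=-v(vu)=0$, and $ww=0$ by alternativity and isotropy, so $L_w$ vanishes on the basis $\{u,v,w\}$ of $\Im(\mathbb{A})$ (whose linear independence is elementary), contradicting rank $1$. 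Uniqueness of $\ell=\CC x_0$ comes from injectivity of $[x]\mapsto K_x$ (Exercise~\ref{ex_OG}), and the single $G_2$-orbit from transitivity on $\QQ^5$. The main obstacle is the $r=3$ exclusion, which uses alternativity in a non-obvious way via the associator identity rather than a bare dimension count.
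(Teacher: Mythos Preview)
Your argument is correct and takes a genuinely different route from the paper. The paper organizes the degenerate case around the dichotomy ``is the null-plane $N$ from Lemma~\ref{lem:deg-null-plane} unique or not'': if two null-planes $N,N'$ coexist, a nonzero $\ell\in N\cap N'$ annihilates $N+N'=\Im(\mathbb{A})$, forcing $\Im(\mathbb{A})=K_\ell$ and $r=1$; if $N$ is unique, a direct product computation shows $\Im(\mathbb{A})\subset N^\perp$, hence $r\le 2$. The impossibility of $r=3$ is thus a corollary rather than an explicit step. You instead split directly on the value of $r$, and your associator identity ruling out $r=3$ is a clean use of alternativity absent from the paper. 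Likewise, your $r=1$ analysis (producing $x_0$ with $L_{x_0}|_{\Im(\mathbb{A})}=0$ from the nilpotent skew multiplication) is more self-contained than the paper's ``two null-planes'' route. Your $r=2$ identification of $N$ with the radical of $q|_{\Im(\mathbb{A})}$ is also slicker than the paper's product computation.

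One point to tighten: the justification of your ``key tool'' (that $\Im(xy)$ lies in the radical of $q|_{\Im(\mathbb{A})}$ for $x,y\in\Im(\mathbb{A})$) via ``$\mathbb{A}\in\CGr$'' is misleading. Membership in $\CGr$ says $\omega$ vanishes on $\mathbb{A}^\perp$, not on $\Im(\mathbb{A})$, and indeed the claim is false for $r=4$ (take $x=e_1$, $y=e_2$). The correct reason, in the degenerate cases only, is the null-plane itself: with $N=\spann{a,b}\subset\Im(\mathbb{A})$ and $ab=0$, the top form $\omega|_{\Im(\mathbb{A})}$ vanishes on any basis $(a,b,c)$ since $\omega(a,b,c)=\langle ab,c\rangle=0$, hence identically; then $\langle\Im(xy),z\rangle=\omega(x,y,z)=0$ for all $z\in\Im(\mathbb{A})$, which is precisely the radical condition you use. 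With this correction your $r=3$ exclusion goes through unchanged.
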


\begin{proof} If $r=4,$ $\mathbb{A}$ is nondegenerate, hence isomorphic to $\mathbb{H}^{\CC}$ by Lemma  \ref{lem:nondeg-quaternion}. Suppose  that $\mathbb{A}$ is degenerate; by Lemma \ref{lem:deg-null-plane}, $\Im(\mathbb{A})$ contains a null-plane $N$. Suppose there is another one, say $N'$, and let $\ell$ belong to $N\cap N'$. Then $\ell.\Im(\mathbb{A})=\ell.(N+N')=0$ and we can conclude that  $\Im(\mathbb{A})=\ell\OO\cap \Im(\OO)$ and $r=1$. 

It follows that if $r=2$,  $N$ is unique. What remains to prove is that any $z\in\Im(\mathbb{A})$ is orthogonal to $N=\spann{x,y}$, with the notations of Lemma \ref{lem:deg-null-plane}. For this we note that 
$xz$ belongs to $\AA$ and is killed by left multiplication by $x$, which implies that $xz\in N$. 
So $xz=\alpha x+\beta y$ for some scalars $\alpha, \beta$, that we can write as $\beta y=(x-\alpha 1)z$. Since $z\ne 0$ and $x-\alpha 1$ is invertible, we deduce that $\beta\ne 0$. Then $y$ is a linear combination of $xz$ and $x$, hence also $yz$. In particular $Re(yz)=Re(xz)=0$, hence $z\in N^\perp$.  
\end{proof}

The previous classification leads to the following
\begin{prop}\label{prop:orbits-CG}
The action of $G_2$ on $\CGr$ has three orbits: 
\begin{enumerate}
    \item the open one is isomorphic with $G_2/SL_2\times SL_2;$ 
    \item the intermediate one is a complement to a conic bundle in a $\PP^2$-bundle over $\GtwoGr(2,7)$; 
    \item the closed one is isomorphic to the quadric $\QQ^5.$
\end{enumerate}

\end{prop}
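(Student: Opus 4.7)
My plan is to directly convert the algebraic classification of four-dimensional subalgebras given in Proposition \ref{prop:classif-subalgebras} into a geometric description of $G_2$-orbits, verifying transitivity in each case and then reading off the geometry of the parameter space.

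First I would handle the three cases one by one. For the quaternion case, I would show that $G_2$ acts transitively on pairs of orthogonal imaginary unit octonions, which follows from the simply transitive action of $G_2^{\mathrm{c}}$ on basic triples (Exercise \ref{ex_dim14}) and its complex analogue; since any quaternion subalgebra is generated by such a pair (as in the proof of Lemma \ref{lem:nondeg-quaternion}), the set of quaternion subalgebras is a single orbit. A dimension count, $\dim G_2 - 8 = 6 = \dim(\SL_2\times\SL_2)$, together with the fact that this orbit must be open in $\CGr$ (being eight-dimensional), identifies it with $G_2/(\SL_2\times\SL_2)$; I would then identify the stabilizer explicitly by writing it as the subgroup preserving the orthogonal decomposition $\OO^\CC = \AA \oplus \AA^\perp$, where $\AA^\perp$ is a free rank-one $\AA$-module under left multiplication, so that the stabilizer embeds in $\Aut(\AA)\times \mathrm{SL}(\AA^\perp\text{ as } \AA\text{-module})$ which is $\SO_3\times\SL_2$, with simply-connected cover $\SL_2\times\SL_2$ acting on $\CGr$.

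For the intermediate case, parametrizing data is a pair $(N,\Im(\AA))$ where $N$ is the null-plane from Proposition \ref{prop:classif-subalgebras}(2) and $\Im(\AA)$ is a three-dimensional subspace between $N$ and $N^\perp$; equivalently, a point of the projectivization $\PP(N^\perp/N)\simeq \PP^2$. The quadratic form $q$ descends to a nondegenerate quadratic form on the three-dimensional quotient $N^\perp/N$, whose zero locus in $\PP(N^\perp/N)$ is a smooth conic; the rank-two condition on $q|_{\AA}$ excludes precisely the null lines in $N^\perp/N$. Letting $\cU\subset\cU^\perp\subset V_7\otimes\cO$ be the tautological flag on $\GtwoGr(2,7)$, I would realize this orbit as the complement of the conic bundle $\{q=0\}\subset \PP(\cU^\perp/\cU)$ inside the $\PP^2$-bundle $\PP(\cU^\perp/\cU)\to \GtwoGr(2,7)$. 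Transitivity follows since $G_2$ is transitive on $\GtwoGr(2,7)$ and the stabilizer $P_2$ of a null-plane $N$ acts on $N^\perp/N$ through its reductive quotient, which contains $\SO_3$ and hence acts transitively on non-null directions; the dimension count $5+2=7$ confirms the orbit is the expected seven-dimensional one.

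For the closed orbit, the map $\AA\mapsto\ell$ sending a rank-one subalgebra to the unique isotropic line in $\Im(\AA)$ from Proposition \ref{prop:classif-subalgebras}(3) is $G_2$-equivariant; I would show it is a bijection onto $\QQ^5$ with inverse $\ell\mapsto \CC\cdot 1 \oplus (\ell\OO^\CC\cap \Im(\OO^\CC))=\CC\cdot 1\oplus K_\ell$, and this is an isomorphism of $G_2$-varieties since $G_2$ acts transitively on $\QQ^5$. Finally I would observe that the three subsets exhaust $\CGr$ by Proposition \ref{prop:classif-subalgebras} and that they fit together correctly (the closed orbit lies in the closure of the intermediate one, which lies in the closure of the open one), giving the orbit stratification as claimed.

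The main obstacle will be the precise identification of the stabilizer of a quaternion subalgebra as $\SL_2\times\SL_2$ (rather than some isogenous group), which requires keeping careful track of the module structure of $\AA^\perp$ over $\AA\simeq M_2(\CC)$ and of how the derivations of $\AA$ combine with the module symmetries to form a subgroup of $G_2$; the rest amounts to reinterpreting Proposition \ref{prop:classif-subalgebras} bundle-theoretically.
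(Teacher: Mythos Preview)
Your proposal is correct and follows essentially the same route as the paper: both deduce the three orbits directly from the rank trichotomy of Proposition~\ref{prop:classif-subalgebras}, identify the open stabilizer via the restriction map $\Stab_{G_2}(\AA)\to\Aut(\AA)$ with kernel the unit vectors in $\AA^\perp$, and describe the intermediate orbit as the $\PP^2$-bundle $\PP(N^\perp/N)$ over $\GtwoGr(2,7)$ minus the conic cut out by $q$. One point the paper makes explicit that you gloss over: Proposition~\ref{prop:classif-subalgebras}(2) only tells you that a rank-two subalgebra determines a flag $N\subset\Im(\AA)\subset N^\perp$, not that every such flag arises from a point of $\CGr$; the paper verifies this converse by a direct computation in root coordinates (checking that $\omega|_{W^\perp}\equiv 0$ for any such $W$), and you would need to supply this step to make your bijection well-defined.
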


\begin{proof}
Given a four-dimensional nondegenerate subalgebra $\AA$, with a basis  
$e_1, \: e_2\: e_3$ of  $\Im(\mathbb{A})$  as in the proof of Lemma~\ref{lem:nondeg-quaternion}, we can choose a norm-one element $e_4$ orthogonal to $\mathbb{A}$ and define $e_5=e_3e_4, \ e_6=e_2e_4, \ e_7=e_4e_1$. The multiplication table of octonions in this basis is then the same as for the standard basis, proving that the corresponding change of basis defines an element of $G_2$; by construction it maps $\mathbb{A}$ to the standard quaternion algebra $\HH^\CC\subset \OO^{\CC}$, hence the claim that four-dimensional nondegenerate subalgebras are parametrized by a unique orbit of $G_2$.

To prove $(1)$, there remains to compute the stabilizer of  $\HH^\CC$. 
Consider the restriction map  $$r\colon \Stab_{G_2}(\mathbb{A})\to \Aut(\mathbb{A})\simeq \SL_2.$$ This is a surjection whose kernel is isomorphic to the unit vectors in $\mathbb{A}^{\perp}$ (see, e.g., the multiplication by $e_4$ described above). As these are parametrized by $\SL_2$, we get $\Stab_{G_2}(\mathbb{A})\simeq \SL_2\times \SL_2$.

For the cases $(2)$ and $(3)$, we first show that, given a null-plane $N\subset V_7$, any three-dimensional subspace $W$ of $N^{\perp}$ containing $N$ is the imaginary part of a degenerate subalgebra $\AA$ of $\OO^{\CC}$. 
By Lemma~\ref{lem:cg-subalgebras}, it suffices to check that $W^{\perp}\subset V_7$ is isotropic with respect to $\omega$. 
Since $N$ is a null-plane, this is equivalent to stating that, taking an arbitrary $3$ dimensional subspace $U$ of $N^{\perp}$ disjoint from $N$, 
we have $x\intprod (y\intprod \omega)|_{N}\equiv 0$. 
However, due to the transitivity of the action on the $G_2$ Grassmannian, this can be verified directly on a specific null-plane. Take, for instance, 
\[ N=\spann{x_{\alpha},x_{-\beta}}, \hspace{5mm} U=\spann{x_0, x_{\gamma}, x_{-\gamma}}. \]
One checks that the linear morphism $\Gr(2,U)\to \PP(V_7)^*, \ \spann{x, y} \mapsto x\intprod(y\intprod \omega)$ maps $\Gr(2,U)$ isomorphically to $\PP(q(U))\simeq \PP(U)^*$ and that the latter is contained in $\PP(\ann(N))$.
The family of 3-dimensional subspaces of $N^{\perp}$ containing $N$ identifies with $\PP(U)\simeq \PP(N^{\perp}/N)$, and a point $u\in \PP(U)$ defines an algebra $\mathbb{A}$ such that $r=2$ if and only if it lies outside the smooth conic $\QQ^5\cap \PP(U)$, hence the claim since $N$ is uniquely defined by $\AA$. 
%The item $(2)$ is simply obtained letting $N$ vary in $\GtwoGr(2,7).$ (for more details see Remark~\ref{rmk:conic-bundle})
\begin{comment}
More precisely: let $\cU$ be the tautological rank 2 vector bundle on $\Gr(2,V_7).$ Denote then by $\cU^{\perp}$ the rank 5 vector bundle whose fibre over $[V]\in \Gr(2,V_7)$ is $V^{\perp}$. $\cU^{\perp}$ is nothing but the pullback of the tautological rank 5 vector bundle on $\Gr(5,V_7)$
under the isomorphism $\Gr(2,V_7)\simeq \Gr(2,V_7^*)\simeq \Gr(5, V_7)$ induced by $(\cdot, \cdot).$
By construction, the composition $\cU\to \cO\otimes V_7\to \cO\otimes V_7/ \cU^{\perp}$ is zero on $\OGr(2,V_7)$ so that on this variety we get an inclusion $\cU\hookrightarrow \cU^{\perp}$, whose cokernel is a rank 3 vector bundle $\cQ$. The quadratic form $q$ Restricting to $\GtwoGr(2,7)$ we get a $\PP^2$ bundle 
$\PP{\cQ}\xrightarrow{\pi} \GtwoGr(2,7)$
\end{comment}
Finally, the family of degenerate subalgebras such that $r=1$ identifies with the family of planes in $\QQ^5$ defined by
\[ \QQ^5\rightarrow \OGr(3,7),\qquad [x]\mapsto \PP(\ker(x\intprod \omega)), \]
which, by construction, is isomorphic to $\QQ^5$ itself.
\end{proof}

\begin{comment}
\gaia{describe relation with ex 2.6-describe relation between the conic and the pencil of lines through a null-plane describe how to "draw" the twisted cubic. 
The conic corresponds to a family of planes in $\QQ^5$, image of the pencil of flags, it is the family of isotropic planes containing $N$.}
The conic is nothing but the image of $\PP^1\simeq \PP(N)$ under $\Pf(4, \extp^2 V_7^*)\to \Gr(3,V_7), \ x\intprod \omega\mapsto \ker(x\intprod\omega).$

% it should be a linear projection of the twisted cubic
Each 4-dimensional subalgebra of $\OO^{\CC}$ lies therefore in the $G_2$ orbit of one of the following:
\begin{equation*}\label{eq:orbits-algebras}
    H_0=\langle \rangle ,\hspace{2mm} H_1=\langle \rangle \hspace{2mm}, H_2=\langle \rangle \hspace{2mm}
\end{equation*}

The closed one is the quadric $\QQ^5$, the 
intermediate one is a complement-to-conic bundle over the $G_2$-Grassmannian, % we should have $\Gr(3, N^{\perp})\simeq \Gr(3,5)... but we need the condition that they contain N.  it should mean that I have a conic for each null-plane and I take the linear span minus the conic (dimension 7) and the open one
is the homogeneous space $G_2/SL_2\times SL_2$.
\end{comment}

Over the real numbers, quaternion subalgebras of 
the octonions are parametrized by the compact homogeneous space $G_2^c/SO_3(\RR)\times SO_3(\RR)$.
Note that 
$\Aut(\HH)=SO_3(\RR)$, so there must exist automorphisms of $\OO$ that act trivially on $\HH$ (Exercise \ref{ex_aut_OO_HH}).

\section{The double Cayley Grassmannian}

A ``doubled'' version of the Cayley Grassmannian is provided by a special compactification of the group $G_2$: the \emph{double Cayley Grassmannian} $\DGr$, see \cite{Manivel_double}. This variety can be defined as parametrizing eight-dimensional subalgebras of the complex bioctonion algebra $\OO^\CC \otimes_\RR \CC$ (hence its name). We will describe the geometric construction of $\DGr$ and briefly hint at its connection with octonions.

The first observation is the following. Consider the group $\Spin_{14}$, which is the universal (degree two) cover of $\SO_{14}$. The projectivization of the spinor representation $\PP \Delta^+_{14}$ is prehomogeneous for the action of $\Spin_{14}$, meaning that it contains an open orbit (see Example \ref{ex_spinor_orbit_grading}). The stabilizer of an element $\delta$ of the open orbit has thus dimension equal to $\dim(\Spin_{14})-\dim(\PP\Delta^+_{14})=91-63=28=2\dim(G_2)$. The last equality is not just a coincidence, as shown by the following result:

\begin{prop}\cite{KimSato, AbMan}\label{prop_lie_double_G2}
$\Stab_{\Spin_{14}}(\delta)$ is isomorphic with $(G_2\times G_2)\rtimes \ZZ_2$.
%equal to $\fg_2\oplus \fg_2$, and the stabilizer itself is $(G_2\times G_2)\rtimes \ZZ_2$.
\end{prop}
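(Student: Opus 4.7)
The plan is to exhibit $(G_2\times G_2)\rtimes\ZZ_2$ explicitly as a subgroup of $\Stab_{\Spin_{14}}(\delta)$, and then conclude equality by a dimension count combined with a short component-group analysis.

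First I would realize the $14$-dimensional vector representation of $\Spin_{14}$ as $W:=\Im(\OO^\CC)\oplus\Im(\OO^\CC)$, equipped with the orthogonal sum of the two octonionic quadratic forms. Each factor of $G_2\times G_2$ acts on one copy of $\Im(\OO^\CC)$ through its standard representation $V_7$, so we get an embedding $G_2\times G_2\hookrightarrow\SO_7\times\SO_7\hookrightarrow\SO_{14}$ which lifts to $\Spin_{14}$ because $G_2$ is simply connected with trivial center.

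Next I would branch the half-spin representation. Under $\Spin_7\times\Spin_7\subset\Spin_{14}$, the odd--odd branching rule gives $\Delta_{14}^+\simeq\Delta_7\otimes\Delta_7$ (dimensions match: $64=8\cdot 8$). Since $G_2\subset\Spin_7$ is characterized as the stabilizer of a unit spinor in $\Delta_7$, one has $\Delta_7\simeq\CC\oplus V_7$ as a $G_2$-module, and hence
\[
\Delta_{14}^+\simeq \CC\,\oplus\, V_7\,\oplus\, V_7\,\oplus\,(V_7\otimes V_7)
\]
as a $(G_2\times G_2)$-module. I would then let $\delta$ span the trivial summand.

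For the $\ZZ_2$ factor, I would start from the involution $s(x,y)=(y,x)$ swapping the two copies of $\Im(\OO^\CC)$. Its determinant is $(-1)^7=-1$, so $s\in\rmO(W)\setminus\SO(W)$; composing with $-\id$ on one summand gives an element of $\SO(W)$ which I would lift to some $\tau\in\Spin_{14}$. The conjugation action of $\tau$ swaps the two factors of $G_2\times G_2$; with a careful choice of lift one arranges $\tau^2\in G_2\times G_2$, so the extension is split. Moreover $\tau$ acts on $\Delta_7\otimes\Delta_7$ as the tensor swap (up to an overall sign), hence fixes the line $[\delta]$. This produces the inclusion $(G_2\times G_2)\rtimes\ZZ_2\subseteq\Stab_{\Spin_{14}}([\delta])$.

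To close, I would compare dimensions: $\dim(G_2\times G_2)=28=91-63=\dim\Spin_{14}-\dim\PP\Delta_{14}^+$, which matches the generic stabilizer dimension. To rule out a larger identity component, one checks that no proper over-group of $G_2\times G_2$ in $\Spin_{14}$ (such as $G_2\times\Spin_7$ or $\Spin_7\times\Spin_7$) fixes $\delta$, since $\Spin_7$ acts irreducibly on $\Delta_7$ and therefore cannot fix the line $\CC\subset\Delta_7$. Hence the identity component of $\Stab([\delta])$ equals $G_2\times G_2$ and $[\delta]$ lies in the open orbit. The finite quotient $\Stab([\delta])/(G_2\times G_2)$ then injects into $\operatorname{Out}(G_2\times G_2)=\ZZ_2$ (the swap, since $G_2$ has no outer automorphisms); the kernel centralizes $G_2\times G_2$, and that centralizer in $\Spin_{14}$ reduces to $Z(\Spin_{14})\cap(G_2\times G_2)=1$ (the centralizer in $\SO_{14}$ is just $\{\pm I\}$ by irreducibility of $V_7$ under $G_2$, and its lift to $\Spin_{14}$ does not fix $\delta$). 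Combined with the $\tau$ above, the component group is exactly $\ZZ_2$.

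The hard part will be the careful spin-cover bookkeeping for $\tau$: ensuring the lift can be chosen so that $\tau^2\in G_2\times G_2$ (making the extension \emph{split} rather than a non-split $\ZZ_4$), and checking that $\tau$ really fixes $\delta$ and not $-\delta$. A secondary technical point is the centralizer analysis needed to rule out additional components.
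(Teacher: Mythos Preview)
Your overall architecture matches the paper's: both use the branching $\Delta_{14}^+\simeq\Delta_7\otimes\Delta_7$ under $\Spin_7\times\Spin_7$, take $\delta$ to be a pure tensor $\chi\otimes\chi'$, and invoke Lemma~\ref{lem_spin7} to get the two $G_2$ factors. The difference in viewpoint is that the paper starts from a \emph{generic} $\delta$ and proves that it \emph{determines} a unique orthogonal decomposition $V_{14}=V_7\oplus V_7'$ together with the pure-tensor expression; you instead fix the splitting and the tensor from the outset and then try to show your $\delta$ is generic.

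That reversal is where the gap is. Your step ``rule out a larger identity component'' only tests overgroups of the product form $G_2\times\Spin_7$ or $\Spin_7\times\Spin_7$, i.e.\ groups that already preserve the splitting $V_7\oplus V_7$. Nothing in your argument excludes elements of $\Spin_{14}$ (or of $\fso_{14}$) that mix the two factors: the off-diagonal block $V_7\otimes V_7\subset\fso_{14}$ could a priori annihilate $\delta$. The paper closes exactly this door by showing that the splitting is \emph{canonical} from $\delta$, so anything stabilizing $\delta$ must preserve the unordered pair $(V_7,V_7')$; this is what it calls the ``astonishing fact'' and is the real content of the proof. If you want to stay with your direction, the honest fix is a direct infinitesimal check: decompose $\fso_{14}=(\fg_2\oplus V_7)\oplus(\fg_2\oplus V_7)\oplus(V_7\otimes V_7)$ and $\Delta_{14}^+=\CC\oplus V_7\oplus V_7\oplus(V_7\otimes V_7)$ as $G_2\times G_2$-modules, and verify that the orbit map $X\mapsto X\cdot\delta$ is nonzero on each of the two $V_7$'s \emph{and} on the off-diagonal $V_7\otimes V_7$. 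That last piece is precisely what your current outline omits, and once it is checked the kernel is $\fg_2\oplus\fg_2$, $\delta$ is generic, and your component-group analysis goes through.
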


Before sketching the proof, a few words about spinor representations. First recall that, given a complex vector space $W$ of even dimension $2n$ (the odd-dimensional case is similar) endowed with a nondegenerate quadratic form $q\in S^2W^*$, one can construct two \emph{half-spin representations} $\Delta^{\pm}(W)=\Delta^{\pm}_{2n}$ (but a unique one in the odd-dimensional case, $\Delta(W)$). 
For this one chooses two transverse  maximal isotropic subspaces $E,F\subset W$: so  $W=E\oplus F$
and $q$ restricts to a perfect duality between $E$ and $F$.  As vector spaces, $\Delta^+(W)$ and  $\Delta^-(W)$ are isomorphic to the even and  odd exterior powers 
$$\Delta^+(W)\simeq \extp^+F=(\extp^0\oplus \extp^2\oplus\cdots\oplus \extp^{2\lfloor n/2\rfloor})F, \quad \Delta^-(W)\simeq \extp^-F=(\extp^1\oplus \extp^3\oplus\cdots\oplus \extp^{2\lfloor (n-1)/2\rfloor+1})F.$$ 
(In the odd-dimensional case, one considers the whole exterior algebra $\Delta(W)=\extp^\bullet F$.) Then $\Delta^\pm(W)$ is endowed with an action of $\fso(W,q)\simeq \wedge^2W$ by first defining an action of $W$ and its so-called \emph{Clifford algebra} on $\extp^\bullet F$  (see for instance \cite{chevalley,FH}). For $\delta\in \Delta^\pm(W)$ and $w=e+f\in E\oplus F \simeq W$,  the  action is simply defined by the formula $w \otimes \delta \mapsto e \intprod \delta + f\wedge \delta$, where $e \intprod$ is the contraction induced by the identification of $E$ with $F^*$. 
One can check that when $n$ is odd,  $\Delta^+_{2n}$ and $\Delta^-_{2n}$ are dual one to the other, while they are both self-dual when $n$ is even. In the odd-dimensional case,  $\Delta_{2n+1}$ is always self-dual.
%When $n$ is even or in the odd-dimensional case, let us denote this self-duality by $\langle\,\cdot\,,\cdot\,\rangle$. 

\begin{lemma}
\label{lem_spin7}
The stabilizer in $\Spin(V_7)$ of a general element of $\PP(\Delta_7)$ is isomorphic to $G_2$.
\end{lemma}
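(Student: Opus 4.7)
The plan is to establish the inclusion $G_2 \subseteq \Stab_{\Spin_7}([\delta])$ and then show both have dimension $14$.

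\emph{Setup via octonions.} I would identify $\Delta_7$ with the complexified octonions $\OO^\CC$ as a vector space, in such a way that the $\Spin_7$-invariant nondegenerate quadratic form on $\Delta_7$ (coming from its self-duality) corresponds to the octonionic norm. Since $G_2 = \Aut(\OO^\CC)$ is simply connected, its embedding $G_2 \hookrightarrow \SO(V_7)$ (acting on the imaginary octonions) lifts canonically to $G_2 \hookrightarrow \Spin(V_7) = \Spin_7$. Under this lift, the spin representation restricts to $G_2$ as
\[
\Delta_7|_{G_2} \;\cong\; \CC\cdot 1 \,\oplus\, \Im(\OO^\CC) \;=\; \CC \oplus V_7,
\]
because $G_2$ preserves the splitting $\OO^\CC=\Re(\OO^\CC)\oplus\Im(\OO^\CC)$ and acts trivially on the first summand. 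In particular $G_2$ fixes the line $[1]\in\PP(\Delta_7)$, giving $G_2 \subseteq \Stab_{\Spin_7}([1])$.

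\emph{Dimension count.} To show equality, I compute the tangent space to the orbit $\Spin_7\cdot[1]\subset\PP(\Delta_7)$ at $[1]$, i.e.~the image of the infinitesimal action $\mathfrak{so}_7 \to \Delta_7/\CC\cdot 1$. Using the $G_2$-module decomposition $\mathfrak{so}_7 \cong \extp^2V_7 \cong \mathfrak{g}_2\oplus V_7$: the $\mathfrak{g}_2$-summand annihilates $1$; the complementary $V_7$-summand cannot annihilate $1$, since $\Delta_7$ is $\mathfrak{so}_7$-irreducible; and by $G_2$-equivariance combined with irreducibility of $V_7$, the induced nonzero $G_2$-equivariant map $V_7 \to \Delta_7/\CC\cdot 1 \cong V_7$ must be an isomorphism. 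Hence the orbit is open of dimension $7$, and the projective stabilizer has dimension $21-7=14=\dim G_2$.

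\emph{Conclusion.} Since $G_2$ is a connected $14$-dimensional subgroup of the $14$-dimensional stabilizer, it coincides with the identity component of $\Stab_{\Spin_7}([\delta])$ for $[\delta]$ in the open orbit. The main technical obstacle is justifying the $G_2$-equivariant identification $\Delta_7|_{G_2}\cong\CC\oplus V_7$, which is typically done through the Clifford algebra realization of the spin representation. An alternative route bypasses this by using triality: the invariant form on $\Delta_7$ realizes $\Spin_7 \hookrightarrow \Spin_8$ as one of three triality-conjugate subgroups, and $\Stab_{\Spin_7}(\delta)=\Spin_7\cap\Spin_7'\subset\Spin_8$ for a second triality-conjugate copy $\Spin_7'$ stabilizing $\delta$; this intersection is classically identified with $G_2=\Aut(\OO^\CC)$ as the common stabilizer of units in all three eight-dimensional $\Spin_8$-representations.
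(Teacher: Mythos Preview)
Your approach is correct and genuinely different from the paper's. You work directly with the octonionic model $\Delta_7 \simeq \OO^\CC$: you exhibit $G_2$ inside the stabilizer of $[1]$ and then prove openness of the orbit by a clean $G_2$-equivariant dimension count on $\fso_7 \cong \fg_2 \oplus V_7$. The paper instead invokes the $\ZZ$-grading of $\ff_4$ (Appendix~\ref{sec_gradings}) to obtain the dimension $14$, and then establishes the \emph{reverse} inclusion $\Stab_{\Spin_7}(\chi) \subset G_2$ by constructing from $\chi$ a three-form $\omega_\chi \in \extp^3 V_7^*$ via Clifford multiplication (Exercise~\ref{ex_spinors}), checking it is generic, and appealing to Theorem~\ref{omegageneric}.

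Your route is more self-contained and avoids the graded Lie algebra machinery; it also makes transparent \emph{which} general point one should take (namely $[1]$). However, the paper's three-form argument yields the full group equality directly, whereas your main argument only shows that $G_2$ is the identity component of the stabilizer: you still owe a reason why the projective stabilizer is connected (or must fall back on the triality sketch you give at the end). The paper's construction has the further advantage of tying the spinor picture back to the three-form picture of Lecture~1, which fits the narrative of the notes.
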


\begin{proof}
We use the theory of $\ZZ$-gradings, see Appendix \ref{sec_gradings}, that we apply to 
$\ff_4$ as in  Example \ref{main_ex_grading_Z} by choosing $h$ dual to the $4$th simple root. Then $\fg_0\simeq \fso(V_7)\oplus \CC$ and $\fg_1\simeq \Delta_7$. Moreover, the group $G_0$ is the semidirect product $\Spin(V_7)\rtimes \CC^*$, where $\CC^*$ acts by homotheties. Since there are only finitely many $G_0$-orbits in $\fg_1$ (Theorem \ref{thm_finite_parabolic}), a general element $\chi\in\Delta_7$ has a dense $G_0$-orbit, and $\Stab_{\Spin(V_7)}(\chi)$ has dimension equal to $\dim(\Spin(V_7))+\dim(\CC^*)-\dim(\Delta_7)=21-8+1=14$. We already know that $G_2\subset \Spin(V_7)$, and that it is connected (and irreducible) of dimension $14$; if we can show that $\Stab_{\Spin(V_7)}(\chi)\subset G_2$, we are done.

Let us consider the Clifford action $V_7\otimes \Delta_7 \to \Delta_7$. The element $\chi$ defines a three-form $w_\chi$ on $V_7$ by the formula $w_\chi(x\wedge y\wedge z)=\langle x\cdot( y\cdot ( z\cdot \chi)), \chi \rangle$, where we have used both the Clifford action and self-duality of $\Delta_7$. One can show that, if $\chi$ is in the dense orbit (which in this case is just the complement of a hypersurface, namely the cone over $\OGr(3,7)$), then $w_\chi$ is a generic form in $\extp^3 V_7$ (Exercise \ref{ex_spinors}). By Theorem~\ref{omegageneric}, we deduce that $\Stab_{\Spin(V_7)}(\chi)\subset \Stab_{\GL(V_7)}(w_\chi)\simeq G_2$, and the proof is complete.
\end{proof}

\begin{remark}
\label{rem_G2G_in_OG}
We have seen that a null-plane is isotropic with respect to the symmetric quadratic form. This means that we have a codimension two embedding $\GtwoGr(2,V_7)\subset \OGr(2,V_7)$ (see also Exercise \ref{ex_OG}). By the classification of homogeneous vector bundles on homogeneous spaces (refer to \cite{Otta}), there exists a rank two vector bundle $\cS$ on $\OGr(2,V_7)$, called the spinor bundle, whose space of global sections is $\Delta_7$. For a general section $\chi\in \Delta_7\simeq H^0(\OGr(2,V_7), \cS)$, the zero locus of $\chi$ is a smooth five dimensional subvariety of $\OGr(2,V_7)$ which, by the previous lemma, is stabilized by $G_2$: it is in fact exactly $\GtwoGr(2,V_7)$.
\end{remark}

\begin{proof}[Sketch of proof of Proposition \ref{prop_lie_double_G2}]
    Let  $V_{14}$ be the natural representation of $\Spin_{14}$. One proves that given $\delta$ generic in $\Delta_{14}^+$, there exists a uniquely defined orthogonal decomposition $V_{14}=V_7\oplus V'_7$ with the following properties. Since by restriction $V_7$ and $V'_7$ are endowed with nondegenerate quadratic forms, one can define their spin representations  $\Delta_7$ and  $\Delta'_7$, as well as an isomorphism  $\Delta_{14}\simeq \Delta_7\otimes \Delta'_7$. Then the astonishing fact is that there exist  $\chi\in \Delta_7$ and $\chi'\in \Delta'_7$ such that $\delta\simeq\chi\otimes \chi'$. As a consequence, the stabilizer of $\delta$ in $\Spin_{14}$ has to fix the pair $(V_7,V'_7)$ (hence the factor $\ZZ_2$ in the stabilizer), and the index two subgroup that fixes $V_7$ and $V'_7$ can be deduced from the stabilizers of $\chi$ in $\Spin(V_7)$ and $\chi'$) in $\Spin(V_7')$), hence the two copies of $G_2$ by Lemma \ref{lem_spin7}.
\end{proof}

Having obtained this, the motto we will follow is: ``Any variety constructed from the only datum of $\delta$ (a generic element in $\Delta^+_{14}$) will inherit an action of its stabilizer 
$(G_2\times G_2) \rtimes \ZZ_2$''. In order to construct such a variety, we
start with the spinor variety $\SSS_{14}$, which is one of the two isomorphic connected components of the orthogonal Grassmannian $\OGr(7,V_{14})$:
\[ \SSS_{14}\subset \OGr(7,V_{14}):= \left\{[U_7]\in \Gr(7,V_{14}) \mid q|_{U_7}=0\right\}= \SSS_{14} \cup \SSS'_{14}\subset \Gr(7,V_{14}). \]
Let $\cU$ be the restriction to $\SSS_{14}$ of the tautological vector bundle on $\Gr(7,V_{14})$. 
It is a standard fact that $\det(\cU)=\cL^{-2}$, where $\cL$  is the positive generator of the Picard group of $\SSS_{14}$. In particular $\cL$ is ample, in fact very ample, and defines an embedding $\SSS_{14} \subset \PP (\Delta^+_{14})$. The key property of this embedding is that it 
sends $x=[U_7]\in \SSS_{14}$ to $[\delta_x]$, where $\delta_x$ is a {\it pure spinor} from which we can recover $U_7\subset V_{14}$ as the kernel of the Clifford mutiplication $V_{14}\otimes \delta_x\subset V_{14}\otimes \Delta_{14}^+\lra \Delta_{14}^-$.  
%(beware: here we have chosen a connected component; the other one is rather embedded in $\PP\Delta^-_{14}$); and the rank $7$ vector bundle $\cU$ which is the restriction of the tautological vector bundle on $\Gr(7,V_{14})$. It is a standard fact that $\det(\cU)=\cL^{-2}$. 

\begin{lemma}
$ H^0(\SSS_{14},\cU\otimes \cL) \simeq \Delta^+_{14}. $
\end{lemma}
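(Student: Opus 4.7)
The plan is to realize $\cU\otimes\cL$ as the kernel of a natural $\Spin_{14}$-equivariant morphism of vector bundles arising from Clifford multiplication on pure spinors, and then apply Borel-Weil to compute $H^0$.

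The starting point is the pure-spinor characterization of the embedding $\SSS_{14}\hookrightarrow\PP(\Delta^+_{14})$: at $x=[U_7]$, the pure spinor $\delta_x$ spanning $\cL_x$ satisfies $U_7=\{v\in V_{14}:v\cdot\delta_x=0\}$, where $\cdot$ is Clifford multiplication $V_{14}\otimes\Delta^+_{14}\to\Delta^-_{14}$. Globalizing, I obtain a $\Spin_{14}$-equivariant morphism of vector bundles
\[
c\colon V_{14}\otimes\cL\lra\Delta^-_{14}\otimes\cO_{\SSS_{14}},\qquad v\otimes\delta\longmapsto v\cdot\delta,
\]
whose kernel at each point is precisely $U_7\otimes\cL_x$, i.e.\ $\ker(c)=\cU\otimes\cL$. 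The corresponding short exact sequence $0\to\cU\otimes\cL\to V_{14}\otimes\cL\to\cU^*\otimes\cL\to 0$ records the image sheaf as the subbundle $\cU^*\otimes\cL\hookrightarrow\Delta^-_{14}\otimes\cO_{\SSS_{14}}$.

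Taking global sections and using left-exactness of $H^0$ together with Borel-Weil for $H^0(\cL)$ (a half-spin representation), the lemma reduces to identifying an explicit $\Spin_{14}$-subrepresentation cut out by the equivariant Clifford map on $V_{14}\otimes\Delta^+_{14}$. Alternatively, and more directly, one applies Borel-Weil-Bott to the irreducible $\Spin_{14}$-homogeneous bundle $\cU\otimes\cL$ on $\SSS_{14}=\Spin_{14}/P_7$: its fiber at the base point is the standard $\GL_7$-representation (the Levi acting on $\cU$) tensored with the character of weight $\omega_7$ (coming from $\cL$), and Bott's theorem then produces an irreducible $\Spin_{14}$-module which, after dominance verification in the $D_7$ root system, is identified with $\Delta^+_{14}=V_{\omega_7}$.

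The main obstacle is the careful tracking of sign and duality conventions: the $D_7$-specific identification $(\Delta^+_{14})^*\simeq\Delta^-_{14}$, the convention for the line bundle $\cL$ (sub- versus quotient tautological) relative to the embedding $\SSS_{14}\subset\PP(\Delta^+_{14})$, and the precise Bott twist all influence whether the half-spin representation emerging is $\Delta^+_{14}$ or $\Delta^-_{14}$. Verifying that the construction produces $\Delta^+_{14}$ is what ensures that a generic element $\delta\in\Delta^+_{14}$ (living in the open $\Spin_{14}$-orbit with stabilizer $(G_2\times G_2)\rtimes\ZZ_2$) indeed defines a section of $\cU\otimes\cL$ whose zero locus is the $14$-dimensional variety $\DGr$.
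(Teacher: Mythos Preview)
Your proposal is correct and takes essentially the same approach as the paper: both invoke Borel--Weil and use Clifford multiplication to make the identification concrete. The paper's explicit pointwise construction of the section $s_\delta$ --- via the linear form $v\mapsto\langle v\cdot\delta_x,\delta\rangle$ on $V_{14}$, whose kernel is a hyperplane containing $U_7$ so that its $q$-orthogonal line lies in $U_7$ --- is just the dual/adjoint formulation of your kernel-of-Clifford-multiplication description of $\cU\otimes\cL$.
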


\begin{proof}
This is a special case of the Borel-Weil Theorem \cite{serre-bw}.
Let us be more explicit about how  $\delta\in \Delta^+_{14}$ defines 
a section $s_\delta$ of $\cU\otimes \cL$. For any point $x:=[U_7]\in \SSS_{14}$, $s_\delta(x)$ will belong to the fiber $(\cU\otimes \cL)|_{x}$, hence will define an element of $U_7$, at least up to scalar. In order to construct this element, consider the linear form on $V_{14}$ sending $v$ to 
$\langle v.\delta_x,\delta\rangle$. Since $v.\delta_x=0$ for $v\in U_7$, the kernel of this linear form is (in general) a hyperplane $H$ containing $V_7$, whose orthogonal $h=H^\perp$ is contained in $U_7^\perp=U_7$; this is the subspace we were looking for. 
\end{proof}

\begin{remark}
Both $\cL$ and $\cU\otimes \cL$ are  referred to as {\it spinor bundles} since their spaces of sections are spin representations.
\end{remark}

Now, being equivariant and irreducible $\cU\otimes \cL$ is globally generated.  By Bertini's theorem, since the rank is $7$,  for $\delta\in \Delta^+_{14}$ generic, the zero locus $Y_\delta:=\left\{ x\in \SSS_{14}\mid s_\delta(x)=0 \right\}$ is a smooth codimension $7$ subvariety of $\SSS_{14}$. Since the latter has dimension  $21$,  $Y_\delta$ is a $14$-dimensional variety acted on by $(G_2\times G_2)\rtimes \ZZ_2$; can you guess which one?

\begin{prop}\cite{Manivel_double}
$Y_\delta$ is a compactification of $G_2$ and identifies with $\DGr$. 
\end{prop}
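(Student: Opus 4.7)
My plan splits into two parts: first, exhibit $Y_\delta$ as a smooth equivariant compactification of $G_2$ by producing an explicit open $(G_2 \times G_2)$-orbit isomorphic to $G_2$; second, identify this compactification with $\DGr$ via the bioctonionic subalgebra description.

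For the first step, I would start from the canonical orthogonal splitting $V_{14} = V_7 \oplus V_7'$ stabilized by $\delta$, together with the factorization $\delta \simeq \chi \otimes \chi'$ from the sketch of Proposition \ref{prop_lie_double_G2}, where $\chi \in \Delta_7$ and $\chi' \in \Delta'_7$ are generic spinors whose $\Spin$-stabilizers are the two copies of $G_2$ provided by Lemma \ref{lem_spin7}. Pick an isomorphism $\phi_0 \colon V_7 \to V_7'$ scaled so that its graph $\Gamma_{\phi_0} \subset V_{14}$ is $q$-isotropic of dimension $7$, and chosen so that the induced isomorphism $\Spin(V_7) \simeq \Spin(V_7')$ carries $\chi$ to $\chi'$. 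Then the pure spinor $\delta_{\Gamma_{\phi_0}} \in \Delta^+_{14}$ associated with $\Gamma_{\phi_0}$ will be proportional to $\chi \otimes \chi' \simeq \delta$ itself. A direct Clifford-algebra check then shows $\langle v \cdot \delta_{\Gamma_{\phi_0}}, \delta \rangle = 0$ for all $v \in V_{14}$, so $s_\delta([\Gamma_{\phi_0}]) = 0$ and $[\Gamma_{\phi_0}] \in Y_\delta$.

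Next, I would compute the $(G_2 \times G_2)$-orbit of this point. Under the identifications above, an element $(g_1, g_2)$ transports $\Gamma_{\phi_0}$ to $\Gamma_{g_2 \phi_0 g_1^{-1}}$, and the stabilizer is the diagonal $\Delta(G_2) \hookrightarrow G_2 \times G_2$, $g \mapsto (g, \phi_0 g \phi_0^{-1})$. Hence the orbit has dimension $28 - 14 = 14 = \dim Y_\delta$ and is isomorphic to $(G_2 \times G_2) / \Delta(G_2) \simeq G_2$. Since $Y_\delta$ is smooth of codimension $7$ in $\SSS_{14}$ (by Bertini applied to the globally generated bundle $\cU \otimes \cL$ and genericity of $\delta$), this orbit must be open in $Y_\delta$, exhibiting $Y_\delta$ as a smooth equivariant compactification of $G_2$ for the $(G_2 \times G_2) \rtimes \ZZ_2$-action (the $\ZZ_2$ factor is absorbed into the stabilizer since it swaps the two $G_2$-factors and the graph is preserved).

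For the identification with $\DGr$, I would exploit that the bioctonion algebra $\OO^\CC \otimes_\RR \CC$ splits as a product $\OO^\CC \oplus \OO^\CC$ of two octonion algebras, and that an $8$-dimensional subalgebra (necessarily containing the unit $(1,1)$) is determined by a $7$-dimensional subspace $U_7 \subset V_{14}$ where $V_{14}$ is identified with an appropriate copy of the imaginary bioctonions transverse to $(1,1)$. As in Proposition \ref{lem:cg-subalgebras}, the subalgebra condition $U_7 \cdot U_7 \subset U_7 \oplus \CC \cdot (1,1)$ translates into the vanishing on $U_7$ of a $(G_2 \times G_2) \rtimes \ZZ_2$-invariant trilinear form on $V_{14}$ together with the isotropy condition placing $U_7$ in $\SSS_{14}$. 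The remaining task is to recognize that this invariant trilinear form coincides (up to scalar) with $(u,v,w) \mapsto \langle u \cdot v \cdot w \cdot \delta, \delta \rangle$, so that the subalgebra condition is equivalent to $s_\delta([U_7]) = 0$, and hence $Y_\delta = \DGr$.

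The main obstacle will be this last identification, namely producing the explicit spinor-to-trilinear-form dictionary relating $\delta$ to bioctonion multiplication. This is the $\Spin_{14}$-analogue of the classical fact used in Lemma \ref{lem_spin7} that a generic $\Spin_7$-spinor $\chi$ defines a generic $3$-form $w_\chi$ on $V_7$ via $w_\chi(x,y,z) = \langle x \cdot y \cdot z \cdot \chi, \chi \rangle$; here one must show that a generic $\Spin_{14}$-spinor encodes exactly the multiplication table of the bioctonions. Since both sides are $(G_2 \times G_2) \rtimes \ZZ_2$-invariant trilinear forms on $V_{14}$, uniqueness of invariants in $V_{14}^{\otimes 3}$ (controlled by the Vinberg-theoretic analysis of the $\ZZ$-grading in Appendix \ref{sec_gradings}, cf.~Example \ref{example:e7e8}) should reduce the identification to computing a single normalizing constant.
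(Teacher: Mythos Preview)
Your Part 1 is correct and essentially matches the paper's approach: both produce an open $(G_2\times G_2)$-orbit in $Y_\delta$ by considering graphs $\Gamma_g\subset V_7\oplus V_7'$ of elements $g\in G_2$, and both verify that such graphs land in $Y_\delta$ once $\delta$ is normalized compatibly with the splitting.

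Part 2, however, has a genuine gap. The vanishing $s_\delta([U_7])=0$ is the condition $\langle v\cdot\delta_{U_7},\delta\rangle=0$ for all $v\in V_{14}$, which involves the \emph{pure spinor} $\delta_{U_7}$ of $U_7$; it is not the restriction of a fixed trilinear form on $V_{14}$ to $U_7$. So your proposed reduction ``subalgebra condition $=$ isotropy $+$ vanishing of an invariant $3$-form on $U_7$, and this $3$-form equals $(u,v,w)\mapsto\langle u\cdot v\cdot w\cdot\delta,\delta\rangle$, hence $s_\delta=0$'' conflates two different kinds of conditions. Even the invariant-theory step is shakier than you suggest: the relevant $(G_2\times G_2)$-invariant $3$-form vanishing on the graphs $\Gamma_g$ is $\omega-\omega'$ (which is $\ZZ_2$-\emph{anti}-invariant), not $\omega+\omega'$; and in any case the locus in $\SSS_{14}$ where a $3$-form vanishes on $U_7$ has no reason to have codimension $7$.

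The paper avoids this entirely with a closure argument: once the graph construction gives an embedding $G_2\hookrightarrow\SSS_{14}$ whose image lies in $Y_\delta$, one simply observes that the subalgebra condition defining $\DGr$ is closed in $\SSS_{14}$, so $\DGr$ is a closed $14$-dimensional subvariety containing $G_2$ as an open dense subset. Since $Y_\delta$ is also closed, smooth, irreducible of dimension $14$ and contains $G_2$, both varieties coincide with the closure of $G_2$ in $\SSS_{14}$. This bypasses any need to match up defining equations.
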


More precisely, $Y$ admits an action of $G_2\times G_2$ that compactifies the left-right action! As for the Cayley Grassmannian,
there are only three orbits in the double  Cayley Grassmannian : the open one, an open subset of a special hyperplane section, and the closed orbit isomorphic to $\QQ^5\times \QQ^5$. The blowup of the closed orbit is the so-called wonderful compactification of $G_2$, see \cite{dCP}. 

\begin{proof}[Sketch of proof]
As mentioned, $\DGr$ parametrizes $8$-dimensional subalgebras of $\OO^\CC\otimes_\RR \CC$; the latter is isomorphic (as an algebra) to $\OO^\CC \oplus \OO^\CC$ (Exercise \ref{ex_productofC}). A general subalgebra in $\DGr$ will be the graph of an element $g\in \GL(\OO^\CC)$; this element must satisfy $g(1)=1$ and, for its graph to be a subalgebra, one needs that $g$ belongs to $G_2$. Thus, the graph of $g$ defines a $7$-dimensional subspace $L_g$ of $V_{14}:=\Im(\OO^\CC)\oplus Im(\OO^\CC)$, which is isotropic with respect to the difference of the octonionic norms on the two copies of $\OO^\CC$. The map $g\mapsto L_g$ defines an embedding $G_2 \subset \DGr \to \SSS_{14} $. One can show that its image  is contained in $Y_\delta$ (when $\delta$ is chosen so as to identify the associated $V_7$ and $V_7'$ with the two copies of $\Im(\OO^\CC)$). Since being a subalgebra is a closed condition and $G_2$ is open both inside $\DGr$ and $Y$, we deduce that $\DGr\simeq Y$.
\end{proof}

\section{The Lie Grassmannian}

The canonical three-form $\omega$ yields a map from $\extp^2V_7$ to $V_7$, similar to 
a Lie bracket. This is just the skew-symmetrized octonionic multiplication, that is  $$ \omega(x,y,\cdot\,)=\frac{1}{2}\langle xy-yx,\cdot\,\rangle = Im(xy).$$

\begin{remark}
This map is not quite a Lie bracket: the Jacobi identity does not hold. Indeed, the map $\phi:\extp^2V_7
\lra V_7$ can be read on the Fano plane, in the form $[e_i,e_j]=e_{ij}$. One directly checks that the Jacobi identity gives zero 
for $i,j,k$ aligned, and otherwise gives $3e_{(ij)k}$.
\end{remark}  

Although we do not get a Lie algebra structure on $V_7$, it may happen that some subspace $\fs\subset V_7$ is stable under the fake Lie bracket, and that the Jacobi identity holds on $\fs$. We improperly call such subspaces  \emph{Lie subalgebras} of $V_7$.

\begin{definition}
    We denote by $\LieGr(2,V_7)$ the subvariety of $\Gr(2,V_7)$ parametrizing two-dimensi\-on\-al Lie subalgebras of $V_7$.
\end{definition}

\begin{prop} \label{LieGr}
The subvariety $\LieGr(2,V_7)$ of $\Gr(2,V_7)$ is smooth 
of dimension seven. It can be described as the locus where the section of $\extp^2\cQ^*(1)$ 
defined by $\star\omega$ drops rank. It is acted on by $G_2$ with only two orbits, the closed 
one being $\GtwoGr(2,V_7)$. 
%The action of $G_2$ on $\LieGr(2,V_7)$ has only two orbits, the $G_2$-Grassmannian and its complement.
\end{prop}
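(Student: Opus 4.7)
The plan is to realize $\LieGr(2,V_7)$ as the rank-drop locus of a natural $G_2$-equivariant section built from $\star\omega$, and then derive the dimension count and orbit structure from this description.

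First I would set up the section. For $P=\spann{x,y}$, the contraction $(u,v)\mapsto\star\omega(x,y,u,v)$ is alternating in $u,v$ and vanishes whenever either argument lies in $P$, so it descends to a 2-form on $V_7/P$. Globally this gives a $G_2$-equivariant section $\tau\in H^0(\Gr(2,V_7),\extp^2\cQ^\ast(1))$; pointwise it is a skew 2-form on the 5-dimensional space $V_7/P$, whose maximum (and generic) rank is $4$, so ``dropping rank'' means rank $\le 2$.

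The central step is to prove that $P$ is a Lie subalgebra if and only if $\tau(P)$ has rank at most $2$. My approach is through the classical $G_2$-quartic identity
\[
\star\omega(x,y,u,v)\;=\;q(x\times y,\,u\times v)-q(x,u)q(y,v)+q(x,v)q(y,u),
\]
which I would derive in an orthonormal basis and extend by $G_2$-equivariance (up to a harmless normalization constant). When $[x,y]=x\times y$ lies in $P$, expanding the first term on the right descends to a sum of at most two decomposable pieces built from $q(x,\cdot)$ and $q(y,\cdot)$ modulo $P$, so $\tau(P)$ has rank $\le 2$. The converse direction can either be obtained by reversing the algebra, or, more robustly, by an orbit-by-orbit verification using $G_2$-equivariance with representatives such as the null plane $\spann{x_\alpha,x_{-\beta}}$ and the non-abelian Lie subalgebra $\spann{x_\alpha,x_0}$ from Remark~\ref{rmk:V_7-roots}.

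Granted this description, dimension and smoothness follow immediately. The locus of skew 2-forms of rank $\le 2$ on $\CC^5$ is the affine cone over the Pl\"ucker-embedded $\Gr(2,5)\subset\PP(\extp^2\CC^5)$, which is irreducible of codimension $3$. Since $\omega$ is generic, the section $\tau$ meets this degeneracy stratum transversely along its smooth locus, yielding $\LieGr(2,V_7)$ smooth of codimension $3$ in the 10-dimensional Grassmannian, hence of dimension $7$. For the orbit structure, $G_2$-invariance of $\LieGr$ is automatic, and the two orbits match the two isomorphism classes of 2-dimensional complex Lie algebras: \emph{abelian} Lie subalgebras amount to $[x,y]=0$, i.e.\ $x\wedge y\in\fg_2\subset\extp^2V_7$ (the kernel of the bracket map), whose decomposable elements constitute exactly the $G_2$-adjoint variety $\GtwoGr(2,V_7)$ of null planes, realized as the 5-dimensional closed orbit; the complementary 7-dimensional open orbit is formed by non-abelian Lie subalgebras, whose $G_2$-transitivity follows by reducing any such $P$ to the standard model $\spann{x_\alpha,x_0}$ with $[x_\alpha,x_0]=2\,x_\alpha\in P$.

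The main obstacle is step two: establishing the precise equivalence between the Lie-subalgebra condition and the rank-drop condition. Pinning down the normalization of the quartic identity and verifying that the rank drops to $\le 2$ exactly on $\LieGr$ — not on a strictly smaller locus (e.g.\ only the null planes) nor a strictly larger one — is the delicate point, best handled by combining the algebraic identity above with an orbit-by-orbit check under the $G_2$-action.
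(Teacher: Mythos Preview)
Your plan diverges from the paper's, and the gap sits exactly at the step you flag --- but it is more serious than a normalization issue.

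The quartic identity is (up to sign) $\star\omega(x,y,u,v)=-\langle x\times y,u\times v\rangle+q(x,u)q(y,v)-q(x,v)q(y,u)$, which is correct. However, it does \emph{not} yield $\mathrm{rk}\,\tau(P)\le 2$ the way you sketch. If $x\times y=\alpha x+\beta y$, the first term becomes $-\alpha\,\omega(x,u,v)-\beta\,\omega(y,u,v)$, and the $2$-forms $x\intprod\omega,\,y\intprod\omega$ are \emph{not} decomposable (for isotropic $x$ one has $\mathrm{rk}(x\intprod\omega)=4$); your claim that these ``descend to decomposable pieces built from $q(x,\cdot),q(y,\cdot)$'' is false. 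Your fallback --- verify on orbit representatives --- is fine in principle, but then everything rests on the assertion that non-abelian Lie subalgebras form a single $G_2$-orbit, which you do not prove (``reducing to $\langle x_\alpha,x_0\rangle$'' \emph{is} the thing to be proved). The converse via ``orbit-by-orbit check'' is circular for the same reason: it presupposes the $G_2$-orbit structure on $\Gr(2,V_7)$. Finally, ``$\omega$ generic $\Rightarrow$ $\tau$ transverse'' is unjustified: $\tau$ is a very particular, non-generic section of $\extp^2\cQ^*(1)$.

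The paper inverts the logic. It \emph{first} classifies two-dimensional Lie subalgebras by bare octonion algebra: writing $xy=ax+by+c$ forces $X\bar Y=0$ for shifts $X=x-b,\,Y=y-a$, so $P'=\langle X,Y\rangle$ is isotropic; either $P'=P$ is a null-plane, or after normalizing $X=1+ie_1$ and using transitivity of $G_2^{\mathrm c}$ on orthonormal pairs one lands in the orbit of $P_0=\langle e_1,e_2+ie_3\rangle$. This gives the two orbits with no reference to $\star\omega$. Only then is $P_0\in D$ checked by a single computation, and $\LieGr=D$ follows from matching dimension $7$ (the paper computes $\dim\LieGr=5+2$ via a fibration, and $\operatorname{codim} D=3$ by the Bertini-type count) together with irreducibility of $D$.
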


\begin{remark}
 Recall from Definition \ref{def:3-form} the coassociative 4-form $\star\omega\in\extp^4V_7^*$. It defines a section of $\extp^2 \cQ^*(1)$ as follows. Consider $P\in \Gr(2,V_7)$ so that $P$ is generated by two vectors $x_1,x_2$; then $\star\omega(P)=(x_1\wedge x_2) \lrcorner \star\omega \mod \spann{x_1,x_2} \in \extp^2(V_7/P)^*$.
\end{remark}

\begin{remark}
Let $s$ be a section of $\extp^2 \cQ^*(1)$. Since $\cQ$ has rank five, the fiber over a point of $\Gr(2,V_7)$ of $\extp^2 \cQ^*(1)$ is isomorphic to $\extp^2 \CC^5$. Thus, for $P\in \Gr(2,V_7)$, $s(P)$ can be seen as a $5\times 5$ skew-symmetric matrix. The locus where the section $s$ drops rank is the locus of points $P$ such that $s(P)$ is a matrix of rank two (recall that skew-symmetric matrices always have even rank).
\end{remark}

\begin{proof} A plane $P=\spann{x,y}$ is a Lie subalgebra if and only if $xy$ (or $yx$)
belongs to $\spann{1,x,y}$, say $xy=ax+by+c$. We can rewrite this a $(x-b1)(y-a1)=ab+c$. 
If $ab+c=0$, $X=x-b1$ and $Y=y+a1$ and inverse to the other, up to constant and conjugation; so they must be proportional, hence also their imaginary parts $x$ and $y$, a contradiction!
So $ab+c=0$ and $X\bar{Y}=0$. In particular $X$ and $Y$ are zero divisors, hence isotropic, and orthogonal since $Re(X\bar{Y})=0$, so that $P'=\spann{X,Y}$ is isotropic. If $P'\subset V_7$, hence $P'=P$, we conclude that $P$ is a null-plane. 

Otherwise, we may suppose that $Y$ is imaginary, but not $X$. Up to the action of $G_2$, we can let 
$X=1+ie_1$. Then $Y=v+iw$ has to verify the only condition that $w=e_1v$, with $v$ orthogonal to $e_1$ for the latter to be imaginary. Since we know that  $G_2^{\mathrm{c}}$ acts transitively on pairs of orthogonal real octonions, up to the action of $G_2$ we can suppose that  $Y=y=e_2+ie_3$. 

This means that  $G_2$ acts transitively on the complement of $\GtwoGr(2,V_7)$ in  $\LieGr(2,V_7)$, which is the orbit if the plane $P_0=\spann{e_1,e_2+ie_3}$. Note that the restriction of $q$ to $P_0$ has rank one, with kernel generated by $y$. By the previous discussion, Lie subalgebras of $V_7$ containing $y$ are the imaginary parts of planes $P'$ such that $y\subset P'\subset R_y$; in particular they are parametrized by a projective plane. We can deduce  that  $\LieGr(2,V_7)$ is irreducible of  dimension $5+2=7$. On the other hand, the locus $D$ where the section of $\extp^2\cQ^*(1)$ defined by $\star\omega$ drops rank has codimension $3$, hence also dimension $7$, 
and it is preserved by the action of $G_2$. So we just need to check that it contains 
$P_0$, which is a straightforward computation; then $D$ contains $\LieGr(2,V_7)$, which must be one irreducible component. A little extra argument shows that in fact $D$ is also irreducible, and we are done. \end{proof}

\begin{remark}
In the previous proof, we were looking at points $P\in \Gr(2,V_7)$ such that $\star\omega(P)$ is a $5\times 5$ skew-symmetric matrix of rank $2$, and it was stated that this is a codimension $3$ locus. This is an application of a Bertini-type Theorem for degeneracy loci \cite[Proposition 2.3]{bfmt} together with the observation that the locus of $5\times 5$ skew-symmetric matrices of rank at most $2$ has codimension three inside the space of skew-symmetric matrices $\extp^2 \CC^5$; indeed it can be identified with the cone over $\Gr(2,5)$.
\end{remark}

%{\color{blue} [V: Regarding what follows in blue:Using explicit representatives for the orbits, it doesn't seem to work to me...$\iota(P_0)$ doesn't even seem to belong to $\Gr(2,V_7)$....] Surprisingly this projective variety can be described without reference to $G_2$. Recall the decomposition $\extp^2V_7=\fg_2\oplus V_7$ and denote by $\iota$ the associated involution acting as $\id$ on $\fg_2$ and $-\id$ on $V_7$. 

%\begin{prop} $\LieGr(2,V_7)=\iota (\OGr(2,V_7))$.  \end{prop} }
Although $\LieGr(2,V_7)$ seems to be a mysterious object, it is in fact familiar:

\begin{prop}
$\LieGr(2,V_7)\simeq \OGr(2,V_7)$. 
\end{prop}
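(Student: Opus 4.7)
Both $\LieGr(2, V_7)$ and $\OGr(2, V_7)$ are smooth projective $7$-dimensional varieties carrying a $G_2$-action, and by Proposition~\ref{LieGr}, Exercise~\ref{ex_OG}, and Remark~\ref{rem_G2G_in_OG}, they share the same orbit structure: the unique closed $G_2$-orbit in each is $\GtwoGr(2, V_7)$ of dimension $5$, and the complement is a single open $G_2$-orbit of dimension $7$. The plan is to construct an explicit $G_2$-equivariant isomorphism $\Phi\colon \LieGr(2, V_7) \to \OGr(2, V_7)$ that restricts to the identity on the common closed orbit.

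For $P = \spann{x, y}$ in the open orbit of $\LieGr(2, V_7)$, the proof of Proposition~\ref{LieGr} gives scalars $a, b, c$ with $xy = ax + by + c$, $c = -\langle x, y\rangle$, and $ab + c = 0$. The element $\ell := ax + by = [x,y]_V$ is a canonical zero divisor spanning the radical $L = P \cap P^\perp$, so that $[\ell] \in \QQ^5$. Since $P$ with the fake bracket is the non-abelian two-dimensional Lie algebra, there exists a ``semisimple'' element $h \in P$, unique modulo $L$, with $[h, \ell]_V = \ell$. The fake adjoint $\ad_V(h)$ is diagonalizable on $V_7$ with eigenvalues $\{-1, 0, +1\}$, and $\ell$ lies in the $(+1)$-eigenspace. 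Set
\[
\Phi(P) := E^{-}(h) \cap K_\ell,
\]
where $E^{-}(h)$ is the $(-1)$-eigenspace of $\ad_V(h)$ and $K_\ell = \ker L_\ell \cap V_7$ is the canonical $3$-dimensional isotropic plane annihilated by left multiplication by $\ell$. A direct check (for instance in the root basis of Remark~\ref{rmk:V_7-roots}) shows this intersection is $2$-dimensional, contained in $K_\ell$ (hence isotropic for $q$), and depends only on $P$, since replacing $h$ by $h + c\ell$ shifts each eigenspace but leaves the intersection with $K_\ell$ invariant. On null-planes, define $\Phi(N) := N$.

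It remains to verify: (i) $G_2$-equivariance, which is manifest from the intrinsic nature of the construction; (ii) bijectivity on the open orbit, by inverting the construction: given $Q = \spann{u,v} \in \OGr(2, V_7)$ non-null, the bracket $\ell := uv$ is a canonical zero divisor with $Q \subset K_\ell$, and one recovers the Lie subalgebra $P$ as $L \oplus \CC h$, where $h$ is determined by requiring $Q = E^-(h) \cap K_\ell$; and (iii) compatibility with the closed orbit. Since both varieties are smooth projective of the same dimension, an equivariant bijection on closed points is an isomorphism.

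The main obstacle I anticipate is (iii): establishing that $\Phi$ extends continuously across $\GtwoGr(2, V_7)$, where the auxiliary data $\ell$, $h$, $K_\ell$, and $E^{-}(h)$ simultaneously degenerate. A more conceptual alternative bypasses this obstacle by recognizing both varieties as smooth $G_2$-equivariant completions of the same open $G_2$-orbit along $\GtwoGr(2, V_7)$, with normal bundles both isomorphic to the rank-$2$ spinor bundle on $G_2/P_2$ (via Remark~\ref{rem_G2G_in_OG} on one side, and a parallel analysis of the rank-drop section of $\extp^2\cQ^*(1)$ defining $\LieGr$ on the other). An equivariant rigidity argument then produces the isomorphism without explicit formulas.
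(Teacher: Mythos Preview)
Your explicit construction of $\Phi$ via the radical $\ell$, the semisimple element $h$, and the eigenspace intersection $E^{-}(h)\cap K_\ell$ is a genuinely different route from anything in the paper. The paper offers three sketches: (a) the classification of two-orbit varieties due to Cupit-Foutou; (b) a Picard-number-one argument combined with Pasquier's theorem to force a larger, transitive automorphism group; and (c) an explicit proof via the $G_2$-equivariant decomposition $\extp^2 V_7=\fg_2\oplus V_7$, which yields a \emph{linear} involution $\iota$ of $\extp^2 V_7$ exchanging $\LieGr(2,V_7)$ and $\OGr(2,V_7)$ inside $\PP(\extp^2 V_7)$. Your $\Phi$ is nonlinear and pointwise-geometric, whereas (c) is a global linear map, so the constructions are not the same even in spirit.

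The upside of your approach is that it is intrinsic and hands-on; I checked your recipe on the representative $P_0=\spann{e_1,e_2+ie_3}$ and it does produce the isotropic non-null plane $\spann{e_4+ie_5,e_6+ie_7}$, and a one-parameter degeneration to a null-plane behaves as you predict. Equivariance is clear, and since both open orbits are seven-dimensional $G_2$-orbits, your map restricted to them is a $G_2$-equivariant covering; to upgrade to a bijection you still need to verify the stabilisers agree (your ``inverse construction'' as written is too vague to do this).

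The real gap is exactly the one you flag: extending $\Phi$ across $\GtwoGr(2,V_7)$. Your piecewise definition $\Phi(N)=N$ is plausible, but proving regularity requires controlling the simultaneous degeneration of $h$, $E^{-}(h)$, and the constraint $\ell\notin\Phi(P)$ as $P\to N$. Your fallback ``rigidity via matching normal bundles'' is not a proof as stated: identifying both normal bundles with the spinor bundle and then invoking ``equivariant rigidity'' would itself require a Luna-slice or deformation-theoretic argument you have not supplied. By contrast, the paper's route (c) sidesteps the extension problem entirely, since a linear involution of $\extp^2 V_7$ is defined everywhere on $\PP(\extp^2 V_7)$ from the outset; this is what your construction is missing and what makes (c) much shorter.
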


\begin{proof}[Sketch of proofs]
This follows from the classification of $2$-orbit varieties given in \cite{Cupit-Foutou}.

Alternatively, one can show that the Picard number of $\LieGr(2,V_7)$ is one, which allows to apply \cite[Theorem 0.1]{Pa1}. We deduce that $\LieGr(2,V_7)$ has a bigger automorphism group than $G_2$, and that this automorphism group acts transitively. Finally, one checks that the only seven-dimensional homogeneous variety containing $\GtwoGr(2,V_7)$ is $\OGr(2,V_7)$.

A more explicit proof was given in \cite{beri-man}, in relation with the decomposition 
$$\wedge^2V_7 = \fg_2\oplus V_7.$$
This decomposition induces an equivariant involution $\iota$ of $\wedge^2V_7$, which does not preserve $G(2,V_7)$ but exchanges $\LieGr(2,V_7)\simeq \OGr(2,V_7)$. 
\end{proof}

%\begin{exex}    Exercise \ref{ex_LieG} \end{exex}

\section{The horospherical \texorpdfstring{$G_2$}{G2}-Grassmannian}

Another interesting variety that we can define from $G_2$ is the following: let $N=\spann{e,f}$ be a null-plane, and let $\HG$ denote the closure of the $G_2$-orbit of the point $\spann{e+e\wedge f}$ inside $\PP(V_7\oplus\fg_2)$ \cite{Pa1}.

%\paul{What does the notation $\HG$ mean -- is it a type of Grassmannian? Otherwise we could just use $H$.}{\color{blue}V: I put a new operator that we can easily change later on. It is a kind of "horospherical" Grassmannian for the group $G_2$, in the very broad sense that Grassmannian parametrizes some (special class of) 2-dimensional subspaces of a fixed vector space; also it is a degeneration of $\OGr(2,V_7)$, another reason to call it "Grassmannian"}

\begin{prop}
$\HG$ is a smooth Fano variety of dimension $7$ and Picard number one, acted on by $G_2$ with 
three orbits, but  only two orbits of its automorphism group. 
\end{prop}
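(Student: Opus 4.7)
My plan is to analyze $\HG$ in three stages. First, I will compute the $G_2$-stabilizer of the marked point $x_0:=[e+e\wedge f]$ and identify the three $G_2$-orbits. Then I will deduce smoothness, the Fano property, and Picard number one from the horospherical nature of $\HG$. Finally, I will argue that the full automorphism group is strictly larger than $G_2$ and acts with only two orbits.

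For the orbit structure, note that $x_0$ projects to $[e]\in \QQ^5 = G_2/P_1 \subset \PP(V_7)$ and to $[e\wedge f]\in \GtwoGr(2,7)=G_2/P_2 \subset \PP(\fg_2)$. An element $g\in G_2$ fixing $x_0$ must fix each projective factor (hence lie in the Borel subgroup $B=P_1\cap P_2$, of dimension $8$) and act on $e$ and on $e\wedge f$ by the same scalar; the latter condition is the kernel of a non-trivial character on the maximal torus and so cuts one dimension. Thus the stabilizer has dimension $7$ and the orbit of $x_0$ is seven-dimensional. Taking limits of $x_0$ under suitably chosen one-parameter subgroups of the maximal torus, one produces both $[e]$ and $[e\wedge f]$ as limit points, so the closure $\HG$ contains both closed $G_2$-orbits $\QQ^5$ and $\GtwoGr(2,7)$; together with the open $7$-dimensional orbit these three sets exhaust $\HG$.

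Since the stabilizer of $x_0$ contains the unipotent radical of $B$, $\HG$ is a horospherical variety with two colors. Pasquier's classification of smooth projective horospherical varieties of Picard number one \cite{Pa1} lists precisely five non-homogeneous families, of which the $G_2$-entry is exactly our $\HG$; smoothness, the Fano property, and Picard number one follow at once. A more direct verification is also feasible: smoothness near the two closed orbits can be checked via the normal bundle sequence, and the Picard number and anticanonical class via the Luna--Vust combinatorics attached to the two colors.

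The most delicate point is the identification of $\Aut(\HG)$. The plan is to exhibit extra global vector fields: I expect $H^0(\HG, T_\HG)$ to be strictly larger than $\fg_2$, with the new vector fields arising from a unipotent subgroup that deforms the $\fg_2$-component of a point in the direction of its $V_7$-component. This additional action is non-trivial on one of the two closed $G_2$-orbits, sweeping it into the open $7$-dimensional orbit, while restricting to zero on the other; consequently the enlarged automorphism group has exactly two orbits, an open one of dimension $7$ (absorbing one of the closed $G_2$-orbits) and a closed $5$-dimensional orbit equal to the other. The main obstacle is the explicit determination of $\Aut(\HG)$ and of which closed orbit gets absorbed; I would rely on the Pasquier--Perrin formula for the automorphism groups of horospherical varieties of Picard number one, which reduces the problem to combinatorial data on the colored fan.
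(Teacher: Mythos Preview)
Your approach and the paper's are genuinely different. The paper proves smoothness by a direct geometric construction: it builds an incidence variety $Y\subset \GtwoGr(2,7)\times \Gr(2,\OO^\CC)$ of pairs $(N,P)$ with $N$ a null-plane and $\Im P\subset N$, shows $Y$ is smooth of dimension $7$ by a Bertini argument, and then checks that the forgetful map $\pi\colon Y\to \HG$ is birational with exceptional divisor a $\PP^1$-bundle over $\QQ^5$; hence $\pi$ is the blow-up of $\QQ^5$ and $\HG$ is smooth. This buys an explicit resolution and the useful byproduct that $\HG\setminus\QQ^5$ is an $\AA^2$-bundle over $\GtwoGr(2,7)$, from which the three $G_2$-orbits and the horospherical picture are immediately visible. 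Your route---compute the stabilizer, recognise the horospherical datum, and invoke \cite{Pa1}---is faster and delivers Fano and Picard number one in one stroke, but is less self-contained.

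One caution and one correction. The caution: be careful not to use Pasquier's list circularly. His theorem classifies smooth projective non-homogeneous horospherical varieties of Picard number one, so you cannot cite the list to \emph{deduce} smoothness and Picard number one. What is legitimate is the opposite direction: once you have identified the stabilizer as $U\cdot T'$ (your computation of this is correct) and hence the two-colour horospherical datum $(\omega_1,\omega_2)$, the Luna--Vust combinatorics, as worked out in \cite{Pa1}, produces a unique smooth projective Picard-one compactification, and one then observes that $\HG$ coincides with it by construction. You gesture at this but should make the matching explicit. The correction: your description of the extra automorphisms has the direction reversed. The additional vector fields (see the exercise on $\Aut(\HG)$ in the paper) act by $v\cdot(e+f_1\wedge f_2)=(e+q(v,f_1)f_2-q(v,f_2)f_1)+f_1\wedge f_2$ for $v\in V_7$; they deform the $V_7$-component using the $\fg_2$-component, not the other way round. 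In particular this action fixes $\QQ^5$ pointwise and moves $\GtwoGr(2,7)$ into the open orbit, so the unique closed $\Aut(\HG)$-orbit is $\QQ^5$. You should commit to this rather than leaving the absorbed orbit unspecified.
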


\begin{proof} Consider the variety $Y$ parametrizing the pairs of planes $N\subset V_7=\Im\OO^\CC, P\subset\OO^\CC$
such that $N$ is a null-plane and $\Im P\subset N$. This is the subvariety of $\GtwoGr(2,7)\times \Gr(2,8)$
defined by the tautological section of $Q\boxtimes U^*$, a bundle of rank $10$ on a variety of 
dimension $5+12$; we deduce by Bertini's Theorem that $Y$ is smooth of dimension $7$. 

On the other hand, by forgetting $N$, we get a map to  $$\pi : Y\lra \Gr(2,\OO^\CC)\subset\PP(\extp^2\OO^\CC)=\PP(\extp^2(\CC\oplus V_7))=\PP(V_7\oplus \extp^2V_7),$$ sending the plane $P=\spann{1-f, e}$ to  $\spann{e+e\wedge f}$. So $\pi (Y)=\HG$.
Moreover, from $P$ we can recover $N$ by projection to the imaginary octonions, 
except when $1\in P$. In the latter case, $P=\spann{1, x}$
for some isotropic $x$ in $\Im\OO^\CC$, and we know there is a $\PP^1$ of null-planes containing $x$. This means that  $\pi: Y\lra \HG$ is birational and contracts a divisor $E$ to $\QQ^5$, 
with fibers isomorphic to $\PP^1$. In such a situation, $X$ must be smooth, and $\pi$ must be the blow-up of  $\QQ^5$. \end{proof}

\begin{remark}
This birational projection is very close to the description we gave of the open orbit of 
$ \LieGr(2,V_7)$ in the proof of Proposition \ref{LieGr}. In fact one can show that 
%The horospherical $G_2$-variety lives in $\PP(V_7\oplus \fg)\simeq \PP\extp^2V_7$. In fact, one can 
$\OGr(2,V_7)\simeq \LieGr(2,V_7)$ deforms to $\HG$. As we already mentionned, it is a very remarkable and unexpected fact that $\OGr(2,V_7)$ is the only rational homogeneous variety with Picard number one which is not globally rigid \cite{PPa, kuznetsov-horo, hwang-li-horo}.
\end{remark}

\begin{remark}
 Note that $\HG-\QQ^5\simeq Y-E$ is an $\AA^2$-fibration over $\GtwoGr(2,7)$. 
Over the open $G_2$-orbit, this restricts to a $\GG_m^2$-fibration over $\GtwoGr(2,7)$. In fact, 
$\HG$ is an instance of a {\it horospherical variety}, which is roughly a relative toric variety
over a flag manifold. This is precisely this structure that explains the fact that the automorphism group is bigger than $G_2$, and is not semisimple. 
\end{remark}
%\begin{excur}    Exercise \ref{ex_genstab_bigger}\end{excur}
 
\begin{remark}
In this lecture we have encountered several varieties on which $G_2$ acts with an open orbit, isomorphic with $G_2/H$ for some subgroup $H$. There is  a whole theory of equivariant compactifications of such quotients $G/H$ for $H$ a closed subgroup of a reductive group $G$. This includes and generalizes the famous theory of toric varieties, and behaves particularly nicely when $H$ is a so-called  
\emph{spherical subgroup} (see e.g. \cite{perrin-spherical}. 
\end{remark}

\section{Exercises}

\begin{exercise}\label{ex:SL3}
\necessary
Show that  $\PP(\Im(\OO^{\CC}))\setminus\QQ^5\simeq G_2/\SL_3\rtimes\ZZ_2.$
\end{exercise}

\begin{exercise}\label{ex_Kxker}
\computation
Show that $\ker(L_x|_{\Im\OO^\CC})=\ker(x\intprod \omega)$ for every $[x]\in\QQ^5$.
\end{exercise}

%\begin{exercise}\label{ex_G2_OG_codim}\training
%Show that $\GtwoGr(2,V_7)$ has codimension two inside $\OGr(2,V_7)$.
%\end{exercise}
%\paul{Put this before the previous exercise?}

\begin{exercise}
\label{ex_OG}
\necessary
Show that not every line in $\QQ^5$ is a $G_2$-line. More precisely, show that the lines passing through a point $[x]\in\QQ^5$ are parametrized by a three-dimensional variety in which the set of $G_2$-lines through $[x]$ has codimension two.
\end{exercise}

\begin{exercise}
\training
    Similarly to Diagram \ref{eq:diag-A} and Proposition \ref{prop_lines_G2}, discuss the incidence diagrams for the other complex Lie groups of rank two.
\end{exercise} 

\begin{comment}
\begin{exercise}
\training
\label{ex_dual_sextic}
    Prove that the projective dual of $G_2/P_2$ is a sextic hypersurface in $\PP(\fg_2^*)$ whose intersection with a line $\PP(\mathfrak{h}^*)\simeq \PP^1$ is defined by the product of the six long roots.
\end{exercise} 
%\textcolor{red}{need to change notations}
\end{comment}

\begin{exercise}
\curiosity
\label{ex_aut_OO_HH}
    Describe the automorphisms of $\OO$ that act trivially on $\HH$. 
\end{exercise}

\begin{exercise}
\label{ex_spinors}
\computation
Let $\chi$ be a spinor in the dense orbit inside the spinor representation $\Delta_7$; show that the associated three-form $\omega_\chi$ defined by $\omega_\chi(x\wedge y\wedge z)=\langle x\cdot (y\cdot(z\cdot \chi)),\chi \rangle$ belongs to the open orbit in $\extp^3 V_7$ (\emph{Hint}: search in the literature for representative of orbits in $\Delta_7$ and use such a representative to do the explicit computation).
\end{exercise}

\begin{exercise}
\label{ex_productofC}
\computation
Show that $\OO^\CC \otimes_\RR \CC \simeq \OO^\CC \oplus \OO^\CC$ as algebras (\emph{Hint}: show and use the fact that $\CC\otimes_\RR \CC\simeq \CC\oplus \CC$).
\end{exercise}

\begin{exercise}
\label{ex_LieG}
\curiosity
Construct a diagram
$$\xymatrix{ & {I}\ar[ld]_{ p}\ar[rd]^{ q} \\ \LieGr(2,V_7) & & \CGr ,}$$
where the fibers of $ p$ are $\PP^2$'s, while the generic fiber of $ q$ are conics.
\end{exercise}

\begin{exercise}
\label{ex_genstab_bigger}
\training
%Describe the generic stabilizer in $G_2$. 
Show that $\Aut(\HG)$ is bigger than $G_2$.
\end{exercise}  

%\section{Some solutions}
\medskip
\begin{sol}[Hint Ex. \ref{ex:SL3}]
    Compute the stabilizer $\Stab_{G_2}([x])$ of a point $[x]\in \PP(V_7)\setminus \QQ^5$. Note that if $\phi\in \Stab_{G_2}(x)$, then $\phi$ must also preserve $V_x:= x^{\perp}\subset V_7$ and $\omega|_{V_x}\in \extp^3 V_x^*$, the restriction of the invariant three-form. $\omega|_{V_x}$ must be in the orbit of $\omega_1$ from Proposition~\ref{prop:orbits-66}, hence there is a pair of distinguished 3-dimensional subspaces $\Lambda_1$ and $\Lambda_2$ preserved by $\phi$ such that $V_x = \Lambda_1 \oplus \Lambda_2$. When each of the two spaces is preserved, use the quadratic form $q|_{V_x}$ to relate the actions of $\phi$ acts on 
    $\Lambda_1$  and  $\Lambda_2$.
    %, hence $\Stab_{G_2}([x]) \subset \SL(\Lambda_i) = \SL_3$. 
    %Conclude by using that $\dim \Stab_{G_2}([x]) =  \dim \SL_3$.
    %, to conclude that the groups are equal.
\end{sol} 

%\begin{sol}[Ex. \ref{ex_G2_OG_codim}]
%As it turns out, $\GtwoGr(2,7)$ indeed has codimension 2 in the orthogonal Grassmannian $\OGr(2,7)$. This can be shown by applying a standard dimension argument. By looking at the incidence variety $I\subset \Gr(2,7)\times \PP H^0(\PP^6,\mathcal{O}_{\PP^6}(2))$ parameterizing pairs $(l, Q)$ such that $l\subset Q$, a straightforward dimension count allows determining that $\dim(I)=34$; indeed the condition $l\subset Q$ is a codimension three condition (ask that $q|_l=0$) inside the $37$-dimensional variety $ \Gr(2,7)\times \PP H^0(\PP^6,\mathcal{O}_{\PP^6}(2))$. So we get that $\dim(\OGr(2,7))=7$. The $G_2$-Grassmannian therefore has codimension 2 in $\OGr(2,7)$.

%It is also possible to determine the dimension of $\OGr(2,7)$ differently: the quadratic form $q$ indeed defines a global section of the rank 3 vector bundle $S^2(\cU^*)$, for $\cU$ the rank 2 tautological bundle on $\Gr(2,7)$, by $[U]\mapsto q|_U$.
%This global section, evaluated at a point $[U]\in \Gr(2,7)$, is nothing but $q|_{U}$. (shortened it)
%As a consequence, we deduce that 
%$\OGr(2,7)$ is smooth of codimension 3 in $\Gr(2,7)$ and homogeneous under the action of $\SO_7\simeq \Stab_{\GL_7}(q)$ (for homogeneity, one can proceed as in the proof of Corollary \ref{G2G27_hom}).
%\end{sol}

\begin{sol}[Ex. \ref{ex_OG}]
The Grassmannian $\Gr(2,V_7)$ has dimension $10$; the subvariety $\OGr(2,V_7)$ has dimension $7$ since 
the condition that a quadratic form vanishes on a line is verified in codimension three. Therefore  the incidence variety parametrizing pairs $(x,\ell)\in \QQ^5\times OG(2,V_7)$ with $x\in \ell$ has 
dimension  $8$ and by homogeneity, all the fibers of its projection to $\QQ^5$ have the same dimension $3$. This means that there is a three-dimensional family of lines in $\QQ^5$ 
passing  through a given point $x$. The same analysis with  $\OGr(2,V_7)$ replaced by $\GtwoGr(2,7)$
shows that there is only a one-dimensional family of special lines through $x$.
\end{sol}

\begin{sol}[Ex. \ref{ex_LieG}]
We have seen that $\LieGr(2,V_7)$ coincides with the locus of points $P\in \Gr(2,V_7)$ where $\star\omega(P)\in \extp^2 (V_7/P)^*$ has rank two. This means that, if $P$ belongs to $\LieGr(2,V_7)$, the kernel of $\star\omega(P)$ is a three-dimensional subspace of $V_7/P$, i.e., a five-dimensional subspace $U$ of $V_7$ containing $P$. On the other hand, $\CGr$ is the subvariety of $\Gr(3,V_7)$ parametrizing subspaces $A$ such that $\omega|_{A^\perp}=0$, or equivalently $\star\omega(A,A,A,\cdot\,)=0$ (meaning that, whenever we contract $\star\omega$ with three vectors in $A$, we get  zero). 
This suggests to construct $ I$ inside the flag variety $Fl(2,3,5,V_7)$ as the locus of flags $P\subset A\subset U$ with $\dim(P)=2$, $\dim(A)=3$, $\dim(U)=5$ satisfying the condition that  $\star\omega(P,P,U,\cdot\,)=0$ (meaning that contracting $\star\omega$ with two vectors in $P$ and one in $U$ always yields zero). Notice that this condition automatically implies that $\star\omega(A,A,A,\cdot\,)=0$, so $A\in \CGr$. We can thus define ${ p}(P,A,U)=P$ and ${{q}}(P,A,U)=A$. 
$$\xymatrix{ & {I}\ar[ld]_{ p}\ar[rd]^{ q} \\ \LieGr(2,V_7) & & \CGr .}$$
The fibers of $ p$ are $\PP^2$'s, while the generic fiber of $ q$ are conics. Indeed, recall the interpretation of elements $A\in \CGr$ as subalgebras in $\OO^\CC$. The fiber of $ q$ over $A$ parametrizes Lie subalgebras contained in $A$. If $A$ is generic, then $A=Im \HH^\CC$ and the fiber is a conic, a point in the fiber being uniquely determined by the kernel of the restriction of $q$. %If $A$ is in the intermediate orbit, then the fiber (always a conic) intersects $\GtwoGr(2,V_7)$ at exactly one point. Finally, if $A$ is in the closed orbit (isomorphic to $\QQ^5$), the fiber is a full $\PP^2$.
\end{sol} 

 \begin{sol}[Ex. \ref{ex_genstab_bigger}]
 \begin{comment}
Let us describe the Lie algebra $\fs$ of the generic stabilizer $\Stab_{G_2}(e+f_1\wedge f_2)$. Let $\alpha,\beta,\gamma$ be the positive short roots of $G_2$, and $A,B,C$ the positive long roots, so that $\alpha$ and $A$ are simple, $\beta$ is the highest short root and $B$ is the highest long root. Then the decomposition in eigenspaces of $V_7$ is given by $\spann{e_\alpha,e_\beta,e_\gamma,e_0,e_{-\alpha},e_{-\beta},e_{-\gamma}}$ and an element in the generic orbit of $X$ is given by $e_\beta+ e_\beta \wedge e_\gamma\in V_7\oplus \fg_2\subset V_7\oplus \extp^2 V_7$. Notice that since $\beta+\gamma=B$, $e_\beta\wedge e_\gamma$ belongs to the root space $(\fg_2)_B$. The Lie algebra $\fs$ is defined by the condition $\fs=\{ g\in \fg_2 \mid g\cdot (e_\beta+ e_\beta \wedge e_\gamma) \in \CC (e_\beta+ e_\beta \wedge e_\gamma) \}$, where $\cdot$ indicates the action of $\fg_2$ on $V_7\oplus \fg_2$. A computation shows that $\fs=\bigoplus_{\alpha>0}(\fg_2)_\alpha \oplus \CC h $, where $h\in \fh$ satisfies $\beta(h)=B(h)\neq 0$. We obtain that $\fs$ has dimension $7$ as expected ($=\dim(\fg_2)-\dim(X)$). 
\end{comment}
The observation is that $V_7$ act on $X$ through 
$v.(e+f_1\wedge f_2)=e+q(v,f_1)f_2-q(v,f_2)f_1+f_1\wedge f_2$. If such an action of $v\in V_7$ were defined by an element $g\in G_2$, then $g\cdot (e+f_1\wedge f_2)=g\cdot e+ g\cdot (f_1\wedge f_2)=(e+q(v,f_1)f_2-q(v,f_2)f_1)+f_1\wedge f_2$. This would mean that $g\cdot f_1\wedge f_2 =f_1\wedge f_2$, i.e., that $g$ acts on $\fg$ as the identity; this is not possible because $v\in V_7$ does not act as the identity on the first factor. We deduce that $\Aut(\HG)$ is not contained in $G_2$. 
\end{sol}

\chapter{The Tits-Freudenthal magic square}

In this lecture, we will present two  constructions of the exceptional Lie algebras that use Jordan algebras and triality. Both constructions will depend on the choice of a pair of complexified normed algebras and will lead to the so-called Magic Square of Lie algebras. We will also begin to study the geometric incarnations of these constructions.

\section{The Jordan algebra \texorpdfstring{$H_3(\AA)$}{H3(A)}}

Let $\AA$ be one of the complexified normed algebras $\RR^\CC$, $\CC^\CC$, $\HH^\CC$, $\OO^\CC$. We let  $H_3(\AA)$ be the space of Hermitian matrices with coefficients in $\AA$:
$$H_3(\AA)=\left\{ \begin{pmatrix} x & u & v \\ \bar{u} & y & w \\
\bar{v} & \bar{w} & z\end{pmatrix}, \quad u,v,w\in\AA, x,y,z\in \RR^\CC\right\}.$$

\begin{definition}
A Jordan algebra is a non-associative commutative algebra over a field that satisfies the Jordan identity
%: for any two elements $x,y$ in it,
$(x\circ y)\circ (x\circ x)=x\circ (y\circ (x\circ x))$ (where $\circ$ is the multiplication).
\end{definition}

The easiest way to obtain a Jordan algebra structure is to start from an associative algebra and modify the product as $x\circ y=(xy+yx)/2$. Therefore, if $\AA\neq \OO^\CC$ there is a {\it Jordan algebra} structure on $H_3(\AA)$ defined by the symmetric
product $A\circ B=(AB+BA)/2$.
This Jordan algebra has rank three in the sense that for any $A\in H_3(\AA)$
there is a cubic relation 
$$A^3-\tr(A)A^2+\sigma_2(A)A-Det(A)Id=0,$$
where $\tr(A)$ is the usual trace and $\sigma_2(A)=\frac{1}{2}(\tr(A)^2-\tr(A^2))$. No effort is needed to check that $H_3(\RR^\CC)$ identifies with symmetric $3\times 3$ complex matrices. The verification that $H_3(\CC^\CC)$ can be identified with the space of $3\times 3$ complex matrices is more involved, and even more so is that $H_3(\HH^\CC)$ is isomorphic to the space of skew-symmetric $6\times 6$ matrices (\necessary Exercise \ref{ex_jordan_CCHH}). However, the real challenge is to  understand that for $A=\OO^\CC$, one still gets a Jordan algebra, usually called the {\it  exceptional simple Jordan algebra}. %Let us try first to understand the structure of $H_3(\AA)$.

\begin{remark}
When $\AA$ is not $\OO^\CC$, one can also define a Jordan algebra structure on $H_n(\AA)$ for any $n> 3$ in the same way. The exception is that one can define a Jordan algebra structure on $H_3(\OO^\CC)$ when $\OO^\CC$ is not even associative, but not on $H_n(\OO^\CC)$ for $n>3$. 
\end{remark}

\subsection*{Rank structure in $H_3(\AA)$}

Among Hermitian operators, one has Hermitian projections. Those of rank-one satisfy  $A^2=A$ and $\tr(A)=1$. Projectivizing, we get the variety
$$X_a^2:= \left\{ [A]\in \PP H_3(\AA), \quad A^2=\tr(A)A \right\}\subset \PP H_3(\AA).$$
Here $a$ is the complex dimension of $\AA$, while the meaning of the superscript $2$ will be clarified later on.
Now, Hermitian projections of rank one are in bijection with lines (even though over $\OO^\CC$, "lines" do not clearly make sense!). This allows to identify (at least formally for $\OO^\CC$) the varieties $X_a^2$ with projective planes over normed algebras:
$$X_a^2\simeq \AA\PP^2\subset  \PP H_3(\AA).$$
A dense open subset of $X_a^2$ is the projectivization of the image of the map
$$(x,y)\in\AA^2\mapsto \begin{pmatrix} 1&x&y \\
\overline{x}& x\overline{x}& y\overline{x} \\
\overline{y}& x\overline{y}& y\overline{y}\end{pmatrix}
\in H_3(\AA).$$

 The cone over $X_a^2$ parametrizes rank-one matrices inside $H_3(\AA)$. Summing two rank-one matrices yields a rank (at most) two matrix, characterized by the vanishing of the determinant
$$Det_a(M)=\frac{1}{3}tr(M^3)-\frac{1}{2}tr(M)tr(M^2)+\frac{1}{6}tr(M)^3.$$
This formula  makes sense even over $\OO^\CC$! Moreover, writing explicitly the determinant in coordinates, one checks that the derivatives of this cubic polynomial
vanish exactly on $\AA\PP^2$. Let  $C_a\subset \PP H_3(\AA)$ be the cubic hypersurface defined by $Det_a$. We get a stratification by the rank as follows:
$$ X_a^2=\Sing C_a \subset C_a \subset \PP H_3(\AA). $$
One also defines $SL(\AA)$ as the subgroup of $GL(H_3(\AA))$ that preserves the determinant $Det_a$. It preserves the hypersurface $C_a$, hence also its singular locus $X_a^2$, and one can show  that modding out by  the center, we get the full automorphism group of $X_a^2$.

Let us discuss what these varieties look like when $a=1,2,4$.

\begin{ex}[$a=1$: $\RR^\CC \PP^2$]
This is clearly the complex projective plane $\CC\PP^2$. What is interesting in this case is the embedding into the space of Hermitian matrices. We have seen that $H_3(\RR^\CC)$ is the space of symmetric matrices, and indeed the embedding $\CC\PP^2\subset \PP H_3(\RR^\CC)$ is the second Veronese embedding $v_2\colon \CC\PP^2 \to \CC\PP^5$. Moreover, $\CC\PP^2$ is the singular locus of $C_1$, which is defined by the usual determinant of symmetric matrices.
\end{ex}

Let us denote by $\AA\PP^{n-1}$ the variety parametrizing free rank-one $\AA$-modules inside $\AA^n$.

\begin{ex}[$a=2$: $\CC^\CC \PP^2$]
Recall that $H_3(\CC^\CC)$ is the space of complex $3\times 3$ matrices. In the projectivization of this space, rank-one matrices are parametrized by $\CC\PP^2\times \CC\PP^2$, embedded in $\CC\PP^8$ via the usual Segre embedding. Let us check that we can identify $\CC\PP^2\times \CC\PP^2$ with $\CC^\CC \PP^2$ (and more generally,  $\CC^\CC\PP^{n-1}$ with $\CC\PP^{n-1}\times \CC\PP^{n-1}$). 
Let $I$ be the complex structure on the real normed algebra $\CC$, and $i$ the complex structure coming from the complexification. Left multiplication $L_I$ by $I$ satisfies $L_I^2=-\id$, so we have two eigenspaces $(\CC^\CC)^n_i$ and $(\CC^\CC)^n_{-i}$ for $i$ and $-i$ of the same complex dimension $n$, and the algebra multiplication does not mix the two factors in the decomposition 
$(\CC^\CC)^n=(\CC^\CC)^n_{i} \oplus (\CC^\CC)^n_{-i}$. The first factor is the image of $L_{1-iI}$, while the second one is the image of $L_{1+iI}$. Given a free $\CC^\CC$-module $[E]\in \CC^\CC \PP^{n-1}$, one constructs two one dimensional subspaces $E_i:=E\cap (\CC^\CC)^n_i$ and $E_{-i}:=X\cap (\CC^\CC)^n_{-i}$. Vice versa, given any two $E_i$ and $E_{-i}$, one can reconstruct $E=E_i + E_{-i}$. This defines the isomorphism $\CC^\CC\PP^{n-1} \simeq \CC\PP^{n-1}\times \CC\PP^{n-1}$.  
\end{ex}

\begin{ex}[$a=4$: $\HH^\CC \PP^2$] Recall that $H_3(\HH^\CC)$ can be identified with $\extp^2 \CC^6$. We will now show that $\HH^\CC\PP^2\simeq \Gr(2,6)\subset \PP H_3(\HH^\CC)$ (and $C_4$ is the cubic hypersurface defined by the vanishing of the Pfaffian), more generally 
$\HH^\CC\PP^{n-1}\simeq \Gr(2,2n)\subset H_n(\HH^\CC)$. The idea  that since $\HH^\CC$ is associative, 
a Hermitian rank-one matrix inside $H_n(\HH^\CC)$ is of the form $v_2(a)=(\bar{a}_i a_j)$
for some $a\in \HH^n$ (note that $v_2(az)=|z|^2v_2(a)$ if $z\in\HH^\CC$). The image $L_a$ of $(\HH^\CC)^n$ by  right multiplication by $v_2(a)$ is $E(a):=\HH^\CC a\simeq \HH^\CC$ and has complex dimension four. In particular, it is stable by the left multiplication $L_I$ by $I$, whose eigenspaces $E_i(a)$
and $E_{-i}(a)$ are both two-dimensional, since they are exchanged by right 
multiplication by $J$. Notice in particular that $E_{-i}(a)$ is thus uniquely defined by the formula $E_{-i}(a)=JE_i(a)$ once $E_i(a)$ has been chosen. There is a  splitting  $(\HH^\CC)^n=(\HH^\CC)^n_i\oplus (\HH^\CC)^n_{-i}$, 
and the maps $[a] \mapsto E(a):=\HH^\CC a \mapsto E_i(a)$ define the isomorphisms 
%$(X_4^2)^{(n)} \simeq 
$\HH^\CC\PP^{n-1}\simeq \Gr(2,2n)$. 
%Here $(X_4^2)^{(3)}$ is the variety of rank-one Hermitian matrices in $\PP H_n(\HH^\CC)$ and $X_4^2=(X_4^2)^{(n)}$. 
All of this is compatible with what happens for $\CC^\CC$, and we get a diagram
$$\xymatrix@1@=3pt@M=0pt{ \HH^\CC\PP^{n-1} & \simeq  & \Gr(2,2n) \\ \cup & & \cup \\ 
\CC^\CC\PP^{n-1} & \simeq  &\CC\PP^{n-1}\times \CC\PP^{n-1}.}$$
\end{ex}

\begin{ex}[$a=0$]
Let us include in the discussion the degenerate case $a=0$, obtained by letting $\AA=0$: we just get the space $H_3(0)$ of diagonal $3\times 3$  matrices with entries in $\RR^\CC\simeq \CC$. Then $\PP H_3(0)$ identifies with $\CC\PP^2$, and $X_0^2$  with the three points corresponding to diagonal matrices with only one non-zero entry.
\end{ex}

\subsection*{Playing around with Jordan algebras} 

One can think of the cubic $C_a$ as the projective dual hypersurface to $X_a^2$, parametrizing tangent hyperplanes to $X_a^2$ as points inside the 
dual projective space. Conversely the derivatives of the determinant yield a birational map 
\begin{equation}\label{eq_der_det}\xymatrix{  & I_a\ar[dl]_{Bl_{\AA\PP^2}}
\ar[dr]^{Bl_{\AA\PP^2_*}} & \\
\PP H_3(\AA) \ar@{-->}[rr]^{\partial Det_a} & & \PP H_3(\AA)^*.}\end{equation}

\begin{remark}
The derivatives of a polynomial $f$ over a vector space $V$ define a map $\partial f\colon V\to V^*$. Indeed, $\partial f$ takes a point $p\in V$ and a vector $x\in V$ (seen as a vector in the tangent space of $V$ at $p$) and gives a number $\partial_xf(p)$. So we get $0\in V^*$ when all the derivatives of $f$ vanish at $p$. In the projective version, this means 
that the indeterminacy locus 
of $\partial f\colon \PP(V)\dashrightarrow \PP(V^*)$ is the singular locus of the hypersurface $(f=0).$
 \end{remark}

\begin{ex}[$a=0$, continued]
The determinant on $H_3(0)$ is just $Det_0=xyz$, so that $\partial Det_0: \PP^2\dasharrow \check{\PP}^2$ is nothing else than the usual Cremona transformation $[x,y,z]\mapsto [yz,zx,xy]$.  In diagram (\ref{eq_der_det}), the triangle having the three indeterminacy points as vertices is blown-up to a hexagon, whose edges coming from the triangle are contracted  in  $\check{\PP}^2$. 
\end{ex}

Being generalizations of the Cremona transformation, it is no surprise that the other $\partial Det_a$ have very special properties: 
\begin{enumerate}
    \item Polynomials whose derivatives define birational maps are called \textit{homalo\"idal polynomials} and are very uncommon \cite{Dol_Cremona}. Here our homalo\"idal polynomial is a cubic, and by symmetry the inverse map is of the same type; so our birational transformation is quadro-quadric, and its indeterminacy locus $X_a^2$ is smooth and connected; it was proved in \cite{ein-shepherd-baron} that there is no other birational transformation with these properties.
 %   \item $I_a$ contains two exceptional divisors, contracted on both sides. The complement is the open orbit of the group action. \marginnote{pas encore de groupe!}
    \item A fiber over $M\in\AA\PP^2_*$ is a copy of $\CC\PP^{a+1}$, that meets $\AA\PP^2$ along a quadric $Q_M\simeq\QQ^a$. These quadrics behave like lines (they are called $\AA$-lines) in a plane projective geometry:
    \begin{itemize}
    \item two (general) $\AA$-lines meet at a single point,
    \item two (general) points are joined by a unique $\AA$-line.
\end{itemize}
\item The four varieties $X_a^2=\AA\PP^2$ are the four {\it Severi varieties}: the only smooth subvarieties 
$X^{2m}\subset\PP^{3m+2}$
whose secant varieties are not the full space. According to Zak \cite{Zak_Severi}, this is impossible in $\CC\PP^n$ for $n<3m+2$. 
%$\leadsto$ Hartshorne's conjecture on complete intersections. 
\end{enumerate}

\begin{remark}
In (2), the words "general" have to be added because we are working over the complex numbers, which makes some pathologies possible that would not exist over the real numbers (like isotropic octonions or null-planes). As a matter of fact, over the real numbers, "general" can be suppressed and one get a genuine plane projective geometry (with some subtle pathologies over the octonions) \cite{freudenthal-oktaven}.
 \end{remark}

\subsection*{Traceless Hermitian matrices}

There is a natural hyperplane of $H_3(\AA)$ given by traceless Hermitian matrices $ H_3(\AA)_0$. From $X_a^2=\AA\PP^2$ we can thus define the subvarieties $X_a^1:=X_a^2\cap  \PP H_3(\AA)_0$ by taking  hyperplane sections (again, the reason for the superscript $1$ will be explained later). 
Its automorphism group is $\Aut (H_3(\AA))$, that 
identifies with $SO_3(\AA)$. Indeed, $Aut (H_3(\AA))$ must fix the identity of the Jordan algebra $e\in H_3(\AA)$, hence also the quadratic form $Det_a(e,x,y)$, as well as the orthogonal $H_3(\AA)_0$ to $e$, and as a consequence also $X_a^1$. It turns out that $SO_3(\AA)$ and $SL(\AA)$ are semisimple Lie groups whose Dynkin diagrams can be read on the two following rows:
$$\begin{array}{cccc}
  \dynkin A2   & \dynkin A2 \vspace{5mm} \dynkin A2 &\dynkin A5& \dynkin E6 \\
   \dynkin A1   & \dynkin A2  & \dynkin C3& \dynkin F4
\end{array}$$
In terms of Lie algebras, the second row is deduced from the first one by  
{\it folding} the Dynkin diagram. Alternatively, the Lie algebras are related by 
a decomposition 
$$\fsl_3(\AA)=\fso_3(\AA)\oplus H_3(\AA)_0.$$

\begin{ex}[$a=1$]
If $a=1$ then $X_a^2$ is $\CC\PP^2$ embedded in $\PP^5$ by the second Veronese embedding $v_2$. So a general hyperplane section is a conic $v_2(\CC\PP^1)$. The Lie algebra of its automorphism group is $\fsl_2\simeq \fso_3=\fso_3(\RR^\CC)$ while $Lie (\Aut(X_a^2))=\fsl_3(\RR^\CC)$. The decomposition $\fsl_3(\RR^\CC)=\fso_3(\RR^\CC)\oplus H_3(\RR^\CC)_0$ is the decomposition of (traceless) matrices into skew-symmetric and symmetric parts.
\end{ex}

\begin{ex}[$a=2$]
If $a=2$ then $X_a^2$ is $\CC\PP^2\times \CC\PP^2$, so a hyperplane section is the flag variety $\Fl(1,2,3)$. The Lie algebra of the automorphism group of $\Fl(1,2,3)$ is $\fsl_3$ while $Lie (\Aut(X_a^2))=\fsl_3\oplus \fsl_3$ and we get a (non-diagonal!) decomposition $\fsl_3\times \fsl_3=\fsl_3\oplus \fsl_3$, where $H_3(\CC^\CC)_0$ is identified with $\fsl_3$, i.e., a hyperplane section of $\fgl_3$.
\end{ex}

\begin{ex}[$a=4$]
If $a=4$ then $X_a^2$ is $\Gr(2,6)\subset \extp^2 \CC^6$. A hyperplane section $\IGr(2,6)\subset \Gr(2,6)$ is defined by a general element $e\in \extp^2 (\CC^6)^*$, i.e., a symplectic form. The variety $\IGr(2,6)$ parametrizes those planes which are isotropic with respect to this symplectic form: it is a homogeneous variety under the action of $\Sp_6$. The Lie algebra of its automorphism group is $\fsp_6$ while $Lie (\Aut(X_a^2))=\fsl_6$ and we get a decomposition $\fsl_6=\fsp_6\oplus \extp^2 \CC^6$, i.e., again a decomposition of matrices in symmetric and skew-symmetric ones, but with inverted roles.
\end{ex}

\begin{remark}
One notices, maybe surprisingly, that in all the examples above, $\Aut(H_3(\AA))$ acts transitively on $X_a^1$. This is also the case for $a=8$, as we shall see.
\end{remark}

\begin{remark}
From what we have said one can deduce that $\Aut(H_3(\AA))$ can be seen as the stabilizer inside $\Aut(X_a^2)$ of a general element $e\in H_3(\AA)$. This implies that the complement of $X_a^2$ in $\PP H_3(\AA)$ is essentially the symmetric space $SL_3(\AA)/SO_3(\AA)$.
\end{remark}

\subsection*{The exceptional Jordan algebra}

We summarize in this short section what happens when $a=8$, i.e., when $\AA=\OO^\CC$. The first important result is the following:

\begin{theorem}\label{chevalley-F4}\cite{Chev_F4} $F_4=\Aut(H_3(\OO^\CC))$. 
\end{theorem}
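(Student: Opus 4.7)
The plan is to show that $G := \Aut(H_3(\OO^\CC))$ is a connected complex algebraic group whose Lie algebra $\fg = \Der(H_3(\OO^\CC))$ is simple of dimension $52$. Since $\ff_4$ is the unique simple complex Lie algebra of dimension $52$ in the Cartan--Killing classification, this identifies $G$ with a group of type $F_4$. That $G$ is a closed algebraic subgroup of $\GL(H_3(\OO^\CC))$ is immediate, since it is cut out by the polynomial equations $g(A\circ B)=g(A)\circ g(B)$; moreover, its lower bound on dimension is already $14$, thanks to the entrywise embedding $G_2 = \Aut(\OO^\CC)\hookrightarrow G$.

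The most geometric route to the dimension count uses the action of $G$ on the Cayley plane $X_8^2 = \OO\PP^2 \subset \PP H_3(\OO^\CC)$, whose points (rank-one projective idempotents) are automatically stable under any Jordan automorphism. First, I would check that $G$ acts transitively on $\OO\PP^2$: using the $G_2$-action on octonionic entries, together with automorphisms "rotating" between idempotent slots (coming from explicit one-parameter subgroups of the form $\exp(tD_A)$ with $A$ off-diagonal), any primitive idempotent can be moved to $E_{11} = \diag(1,0,0)$. Then the stabilizer $H$ of $E_{11}$ preserves the Peirce decomposition
\[ H_3(\OO^\CC) = \CC E_{11} \oplus J_{1/2}(E_{11}) \oplus H_2(\OO^\CC), \]
of dimensions $1$, $16$, $10$ respectively, and its action on $J_0(E_{11}) = H_2(\OO^\CC)$ (a spin-factor Jordan algebra of rank two, whose norm restricted to the traceless part is a nondegenerate $9$-dimensional quadratic form) identifies $H$ with $\Spin_9$, acting on $J_{1/2}(E_{11}) \simeq \CC^{16}$ as its spin representation. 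This yields $\dim G = \dim \OO\PP^2 + \dim \Spin_9 = 16 + 36 = 52$.

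Simplicity of $\fg$ would then follow by showing that the representation of $G$ on the $26$-dimensional space $H_3(\OO^\CC)_0$ is irreducible: a nontrivial invariant subspace would give a $G$-stable linear subvariety of $\PP H_3(\OO^\CC)_0$, contradicting the fact that the $G$-action has a dense orbit (whose complement is the union of the Severi variety section $X_8^2 \cap \PP H_3(\OO^\CC)_0$ with the cubic section $C_8 \cap \PP H_3(\OO^\CC)_0$, both proper closed subsets). An alternative, purely algebraic route proceeds via a Chevalley--Schafer type decomposition $\fg = \Der(\OO^\CC) \oplus \fsa^0_3(\OO^\CC)$, where the second summand consists of traceless anti-Hermitian octonionic matrices acting as inner Jordan derivations; this gives the dimension count $14+(14+24) = 52$ immediately but still requires verification of simplicity.

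The main obstacle is the identification of the stabilizer $H$ with the \emph{full} group $\Spin_9$: the inclusion $\Spin_9 \subset H$ is straightforward (since $\Spin_9$ is the automorphism group of the rank-two spin-factor $H_2(\OO^\CC)$, and lifts naturally to act on $J_{1/2}(E_{11})$ via the spin representation), but ruling out further symmetries acting on $J_{1/2}(E_{11})$ requires carefully analysing how automorphisms compatibly intertwine the Peirce pieces, which is delicate because of octonionic non-associativity. In the algebraic approach, the analogous obstacle is the explicit verification that no proper ideal of $\fg$ can survive, both when restricted to the $\fg_2$-factor and under the "mixed" bracket with the non-associative octonionic part.
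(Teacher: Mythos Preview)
The paper does not actually prove this theorem: it is stated with a citation to \cite{Chev_F4} and then used as input for the discussion that follows. So there is no ``paper's proof'' to compare against; your proposal is a reasonable sketch of the classical argument (essentially the Chevalley--Schafer route), and the two avenues you describe---transitivity on $\OO\PP^2$ with $\Spin_9$ stabilizer, and the decomposition $\Der(H_3(\OO^\CC))=\fg_2\oplus\fsa_3^0(\OO^\CC)$---are precisely the ones found in the standard references.

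One genuine gap to flag: your claim that ``simplicity of $\fg$ would then follow by showing that the representation on $H_3(\OO^\CC)_0$ is irreducible'' is not correct as stated. A semisimple (even reductive) Lie algebra can act faithfully and irreducibly without being simple---think of $\fsl_2\times\fsl_2$ on $\CC^2\otimes\CC^2$. What irreducibility does give you, combined with the fact that automorphism groups of semisimple algebras are linearly reductive, is that $\fg$ is \emph{semisimple} of dimension $52$. To get simplicity you still need an extra step: the cleanest is to observe that $\fg$ contains the $36$-dimensional stabilizer subalgebra $\fspin_9$, and no proper semisimple summand of dimension $\le 52$ other than $\ff_4$ itself can accommodate it (the only simple Lie algebras of dimension between $36$ and $52$ containing $\fso_9$ are $\fso_9$ and $\fso_{10}$, and neither leaves room for a semisimple complement of the right dimension). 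You should also say a word about connectedness of $G$, which you assert but do not argue; once $\fg=\ff_4$ this is harmless since $F_4$ has trivial center and trivial outer automorphism group, but it deserves a sentence.
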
 

%\begin{remark} If you do not recall the classification of semisimple Lie algebras where the Lie algebra of $F_4$ appears, you can just realize about the existence of this strange group $F_4$ via the theorem above. \alan{Unnecessary remark??} \end{remark}

Note that $H_3(\OO^\CC)$ has dimension $27$ while, from the theory of simple Lie algebras, the Lie algebra  $\ff_4$  of $F_4$ has a unique minimal irreducible representation $V_{\omega_4}$ (with highest weight  $\omega_4$) of dimension $26$, all the non-trivial representations having higher dimension. This is in agreement with what we have seen so far! Indeed, $\Aut(H_3(\OO^\CC))$ must fix the identity in $H_3(\OO^\CC)$, and preserve the space $H_3(\OO^\CC)_0$ of traceless matrices, which identifies with the minimal irreducible representation of $F_4$. Moreover $X_8^1$ identifies with the minimal orbit $F_4/P_4$ inside $\PP V_{\omega_4}$: a $15$-dimensional complex variety, that we will also denote by $\OO^\CC\PP^2_0$.

Now, we know that $X_8^1$ is a hyperplane section of $X_8^2=\OO^\CC \PP^2$ and that $F_4=\Aut(H_3(\OO^\CC))$ is the stabilizer inside $\Aut(\OO^\CC \PP^2)$ of a general point $e\in H_3(\OO^\CC)$. What is this mysterious group $\Aut(\OO^\CC \PP^2)$? Well, it is a ``new", or rather exceptional, group, which turns out to be simple: the group $E_6$ \cite{Chev_F4}. Then one can show that $H_3(\OO^\CC)$ identifies with the irreducible $27$-dimensional $E_6$-representation $V_{\omega_1}$ whose highest weight is the fundamental weight $\omega_1$ (again, this is the non-trivial representation of $E_6$ of minimal dimension); this yields the interpretation of $\OO^\CC\PP^2$ as the minimal orbit $E_6/P_1$ inside $\PP (V_{\omega_1})$ (see Lemma \ref{lem_stable_contains_hom} and discussion that follows it). This variety is known as the (complex) \emph{Cayley plane}. 

\begin{remark}
The Weyl group of $E_6$ 
is (almost) the automorphism group of the configuration of $27$ lines on a smooth cubic surface, and this is no coincidence: J. Lurie \cite{Lurie} showed how to reconstruct the full
Lie algebra $\fe_6$, with its minimal representation, from the combinatorics of the  $27$ lines. In fact, let us identify the $27$ lines with coordinates $x_1,\dots,x_{27}$ on $\CC^{27}$. Let $\Delta$ be the set of triangles in the cubic surface and define $P(x)=\sum_{(ijk)\in \Delta}x_ix_jx_k$. Then in suitable coordinates,  $P$ is nothing else than $Det_8$!
\end{remark}

\begin{remark}
The action of $F_4$ on $\OO \PP^2$ is transitive over the
reals numbers (and the real Cayley plane can be identified with $F_4/\Spin_9$) but not over the 
complex numbers since it stabilizes $\OO\PP^2_0$. In fact the actions of $F_4$ on the latter and its complement are both transitive.
\end{remark}

\subsection*{The magic square from Jordan algebras}

Jordan algebras were studied a lot in the 1940-50's in connection with quantum mechanics
and its use of Hermitian operators. At this time, Chevalley's theorem \ref{chevalley-F4} urged the need to construct 
the other exceptional groups and algebras in terms of specific algebraic structures. This was accomplished in the  1950-60's by Tits, Freudenthal, Vinberg.

Consider a pair $(\AA, \BB)$ of (complexified, if you wish) normed algebras and let
$$\fg(\AA,\BB) :=  \der \AA\times   \der H_3(\BB) \oplus ({\mathrm Im}\AA\otimes H_3(\BB)_0), $$
%where $H_3(\BB)_0\subset H_3(\BB)$ is the hyperplane of traceless matrices inside the Jordan algebra of 
%$3\times 3$ Hermitian matrices with coefficients in $\BB$ and 
where $ \der \AA$  and   $\der H_3(\BB)$ denote the spaces of derivations of these two algebras, i.e., linear operators satisfying the Leibniz rule. 
One can define a Lie algebra structure on $\fg(\AA,\BB)$, so that 
$\der\AA\times \der H_3(\BB)$ is  a Lie subalgebra acting on 
${\rm Im}\AA\otimes H_3(\BB)_0$ in a natural way.  
The result of this construction (over the complex numbers) is 
the {\it Tits-Freudenthal  magic square of Lie algebras}:

\begin{center}
\renewcommand{\arraystretch}{1.25}
\begin{tabular}{|c|cccc|} \hline 
 & $\RR^\CC$ & $\CC^\CC$ & $\HH^\CC$ & $\OO^\CC$ \\ \hline
$\RR^\CC$ &  $\fsl_2$ & $\fsl_3$ & $\fsp_6$ & $\ff_4$ \\  
$\CC^\CC$ & $\fsl_3$ & $\fsl_3\times \fsl_3$ & $\fsl_6$ & $\fe_6$ \\ 
$\HH^\CC$ & $\fsp_6$ & $\fsl_6$ & $\fso_{12}$ & $\fe_7$  \\
$\OO^\CC$ & $\ff_4$ & $\fe_6$ & $\fe_7$ & $\fe_8$ \\ \hline
\end{tabular}\end{center}

This provides a uniform description of the exceptional Lie algebras (except $\fg_2$!) over the complex numbers! Over the reals, one can use different forms of $\AA$ and $\BB$ (the split octonions, typically)  to obtain distinct real Lie algebras with the same complexification. The usual normed algebras always lead to compact forms.

\begin{remark}
Look carefully at the second line... can you see it? What about the first line?
\end{remark}

\section{Triality}

It  is amazing that  $\fg(\AA,\BB)$ has  a natural Lie
algebra structure, but also the symmetry of the square appears to be miraculous, since the definition of $\fg(\AA,\BB)$ is very non-symmetric. 
A more symmetric variant of the Tits-Freudenthal construction  was found by Vinberg 
in 1966, and rediscovered by several people (Allison, Dadok-Harvey, Barton-Sudbery, 
Landsberg-M.) using triality.
Triality groups generalize automorphism groups of normed algebras. They are defined as 
$$\begin{array}{lcl}
%Aut(\AA) & = & \{U\in SO(\AA),\; U(xy)=U(x)U(y)\;\;\forall x,y\in\AA\}, \\
\Tri(\AA) & = & \left\{(U_1,U_2,U_3)\in SO(\AA)^3,\; U_1(xy)=U_2(x)U_3(y)\;\;\forall x,y\in\AA\right\}.
\end{array}$$
The three projections $\pi_i: \Tri(\AA)\lra SO(\AA)$ give three representations of the triality group, that we denote $\AA_1, \AA_2, \AA_3$. The triality Lie algebras
$$\ftri(\AA)\simeq  \left\{(u_1,u_2,u_3)\in \fso (\AA)^3,\; u_1(xy)=u_2(x)y+xu_3(y)\;\;\forall x,y\in\AA\right\}$$
for $a=1,2,4$ are $\ftri(\AA)=0, \;\mathfrak{ab}_2, \;\fsl_2^3$, where $\mathfrak{ab}_2$ is abelian and two-dimensional.

\begin{theorem}[Cartan 1925] 
The group $\Tri(\OO^\CC)\simeq \Spin_8$. Each projection 
$\pi_i$ is a twofold cover of $SO_8$. The three corresponding $8$-dimensional
representations of $\;\Spin_8$ are inequivalent. \end{theorem}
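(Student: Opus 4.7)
The plan is to identify $\Tri(\OO^\CC)$ with $\Spin_8$ via the Clifford-algebra structure coming from octonion multiplication, and then read off the three projections as the vector and the two half-spin representations of $\Spin_8$. The starting point is that alternativity ($x(\bar x y)=q(x)y$) makes the map
\[
\phi \colon \OO^\CC \lra \End(\OO^\CC \oplus \OO^\CC), \qquad
\phi(x)=\begin{pmatrix} 0 & L_{\bar x} \\ L_x & 0 \end{pmatrix},
\]
satisfy $\phi(x)^2=q(x)\,\id$, so that $\phi$ extends to an algebra map $\mathrm{Cl}_8=\mathrm{Cl}(\OO^\CC,q)\to\End(\OO^\CC\oplus\OO^\CC)$; by a dimension count ($2^8=16^2$) this is an isomorphism. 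The even part $\mathrm{Cl}_8^+$ preserves the $\ZZ_2$-grading and acts on the two summands as the two $8$-dimensional half-spin representations of $\Spin_8$.

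Next I would rewrite the triality relation as conjugation inside $\mathrm{Cl}_8^+$. For $g\in\Spin_8\subset\mathrm{Cl}_8^+$ acting diagonally on $\OO^\CC\oplus\OO^\CC$ as $(U_2,U_3)\in\SO(\OO^\CC)^2$, the defining property $g\,\phi(x)\,g^{-1}=\phi(\rho(g)x)$ (with $\rho\colon\Spin_8\to\SO_8$ the vector representation) reads, in matrix components, $U_3\circ L_x\circ U_2^{-1}=L_{\rho(g)x}$, i.e.\ $U_3(xy)=\rho(g)(x)\cdot U_2(y)$. This is exactly the triality identity with $(U_1,U_2,U_3)=(\rho(g),U_2,U_3)$. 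The assignment $g\mapsto(\rho(g),U_2,U_3)$ thus defines a homomorphism $\Phi\colon\Spin_8\to\Tri(\OO^\CC)$ whose three projections $\pi_i\circ\Phi$ are the vector and the two half-spin representations of $\Spin_8$; once $\Phi$ is shown to be an isomorphism, each $\pi_i$ will be a twofold cover of $\SO_8$, because each of these three representations is.

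To show $\Phi$ is an isomorphism, I would combine a dimension count with triviality of the kernel. Infinitesimally, the system $u_1(xy)=u_2(x)y+xu_3(y)$ determines $u_2,u_3$ in terms of $u_1$ and the scalars $a=u_3(1), b=u_2(1)$ via $u_2(x)=u_1(x)-xa$ and $u_3(y)=u_1(y)-by$; the remaining consistency constraint then forces $u_1\in\fso_8$, yielding $\dim\ftri(\OO^\CC)=\dim\fso_8=28$. Any $g\in\ker\Phi$ acts trivially on all three representations, hence is central in $\Spin_8$ and in fact equal to the identity. Combined with connectedness of $\Tri(\OO^\CC)$ (immediate once its Lie algebra is identified with $\fso_8$), this shows that $\Phi$ is an isomorphism. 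The inequivalence of the three $8$-dimensional representations $\AA_i$ then follows from distinctness of their highest weights -- the three outer fundamental weights of the $D_4$ Dynkin diagram; equivalently, they are permuted by the celebrated $S_3$ outer automorphism group of $\Spin_8$.

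The main obstacle I expect is the careful matching between the Clifford-algebraic and the octonionic sides: specifically, verifying that the pair $(U_2,U_3)$ obtained from $g\in\Spin_8$ genuinely lands in $\SO(\OO^\CC)^2$ rather than merely $\GL(\OO^\CC)^2$, which requires identifying the invariant bilinear forms on the two half-spin representations with the octonionic norm on each copy of $\OO^\CC$, and keeping track of sign conventions so that the correspondence defines a group homomorphism in both directions.
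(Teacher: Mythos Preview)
The paper does not actually give its own proof here; it simply cites \cite[Theorem 14.19]{harvey}. Your Clifford-algebra argument is the standard route (and is essentially what Harvey does): build $\mathrm{Cl}_8$ on $\OO^\CC\oplus\OO^\CC$ via left multiplications, identify the two summands with the half-spin representations, and read the triality relation off the conjugation action of $\Spin_8$ on the odd part. So in substance you are supplying the argument the paper omits, by the expected method.

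Two places where the write-up needs tightening. First, your infinitesimal dimension count is garbled: you say ``the remaining consistency constraint then forces $u_1\in\fso_8$,'' but that is already part of the definition of $\ftri$. What you actually want is that the projection $(u_1,u_2,u_3)\mapsto u_1$ is injective (which does follow from your formulas $u_2(x)=u_1(x)-xa$, $u_3(y)=u_1(y)-by$ once you check that $u_1=0$ forces $a=b=0$ via alternativity); this gives $\dim\ftri\le 28$, and the Clifford map $d\Phi$ gives the reverse inequality. Second, ``connectedness of $\Tri(\OO^\CC)$ is immediate once its Lie algebra is identified with $\fso_8$'' is not a valid inference: knowing the Lie algebra says nothing about $\pi_0$. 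You should instead argue directly that $\Phi$ is surjective, e.g.\ by showing $\ker\pi_1=\{(1,\pm1,\pm1)\}\simeq\ZZ_2$ (use that $(xd)(d^{-1}y)=xy$ for all $x,y$ forces $d\in\RR^\CC$), so that $\pi_1$ is a double cover onto an open, hence closed, subgroup of $\SO_8$; since $\SO_8(\CC)$ is connected, $\pi_1$ is onto and $\Tri$ has order $2\cdot|\SO_8|=|\Spin_8|$. Also note a small labeling slip: from $U_3 L_x U_2^{-1}=L_{\rho(g)x}$ you get $U_3(xy)=\rho(g)(x)\,U_2(y)$, which is the triality relation with the indices cyclically permuted, not with $(U_1,U_2,U_3)=(\rho(g),U_2,U_3)$ as written. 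None of this affects the strategy, which is sound.
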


\begin{proof} See e.g. \cite[Theorem 14.19]{harvey}. \end{proof}

That we have (at least) double covers is clear since we can fix for example $U_1$ and change the 
signs of $U_2$ and $U_3$. 
If you don't know Spin groups, the Lie algebra version is also nice:
$$\fso_8\simeq  \left\{(u_1,u_2,u_3)\in \fso (\OO^\CC)^3,\; u_1(xy)=u_2(x)y+xu_3(y)\;\;\forall x,y\in\OO^\CC\right\}.$$

Triality is related to the fact that the  Dynkin diagram  $D_4$ is the only one with a 
threefold symmetry. The three exterior nodes correspond to the three $8$-dimensional representations 
$V_8$, $\Delta_+$, $\Delta_-$, and the symmetry means that these representations can be exchanged
by some outer automorphisms (that we can realize by permutations of $U_1,U_2,U_3$, and some 
conjugations.) In particular, these three representations are in fact indistinguishable.
(For higher spin groups only the two half-spin representations are indistinguishable.)

\setlength{\unitlength}{4mm}
\begin{picture}(30,7.2)(-10,3)
%\put(0,5.7){$D_4$}
\put(3,5.8){$\circ$}
\put(.2,5.7){$\circ$}
\put(4.8,8.2){$\circ$}
\put(4.8,3.3){$\circ$}
\put(.5,6){\line(1,0){2.6}}
\put(3.35,6.1){\line(2,3){1.5}}
\put(3.35,5.9){\line(2,-3){1.5}}
\put(-.8,5.6){$\OO^\CC_1$}
\put(5.3,8.2){$\OO^\CC_2$}
\put(5.3,3.2){$\OO^\CC_3$}

\put(15,5.8){$\circ$}
\put(12.2,5.7){$\circ$}
\put(16.8,8.2){$\circ$}
\put(16.8,3.3){$\circ$}
\put(12.5,6){\line(1,0){2.6}}
\put(15.35,6.1){\line(2,3){1.5}}
\put(15.35,5.9){\line(2,-3){1.5}}
\put(11.2,5.6){$V_8$}
\put(17.3,8.2){$\Delta_+$}
\put(17.3,3.3){$\Delta_-$}
\put(14.4,5.3){$\fg_2$}
%\put(7.8,3.3){$\circ$}
%\put(4,6){\line(1,0){2}}
%\put(6.35,6.1){\line(2,3){1.5}}
%\put(6.35,5.9){\line(2,-3){1.5}}
\end{picture}

\begin{remark}
Sometimes one starts with the 
construction of spin representations, proves the triality Theorem, and then deduces
that $G_2$ is the generic stabilizer for the transitive action of $\Spin_7$ on 
the seven dimensional sphere in $\Delta_+$.
\end{remark} 

%\begin{remark} Triality can be characterized also by the existence of a nondegenerate tensor $$\theta \in V_8\otimes \Delta_+\otimes \Delta_-,$$ nondegenerate in the sense that fixing a generic element inside one of the three spaces, we get a perfect duality between the two others. This tensor $\theta$ is nothing else than the Clifford multiplication tensor \cite{}. \end{remark}

\subsection*{The magic square from triality}
Let $(\AA, \BB)$ be normed algebras, with the three actions of their triality 
Lie algebras $\ftri(\AA)$ and $\ftri(\BB)$. There is a natural Lie algebra structure on 
$$\fg(\AA,\BB) = \ftri(\AA)\times \ftri(\BB)\oplus (\AA_1\otimes\BB_1)\oplus (\AA_2\otimes\BB_2)
 \oplus (\AA_3\otimes\BB_3).$$ 
Of course $\ftri(\AA)\times \ftri(\BB)$ is a Lie subalgebra, 
acting naturally on $\AA_i\otimes\BB_i$ \cite{LMTriality}. The bracket of 
$a_1\otimes b_1\in\AA_1\otimes\BB_1$ with $a_2\otimes b_2\in\AA_2\otimes\BB_2$ 
is simply
$a_1a_2\otimes b_1b_2$, considered as an element of $\AA_3\otimes\BB_3$. Now, by anti-linearity of the Lie bracket, this forces $[a_2\otimes b_2,a_1\otimes b_1]=-a_1a_2\otimes b_1 b_2$. For the other maps, one should be careful with signs: $[a_1\otimes b_1,a_3\otimes b_3]=-\overline{a}_1a_3\otimes \overline{b}_1 b_3\in \AA_2\otimes \BB_2$, $[a_2\otimes b_2,a_3\otimes b_3]=a_3\overline{a}_2\otimes b_3 \overline{b}_2\in \AA_1\otimes \BB_1$. Finally, if $c_i\in \AA_i$ and $d_i\in \BB_i$, then $[a_i\otimes b_i,c_i\otimes d_i]$ must belong to $\ftri(\AA)\times \ftri(\BB)$; to such an element we can associate $ q(b_i,d_i)a_i\wedge c_i +q(a_i,c_i) b_i\wedge d_i \in \extp^2 \AA_i\oplus \extp^2\BB_i$, where $q$ is the symmetric form. Then one obtains an element of $\ftri(\AA)\times \ftri(\BB)$ by applying the duals of the inclusion maps $\ftri(\AA)\to \fso(\AA_i)\simeq \extp^2\AA_i$. Note that 
by definition, $\fg(\AA,\BB)$ is graded over  $\ZZ_2\times\ZZ_2$, starting from 
$$\fg(\AA,\RR) = \ftri(\AA)\oplus \AA_1\oplus \AA_2 \oplus \AA_3.$$

\begin{remark}
Let $u,v\in \AA_1$ and denote by $\psi(u\wedge v)$ the corresponding element in $\ftri(\AA)$. Then the action on $\AA_1$ is given by the natural action of $\extp^2 \AA_1\simeq\fso(\AA_1)$: $\psi(u\wedge v)_1 x=4q(u,x)v-4q(v,x)u$. The action on $\AA_2$ and $\AA_3$ is instead given essentially by multiplication: $\psi(u\wedge v)_2= \overline{u}(vx)-\overline{v}(ux)$, $\psi(u\wedge v)_3= (xu)\overline{v}-(xv)\overline{u}$.
\end{remark}

\section{Exercises}

\begin{exercise}
    \necessary \label{ex_jordan_CCHH}
    Show that $H_3(\CC^\CC)$ is isomorphic to the space of $3\times 3$ complex matrices, and that $H_3(\HH^\CC)$ is isomorphic to the space of skew-symmetric $6\times 6$ matrices.
\end{exercise}

\begin{exercise}\training
\label{ex_der_det}
 Check that the first instance ($a=1$) of Diagram \eqref{eq_der_det} is well-known: 
$$\xymatrix{  & I_1\ar[dl]\ar[dr] & \\
 \CC\PP^5\ar@{-->}[rr] & & \CC\PP^5}$$
%$$\xymatrix{ & I_0\ar[dl]\ar[dr] & & & I_1\ar[dl]\ar[dr] & \\
%\CC\PP^2\ar@{-->}[rr] & & \CC\PP^2 & \CC\PP^5\ar@{-->}[rr] & & \CC\PP^5}$$
%Elliptic geometry/Plane projective geometry
\end{exercise}

\begin{exercise}\curiosity
For $a=1,2,4$, observe that the quadrics that define "lines" in $X_a^2=\AA\PP^2$
can be seen as {\it entry loci}: any $P\in Sec(X_a^2)-X_a^2 $
belongs to infinity many secant lines, and the intersection points of these lines with $X_a^2$ are parametrized by an $a$-dimensional 
quadric $Q_P$. 
\end{exercise}

\begin{exercise}\training
\label{ex_tri_HH}
Compute $\ftri(\HH^\CC)$ and determine the three triality representations.
\end{exercise} 

\begin{exercise}\training
\label{ex_sl2_sl8}
Construct $\fso_8$ from four copies of $\fsl_2$ and decompose the triality representations. 
\end{exercise}  

\begin{exercise}\computation
\label{ex_geom_tri}
Check the geometric version of triality: any maximal isotropic subspace of $\OO^\CC$ is of the form $Im(R_p)$ or $Im(L_p)$ for some zero divisor $p\in\OO^\CC$. \end{exercise}

\begin{exercise}\computation
Check the Jacobi identity on $\fg(\AA,\RR)$. \end{exercise}

\begin{exercise}\training
\label{ex_root_e8}
Use this model to describe the root system of $\fe_8$. \end{exercise}

%\section{Some solutions}
\medskip
\begin{sol}[Exercise \ref{ex_der_det}]
%$\partial Det_0\colon [r,s,t]\mapsto [st,tr,rs]$ is the standard Cremona transformation obtained by blowing up $\CC\PP^2$ at three points (which yields $I_0$) and then contracting the three lines in $\CC\PP^2$ joining the three points. 
 $I_1$ is the blow-up of the Veronese surface in $\CC\PP^5$ and $\partial Det_1$ is given by the complete linear system of quadrics vanishing on this surface. The study of this blow-up was instrumental in Schubert's proof that there exist exactly $3264$ conics tangent to $5$ given general conics (see for instance \cite{EH-3264}). 
\end{sol}

\begin{sol}[Exercise \ref{ex_tri_HH}]
For $(0,u_2,u_3)\in \ftri(\HH^\CC)$, the triality equations become $u_2(x)y+xu_3(y)=0$, so $u_3$ is determined by $u_2$ (just let $x=1$). Thus elements of the form $(0,u_2,u_3)$ are contained in a copy of $\fso(\HH^\CC)\simeq \fso_4\simeq \fsl_2\times \fsl_2$. One checks that the triality equations define three supplementary equations on $\fso_4$ so that $\{(0,u_2,u_3)\in \ftri(\HH^\CC)\}$ form a (diagonal) subalgebra isomorphic to $\fsl_2$. Of course, putting to zero $u_2$ or $u_3$ instead of $u_1$ one gets two other copies of $\fsl_2$. These algebras commute since, for instance, $[(0,u_2,u_3),(v_1,0,v_3)]=(0,0,w_3)$ for some $w_3$, and the triality equations easily imply that $w_3=0$. Finally  $\ftri(\HH^\CC)=\fsl_2^3$.
\end{sol}

\begin{sol}[Exercise \ref{ex_sl2_sl8}]
(Sketch) One can construct the explicit decomposition from very natural morphisms, but one can also do it using graded Lie algebras. In particular, this example appears already in \ref{ex_graded_so8}. One obtains a decomposition $\fso_8=(\fso_8)_0\oplus (\fso)_1=(\fsl_2)^{\oplus 4} \oplus (\CC^2)^{\otimes 4}$. The space $(\fso)_1= (\CC^2)^{\otimes 4}$ is the tensor product of the four standard $\fsl_2$-representations corresponding to the four copies of $\fsl_2$. Each $\fsl_2$ corresponds to one of the four extremities of the affine Dynkin diagram of type $D_4$: three corresponding to simple roots at the extremes of the Dynkin diagram of $D_4$, and one for the highest root. The three triality representations are of the form $\CC^2_i\otimes\CC^2_j\oplus\CC^2_k\otimes\CC^2_l$ for $ijkl$ a permutation of $1234$.  
\end{sol}

\begin{sol}[Exercise \ref{ex_geom_tri}]
Recall that the zero divisors in a unital composition algebra are the elements of norm zero.  Let  $p\in\OO^\CC$ be us such a zero divisor. Then any element of the form $px$ is again a zero divisor, so  the image $Im(L_p)$ of left multiplication by $p$ is isotropic, in particular its dimension is at most $4$. The same holds for the kernel of $L_p$, which is again made of zero divisors and thus isotropic. So both are $4$-dimensional. We get a map from the quadric $\QQ^6\subset \PP V_8$ to the orthogonal Grassmannian $\OGr(4,V_8)\subset \PP \Delta_+$ given by $p\mapsto Im(L_p)$ (and similarly for right multiplication with $\Delta_-$ and $Im(R_p)$). This is an isomorphism since $\QQ^6$ and $\OGr(4,V_8)$ are both  six-dimensional quadrics.
\end{sol}

\begin{sol}[Exercise \ref{ex_root_e8}]
(Sketch) The Lie algebra of $\fe_8$ is obtained as
$$
\fe_8\simeq \fg(\OO^\CC,\OO^\CC)=\fso_8\times \fso_8 \oplus (V_8^{(1)}\otimes V_8^{(2)})\oplus (\Delta_+^{(1)}\otimes \Delta_+^{(2)})\oplus (\Delta_-^{(1)}\otimes \Delta_-^{(2)}).
$$
We deduce that two copies of the root system of type $D_4$ are contained in the root system of type $E_8$. The remaining roots come from the three remaining factors. For instance, let us denote by $\delta_1^{(i)}\dots \delta_8^{(i)}$ the weights of $V_8^{(i)}$ for $i=1,2$. Then $\delta_j^{(1)}+\delta_k^{(2)}$ are roots of $\fe_8$. All the roots of $E_8$ are obtained in this way ($48$ roots from the two copies of $\fso_8$ and $3\times 64=192$ from the three triality factors).
\end{sol}

\chapter{The magic square and open problems}

In the previous section, we have constructed the Magic Square from Jordan algebras and triality, together with some "natural" varieties $X_a^1$ and $X_a^2$. No we discuss the full geometric version of the magic square, making the varieties $X_a^1$ and $X_a^2$ appear in a quite natural fashion.

\section{Geometric version of the magic square}

This was developed mainly by Freudenthal in the 1950-60's, based on "points" 
in certain geometries \cite{freudenthal-oktaven, freudenthal-lie}, that are parametrized (over the complex numbers) by the following homogeneous spaces:

\begin{center}\begin{tabular}{|c|cccc|} \hline 
 & $\RR^\CC$ & $\CC^\CC$ & $\HH^\CC$ & $\OO^\CC$ \\ \hline
$\RR^\CC$ &  $v_4(\PP^1)$ & $Fl_3$ & $\IGr(2,6)$ & 
$\OO^\CC\PP^2_0$ \\  
$\CC^\CC$ & $v_2(\PP^2)$ & $\PP^2\times \PP^2$ & $\Gr(2,6)$ & $\OO^\CC\PP^2$ \\ 
$\HH^\CC$ & $\IGr(3,6)$ & $\Gr(3,6)$ & $\OGr(6,12)^+$ & $E_7^{hs}$  \\
$\OO^\CC$ & $F_4^{ad}$ & $E_6^{ad}$ & $E_7^{ad}$ & $E_8^{ad}$ \\ \hline
\end{tabular}\end{center}

The varieties in the first row are the varieties $X_a^1$, while the varieties $X_a^2$ are the varieties from the second row (mystery unveiled on the apices $1$ and $2$). The formers are hyperplane sections of the latters. Moreover, each variety is homogeneous for the action of a Lie group whose Lie algebra is $\fg(\AA,\BB)$. We can even complete it to a magic triangle \cite{deligne3} (the first line gives the parameter $a$):

\begin{center}\begin{tabular}{|cccccc|}  \hline 
  $ -\frac{2}{3}$  & 0 & 1 & 2 & 4 & 8 \\ \hline 
 & &  $v_4(\PP^1)$ & $Fl_3$ & $\IGr(2,6)$ & 
$\OO^\CC\PP^2_0$ \\  
 & 3pts& $v_2(\PP^2)$ & $\PP^2\times \PP^2$ & $\Gr(2,6)$ & $\OO^\CC\PP^2$ \\ 
 $v_3(\PP^1)$ & $(\PP^1)^3$ & $\IGr(3,6)$ & $\Gr(3,6)$ & $\OGr(6,12)^+$ & $E_7^{hs}$  \\
 $G_2^{ad}$ & $SO_8^{ad}$& $F_4^{ad}$ & $E_6^{ad}$ & $E_7^{ad}$ & $E_8^{ad}$ \\ \hline
\end{tabular}\end{center}

We will denote by $X_a^i$ the variety corresponding to the $i$th row. Note that each space $X_a^i=G/P$ from row $i$ and column $a$ comes with a homogeneous embedding inside $\PP V_a^i$ for some irreducible  $G$-representation $V_a^i$, whose dimension is given below: 
$$\begin{array}{lll}
\dim X_a^1=2a-1, &\qquad &  \dim V_a^1=3a+2, \\
\dim X_a^2=2a, & & \dim V_a^2=3a+3, \\
\dim X_a^3=3a+3, && \dim V_a^3=6a+8,\\
\dim X_a^4=6a+9, && \dim V_a^4=\dim\fg(\AA,\OO)=2\frac{(3a+7)(5a+8)}{a+4}. 
\end{array}$$
The last formula is the  most remarkable. To be able to express a dimension by a rational function is completely  unexpected! This formula was essentially discovered by Vogel \cite{vogel} from the construction of knot invariants, 
then expanded by Deligne and led to his conjectures on  {\it the exceptional series} of Lie algebras \cite{deligne1, deligne2}. 

We already know that $V_a^2=H_3(\AA)$, $V_a^1=H_3(\AA)_0$, $X_a^2=\AA\PP^2$ and $X_a^1=\AA\PP^2_0=\AA\PP^2\cap \PP H_3(\AA)_0$. There remains to understand the last two rows. For later use, let us explain some notation we are using.

\begin{definition}
\label{def_adjoint}
Let $G$ be a simple complex Lie group, acting on its Lie algebra $\fg$ via the adjoint representation. The image  inside $\PP\fg$ of the minimal non-trivial  nilpotent orbit  is called the \emph{adjoint variety} $G^{\ad}\subset \PP\fg$. Equivalently, it is the unique closed orbit inside $\PP\fg$.
\end{definition}

\begin{definition}
The variety $E_7^{hs}$ is the \emph{Freudenthal variety}, defined as the minimal $E_7$-orbit $E_7^{hs}=E_7/P_7\subset \PP V_{\omega_7}$ inside the projectivization of the representation $V_{\omega_7}$ of $E_7$ with highest weight  $\omega_7$ (its minimal representation, of dimension $56$).
\end{definition}

%\section{Lecture 4. The magic square and open problems}

\section{The third row and varieties \texorpdfstring{$X_a^3$}{Xa3}}

Freudenthal defined for the third row a synthetic geometry modeled on symplectic geometry in five projective dimensions. The geometric elements are 
points, isotropic lines and isotropic  planes. Points are parametrized by adjoint varieties. Isotropic planes are parametrized by $X_a^3$  of dimension $3a+3$. 

$$\begin{array}{|l|c|c|c|c|} \hline
\mathrm{Points} &  \dynkin C{*oo}   & \dynkin A{*ooo*} & \dynkin   D{o*oooo} &\dynkin E{*oooooo} \\
\mathrm{Lines} &   \dynkin C{o*o}   & \dynkin A{o*o*o} & \dynkin D{ooo*oo} &\dynkin E{ooooo*o} \\
\mathrm{Planes} &  \dynkin C{oo*}   & \dynkin A{oo*oo} & \dynkin D{oooo*o} &\dynkin E{oooooo*} 
\\ \hline 
\end{array}$$

Up to isomorphisms, two different planes have only three possible relative positions. They are called {\it incident} when the corresponding points are joined by a line contained in $X_a^3$ (refer to Section \ref{sec_tits} for the description of projective linear spaces inside  homogeneous varieties in terms of Dynkin diagrams).

The connection between the second and the third row goes in both directions. The space of lines in $X_a^3$ passing
through a given point of $X_a^3$ is a copy of  $X_a^2$ (this is a nice application of the theory of Tits' shadows as presented in Section \ref{sec_tits}; this is also another instance of a VMRT, see Remark \ref{vmrt}). This also means that the Lie algebra of the stabilizer of any given point of $X_a^3$ contains a copy of $\fg(\AA,\CC)$, and that the isotropy representation on the tangent space is a copy of $V_a^2=H_3(\AA)$, containing $X_a^2$.

Conversely, we can reconstruct $X_a^3$ from $X_a^2$. 
Recall the latter is $\AA\PP^2$, 
the space of rank-one elements in 
$\PP H_3(\AA)$. This is also the singular locus of the space of elements of rank at most two, which is the cubic $C_a$ of equation $(Det_a=0)$. 

\begin{theorem}[Integrating the cubic, \cite{LMFreud}]
$X_a^3\subset \PP V_a^3$ is the  image of the cubic map 
$$M\in H_3(\AA) \mapsto [ 1, M, \partial Det_a(M), Det_a(M)] \in \PP (V_a^3),$$
where $V_a^3=\CC\oplus H_3(\AA)\oplus H_3(\AA)^*\oplus\CC^*$  admits a natural symplectic
structure, invariant under $\Aut(X_a^3)$ (\necessary Exercise \ref{ex_sympl_str}).
Moreover $X_a^3$ is Legendrian. 
\end{theorem}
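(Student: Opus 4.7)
Plan. I would begin by specifying the symplectic structure: on $V_a^3 = \CC \oplus H_3(\AA) \oplus H_3(\AA)^* \oplus \CC^*$, combining the two canonical dualities gives
\begin{equation*}
\Omega\big((s, M, N, t), (s', M', N', t')\big) := s t' - s' t + \langle N, M'\rangle - \langle N', M\rangle,
\end{equation*}
skew-symmetric and nondegenerate of dimension $6a+8$. In each of the four rows $V_a^3$ is the minuscule (or minimal) irreducible representation of $\fg(\AA, \HH)$, so the space of invariant skew forms is at most one-dimensional; invariance of $\Omega$ under $\Aut(X_a^3)$ then reduces to checking vanishing on a single Chevalley generator, which is automatic from the grading of $\fg(\AA, \HH)$ used to build $V_a^3$.

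Next I would identify the image of $\phi(M) := (1, M, \partial Det_a(M), Det_a(M))$ with the big cell of $X_a^3$. The parabolic $P$ with $X_a^3 = G/P$ corresponds to a $5$-step $\ZZ$-grading of $\fg(\AA, \HH)$ whose nonzero pieces identify with the four summands of $V_a^3$, and $v_0 := (1, 0, 0, 0)$ is an extremal weight vector. Applying $\exp\ad(M)$ for $M \in H_3(\AA) \subset \fg(\AA, \HH)$ to $v_0$ and tracking graded components produces $\phi(M)$: the crucial computational input is the Jordan identity $M \circ M^\# = Det_a(M)\cdot \mathrm{Id}$ (where $M^\# := M \circ M - \tr(M)\, M + \sigma_2(M)\, \mathrm{Id}$), valid uniformly over the four rows. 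Hence the image of $\phi$ is the big Schubert cell, whose Zariski closure fills out $X_a^3$.

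The Legendrian property is now a direct tangent-space calculation. At $\phi(M)$, the affine cone $\hat X_a^3$ has tangent space spanned by $\phi(M)$ and the vectors
\begin{equation*}
d\phi_M(H) = \big(0,\ H,\ \partial_H \partial Det_a(M),\ \langle \partial Det_a(M), H\rangle\big), \qquad H \in H_3(\AA),
\end{equation*}
where $\partial_H \partial Det_a(M) \in H_3(\AA)^*$ is the derivative of the gradient at $M$ in direction $H$. Symmetry of the Hessian in $(H,M)$ gives $\Omega(d\phi_M(H_1), d\phi_M(H_2)) = 0$ immediately, while Euler's identity for the cubic $Det_a$, in the form $\langle \partial_H \partial Det_a(M), M\rangle = 2\langle \partial Det_a(M), H\rangle$, yields
\begin{equation*}
\Omega(d\phi_M(H), \phi(M)) = -\langle \partial Det_a(M), H\rangle + 2\langle \partial Det_a(M), H\rangle - \langle \partial Det_a(M), H\rangle = 0.
\end{equation*}
Thus the tangent cone is isotropic of dimension $3a+4 = \tfrac{1}{2}\dim V_a^3$, and $\dim X_a^3 = \tfrac{1}{2}\dim V_a^3 - 1$, which is exactly the Legendrian condition.

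The principal obstacle, I expect, lies in matching $\exp\ad(M)\cdot v_0$ with $\phi(M)$ in the exceptional row $\AA = \OO^\CC$. For $\AA \in \{\RR^\CC, \CC^\CC, \HH^\CC\}$ the computation is a direct matrix manipulation and $\phi$ reproduces the classical Plücker-type parametrizations of $\IGr(3,6)$, $\Gr(3,6)$, $\OGr(6,12)^+$ respectively; but for the exceptional Jordan algebra $H_3(\OO^\CC)$ one must work without associativity, and the sharp identity $M \circ M^\# = Det_a(M)\cdot \mathrm{Id}$ carries the full weight of the argument.
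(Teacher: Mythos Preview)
Your argument is correct in all essential respects. The paper itself does not prove the first claim (that the image of the cubic map equals $X_a^3$), simply citing the reference; your sketch via the unipotent radical acting on the highest weight vector is the standard way to fill this in. One terminological slip: your phrase ``5-step $\ZZ$-grading of $\fg(\AA,\HH)$'' is not quite right --- the grading of $\fg(\AA,\HH)$ attached to the parabolic $P$ is a 3-step grading $\fg_{-1}\oplus\fg_0\oplus\fg_1$ with $\fg_1\simeq H_3(\AA)$ (the coefficient of the relevant simple root in the highest root is $1$ in each case); it is the induced weight decomposition of the \emph{module} $V_a^3$ that has four pieces. Also, $\exp\ad(M)$ should read $\exp(M)$ acting in the representation. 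Neither affects the substance.

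The genuine point of comparison is the Legendrian verification. The paper (via the solution to the cited exercise) argues by homogeneity: since $\Aut(X_a^3)$ preserves $\Omega$ and acts transitively on $X_a^3$, it suffices to check isotropy of the affine tangent space at the single point $[1,0,0,0]=\phi(0)$; there, since $Det_a$ is cubic and $\partial Det_a$ quadratic, the tangent space is visibly $\CC\oplus H_3(\AA)$, which is Lagrangian by construction of $\Omega$. Your route instead computes $\Omega$ on the tangent space at a \emph{general} $\phi(M)$ and finds it vanishes by symmetry of the Hessian of $Det_a$ together with Euler's identity for the quadratic $\partial Det_a$. Both are short; the paper's argument is marginally slicker but presupposes transitivity of the group action, whereas yours works directly on the affine chart and in fact shows that \emph{any} variety parametrized in this way from a homogeneous cubic is Legendrian for the canonical symplectic form --- a statement worth having in its own right, independent of the Jordan-algebraic or Lie-theoretic context.
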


\begin{remark}
Legendrian means that any  affine tangent space is maximal isotropic with respect to the symplectic structure  (i.e., Lagrangian).
Mukai coined these varieties {\it twisted cubics over Jordan algebras} \cite{mukai-sim}. 
\end{remark}

\begin{remark}
For $a=-\frac{2}{3}$, $M$ is just a scalar and we get a rational cubic in $\PP^3$:
$$X^3_{-\frac{2}{3}}=v_3(\PP^1), \hspace{2cm}  X^3_0=\PP^1\times\PP^1\times\PP^1.$$
\vspace{-5mm}
$$
\tikzset{/Dynkin diagram/fold style/.style={stealth-stealth,thick,shorten <=1mm,shorten >=1mm,}}
\dynkin[edge length=.75cm]G2 \hspace{3cm} \dynkin[ply=3,edge length=.75cm]D4
%\begin{dynkinDiagram}[ply=4]D[1]%{****.*****.*****}\dynkinFold 1{13}\dynkinFold[bend right=90] 0{14}\end{dynkinDiagram}
$$

This is a geometric incarnation of folding! Moreover the twisted cubic curve has the well-known property that its tangent developpable is a quartic surface in $\PP^3$. 
For $\PP^1\times\PP^1\times\PP^1\subset \PP (M_{2,2,2})$, where $M_{2,2,2}=\CC^2\otimes\CC^2\otimes\CC^2$ is the space of cubic matrices, we also have an invariant quartic polynomial given by {\it Cayley's hyperdeterminant}
$$\begin{array}{rcl}
HDet(A) & =&  a_{000}^2a_{111}^2 + a_{001}^2a_{110}^2 + a_{010}^2a_{101}^2 + a_{100}^2a_{011}^2 
 \\
 & & -2a_{000}a_{001}a_{110}a_{111}-2a_{000}a_{010}a_{101}a_{111}-2a_{000}a_{011}a_{100}a_{111} \\ 
 & & -2a_{001}a_{010}a_{101}a_{110}-2a_{001}a_{011}a_{110}a_{100} -2a_{010}a_{011}a_{101}a_{100} \\
 & & + 4a_{000}a_{011}a_{101}a_{110} + 4a_{001}a_{010}a_{100}a_{111}.
 \end{array}$$
 Hyperdeterminants are equations of dual varieties, but here this  is the same as the tangent variety,
 because of the Legendrian property. 
\end{remark}

Remarkably, these properties propagate to the whole row $X_a^3\subset \PP V_a^3$. 
The tangent variety of $X_a^3$ is always a quartic hypersurface, with a singular locus $W_a\supset X_a^3$ of dimension $5a+4$. 
The equation $HDet_a$ of the  quartic hypersurface admits a uniform expression, independently of $a$. 
The string 
\begin{equation}
\label{eq_strat}
    X_a^3\subset W_a\subset Tan(X_a^3)\subset \PP V_a^3
\end{equation}
is the full stratification into orbit closures of $\Aut(X_a^3)$ (Exercise \ref{ex_strat}). 
Let us conclude this section with  one last property that the varieties $X_a^3$ share. 

\begin{definition}
A projective variety $X\subset \PP (V)$ has the One Apparent Double Point property (OADP) if through any general point $x$ in $\PP (V)$ passes a unique secant line to $X$.
\end{definition}

\begin{theorem}\cite[Proposition 8.4]{clerc}
$X_a^3$ has the OADP property.\end{theorem}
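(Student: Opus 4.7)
The plan is to reduce the OADP property to a count at a single well-chosen point by exploiting the transitivity of $G = \Aut(X_a^3)$ on the open orbit, and then verify the count using the explicit cubic parametrization $\phi \colon M \mapsto [1, M, \partial Det_a(M), Det_a(M)]$ of $X_a^3$. The dimension balance $2\dim X_a^3 + 1 = 6a+7 = \dim\PP V_a^3$ already shows that the secant variety fills $\PP V_a^3$, so the issue is genuinely one of counting.

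First, I would invoke the stratification \eqref{eq_strat}: $G$ acts on $\PP V_a^3$ with four orbits, the open one being $U = \PP V_a^3 \setminus Tan(X_a^3)$. Since $G$ preserves $X_a^3$, the function $p \mapsto \#\{\text{secants through } p\}$ is constant on $U$, so it suffices to verify the property at one point $p_0 \in U$. I would take $p_0 = e + f = [1,0,0,1]$, where $e = \phi(0) = [1,0,0,0]$ and $f = [0,0,0,1]$ both lie in $X_a^3$: the first as the image of the origin under $\phi$, the second as the image of the origin under the dual parametrization $\phi^\vee(N) = [Det_a(N), N^{\#}, N, 1]$ (related to $\phi$ by the Jordan inversion $M \leftrightarrow M^{\#}/Det_a(M)$, with $M^{\#}$ the Jordan adjoint so that $\partial Det_a(M)$ is proportional to $M^{\#}$). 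A direct evaluation of the quartic gives $HDet_a(p_0) \neq 0$, so $p_0 \in U$, and $\langle e, f\rangle$ is evidently a secant through $p_0$.

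Next I would analyze any other secant $\ell = \langle x_1, x_2\rangle$ through $p_0$. The two charts $\phi$ and $\phi^\vee$ cover $X_a^3$ outside a small subvariety inside $\{t = s = 0\}$, and I would separately check that no secant meets this boundary locus. In the main case, up to swap there are three subcases. \textbf{(a)} If both $x_i = \phi(M_i)$, matching coordinates yields $M_2 = -\tfrac{\alpha}{1-\alpha}M_1$ from the first equation, then $\tfrac{\alpha}{1-\alpha}M_1^{\#} = 0$ from the second, forcing $M_1 \in X_a^2$ (so $M_1^{\#} = 0$) and hence $Det_a(M_i) = 0$, contradicting $\alpha Det_a(M_1) + (1-\alpha)Det_a(M_2) = 1$. \textbf{(b)} If both $x_i = \phi^\vee(N_i)$, this case is symmetric to (a) via the involution of $V_a^3$ swapping $(t, M, N, s) \leftrightarrow (s, N, M, t)$, which lies in $G$, exchanges $e$ and $f$, and fixes $p_0$. \textbf{(c)} In the mixed case $x_1 = \phi(M_1)$, $x_2 = \phi^\vee(N_2)$, the coordinate equations give $M_1 = -\beta N_2^{\#}$ and $M_1^{\#} = -\beta N_2$, and the Jordan identity $(N_2^{\#})^{\#} = Det_a(N_2)\,N_2$ combines these into $\beta\, Det_a(N_2) = -1$ whenever $N_2 \neq 0$, whereupon the first-coordinate equation degenerates to $0 = 1$; hence $N_2 = 0$, then $M_1 = 0$, and $\ell = \langle e, f\rangle$.

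The main obstacle is the case analysis above, which relies on the uniform Jordan-algebraic identities $(X^{\#})^{\#} = Det_a(X)\,X$ and $Det_a(X^{\#}) = Det_a(X)^2$, valid across all $a \in \{1, 2, 4, 8\}$, together with a careful treatment of the boundary locus $\{t = s = 0\} \cap X_a^3$ not reached by either chart. A more conceptual alternative would use the Freudenthal triple-system structure on $V_a^3$: the four orbit closures of \eqref{eq_strat} correspond exactly to the four ranks $1, 2, 3, 4$ in this structure, and a rank-$4$ element admits a unique decomposition as a sum of two rank-$1$ elements, realized as the roots of a canonical quadratic polynomial whose coefficients are $G$-invariants of the ambient element; this would yield OADP uniformly and without any chart analysis.
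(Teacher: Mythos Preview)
The paper does not actually give a proof of this theorem: it simply states the result and cites \cite[Proposition~8.4]{clerc}. So there is no ``paper's own proof'' to compare your proposal against.

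That said, your strategy is sound and would yield a correct proof. Reducing to a single point $p_0=[1,0,0,1]$ via transitivity of $\Aut(X_a^3)$ on the open stratum of \eqref{eq_strat} is exactly the right move, and the two charts $\phi$, $\phi^\vee$ (related by Jordan inversion) are the natural tool for the count. Your case analysis is essentially correct: in case~(a) the key identity is that $M^\#=0$ forces $Det_a(M)=0$; in case~(c) the identity $(N^\#)^\#=Det_a(N)\,N$ gives $\beta\,Det_a(N_2)=-\alpha$ (you dropped an $\alpha$, but the conclusion is unchanged since the first coordinate then vanishes); case~(b) follows by the obvious $\ZZ_2$-symmetry of $V_a^3$. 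The one genuine loose end you flag yourself is the boundary locus $X_a^3\cap\{t=s=0\}$: this is nonempty (it is a copy of the intermediate orbit $W_a$ sitting in $\PP(H_3(\AA)\oplus H_3(\AA)^*)$) and you should check explicitly that no secant from a boundary point to another point of $X_a^3$ passes through $p_0$; this is a short additional computation in the same spirit.

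Your closing remark is also apt: the conceptual proof via the Freudenthal triple system rank stratification (a rank-$4$ element splits uniquely as a sum of two rank-$1$ elements, the roots of a canonical $G$-invariant quadratic) is almost certainly what the cited reference does, and avoids charts altogether.
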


%So a general point of   $\PP V_a^3$ belongs to a {\it unique} secant line to $X_a^3$. The same property holds for tangent lines. 

\begin{remark}
In the first lecture we have already seen that $\Gr(3,6)\subset \PP (\extp^3 \CC^6)$ has the OADP property: a general trivector in six variables can be written as the sum of two pure trivectors in a unique way.
\end{remark}

\section{The fourth row and varieties \texorpdfstring{$X_a^4$}{Xa4}}

Freudenthal's so-called metaplectic geometries describe the fourth row. There are four types of elements in such a geometry, called points, lines, planes and symplecta, and they are parametrized by homogeneous spaces. (Recall that such a homogeneous space is defined by a subset of vertices of the Dynkin diagram - the black dots in the pictures below.) 
All the varieties in the fourth row are modeled on the geometry of $F_4$ and the four $F_4$-Grassmannians. 

%\vspace{-2mm}
$$\begin{array}{|l|c|c|c|c|} \hline
\mathrm{Points} &  \dynkin F{ooo*}   & \dynkin E{*oooo*} & \dynkin   E{ooooo*o} &\dynkin E{*ooooooo} \\
\mathrm{Lines} &   \dynkin F{oo*o}   & \dynkin E{oo*o*o} & \dynkin E{ooo*ooo} &\dynkin E{ooooo*oo} \\
\mathrm{Planes} &  \dynkin F{o*oo}   & \dynkin E{ooo*oo} & \dynkin E{oo*oooo} & \dynkin E{oooooo*o}\\
\mathrm{Symplecta} & \dynkin F{*ooo} & \dynkin E{o*oooo} & 
\dynkin E{*oooooo} & \dynkin E{ooooooo*} \\ \hline
\end{array}$$

\begin{remark}
Let $G$ be a simple Lie group and $P$ a parabolic subgroup. Since $P$ is its own normalizer, there is a canonical interpretation of the homogeneous space $G/P$ as a conjugation class of parabolic subgroups. Now, there is a natural incidence relation between parabolic subgroups: simply ask that their intersection contains a Borel subgroup. This point of view, that generalizes Freudenthal's 
constructions, was widely 
expanded by Tits, who discussed the properties of the incidence geometries one obtains this way \cite{tits-excgeom}. This ultimately led to the revolutionary notion of Tits building.
\end{remark}

By fixing a point and looking at lines-planes-symplecta through this point,  
one obtains the symplectic geometries from the third line. 
In particular, the space of lines in $X_a^4$ passing
through a given point is isomorphic to  $X_a^3$ (refer again to Section \ref{sec_tits}). This also means that $V_a^3$ can be seen as a subspace of the tangent space at a point of $X_a^4$. In fact, we will see that $V_a^3$ is a hyperplane in this tangent space. Notice also that each simplex is an adjoint variety for the Lie algebras $\fg(\AA,\HH)$ of the third line.

\subsection*{Contact structures}
Recall that the adjoint variety $G^{ad}$ can be defined as the unique closed $G$-orbit inside $\PP\fg$, 
where $\fg$ is a simple complex Lie algebra and $G=\Aut(\fg)$. 

 \noindent {\it Example.} $A_n^{ad}\simeq \Fl(1,n,n+1)$ is a variety
of partial flags. At a point $f=(V_1\subset V_n)$ of  $Fl:=A_n^{ad}$, the cotangent space is 
$$\Omega_{Fl,f}= \left\{X\in\fsl_{n+1}, \quad X(\CC^{n+1})\subset V_n, \quad X(V_n)\subset V_1, \quad X(V_1)=0\right\}.$$
It contains the line $L^*_f=\{X\in\fsl_{n+1}, \; X(\CC^{n+1})\subset V_1, \; X(V_n)=0\}.$ 

Dualizing, an letting $f$ vary in $Fl$, we get an exact sequence of vector bundles 
$$0\raar \cH \raar T_{Fl}\raar L\raar 0,$$
where $T_{Fl}$ is the tangent bundle. The subbundle $\cH$ defines a {\it contact distribution}, i.e., 
a distribution 
of tangent hyperplanes 
which is maximally non integrable, as in the following definition:

\begin{definition}
A (holomorphic) \emph{contact structure} on $X$ is an exact sequence of vector bundles $$ 0\to \cH\to T_X \stackrel{\theta}\to L\to 0,$$
where $L$ is a line bundle and the restriction of $d\theta$ to $\extp^2 \cH$ is everywhere nondegenerate.
\end{definition}
This means that the induced linear map
$$\extp^2\cH\hookrightarrow\extp^2T_{X}\stackrel{d\theta }{\lra} T_{X}\lra L$$
defines at every point a non degenerate skew-symmetric form on $\cH$. 
In particular $X$ must have odd dimension $2n+1$, and then $K_X=-(n+1)L$
\cite{beauville-contact}.

The contact structure on $A_n^{ad}$ extends to any adjoint variety $G^{ad}$. It comes from the famous Kostant-Kirillov-Souriau symplectic
form on a coadjoint orbit $\cO\subset\fg^*$ (see e.g. \cite{beauville-contact}). 
If $\lambda\in\cO$ has stabilizer $H\subset G$, then 
$T_\lambda\cO\simeq \fg/\fh$ and one can let 
$$\omega_\lambda(\overline{X}, \overline{Y})=\lambda([X,Y])$$
for $X,Y\in\fg$ and $\overline{X}, \overline{Y}$ the corresponding tangent vectors.
This is a well-defined two-form, closed and nondegenerate: coadjoint orbits are (open) symplectic manifolds.

When $\fg$ is complex semisimple, $\fg^*\simeq\fg$ has finitely many nilpotent orbits (see Section \ref{sec_nilp_orbits}), 
which are (open) holomorphic symplectic manifolds.  In particular the nilpotent cone is a symplectic
singular variety $\cN$, which can be resolved by the Springer resolution $\Tot(\Omega_{G/B})\raar\cN$. At the other extreme there is a unique {\it minimal} nilpotent orbit $\cO_{min}$, % which is 
%smooth outside the origin. This is a cone and the adjoint variety is 
 and by definition $G^{ad}=\PP\cO_{min}.$ The contact structure on $G^{ad}$ is the projective version of the symplectic structure on  $\cO_{min}.$
 
One can show that any line in $G^{ad}$ is a contact line, i.e., its tangent vector is contained in $\cH$. This explains the discrepancy between 
$$\dim X_a^8=6a+9 \qquad \mathrm{and} \qquad \dim V_a^3=6a+8.$$

\begin{remark}[Going upwards in the magic square]
Through this remark we summarize the connection between the different rows of the magic square. Let us start from the last row and the variety $X_a^4=G^{ad}$. Since it is a contact manifold, the tangent bundle of $X_a^4$ contains a subbundle $\cH$, which is endowed with a symplectic form with values in $L$. At each point of $X_a^4$, the Lie algebra of the stabilizer group contains a copy of $\fg(\AA,\HH)$, and we can identify its actin on the fiber of $\cH$ with the symplectic vector space $V_a^3$. Moreover,  the lines contained in $X_a^4$ and passing through the point can be identified with $X_a^3\subset \PP V_a^3$; this is another example of a nice VMRT \ref{vmrt}. Because of this close connection with adjoint varieties, the varieties 
$X_a^3$ are sometimes called \emph{subadjoint} varieties (see e.g. \cite{hwang-legendrian}).  

In fact, for each point of $X_a^3$ there is a decomposition of  $V_a^3$ as a direct sum $\CC \oplus H_3(\AA)\oplus H_3(\AA)^* \oplus \CC^*$, with the tangent space of $X_a^3$ at the point identified with $H_3(\AA)$. Once again, looking at lines inside $X_a^3$ passing through the point one obtains the subvariety $X_a^2\subset \PP H_3(\AA)$, which is the space of rank-one Hermitian matrices $\AA\PP^2$. Finally, $X_a^1$ is the hyperplane section of $X_a^2$ defined by intersecting with the hyperplane of traceless matrices $H_3(\AA)_0\subset H_3(\AA)$.
\end{remark}

\begin{remark}  
One can add a further column to the Magic Square, with $a=6$. Indeed, recall from Exercise \ref{sextonions} that the orthogonal to any null-plane in the octonions is a six-dimensional subalgebra $\SSS$ of $\OO^\CC$, called the sextonion algebra. Then we can define 
$\Tri(\SSS)$ and $\fg(\AA,\SSS)$ as if $\SSS$ was a genuine normed algebra. 
In particular 
$\fg(\OO,\SSS)$ is a (non-semisimple) Lie algebra of dimension $190$, coined $E_{7\frac{1}{2}}$ \cite{LM-sext}; its existence had already been guessed by string theorists. Remarkably, Freudenthal geometries can also be constructed for $\SSS$, with the major difference that the parameter spaces are no longer homogeneous, and in fact become singular.  See also \cite{westbury}. 
\end{remark}

\section{Exercises} 

\begin{exercise}\necessary
\label{ex_sympl_str}
Guess the symplectic structure! And deduce that $X_a^3$ is {\it Legendrian}: each 
affine tangent space is a Lagrangian subspace of $V_a^3$. 
\end{exercise}

\begin{exercise}\training
\label{ex_strat}
Prove that stratification \eqref{eq_strat} is the full stratification into orbit closures of $\Aut(X_a^3)$ by observing that the tangent bundle is a bundle in Jordan algebras,
in which we know the orbits. For $a=2$ describe three desigularisations of the intermediate orbit.
\end{exercise} 

\begin{exercise}\training
\label{ex_cont_str}
When $\fg$ is classical, describe its adjoint variety with its
contact structure.
\end{exercise}

%\section{Some solutions}
\medskip
\begin{sol}[Exercise \ref{ex_sympl_str}]
Let $V$ be a complex vector space. There is a natural symplectic structure $\omega$ on $V\oplus V^*$ given by $\omega((v_1+w_1,v_2+w_2)=w_1(v_2)-w_2(v_1)$. By applying this to $V=\CC\oplus H_3(\AA)$ one deduces a symplectic structure on $V_a^3$ for which $\CC\oplus H_3(\AA)$ is isotropic. One checks that this can be arranged so that the symplectic structure is invariant under $\fg(\AA,\HH)$. Then to ensure that $X_a^3$ is Legendrian, by homogeneity it suffices to check that the Zariski tangent space at one chosen point is isotropic. Of course one chooses this point to be the image of the matrix $0\in H_3(\AA)$ via the above cubic map, i.e., the point $[1,0,0,0]$. Since $Deta_a$ is a cubic map and $\partial Det_a$ is quadratic, both vanish on the Zariski tangent space of $X_a^3$ at $[1,0,0,0]$, which therefore can be identified with $\CC\oplus H_3(\AA)\subset V_a^3$. But then it is clear that this subspace is isotropic. 
\end{sol}

\begin{sol}[Exercise \ref{ex_cont_str}]
We have already seen the case of $A_n$, so we are left with the symplectic and orthogonal groups.
\begin{itemize}
    \item Let $G=Sp_{2n}=Sp(\omega)$ where $\omega\in \extp^2(\CC^{2n})^*$ is symplectic, and $\fg=\fsp_{2n}\simeq S^2 \CC^{2n}$. Any vector in $\CC^{2n}$ is isotropic with respect to $\omega$ and $\PP^{2n-1}$ is a $Sp_{2n}$-homogeneous space which embeds inside $\PP S^2 \CC^{2n}$ via the second Veronese embedding $v_2$; it is the adjoint variety: $Sp_{2n}^{ad}=v_2(\PP^{2n-1})$. The tangent bundle of $\PP^{2n-1}$ at $x$ is $Hom(x,\CC^{2n}/x)$ and contains the contact hyperplane $Hom(x,x^\perp/x)$. 
    %of the form $Q\otimes \cO(1)$, where $Q$ is the quotient tautological bundle. The form $\omega$ defines a subbundle of $Q$, which we denote by $U^\perp/U$. The fiber of this bundle over $[x]\in \PP^{2n-1}$ is given by $x^\perp / \CC x \subset \CC^{2n} /\CC x$, where $\perp$ is taken with respect to the form $\omega$. The bundle $U^\perp/U$ has rank $2n-2$, 
    This yields a short exact sequence 
    $$0\to Hom(\cO(-1), \cO(-1)^\perp/\cO(-1)) \to T_{\PP^{2n-1}}\to \cO(2)\to 0.$$
     The contact bundle  $ \cH=\cO(-1)^\perp/\cO(-1)$  is called the  null-correlation bundle. 
    \item Let $G=SO_{m}=SO(q)$ where $q\in S^2(\CC^{m})^*$ is symmetric nondegenerate, and $\fg=\fso_{m}\simeq \extp^2 \CC^{m}$. So we need to search for the adjoint variety of $SO_m$ a the minimal orbit inside $\PP(\extp^2 \CC^m)$. Clearly $\Gr(2,m)\subset \PP(\extp^2\CC^m)$ is $SO_m$-invariant, but it is not homogeneous. The adjoint variety is $SO_m^{ad}=\OGr(2,m)\subset \Gr(2,m)$, the Grassmannian of isotropic two-dimensional subspaces. It can as well be seen as the zero locus of $q$ seen (by restriction) as a section of $S^2\cU^*$ over $\Gr(2,m)$, if $\cU$ denotes the tautological rank two bundle. Since the tangent bundle of $\Gr(2,m)$ is $\cU^*\otimes  \cQ$, for $\cQ$ the quotient bundle, 
    the normal exact sequence on $\OGr(2,m)$ reads:
    $$ 0\to T_{\OGr(2,m)} \to \cU^*\otimes \cQ \to S^2 \cU^* \to 0.$$
    Now $q$ gives an isomorphism between $\CC^{m}$ and its dual, and this isomorphism sends an isotropic 2-dimensional subspace $x\subset \CC^m$ into its dual $x^*\simeq \CC^m/x^\perp$ (where $\perp$ is with respect to $q$). So there is an exact sequence $0\to \cU^\perp/\cU\to \cQ\to \cU^*\to 0$, which induces an exact sequence $0\to \cU^*\otimes \cU^\perp/\cU \to \cU^*\otimes \cQ \to \cU^*\otimes \cU^*\to 0$. The normal map $\cQ\otimes \cU^* \to S^2\cU^*$ restricts to zero on $\cU^*\otimes \cU^\perp/\cU$ and is induced by the natural projection $\cU^*\otimes \cU^* \to S^2\cU^*$, whose kernel is $\extp^2 \cU^* \simeq \cO(1)$. The upshot is the exact sequence
    $$0\to \cU^*\otimes \cU^\perp/\cU \to T_{\OGr(2,m)} \to \cO(1)\to 0 ,$$
    which gives the contact structure on $\OGr(2,m)$, with $L=\cO(1)$ and $\cH=\cU^*\otimes \cU^\perp/\cU$.
\end{itemize}
\end{sol}

\section{Open problems}

We list a few open problems related to the lectures. 

 \noindent {\bf Problem 1.} Hartshorne's conjecture: any smooth subvariety of $\PP^N$ of dimension $n$ and codimension $c$ is a complete intersection if $c<n/2$.

 \noindent {\bf Problem 1bis.} Hartshorne's conjecture in codimension $2$: equivalently (by Serre's construction) any rank two vector bundle on $\PP^N$ is decomposable for $N\ge 6$. 

 Major problem in the 1980's, but zero progress except Zak's theorem, and most people gave up \cite{laz-vdv, Zak_Severi}. 

 \noindent {\bf Problem 2.} Classify smooth subvarieties of $\PP^N$ with codegree four. Prove they "essentially" all come from the third row of the Magic Square. 
(see \cite{fu-liu}). 

 \noindent {\bf Problem 3.} Classify varieties with the OADP property (see \cite{ciliberto-oadp}).
 
 \noindent {\bf Problem 4.} Classify smooth Legendrian varieties.

 Buczynski showed how to construct many of them by cutting a given one by a linear space of codimension two. He also 
gave conditions that imply homogeneity. See \cite{buczynski} for more on this topic. The problem of characterizing homogeneous Legendrian 
varieties by intrinsic properties was recently addressed in \cite{hwang-legendrian}. 

 \noindent {\bf Problem 5.} LeBrun-Salamon conjecture: smooth (complex) contact manifolds with Picard number one of Fano type are adjoint varieties of simple Lie algebras 
 \cite{lebrun-salamon}. 

 Complex contact manifolds are twistor spaces of manifolds with holonomy $Sp(1)Sp(2n)$ in Berger's classification, also called quaternion-K\"ahler manifolds. 
So this is another holonomy class than $G_2$-manifolds! The Fano condition corresponds to asking that the quaternion-K\"ahler manifold has positive scalar curvature.

 It is known that contact manifolds with Picard number $>1$ are projectivized cotangent bundles \cite{kpsw}. This includes adjoint varieties of type $A$. 

 \noindent {\bf Problem 6.} Prove that the VMRT of such a contact manifold is a homogeneous Legendrian variety.

 That the variety of minimal rational tangents (VMRT) at a generic point is a smooth Legendrian variety was proved by Kebekus \cite{kebekus2}. 

 \noindent {\bf Problem 7.} Classify special Cremona transformations. 

Special Cremona tranformations are birational maps $\PP^n\dasharrow\PP^n$ 
that become regular after blowing-up a smooth connected subvariety. Those 
defined by quadratic polynomials, as well as their inverse (quadro-quadric transformations) were classfied in  \cite{ein-shepherd-baron}. 

 \noindent {\bf Problem 8.} Find nice projective embeddings of the homogeneous spaces 
$$E_6/PSp_8, \qquad E_7/PGL_8, \qquad E_8/Spin_{16}.$$  

The last one, of dimension $128$, potentially qualifies over the real numbers as a projective plane over $\OO\otimes\OO$,
whatever that means. Over the complex numbers no nice projective embedding has been described. 

\chapter{Appendix}

\section{Elements of projective duality}\label{app:proj-duality}

In this appendix we summarize some elements of projective duality used in the text. For further details we refer to \cite{GKZ} and \cite{Tevelev-PD}. Let $V\simeq \CC^{n}$ a vector space and $\PP(V)$ the projective space of one-dimensional subspaces of $V$. For any subspace $U\subset V$, the projectivization 
$\PP(U)\subset \PP(V)$ is a linear subspace. We denote by $V^*$ the dual of $V$ and $\cPP(V) = \PP(V^*)$;
a point in $\cPP(V)$ corresponds to a hyperplane in $\PP(V)$. For $L = \PP(U)\subset \PP(V)$ linear, its \emph{projective dual} $L^* = \PP(\ann(U)) \subset \cPP(V)$, is the  set of hyperplanes containing $L$. Note that $\dim L^* = \dim V - \dim L$ and $(L^*)^* = L$. This notion of duality extends to algebraic subvarieties, as we will now explain.  

Let $X \subset \PP^n$ be a closed subvariety; we denote by $X_\sm$ the smooth locus of $X$, which is open and dense in $X$. For a point $x\in X_{\sm}$,  the \emph{embedded tangent space} is 
\[
\hat{T}_xX = \PP( T_vC(X))\subset \PP^n,
\]
where $C(X)\subset \CC^{n+1}$ is the affine cone of $X$ and $v \in x$ is any nonzero vector. A hyperplane $H\subset \PP^n$ is \emph{tangent} to $X$ if is contains $\hat{T}_xX$ for some $x\in X_\sm$.

\begin{definition}\label{def:dual-of-X}
The \emph{projective dual variety of $X\subset \PP^n$} is the variety $X^\vee\subset \check{\PP}^n$ defined as the Zariski closure of the set of tangent hyperplanes:
\[
X^\vee:=\overline{\left\{\,H\in \check{\PP}^{n} \mid \exists\, x\in X_{\sm} \mbox{ s.t. }\hat{T}_x X\subset H \,\right\}}.
\]
\end{definition}

Consider the incidence variety $\con_X^0 = \{\, (x,H) \in \PP^n\times\cPP^n \mid x\in X_\sm \text{ and } H \supset \hat{T}_xX \,\}$. It sits inside the projectivized cotangent bundle $\PP (T^*\PP^n)\subset \PP^n\times\cPP^n$ since the latter is the locus of pairs $(x,H)$ such that $x\in H$. Moreover, the projection $\pi_1 \colon \con_X^0 \to X_\sm$ shows that $\con_X^0$ is the projectivized conormal bundle $\PP(N^*_{X_\sm/\PP^n})$. The closure of the image of the second projection $\pi_2 \colon \con_X^0 \to \cPP^n$ is precisely $X^\vee$. The Zariski closure $\con_X$ of $ \con_X^0$ is called the \emph{conormal variety} of $X$. It has many useful properties that shed light onto the geometry of the dual variety. One important result is the following.

\begin{theorem}[Biduality Theorem]\label{thm:biduality}
For any projective variety $X\subset \PP^n$, the bidual 
\[
(X^\vee)^\vee = X.
\]
Moreover, if $z$ is a smooth point of $X$ and $H$ is a smooth point of $X^\vee$, then $H$ is tangent to $X$ at $z$ if and only if $z$, regarded as a hyperplane in $\cPP^n$,  is tangent to $X^\vee$ at $H$.
\end{theorem}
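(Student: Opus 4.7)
The plan is to deduce both parts of the theorem from a single stronger, symmetric statement about the conormal variety. Identifying $\PP^n = \PP(V)$ and $\check\PP^n = \PP(V^*)$ for $V = \CC^{n+1}$, the canonical isomorphism $V \simeq V^{**}$ yields an involution $\sigma\colon \PP^n\times \check\PP^n \to \check\PP^n\times \PP^n$ swapping the two factors, and I claim $\sigma(\con_X) = \con_{X^\vee}$. Granting this, biduality is immediate: the two projections from $\check\PP^n\times\PP^n$ recover $X^\vee$ and $(X^\vee)^\vee$ from $\con_{X^\vee}$, while the same projections applied to $\sigma(\con_X)$ yield $X^\vee$ and $X$ respectively. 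The ``moreover'' clause also falls out: $H$ being tangent to $X$ at a smooth point $z$ is by definition the statement $(z,H)\in \con_X$, and under $\sigma$ this becomes $(H,z)\in\con_{X^\vee}$, i.e., $z$ (viewed as a hyperplane in $\check\PP^n$) is tangent to $X^\vee$ at $H$.

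To establish $\sigma(\con_X) = \con_{X^\vee}$, I will first observe that both sides are closed irreducible subvarieties of $\mathcal{I} = \{(x,H)\mid x\in H\}$ of dimension $n-1$: indeed, the open stratum $\con_X^0$ is the projectivization of the rank-$(n-\dim X)$ conormal bundle of $X_\sm$, so $\dim \con_X = \dim X + (n-\dim X - 1) = n-1$, and the same computation applies symmetrically to $\con_{X^\vee}$. It then suffices to verify the pointwise containment $\sigma(x_0,H_0)\in \con_{X^\vee}$ at a generic $(x_0,H_0)\in \con_X^0$, chosen so that in addition $H_0$ is a smooth point of $X^\vee$ and $\pi_2\colon \con_X \to X^\vee$ is a submersion at $(x_0,H_0)$; such points form a dense open subset of $\con_X$ by generic smoothness.

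The heart of the argument is a tangent-space calculation inside $\mathcal{I}$. Differentiating the defining relation $H(x) = 0$ in affine charts, one sees that $T_{(x_0, H_0)}\mathcal{I}$ consists of pairs $(\dot x, \dot H)$ with
\[
H_0(\dot x) + \dot H(x_0) = 0.
\]
For $(\dot x, \dot H)\in T_{(x_0, H_0)}\con_X$, however, $\dot x$ lies in $T_{x_0}X$, and by the very definition of $\con_X$ the form $H_0$ annihilates $T_{x_0}X$; thus $H_0(\dot x) = 0$, and the incidence relation forces $\dot H(x_0) = 0$. Since $\pi_2$ is a submersion at our chosen point, its differential surjects onto $\hat T_{H_0}X^\vee$, so every tangent vector to $X^\vee$ at $H_0$ annihilates $x_0$. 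This is precisely the statement that $x_0$, viewed as the hyperplane $\{H \mid H(x_0) = 0\}\subset \check\PP^n$, contains $\hat T_{H_0}X^\vee$, i.e., $(H_0,x_0)\in \con_{X^\vee}^0$. Taking closures and invoking the dimension equality yields $\sigma(\con_X) = \con_{X^\vee}$.

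The hard part will be setting up the genericity carefully -- simultaneously ensuring that $x_0\in X_\sm$, $H_0\in (X^\vee)_\sm$, and that $\pi_2$ is smooth at $(x_0,H_0)$. The first condition is automatic on $\con_X^0$, the second follows because $\pi_2$ is dominant onto $X^\vee$ and smooth points of $X^\vee$ are dense, and the third is generic smoothness for a dominant morphism of irreducible varieties in characteristic zero. Beyond this the proof is a direct linear-algebra manipulation, and I do not anticipate further obstacles.
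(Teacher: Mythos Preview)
The paper does not actually prove this theorem; it simply refers the reader to \cite[I, Theorem 1.1]{GKZ}. Your argument, based on the symmetry $\sigma(\con_X)=\con_{X^\vee}$ of the conormal variety established via a tangent-space computation in the incidence correspondence, is exactly the standard proof one finds in that reference (and in \cite{Tevelev-PD}), so there is no meaningful methodological difference to discuss.

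One small point worth tightening in your deduction of the ``moreover'' clause: from $(z,H)\in\con_X$ and $\sigma(\con_X)=\con_{X^\vee}$ you get $(H,z)\in\con_{X^\vee}$, but to conclude that $z$ is \emph{tangent} to $X^\vee$ at $H$ you need $(H,z)\in\con_{X^\vee}^0$, not just in the closure. This is fine because $\con_{X^\vee}^0$, being a projective bundle over $(X^\vee)_\sm$, is closed in $(X^\vee)_\sm\times\PP^n$, so the closure adds no new points over smooth $H$; you may want to say this explicitly.
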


\begin{proof}
For a proof, see \cite[I, Theorem 1.1]{GKZ}.
\end{proof}

\begin{theorem}
Let $X\subset \PP^n$ be an irreducible projective variety. Then
\begin{enumerate}
    \item If $X$ is smooth then $\con_X$ is smooth.
    \item If $X^\vee$ is a hypersurface then $\pi_2$ is birational.
    \item If $X$ is smooth and $X^\vee$ is a hypersurface, then $\pi_2$ is a resolution of singularities. 
\end{enumerate}
\end{theorem}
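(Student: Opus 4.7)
My plan is to treat the three statements in order, with parts~(2) and~(3) relying crucially on the Biduality Theorem~\ref{thm:biduality} and part~(1) being a direct observation about projective bundles. A recurring ingredient is the identification given in the text: $\pi_1\colon\con_X^0\to X_\sm$ realises $\con_X^0$ as the projectivized conormal bundle $\PP(N^\ast_{X_\sm/\PP^n})$, a projective bundle of relative dimension $\codim(X)-1$, so $\dim\con_X=n-1$ for every irreducible $X\subset\PP^n$. For (1), if $X$ is smooth then $X_\sm=X$ and $\con_X^0=\PP(N^\ast_{X/\PP^n})$ is a projective bundle over the smooth projective variety $X$, hence itself smooth and projective. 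Being already closed in $\PP^n\times\cPP^n$, it coincides with its own closure $\con_X$, which is therefore smooth.

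For (2), the map $\pi_2\colon\con_X\to X^\vee$ is a surjection of irreducible varieties of common dimension $n-1$, hence generically finite; to conclude birationality I will show that the generic fiber has a single point. Using $(X^\vee)^\vee=X$, I may choose a dense open $U\subset(X^\vee)_\sm$ such that for every $H\in U$ the hyperplane $\hat T_H X^\vee\subset\cPP^n$ corresponds under the canonical identification $(\cPP^n)^\vee=\PP^n$ to a point $z_H\in X_\sm$. For $H\in U$, the Biduality Theorem states that $(z,H)\in\con_X^0$ with $z\in X_\sm$ holds precisely when $z$, viewed as a hyperplane in $\cPP^n$, contains $\hat T_H X^\vee$; since the latter is itself a hyperplane in $\cPP^n$, the only such $z$ is $z_H$. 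Hence $\pi_2^{-1}(H)\cap\con_X^0=\{(z_H,H)\}$ for every $H\in U$, which forces $\pi_2$ to be birational.

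For (3), combining (1) and (2), $\pi_2\colon\con_X\to X^\vee$ is a birational morphism from the smooth variety $\con_X$; it is moreover projective -- hence proper -- because $\con_X$ is closed in $\PP^n\times\cPP^n$ and $\pi_2$ is the restriction of the second projection. This is exactly what it means for $\pi_2$ to be a resolution of singularities. The main technical subtlety of the whole argument lies in~(2): one must ensure that the closed complement $\con_X\setminus\con_X^0$ does not contribute extra points to the generic fiber. This is harmless because $\con_X\setminus\con_X^0$ is a proper closed subset of the $(n-1)$-dimensional variety $\con_X$ and so has dimension at most $n-2$; its image under $\pi_2$ is then a proper closed subset of the hypersurface $X^\vee$, which we may exclude by further shrinking $U$.
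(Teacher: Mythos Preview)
Your proof is correct. The paper does not supply its own argument here, deferring instead to \cite[Theorem~1.15]{Tevelev-PD}; your approach via the identification $\con_X^0=\PP(N^\ast_{X_\sm/\PP^n})$ for part~(1) and the Biduality Theorem for part~(2) is precisely the standard one found in that reference, and your handling of the boundary $\con_X\setminus\con_X^0$ by dimension is the right way to close the argument.
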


\begin{proof}
See \cite[Theorem 1.15]{Tevelev-PD}.
\end{proof}

Typically, $X^\vee \subset \cPP^n$ is a hypersurface,
%This is so due, for instance, to Katz Theorem \cite[I, Theorem 5.1]{GKZ} that relates the codimension of $X^\vee$ to the corank of some Hessian matrices. If $X^\vee$ is a hypersurface, i 
defined by a polynomial $\Delta_X$ called 
the \emph{discriminant} of $X$.

To compute the codimension and the degree of $X^\vee$, at least when $X$ is smooth, one may follow Cayley's method, that consists in realizing $\con_X$ as the degeneracy locus of a section of a twisted jet bundle, see \cite[Chapter 2]{GKZ}. In particular, this leads to the following powerful result of Katz and Kleimann, later generalized by Holme.

\begin{theorem}\label{thm:KKH}
Let $X\subset \PP^n$ be an irreducible smooth projective variety. Consider the polynomial 
\[
c_X(q) = \sum_{i=0}^{\dim X} q^{i+1}\int_X c_{\dim X -i}(\Omega^i_X)\cdot c_1(\cO_X(1))^i.
\]
Then $\codim X^\vee = \mu$ is the maximum power of $q-1$ that divides $c_X(q) -c_X(1)$, and $\deg X^\vee = c_X^{(\mu)}(1)/ \mu!$
\end{theorem}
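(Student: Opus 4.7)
The plan is to apply Cayley's method, which realizes $X^\vee$ as the image of the conormal variety $\con_X$ in $\cPP^n$ and computes its class via intersection theory on $\con_X$. Since $X$ is smooth, the conormal variety is identified with the projective bundle $\PP(N^*_{X/\PP^n})\to X$, smooth of dimension $n-1$. The second projection $\pi_2\colon \con_X\to \cPP^n$ has image $X^\vee$, and its generic fiber has dimension $(n-1)-\dim X^\vee = \mu-1$. By the projection formula,
\[
(\pi_2)_\ast H^{n-1-k}|_{\con_X} \;=\; \bigl(\text{multiple of}\ [X^\vee]\cdot H^{n-\mu-k}\bigr)
\]
vanishes for $k > n-\mu-1$ and first becomes nonzero at $k=n-\mu-1$, where it equals $\deg(X^\vee)\cdot H^0$ (after an integration over $\cPP^n$). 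This reduces the computation of both $\mu$ and $\deg(X^\vee)$ to determining these intersection numbers.

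The first step is to express these numbers as Chern-class integrals on $X$. The tautological class $\xi$ on $\PP(N^*_{X/\PP^n})$ agrees (up to a twist by $h=c_1(\cO_X(1))$) with the pullback $H$ of the hyperplane class on $\cPP^n$, so iterated applications of the projective-bundle pushforward formula turn $(\pi_1)_\ast H^{n-1-k}$ into Segre classes of $N^*_{X/\PP^n}$ multiplied by powers of $h$. The conormal exact sequence $0\to N^*_{X/\PP^n}\to \Omega^1_{\PP^n}|_X\to \Omega^1_X\to 0$, together with the restriction of the Euler sequence to $X$, lets one rewrite these Segre classes in terms of Chern classes of $\Omega^1_X$ and $h$. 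Summing the resulting expressions against a formal parameter $q$ that bookkeeps the degree of the pushed-forward class reproduces precisely the polynomial $c_X(q)$ of the statement; the higher exterior powers $\Omega^i_X$ enter through the standard identity $c(\Omega^1_X\otimes \mathcal{L})=\sum_i c_i(\Omega^1_X)\cdot(1+c_1(\mathcal{L}))^{n-i}$ rewritten via Newton-type relations.

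The final step is a Taylor expansion at $q=1$. The vanishing of $(\pi_2)_\ast H^{n-1-k}|_{\con_X}$ for $k<\mu-1$ translates into the vanishing of the coefficients of $(q-1)^j$ in $c_X(q)-c_X(1)$ for $j<\mu$, so $\mu$ is the largest power of $(q-1)$ dividing $c_X(q)-c_X(1)$. The first nonzero Taylor coefficient, $c_X^{(\mu)}(1)/\mu!$, matches the first nonzero pushforward and therefore equals $\deg(X^\vee)$. The main obstacle is the second step: establishing rigorously that the generating function assembled from Segre-class expansions is exactly the polynomial $c_X(q)$ displayed in the theorem. This requires a careful combinatorial manipulation of Chern and Segre classes on the projective bundle $\PP(N^*_{X/\PP^n})$, using the conormal and Euler sequences to eliminate $N^*_{X/\PP^n}$ in favor of $\Omega^1_X$, and recognizing the emerging expression as the twisted total Chern class of $\Omega^\bullet_X$ paired against the variable $q$. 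Once this identification is complete, the extraction of $\mu$ and $\deg(X^\vee)$ by Taylor expansion is a purely formal step.
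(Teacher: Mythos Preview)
The paper does not prove this theorem at all: its entire proof is the sentence ``See \cite[II, Theorem 3.4]{GKZ}.'' So there is no argument in the paper to compare your proposal against; the result is simply quoted from the literature.

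Your outline follows the standard route (Cayley's method via the conormal variety $\con_X\simeq\PP(N^*_{X/\PP^n})$, pushforward to $\cPP^n$, Segre-class computations and the conormal/Euler sequences), which is indeed the approach taken in \cite{GKZ}. As a sketch it is accurate in spirit. Two cautions, though. First, what you wrote is an outline, not a proof: you yourself flag the ``second step'' (identifying the Segre-class generating function with the displayed polynomial $c_X(q)$) as the real content, and you do not carry it out. Second, your explanation of how the bundles $\Omega^i_X$ enter is not right: the identity you quote, $c(\Omega^1_X\otimes\cL)=\sum_i c_i(\Omega^1_X)(1+c_1(\cL))^{n-i}$, involves only $\Omega^1_X$ and does not by itself produce Chern classes of the exterior powers $\Omega^i_X$; if the displayed formula really involves $c_{\dim X-i}(\Omega^i_X)$, a different mechanism (or a correction of the formula to $\Omega^1_X$, as in the usual Katz--Kleiman statement) is needed. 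In short: correct strategy, but the crucial identification is asserted rather than proved, and the remark about exterior powers should be revisited.
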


\begin{proof}
See \cite[II, Theorem 3.4]{GKZ}.
\end{proof}

\begin{ex}\label{ex:deg dual G(3,7)}
Let $X = \Gr(3,7)\subset \PP^{34}$ the Grassmannian of $3$-dimensional subspaces of $\CC^7$ in its Pl\"ucker embedding. One may compute that
\[
\begin{split}
   c_X(q) =  462\,q^{13}-3\,234\,q^{12}+10\,836\,q^{11}-22\,932\,q^{10}+34\,062\,q^{9}-37\,338\,q^{8} \\ +30\,952\,q^{7}-19\,600\,q^{6}+9\,492\,q^{5}-3\,500\,q^{4}+980\,q^{3}-210\,q^{2}+35\,q,
\end{split}
\]
hence $c_X'(1) = 7$. Therefore, $X^\vee$ is a degree seven hypersurface. 
\end{ex}

In the context of actions of algebraic groups, the biduality theorem has also some strong consequences. In \cite{PY}, Pyasetski\u{\i} has observed that if a connected algebraic group acts linearly on a vector space with finitely many orbits, then the dual representation has the same property; moreover the numbers of orbits are the same.

\begin{theorem}\label{thm:Py}
Suppose that a connected algebraic group $G$ acts linearly on a vector space $V$ with finitely many orbits. Then the dual action on $V^*$ has the same number of orbits. Let 
\[
V = \bigsqcup_{i=1}^N\cO_i \quad \text{and} \quad V^* = \bigsqcup_{i=1}^N \cO_i'
\]
be the orbit decompositions. Then the bijection is defined as follows: $\cO_i$ corresponds to $\cO_j'$ if and only if $\PP(\overline{\cO_i})$ is projectively dual to $\PP(\overline{\cO_j'})$.
\end{theorem}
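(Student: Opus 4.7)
The plan is to work in the cotangent bundle $T^*V \cong V\times V^*$ and exploit the symmetric way it sits over both $V$ and $V^*$. For each orbit $\cO_i \subset V$, I would form the conormal variety
\[
\Lambda_i = \overline{\{(v,\xi)\in V\times V^* \,:\, v\in \cO_i,\; \xi|_{T_v\cO_i}=0\}},
\]
which is an irreducible closed subvariety of $T^*V$, Lagrangian for the canonical symplectic form, invariant under $G$, and conical in each of the two $\CC^\ast$-scalings (here it is convenient, and harmless for the projective statement, to assume that $G$ contains the scalars of $V$, so that every orbit closure is already a cone). The $\Lambda_i$ are pairwise distinct of dimension $n=\dim V$ and are  irreducible, so they are at worst finitely many.

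The central step is the classical correspondence: sending $\Lambda$ to $\overline{\pi_1(\Lambda)}$ establishes a bijection between irreducible closed biconical Lagrangian subvarieties of $T^*V$ and irreducible closed conical subvarieties of $V$, with inverse $X\mapsto \overline{T^*_{X_\sm}V}$. Indeed, if $\Lambda$ is an irreducible closed biconical Lagrangian and $X=\overline{\pi_1(\Lambda)}$, then at a smooth point of $X$ lying in the image, the tangent space to $\Lambda$ must contain the horizontal lift of $T_xX$ and, being Lagrangian of dimension $n$, must then reduce to $T_x X \oplus \mathrm{ann}(T_x X)$; a dimension count forces $\Lambda=\overline{T^*_{X_\sm}V}$. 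Swapping the roles of $V$ and $V^*$ gives the same bijection with irreducible closed conical subvarieties of $V^*$. Because both correspondences are manifestly $G$-equivariant, composing them yields a bijection
\[
\phi:\{\text{irr. closed } G\text{-invariant conical subvarieties of }V\}\longleftrightarrow \{\text{irr. closed } G\text{-invariant conical subvarieties of }V^*\},
\]
$X\mapsto \overline{\pi_2(\Lambda_X)}$.

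Under the finiteness hypothesis on $V$, the irreducible closed $G$-invariant subvarieties of $V$ are precisely the $N$ orbit closures $\overline{\cO_i}$. The bijection $\phi$ then yields $N$ irreducible closed $G$-invariant conical subvarieties of $V^*$; since an infinite family of orbits on $V^*$ would contribute an infinite family of distinct irreducible invariant closed subvarieties, the dual action must also have finitely many orbits, and these orbits must be exactly $N$ in number. The induced bijection on orbits is the desired $\cO_i\leftrightarrow \cO'_{\phi(i)}$.

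To identify $\phi$ with projective duality, I would simply observe that the projective conormal variety $\con_{\PP(\overline{\cO_i})}\subset \PP(V)\times \PP(V^*)$ of Definition \ref{def:dual-of-X} is, by construction, the projectivization of $\Lambda_i$ away from the zero sections, and therefore its projection to $\PP(V^*)$ is $\PP(\overline{\cO_i})^\vee = \PP(\overline{\cO'_{\phi(i)}})$; the Biduality Theorem \ref{thm:biduality} closes the loop and shows the correspondence is involutive. The main technical obstacle is the Lagrangian-conormal correspondence of Step 2: although classical, it has to be handled carefully to ensure that biconicality really forces $\Lambda$ to be a conormal variety and that the correspondence is compatible with the $G$-action, and this is where the symplectic structure of $T^\ast V$ enters essentially.
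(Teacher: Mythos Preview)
Your approach via conormal varieties in $T^*V\cong V\times V^*$ is exactly the argument the paper has in mind (it is Pyasetski\u{\i}'s original proof, and the reference to Tevelev \S2.2 spells out precisely this Lagrangian/conormal machinery); the paper's appeal to the Biduality Theorem is the projective shadow of the symmetric role of the $\Lambda_i$ over $V$ and over $V^*$.

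There is, however, a real gap in your reduction. Enlarging $G$ to $G'=G\cdot\CC^*$ is not harmless for the orbit count on $V^*$: on $V$ the finiteness hypothesis forces $G$-orbits to be cones (so $G$- and $G'$-orbits agree there), but on $V^*$ your argument only yields $N$ orbits for $G'$, and to pass back to $G$ you would need the $G$-orbits in $V^*$ to be $\CC^*$-stable --- which follows from finiteness on $V^*$, the very thing you are proving. The clean fix bypasses biconicality entirely: for \emph{any} $G$-orbit $\cO'\subset V^*$, its conormal variety is irreducible of dimension $n$ and lies in the moment-map fibre $\mu^{-1}(0)=\{(v,\xi):\xi|_{\fg\cdot v}=0\}$; from the $V$ side one has $\mu^{-1}(0)=\bigcup_i\overline{\Lambda_i}$ with exactly $N$ irreducible components, so the conormal of $\cO'$ must coincide with some $\overline{\Lambda_i}$, giving the injection from orbits in $V^*$ into $\{1,\dots,N\}$ without any conicality assumption on $\cO'$. (A smaller imprecision: in your sketch of the Lagrangian--conormal correspondence, at a general $(x,\xi)\in\Lambda$ the tangent space does not contain the horizontal lift of $T_xX$; rather, fibre-conicality puts the Euler vector $(0,\xi)$ in $T_{(x,\xi)}\Lambda$, and then the Lagrangian condition forces $\xi\in(T_xX)^\perp$.)
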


\begin{proof}
The argument is to first show that the orbits in $V$ are affine cones, and so are the orbits in $V^*$; here it is used that there are finitely many orbits. Then the Biduality Theorem is applied to the projectivizations of the orbit closures. For details, see \cite[\S2.2]{Tevelev-PD}.
\end{proof}

\begin{ex}\label{ex:G36}
Consider the action of $\GL_6$ on $\extp^3 \CC^6$. As shown in 
\S\S~\ref{sec:fewvars}, this action has four orbits 
beside the origin $\cO_0 = \{0\}$: $\cO_1$ consisting of decomposable tensors (the affine cone of $\Gr(3,6)$); then the set $\cO_2$ of rank-five forms; finally $\cO_3$ and $\cO_4$ consisting of rank-six forms, $\cO_4$ being the dense orbit and $\overline{\cO_3}$  the affine cone over the tangential variety to $\Gr(3,6)$. Identifying $\CC^6$ with its dual, Projective Duality sends $\cO_0$ to $\cO_4$, $\cO_1$ to $\cO_3$, while the intermediate orbit closure $\cO_2$ is self-dual. 
\end{ex}

\section{Gradings of Lie algebras}
\label{sec_gradings}

Gradings of Lie algebras have appeared in an explicit (gradings of $\fg_2$ and $\fso_8$) or more intrinsic way (prehomogeneous spaces) during these lectures. Let us give a very concise overview of the subject, hopefully of practical use. We refer to \cite{Vinberg_graded, Kimura_prehom, Manivel_prehomogeneous} for more details. We will restrict to a simple situation, even though everything can be done more generally. 

\subsection*{\texorpdfstring{$\ZZ$}{Z}-Gradings}

Let $\fg$ be the complex Lie algebra of an algebraic group $G$.

\begin{definition}
A $\ZZ$-grading of $\fg$ is a decomposition $\fg=\bigoplus_{i\in \ZZ} \fg_i$ such that $[\fg_i,\fg_j]\subset \fg_{i+j}$. In particular $\fg_0$ is a subalgebra and $\fg_i$ is a $\fg_0$-representation for all $i$.
\end{definition}

Suppose $\fg$ is semisimple, fix a Cartan subalgebra $\fh$ and denote by $\Delta \subset \fh^*$ the set of roots of $\fg$. The elements in $\fh$ are commuting semisimple elements and thus their action on $\fg$ can be diagonalized; this yields the so-called Cartan decomposition $\fg=\fh \oplus \bigoplus_{\alpha\in \Delta}\fg_\alpha$, where the $\fg_\alpha$'s are one-dimensional and, if $h\in \fh$ and $x\in \fg_\alpha$, $[h,x]=\alpha(h) x$. Recall that one can choose a preferred basis of roots $\alpha_1,\dots,\alpha_n \in \Delta$ called \emph{simple} roots; their relative position (from which $\fg$ can be recovered) is encoded in the Dynkin diagram of $\fg$, whose vertices correspond indeed to simple roots.

\begin{ex}[Main example]
\label{main_ex_grading_Z}
Define a grading on $\fg$ as follows: choose  $h\in \fh$ such that $\alpha(h)\in \ZZ$ for all $\alpha\in \Delta$, and let 
$\fg_i:=\left\{x\in \fg \mid [h,x]=ix\right\}.$

By the Jacobi-identity, this is a grading of $\fg$; in fact all $\ZZ$-gradings are obtained in this way. 
\end{ex}

Let  $G_0\subset G$ be the connected subgroup corresponding to $\fg_0\subset \fg$. Since each $\fg_i$ is stabilized by $\fg_0$, this group acts on each $\fg_i$. One of the interests of $\ZZ$-gradings is the following:

\begin{theorem}[\cite{Vinberg_graded}]
\label{thm_finite_parabolic}
The action of $G_0$ on $\fg_1$ has only finitely many orbits. 
\end{theorem}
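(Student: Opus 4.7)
The plan is to reduce the statement to the classical finiteness of nilpotent $G$-orbits in the semisimple Lie algebra $\fg$, a theorem of Dynkin and Kostant. First I will observe that every $x \in \fg_1$ is $\ad$-nilpotent on $\fg$: since $\ad(x)$ raises the grading by one, $\ad(x)^N = 0$ as soon as $N$ exceeds the length of the grading. Hence $x$ is a nilpotent element of $\fg$, and the assignment $G_0 \cdot x \mapsto G\cdot x$ sends $G_0$-orbits on $\fg_1$ into the finite set of nilpotent $G$-orbits of $\fg$. It will suffice to show that each fibre of this assignment is finite.

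The key technical step will be the existence of a \emph{normal} $\fsl_2$-triple attached to any nonzero $x \in \fg_1$: namely elements $H \in \fg_0$ and $f \in \fg_{-1}$ with $[H,x]=2x$, $[H,f]=-2f$, and $[x,f]=H$. Starting from an arbitrary $\fsl_2$-triple $(x, H_0, f_0)$ furnished by Jacobson--Morozov, I will decompose $H_0 = \sum_i H_{0,i}$ and $f_0 = \sum_i f_{0,i}$ along the grading, isolate homogeneous components of the bracket relations, and successively conjugate by $\exp$ of suitable elements of $\bigoplus_{i \geq 1} \fg_i$, which exponentiate into the unipotent radical of the parabolic subgroup corresponding to $\bigoplus_{i \geq 0} \fg_i$. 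This should eliminate the unwanted components of $H_0$ and $f_0$ and yield a normal triple containing $x$.

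Given this, a graded analogue of Kostant's uniqueness theorem for $\fsl_2$-triples will assert that two normal triples whose nilpositive elements $x,x'$ lie in the same nilpotent $G$-orbit of $\fg$ have $H$-elements that are $G_0$-conjugate in $\fg_0$, and conversely that the $G_0$-orbit of $H$ recovers the $G_0$-orbit of $x$ inside the fixed $G$-orbit. The set of admissible $H \in \fg_0$ will then be shown to be finite modulo $G_0$: after conjugation into a Cartan subalgebra of $\fg_0$, the values $\alpha(H)$ for roots $\alpha$ of $\fg$ are all integers, and Dynkin's characterization of characteristics of nilpotent orbits constrains $\alpha_i(H) \in \{0,1,2\}$ on a suitable basis of simple roots, leaving only finitely many possibilities up to the Weyl group of $\fg_0$. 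Finiteness of $G_0$-orbits on $\fg_1$ follows.

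The principal obstacle will be the combination of (i) the existence of a normal $\fsl_2$-triple and (ii) the graded Kostant conjugacy statement; both demand careful inductive arguments exploiting conjugation by unipotent elements of the parabolic above. Once these two technical inputs are in place, the reduction to nilpotent $G$-orbits and the Dynkin-type bound on the semisimple element $H$ are essentially formal.
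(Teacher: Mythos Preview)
Your strategy via normal $\fsl_2$-triples and characteristics is essentially Vinberg's original \emph{classification} machinery, and in principle it can be pushed through to yield finiteness. However, there is a confusion in your Kostant step: you assert that two normal triples whose nilpositive elements lie in the same $G$-orbit have $G_0$-conjugate $H$-elements. That is false in general---different $G_0$-orbits inside a single $G$-orbit $\cO\cap\fg_1$ typically have \emph{distinct} characteristics, and this is precisely what distinguishes them. What you need is only that the map $G_0\cdot x\mapsto G_0\cdot H$ is well-defined and injective (graded Kostant), together with your Dynkin-type bound on the possible $H$. So the outline is repairable, but as written the logic of that paragraph is inconsistent.

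More importantly, the paper's proof bypasses all of this. It uses a single tangent-space identity: for $x\in\fg_1$ one has $[\fg,x]\cap\fg_1=[\fg_0,x]$, which follows immediately by decomposing $y=\sum_k y_k$ and observing $[y_k,x]\in\fg_{k+1}$. This says that for each nilpotent $G$-orbit $\cO$, the $G_0$-orbit through any $x\in\cO\cap\fg_1$ is open in $\cO\cap\fg_1$; hence the irreducible components of $\cO\cap\fg_1$ are exactly the $G_0$-orbits. Finiteness then follows from finiteness of nilpotent $G$-orbits plus finiteness of irreducible components, with no $\fsl_2$-triples, no graded Kostant, and no Dynkin characteristics. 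Your approach buys an explicit parametrization of the orbits (useful for classification), but for mere finiteness it is substantial overkill.
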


In order to give a proof of this theorem, we recall some basic facts about nilpotent elements in semisimple Lie algebras.

\subsection*{The nilpotent cone}
\label{sec_nilp_orbits}
Let $\fg$ be a semisimple Lie algebra and $G=Aut(\fg)$ the corresponding adjoint Lie group. An element $x\in \fg$ is \emph{nilpotent} (respectively \emph{semisimple}) if the linear operator $\ad(x):=[x,\cdot\,]\in End(\fg)$ is nilpotent (resp. diagonalizable). Orbits of semisimple elements are completely described by Chevalley's restriction Theorem (Theorem \ref{thm_Chevalley}): any semisimple element is contained in a Cartan subalgebra $\fh\subset \fg$ (a maximal subalgebra consisting of commuting semisimple elements) and all Cartan subalgebras are conjugate under the action of $G$; moreover, two elements in $\fh$ are conjugate under $G$ if and only if they are conjugate under the Weyl group, which is finite. In particular, semisimple orbits form a continuous family of dimension equal to the rank of $G$, defined as the dimension of any Cartan subalgebra. Finally, semisimple orbits are those that are closed in $\fg$.

Nilpotent orbits have somehow ``opposite'' properties to semisimple ones. For instance, they are never closed, since the closure of each nilpotent orbit contains $0$, and in fact these orbits can be caracterized by this property. More precisely, if $x$ is nilpotent then $\lambda x$ is in the same orbit for any $\lambda\in \CC^*$: nilpotent orbits are pointed cones. This follows from the Jacobson-Morozov Theorem, which states that any nilpotent element $x\in\fg$ can be extended into a $\fsl_2$-triple $(x,y,h)$ such that $\spann{x,y,h}\subset \fg$ is a subalgebra isomorphic to $\fsl_2$ with its canonical basis (with $h$ the semisimple element). In particular $[h,x]=2x$. Then the one dimensional torus $\exp(\CC h)\simeq \CC^*\subset G$ (where $h$ is seen as a linear operator on $\fg$ via the adjoint action, and $\exp$ is the exponential of linear operators) acts on $\CC x$ with weight $2$, i.e., $\exp(\lambda h)\cdot x=2^\lambda x$ for any $\lambda\in \CC$. 

This implies for instance that the nilpotent cone $\cN$, the union of all nilpotent elements in $\fg$, can be described  as 
$\cN=\left\{x\in \fg\mid f(x)=0\mbox{ for any }f\in \CC[\fg]^G\right\}\subset\fg.$ Combining this with the Chevalley's restriction Theorem, one can get a complete picture of orbits in semisimple Lie algebras, generalizing Jordan's reduction of matrices. Recall that the Weyl group $W$ is a finite reflection group actin on $\fh$. 

\begin{theorem}[Chevalley's restriction theorem]
\label{thm_Chevalley}
Restriction to any Cartan subalgebra $\mathfrak{h}$ induces an isomorphism between $G$-invariant polynomials on $\mathfrak{g}$ and $W$-invariant polynomials on $\mathfrak{h}$:
$ \CC[\mathfrak{g}]^G\simeq \CC[\mathfrak{h}]^W.$
\end{theorem}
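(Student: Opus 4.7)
The plan is to prove this by first verifying well-definedness, then establishing injectivity by a density argument, and finally surjectivity, which is the substantial step.

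\textbf{Well-definedness.} First I would check that restriction to $\mathfrak{h}$ sends $G$-invariant polynomials to $W$-invariant ones. The normalizer $N_G(\mathfrak{h})$ acts on $\mathfrak{h}$ by conjugation, and its action factors through the quotient $N_G(\mathfrak{h})/Z_G(\mathfrak{h})$, which is precisely the Weyl group $W$. So if $f \in \CC[\mathfrak{g}]^G$, then $f|_{\mathfrak{h}}$ is automatically $N_G(\mathfrak{h})$-invariant, hence $W$-invariant.

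\textbf{Injectivity.} I would use that the set of semisimple elements is dense in $\mathfrak{g}$ (indeed, the regular semisimple elements form a Zariski-open subset, as their complement is cut out by the vanishing of the discriminant of the characteristic polynomial of $\ad(x)$). Since every semisimple element of $\mathfrak{g}$ is $G$-conjugate to an element of $\mathfrak{h}$ (conjugacy of Cartan subalgebras), the union $G \cdot \mathfrak{h}$ is dense in $\mathfrak{g}$. A $G$-invariant polynomial $f$ with $f|_\mathfrak{h} = 0$ vanishes on $G\cdot \mathfrak{h}$ by invariance, hence vanishes on all of $\mathfrak{g}$.

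\textbf{Surjectivity.} This is the main obstacle. Given $g \in \CC[\mathfrak{h}]^W$, I want to construct $\tilde{g} \in \CC[\mathfrak{g}]^G$ with $\tilde{g}|_\mathfrak{h} = g$. The natural idea is to define $\tilde{g}$ set-theoretically on the open subset $\mathfrak{g}^{rs}$ of regular semisimple elements by $\tilde{g}(x) = g(h)$ whenever $x$ is $G$-conjugate to $h \in \mathfrak{h}$; this is well-defined precisely because $g$ is $W$-invariant, since the set of $h \in \mathfrak{h}$ conjugate to a fixed $x$ is a single $W$-orbit. Invariance under $G$ is then automatic from the construction. The delicate point is to show two things: (i) $\tilde{g}$ is a regular function on $\mathfrak{g}^{rs}$, and (ii) $\tilde{g}$ extends to a polynomial on all of $\mathfrak{g}$.

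\textbf{Completing the hard step.} For (i), I would show that the adjoint quotient map $\chi: \mathfrak{g} \to \mathfrak{h}/W$ is well-defined as a morphism (for instance by sending $x$ to the unordered tuple of eigenvalues of $\ad(x)$ restricted to a suitable complement, or via the coefficients of a canonical characteristic polynomial), and that $\tilde{g}$ factors as $g \circ \chi$. For (ii), one invokes Hartogs-type extension: the complement of $\mathfrak{g}^{rs}$ has codimension at least one in $\mathfrak{g}$, and a refinement using that the discriminant locus has codimension one while $\tilde g$ is locally bounded near it (as the map $\chi$ extends continuously to all semisimple elements) gives that $\tilde g$ extends regularly across it. An alternative, more algorithmic route I would flag is the Chevalley--Shephard--Todd theorem, which asserts that $\CC[\mathfrak{h}]^W$ is a polynomial algebra in $r = \mathrm{rk}(\mathfrak{g})$ homogeneous generators; for classical $\mathfrak{g}$ these come from restriction of trace-polynomials $x \mapsto \mathrm{tr}(\ad(x)^k)$, giving surjectivity by explicit generators. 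The genuinely hard part, in any presentation, is controlling the behavior of $\tilde g$ along the non-regular locus.
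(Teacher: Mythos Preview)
The paper does not prove Chevalley's restriction theorem; it is stated in the appendix on gradings (Section~\ref{sec_gradings}) as a classical input and used without proof to describe the GIT quotient $\fg/\hspace*{-1mm}/G$. So there is no paper proof to compare against.

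As for your outline itself: well-definedness and injectivity are fine and standard. The surjectivity argument, however, is not complete as written. Your extension step (ii) invokes a Hartogs-type argument, but the non-regular locus has codimension one (it is exactly the discriminant hypersurface), so Hartogs does not apply directly; your ``refinement'' via local boundedness is the right instinct but is not a proof in the algebraic category without more work (you would need to show $\tilde g$ is integral over $\CC[\fg]$ and then invoke normality of affine space). Your alternative route via trace polynomials $x\mapsto\tr(\rho(x)^k)$ is closer to a genuine proof, but it only gives generators of $\CC[\fh]^W$ directly in type $A$; in general one needs the finer statement that the restrictions of the functions $x\mapsto\tr_{V_\lambda}(x^k)$, as $\lambda$ ranges over dominant weights, span $\CC[\fh]^W$ --- this is the usual character-theoretic argument (see e.g.\ Humphreys or Bourbaki), and is the step you would need to fill in.
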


The orbit picture is as follows. The GIT quotient $\fg/\hspace*{-1mm}/G=\Spec \CC[\fg]^G$ coincides with the finite quotient $\fh/W=\Spec \CC[\fh]^W$. Since $W$ is a finite reflection group, $\CC[\fh]^W$ is a polynomial algebra. The GIT quotient parametrizes  semisimple (or equvalently,  closed) orbits. On the other extreme, nilpotent orbits are all contracted to $0\in \Spec \CC[\fh]^W$ by the quotient morphism. Any $x\in\fg$ admits a unique Jordan decomposition $x=x_s+x_n\in\fg$ with $x_s$ semisimple, $x_n$ nilpotent and $[x_s,x_n]=0$;  then the image of $x$ in 
$\fg/\hspace*{-1mm}/G$ is the same as the image of $x_s$. A very important fact
which distinguishes nilpotent and semisimple orbits is the following:

\begin{theorem}
There are finitely many  orbits in the nilpotent cone of a semisimple Lie algebra.
\end{theorem}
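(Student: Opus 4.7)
My plan is to reduce the classification of nilpotent orbits to a finite combinatorial problem on a Cartan subalgebra, via the Jacobson--Morozov theorem. The starting point is exactly the result recalled in the paragraph just above the statement: every nonzero $x\in\cN$ embeds in an $\fsl_2$-triple $(x,h,y)$ satisfying $[h,x]=2x$, $[x,y]=h$, $[h,y]=-2y$. In particular, $h$ is semisimple and, since $\fg$ is a finite-dimensional representation of the subalgebra $\mathfrak{s}=\spann{x,h,y}\simeq\fsl_2$, the operator $\ad(h)$ has integer eigenvalues on $\fg$.

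The first key input I would invoke is a theorem of Kostant (refining earlier work of Mal'cev): any two $\fsl_2$-triples sharing the same neutral element $h$ are conjugate under the connected centralizer $Z_G(h)^0$, and two $\fsl_2$-triples $(x,h,y)$ and $(x',h',y')$ are $G$-conjugate if and only if $h$ and $h'$ are. Consequently, the assignment $[x]\mapsto[h]$ gives an injection from nonzero nilpotent $G$-orbits into $G$-orbits of semisimple elements of $\fg$, and it suffices to show that only finitely many semisimple orbits arise this way as ``neutral elements'' of $\fsl_2$-triples.

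To bound the image, I would use conjugacy of Cartan subalgebras to put $h$ in a fixed $\fh\subset\fg$, then use the Weyl group $W$ to bring $h$ into the closure of a dominant chamber, so that $\alpha_i(h)\geq 0$ for every simple root $\alpha_i$. Integrality of $\ad(h)$ then forces $\alpha_i(h)\in\ZZ_{\geq 0}$, and the sharp Dynkin bound $\alpha_i(h)\in\{0,1,2\}$ follows from the $\fsl_2$-representation theory applied to $\fg$ as an $\mathfrak{s}$-module: viewing $h$ as defining a $\ZZ$-grading of $\fg$ as in Example~\ref{main_ex_grading_Z}, the requirement that $\ad(x)\colon\fg_0\to\fg_2$ be surjective (standard for neutral elements of $\fsl_2$-triples) rules out larger coefficients. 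There are then at most $3^{\rank(\fg)}$ admissible dominant characteristics, yielding the finiteness.

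The main obstacle I foresee is justifying the sharp bound $\alpha_i(h)\leq 2$, which relies on a careful analysis of how $\fg$ decomposes into $\fsl_2$-strings under $\mathfrak{s}$. However, for mere finiteness a cruder argument already suffices: the admissible $h$'s lie in the coroot lattice of $\fh$ (by integrality), and they cannot be arbitrarily large, since otherwise $\fg$ would contain $\ad(h)$-eigenvectors of arbitrarily large eigenvalue, contradicting $\dim\fg<\infty$. Discreteness plus boundedness inside $\fh$ forces the set of admissible $h$'s to be finite, which -- together with the Kostant-Mal'cev injection -- completes the proof.
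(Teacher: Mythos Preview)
Your argument is correct and follows the classical Dynkin--Kostant route via $\fsl_2$-triples, which is precisely what the paper gestures at in the sentence following the theorem (``The nilpotent orbits can be classified using the theory of $\fsl_2$-triples''); the paper itself offers no proof, so there is nothing substantive to compare against. Two small corrections: integrality of the $\alpha_i(h)$ places $h$ in the \emph{coweight} lattice rather than the coroot lattice, and in your cruder boundedness step the reason finite dimension of $\fg$ caps the $\ad(h)$-eigenvalues is not bare finite-dimensionality but the $\fsl_2$-module structure (each irreducible summand $V(n)$ contributes eigenvalues up to $n$ with $n+1\leq\dim\fg$), so $\theta(h)\leq\dim\fg-1$ and hence each $\alpha_i(h)$ is bounded. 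With those tweaks your argument is complete.
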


The nilpotent orbits can be classified using the theory of $\fsl_2$-triples. 
\begin{ex}
Let us consider $\fg=\fsl_{n+1}$. Nilpotent elements can be put in Jordan normal form with only zeroes on the diagonal. The sizes of the blocks  determine uniquely the nilpotent orbit the element belongs to, hence a bijection between nilpotent orbits
and partitions of $n+1$.
\end{ex}

\begin{proof}[Proof of Theorem \ref{thm_finite_parabolic}]
We want to show that there are  finitely many $G_0$-orbits of elements $x\in \fg_1$. First we notice that any element $x\in \fg_1$ is nilpotent ($ad^n (x)=0$ for $n>>0$ since it sends $\fg_k$ to $\fg_{k+n}$). Then we show that, for each nilpotent $G$-orbit $\cO\subset \fg$, each irreducible component of $\cO\cap \fg_1$ is a $G_0$-orbit; since irreducible components are finitely many, the result then follows from the finiteness of nilpotent orbits.

To prove the claim it suffices to show that for any $x\in \cO\cap \fg_1$, the tangent space  at $x$ of its $G_0$-orbit is equal to the tangent space  at $x$
of $\cO\cap \fg_1$. The former is  $[\fg_0,x]$. The latter is $[\fg,x]\cap \fg_1$, and clearly contains the former. Now, consider an element $[y,x]\in \fg_1$ with $y\in \fg$. If the decomposition of $y$ with respect to the grading is  $y=\sum_k y_k$, then $[y_k,x]\in \fg_{k+1}$ for any $k$, hence $[y,x]=[y_0,x]\in [\fg_0,x]$.
\end{proof}

\begin{remark}
The fact that the number of nilpotent orbits in $\fg_1$ is finite also holds for any $\ZZ_m$-grading (see next section), with exactly the same proof.
\end{remark}

We conclude that $\fg_1$ is a $G_0$ prehomogeneous space, meaning that it contains a dense orbit. However, these are very special prehomogeneous spaces (called \emph{parabolic}), since a prehomogeneous space does not need to have finitely many orbits in general. We also point out that Vinberg managed to classify orbits in parabolic prehomogeneous spaces in terms of $\fsl_2$-triples \cite{Vinberg_Classification_of_homogeneous_nilpotent_elements_of_a_semisimple_graded_Lie_algebra}.

Let us come back to \ref{main_ex_grading_Z} and let us now suppose that $h$ is of a very special form: we ask that there exists $1\leq j\leq n$ such that $h(\alpha_j)=1$ and $h(\alpha_u)=0$ for $u\neq j$. Recall that any root $\alpha$ can be written as an integer linear combination of simple roots with non-negative or non-positive coefficients $\alpha=\sum_i n^\alpha_i \alpha_i$. One easily deduces that $\fg_0=\fh \oplus \bigoplus_{\alpha,\, n^\alpha_j=0}\fg_\alpha $. Then $\fg_0= \fg_0' \oplus \CC h_j$, where $\fg_0'$ is the semisimple Lie algebra whose Dynkin diagram is obtained by erasing the $j$-th node from the Dynkin diagram of $\fg$, and $h_j\in \fh$. Also $\fg_1$ can be recovered in a similar manner: it is the $\fg_0$-representation whose highest weight is $-\alpha_j|_{\fg_0}$. Let us give a few examples of such gradings that already appeared in these lectures.

\begin{ex}\label{example:e7e8}
Consider the exceptional group $E_6$ with its Lie algebra $\fe_6$, and the previous grading defined by  $j=2$ (in Bourbaki's notation, as in the diagram below). By removing the second root, we obtain $\fg_0=\fsl_6\oplus \CC h_2$, which corresponds to $G_0\simeq \GL_6$ acting on $\fg_1=\extp^3 \CC^6$. As we have already seen, this action has only five orbits: $\{0\}$, the cone over $\Gr(3,6)$, the set of partially decomposable tensors, the tangential variety of $\Gr(3,6)$ and the open orbit.
\begin{equation*}
 E_{6}=\dynkin[labels={\alpha_1,\alpha_2,\alpha_3,\alpha_4,\alpha_5,\alpha_6}]E6 \qquad 
E_{7}=\dynkin[labels={\alpha_1,\alpha_2,\alpha_3,\alpha_4,\alpha_5,\alpha_6,\alpha_7}]E7 
\qquad E_{8}=\dynkin[labels={\alpha_1,\alpha_2,\alpha_3,\alpha_4,\alpha_5,\alpha_6,\alpha_7,\alpha_8}]E8 
\end{equation*} 
\end{ex}

\begin{ex}
In fact, we can run the previous example also with $E_7$ and $E_8$, always choosing the second root. The result in both cases is that $\extp^3 \CC^n$ has finitely many orbits under the action of $\GL_n$ when $n=7$ (choose $E_7$) and $n=8$ (choose $E_8$). When $n=9$, the orbits of $\extp^3 \CC^9$ can still be classified by using $\ZZ_m$-gradings, but there are continuous families  (more precisely, a four dimensional family) \cite{VEl}.
\end{ex}

\begin{ex}
\label{ex_spinor_orbit_grading}
Again, consider the Lie algebras $\fe_6,\fe_7,\fe_8$, but this time choose $j=1$. Then $\fg_0=\fso_n\oplus \CC h_1$, with $n=10,12,14$ respectively, and $\fg_1=\Delta_n$ is a half-spin representation. We deduce that $\Delta_n$ has finitely many orbits in these cases. Once more, the next case $n=16$ has a continuous family of orbits and appears in a $\ZZ_m$-grading of $\fe_8$
\end{ex}

\subsection*{\texorpdfstring{$\ZZ_m$}{Zm}-Gradings}

We will be very brief on $\ZZ_m$-gradings and will focus on examples. The definition is similar to the one of $\ZZ$-gradings, but this time replace $\ZZ$ by $\ZZ_m:=\ZZ/m$. The general construction of $\ZZ_m$-gradings is formally similar to $\ZZ$-gradings. Fix a semisimple Lie algebra $\fg$; in the case of $\ZZ_m$-gradings, one should consider an affine Dynkin diagram of $\fg$.

\begin{remark}
If $\fg$ has no outer automorphisms, there is only one affine Dynkin diagram $G^{(1)}$,  obtained by adding to the Dynkin diagram of $\fg$ a node corresponding 
to the lowest root.  Lie algebras with outer automorphisms have additional affine Dynkin diagrams.
\end{remark}

Now, the choice of a function $f\colon \phi \to \ZZ$, where $\phi$ is the set of nodes of the diagram, defines a $\ZZ_m$-grading for a certain $m$, and all $\ZZ_m$-gradings are obtained this way (the construction for $\ZZ$-gradings is the same but starts from actual Dynkin diagrams). If $\phi$ vanishes on all but one node, say the $j$-th node, then $\fg_0$ is the semisimple Lie algebra whose Dynkin diagram is obtained by erasing the $j$-th node from the affine Dynkin diagram, and $\fg_1$ is the representation with highest weight $-\alpha_j|_{\fg_0}$.

\begin{ex}
The affine Dynkin diagram of $\fg_2$ is 
$ G^{(1)}_{2}=\dynkin[extended, labels={1,2,3}]G2 $.
Choosing a node gives a $\ZZ_m$-grading, where $m$ is the label of the node. So for instance choosing the first node on the left gives a $\ZZ_1$-grading, i.e., no grading at all: $(\fg_2)_0=(\fg_2)_1=\fg_2$. Choosing the last node on the right gives a $\ZZ_3$ grading with $(\fg_2)_0=\fsl_3$, $(\fg_2)_1=\CC^3$ and $(\fg_2)_{-1}=(\CC^3)^*$ (can you recognize it?). Choosing the central node gives a $\ZZ_2$-grading with $(\fg_2)_0=\fsl_2 \oplus \fsl_2$ and $(\fg_2)_1=\CC^2 \otimes S^3 \CC^2$.
\end{ex}

\begin{ex}
\label{ex_graded_so8}
One of the affine Dynkin diagrams of $\fso_8$ is $ SO_8^{(1)}=\dynkin[extended, labels={1,1,2,1,1}]D4 $.
Choosing the central node gives a $\ZZ_2$-grading with $(\fso_8)_0=(\fsl_2)^{\oplus 4}$ and $(\fso_8)_1=(\CC^2)^{\otimes 4} $.
\end{ex}

\begin{ex}
The affine Dynkin diagram of $\fe_8$ is $ E^{(1)}_{8}=\dynkin[extended, labels={1,2,3,4,6,5,4,3,2}]E8 $.
Choosing the upper node gives a $\ZZ_3$-grading with $(\fe_8)_0=\fsl_9$, $(\fe_8)_1=\extp^3 \CC^9 $ and $(\fe_8)_{-1}=\extp^6 \CC^9 $. Choosing the first node on the left gives a $\ZZ_2$-grading with $(\fe_8)_0=\fso_{16}$ and $(\fe_8)_1=\Delta_{16} $. These two examples were mentioned in the previous subsection.
\end{ex}

A remarkable property of the $G_0$-action on $\fg_1$ is that a version of Jordan reduction holds. In particular, 
similarly to the classification of nilpotent and semisimple elements in semisimple Lie algebras (which can be seen as $\ZZ_1$-graded algebras), Vinberg showed that there are finitely many nilpotent $G_0$-orbits in $\fg_1$, while semisimple orbits are parametrized by a quotient of an affine space by a complex reflection group \cite{Vinberg_graded}. 

\section{Tits' shadows}
\label{sec_tits}

By a \emph{Fano variety of subvarieties}, we will mean an algebraic variety (or scheme) parametrizing certain classes of subvarieties inside a fixed ambient variety. The theory of Tits' shadows (\cite{Tits}, see also \cite{LM03}) allows to describe certain Fano varieties of subvarieties inside a $G$-homogeneous space $X$. It works particularly well to describe Fano varieties of lines (parametrizing $\PP^1$'s inside $X$). This theory starts from the observation that $G$ will act as well on such a  Fano variety of subvarieties. If the action is again transitive, the Fano variety of subvarieties itself will be another $G$-homogeneous space, and one can decide combinatorially which one directly from the Dynkin diagram.
%Even when the action is not transitive, one can still recover combinatorially the minimal $G$-orbit inside the Fano variety of subvarieties. 
Let us explain how.

Suppose $G$ is  semisimple  with Dynkin diagram $D$. Recall that by $G$-homogeneous space $X$ we mean a rational projective $G$-homogeneous variety; it can be seen as a quotient $X=G/P$, where $P$ is a \emph{parabolic} subgroup. Such $G$-homogeneous classified by combinatorially classifying  parabolic subgroups $P$; the latter   are in correspondence with sets of simple roots, i.e., sets of nodes of the Dynkin diagram $D$. For $I$ such a set we let $P_I$ be the corresponding parabolic subgroup. If $\omega=\sum_{i\in I}\omega_i$ is the associated sum of fundamental weights, then $G/P_I$ is the minimal orbit inside the projectivization $\PP V_\omega$ of the irreducible representation with highest weight  $\omega$.

\begin{ex}
For $G=\SL_{n+1}$, choose $I=\{\alpha_i\}$ the $i$-th simple root. Then $G/P_I=G/P_i$ is the Grassmannian $\Gr(i,n+1)\subset \PP(\wedge^i\CC^{n+1})$.
\end{ex}

For $X=G/P_I$, we choose another set $J$ of simple roots, disjoint from $I$, and let $D'$ be the Dynkin diagram obtained by removing from $D$ the nodes of $J$. Denote by $G'$ a semisimple group wit Dynkin diagram $D'$. The homogeneous space $G/P_{I\cup J}$ projects onto both  $G/P_I$ and $G/P_J$, and the fibers of the projection $G/P_{I\cup J}\to G/P_J$ are isomorphic to $G'/P'_I$. They map isomorphically to $G/P_I$ where there images are the \emph{Tits shadows}. Thus $G/P_J$ can be seen as parametrizing a family of subvarieties, isomorphic to $G'/P'_I$, inside $G/P_I$. In many cases this family is the whole Fano variety of subvarieties $G'/P'_I\subset G/P_I$. 

%\begin{remark}
%    We will not give a formal definition of \emph{Fano variety of subvarieties}. The non-algebraic reader may retain the fact that the Fano variety $\Hil_X(Y)$ parametrizes subvarieties of $Y$ which are obtained by a sufficiently nice (\emph{flat} in algebro-geometric terms) deformation of $X$ inside $Y$. Also notice that the Fano variety of subvarieties is not necessarily \emph{a} Fano variety (meaning that its anticanonical bundle is not necessarily ample).
%\end{remark}

\noindent {\it Fact}: Assume that $I$ is made of a single root $\{\alpha_i\}$, which is not a long root (this includes the case where all roots have the same length). Then the Fano variety of linear spaces $\PP^k$ of dimension $k$ inside $G/P_I$ is the union of all those homogeneous varieties $G/P_J$ such that $G'/P'_I \simeq \PP^k$. Equivalently, the connected component of $D'$ that contains $I$ must be of type $A_{k-1}$, with $\alpha_i$ being an extreme node of $D'$.

\begin{ex}
\label{lines_in_grassmannian}
Let us describe Tits' shadows yielding linear projective spaces inside the Grassmannians $X=\SL_{n+1}/P_i=\Gr(i,n+1)$.

%\begin{center}
%\setlength{\unitlength}{4mm}
%\thicklines
%\begin{picture}(18,5.9)(.5,-1.5)
%\multiput(0,2)(2,0){7}{$\circ$}
%\put(4,2){$\bullet$}
%\multiput(0.3,2.2)(2,0){6}{\line(1,0){1.8}}
%\put(4.2,0.3){\line(0,1){1.8}}

%\color{black}
%\put(0,2.7){$\alpha_1$}
%\put(2,2.7){$\alpha_2$}
%\put(4,2.7){$\alpha_3$}
%\put(6,2.7){$\alpha_4$}
%\put(8,2.7){$\alpha_5$}
%\put(10,2.7){$\alpha_6$}
%\put(12,2.7){$\alpha_7$}

%\put(1,-.2){$n=7,i=3$ gives $\Gr(3,8)$}
%\end{picture}
%\end{center}

In the following pictures, we have marked with $\bullet$ the node corresponding to $I$, with $\times$ the nodes of $J$; the connected component of $D'$ containing $I$ (of type $A_{k-1}$) has been framed in a rectangle. For simplicity, we fixed $n=7$ and $i=3$.

The Fano variety of lines in $X$ is given by the flag variety $\Fl(i-1,i+1,n+1)=\SL_{n+1}/P_{i-1,i+1}$, as shown by  Tits' shadow described by the following picture.

\setlength{\unitlength}{4mm}
\thicklines
\begin{picture}(18,2)(-1,0)
\put(13,.5){\dynkin[edge length=8mm] A{ot*tooo}}
\multiput(16.8,.2)(1,0){2}{\line(0,1){1}}
\multiput(16.8,.2)(0,1){2}{\line(1,0){1}}
%\color{black}
%\put(1,-.2){$\Hil_{\PP^1}(\Gr(3,8))=\Fl(2,4,8)$}
\end{picture}

Geometrically this corresponds to the fact that each point of $\Fl(i-1,i+1,n+1)$, that is,  each flag of subspaces $V_{i-1}\subset V_{i+1}\subset \CC^{n+1}$, 
defines a line in $\Gr(i,n+1)$. Or course this is just the line  parametrizing $[U_i]\in \Gr(i,n+1)$ such that $V_{i-1}\subset U_i\subset V_{i+1}$. All lines in $\Gr(i,n+1)$ are obtained in this way. 

For linear subspaces of dimension two there are in general two families, obtained as follows: any flag $V_{i-2}\subset V_{i+1}\subset \CC^{n+1}$ from  $\Fl(i-2,i+1,n+1)$ defines the $\PP^2$ in $\Gr(i,n+1)$ parametrizing $[U_i]\in \Gr(i,n+1)$ such that $ V_{i-2}\subset U_i\subset V_{i+1}$.  And similarly for a flag $V_{i-1}\subset V_{i+2}\subset \CC^{n+1}$. Thus (except if $i$ is too small or too big) $\Hil_{\PP^2}(X)=\Fl(i-2,i+1,n+1) \cup \Fl(i-1,i+2,n+1).$

These two families are described by the following Tits' shadows:

\setlength{\unitlength}{4mm}
\thicklines
\begin{picture}(18,2)(-5.5,1)

\put(0,1.5){\dynkin[edge length=8mm] A{to*tooo}}
\multiput(1.8,1.2)(3.1,0){2}{\line(0,1){1}}
\multiput(1.8,1.2)(0,1){2}{\line(1,0){3.1}}
\put(16,1.5){\dynkin[edge length=8mm] A{ot*otoo}}
\multiput(19.8,1.2)(3.1,0){2}{\line(0,1){1}}
\multiput(19.8,1.2)(0,1){2}{\line(1,0){3.1}}
\end{picture}
\end{ex}

\begin{ex}
In order to cconclude with an example of exceptional type, consider the diagrams
corresponding to the generalized Grasmannians of type $E_6$, 

$\begin{array}{ccc}
E_6/P_1 \;\;\dynkin[edge length=6mm] E{*ooooo} \qquad & 
E_6/P_2 \;\; \dynkin[edge length=6mm] E{o*oooo} \qquad & 
E_6/P_6 \;\; \dynkin[edge length=6mm] E{ooooo*} \\
& & \\ 
E_6/P_3 \;\; \dynkin[edge length=6mm] E{oo*ooo} \qquad & 
E_6/P_4 \;\; \dynkin[edge length=6mm] E{ooo*oo} \qquad & 
E_6/P_5 \;\; \dynkin[edge length=6mm] E{oooo*o} 
\end{array}$

\medskip
We know that $E_6/P_1$ (and symmetrically $E_6/P_6$) is the Cayley plane,
while $E_6/P_2$ is the adjoint variety $E_6^{ad}$. Playing with Tits shadows, we see that $E_6/P_3$ is the Fano variety of lines on $E_6/P_1$, and also parametrizes families of $\PP^4$'s on both $E_6/P_2$ and $E_6/P_6$. Similarly, $E_6/P_4$ is the Fano variety of lines on $E_6/P_2$, and also the Fano variety of planes on both 
$E_6/P_1$ and $E_6/P_6$! 

In particular, we can recover all the homogeneous spaces of $E_6$ just from 
the Cayley planes and the adjoint variety. This implies that the whole representation theory of $E_6$ can be recovered from the minimal representation of dimension $27$, 
and the adjoint representation; a fact which, obviously, was already known to E. Cartan. 
\end{ex}

\begin{comment}
(except if $i$ is too small or too big). More generally, one easily gets all families of linear subspaces in Grassmannians using Tits' shadows. The result when $i=3$ and $n=8$ is as follows. 
$\begin{array}{ll}  \Hil_{\PP^1}(\Gr(3,8))=\Fl(2,4,8), & \Hil_{\PP^2}(\Gr(3,8))=\Fl(1,4,8)\cup \Fl(2,5,8), \quad \\ \Hil_{\PP^3}(\Gr(3,8))=\Gr(4,8)\cup \Fl(2,6,8), & \Hil_{\PP^4}(\Gr(3,8))=\Fl(2,7,8), \\ \Hil_{\PP^5}(\Gr(3,8))=\Gr(2,8), & \Hil_{\PP^6}(\Gr(3,8))=\emptyset . \end{array}$

The theory of Tits' shadows is clearly much richer, since we can construct families of subvarieties different from linear subspaces. For instance, in the example above, the flag variety $\Fl(1,6,8)$ parametrizes a family of $\Gr(2,5)$'s inside $\Gr(3,8)$: for any flag  $(V_{2}\subset V_{6}\subset \CC^{8})$ in  $\Fl(2,6,8)$, 
the set of $[U_3]\in \Gr(3,8)$ such that  $\mid V_{1}\subset U_3\subset V_{6} \}$ is parametrized by  $\Gr(2,V_{6}/V_{1})\simeq \Gr(2,5)$. 
\begin{center}
\setlength{\unitlength}{4mm}
\thicklines
\begin{picture}(18,4)(-2,-.5)
\multiput(0,2)(2,0){7}{$\circ$}
\put(4,2){$\bullet$}
\multiput(0.3,2.2)(2,0){6}{\line(1,0){1.8}}
%\put(4.2,0.3){\line(0,1){1.8}}
\multiput(1.8,1.7)(7,0){2}{\line(0,1){1}}
\multiput(1.8,1.7)(0,1){2}{\line(1,0){7}}

\put(-.12,2){$\times$}
\put(9.88,2){$\times$}

\color{black}

\put(0,-.2){$\Fl(2,6,8)\subset \Hil_{\Gr(2,5)}(\Gr(3,8))$}
\end{picture}
\end{center}
\end{comment} 

\begin{remark}
\label{rmk_lines_short_roots}
When $\alpha_i$ is a short root (which may happen only if $G$ is not simply laced) then the Fano variety of $\PP^k$'s inside $G/P_I$ is not homogeneous. For instance, for $k=1$ there are two orbits in general, the closed one being of the form $G/P_J$ such that $G'/P'_I\simeq \PP^1$. We already know a typical example: the $G_2$-quadric $\QQ^5=G_2/P_1$, whose space of lines is $\OGr(2,7)$ and contains two orbits;
the closed one is the $G_2$ adjoint Grassmannian $\GtwoGr(2,7)=G_2/P_2$ (recall that $ G^{}_{2}=\dynkin[labels={\alpha_1,\alpha_2}]G2 $).
\end{remark}

\addcontentsline{toc}{chapter}{References}
\bibliographystyle{plain}
\bibliography{main}
\end{document}